\newtheorem{thm}{Theorem}[section]
\newtheorem{lem}[thm]{Lemma}
\newtheorem{cor}[thm]{Corollary}
\theoremstyle{definition}
\theoremstyle{remark}
\numberwithin{equation}{section}
\newcommand{\R}{\mathbf{R}}  
\newcommand{\N}{\mathbf{N}}
\newcommand{\C}{\mathbf{C}}
\newcommand{\Z}{\mathbf{Z}}
\newcommand{\Mod}[1]{\ (\textup{mod}\ #1)}
\providecommand{\id}{\operatorname{\mathbf{1}}}
\begin{document}


\title[The second moment of primitive $L$-functions]{A uniform asymptotic formula for the second moment of primitive $L$-functions on the critical line }


\author{Olga Balkanova}
\address{Institute for Applied Mathematics, Russian Academy of Sciences, Khabarovsk, Russia}
\email{olgabalkanova@gmail.com}

\author{Dmitry Frolenkov}
\address{Steklov Mathematical Institute of Russian Academy of Sciences, Moscow, Russia}
\email{frolenkov@mi.ras.ru}

\keywords{primitive forms, L-functions}
\subjclass[2010]{Primary: 11F12}

\begin{abstract}
We prove an asymptotic formula  for the second moment of automorphic $L$-functions of even weight  and prime power level. The error term is estimated uniformly in all parameters: level, weight, shift and twist.
\end{abstract}

\maketitle

\tableofcontents

\section{Introduction}

Many problems of different nature  can be phrased in terms of  central $L$-values. Therefore, asymptotic evaluation of moments of $L$-functions near the critical line is a major subject in analytic number theory.
In view of applications, one of the most important things is to estimate  error terms  optimally and uniformly with respect to different parameters.

In this paper, we consider the family of automorphic $L$-functions associated to primitive forms of  weight $2k\geq 2$ and level
$N=p^{\nu}$ with $p$ prime and $\nu \geq 2$.

Let $\gamma$ be the Euler constant, $\Lambda(n) $ be the von Mangoldt function, $\phi(n)$ be the Euler-totient function, $\mu(n)$ be the M\"{o}bius function
and $\psi(n)$ be the logarithmic derivative of the Gamma-function.
For complex $s$ and natural $n$ define
\begin{equation}
\tau_s(n)=\sum_{n_1n_2=n}\left( \frac{n_1}{n_2}\right)^s=
n^{-s}\sigma_{2s}(n).
\end{equation}
Introduce the identity operator
\begin{equation}
 \id_{c}=\begin{cases}
           1 & \text{ if c is true}\\
           0 & \text{otherwise}
         \end{cases}.
\end{equation}
Let $p$ be a prime and
\begin{equation}
\phi_{\nu}(N)=
\left\{
              \begin{array}{ll}
                1-p^{-1}, & \hbox{if $N=p^{\nu},\,\nu\geq3$;} \\
                1-(p-p^{-1})^{-1}, & \hbox{if $N=p^{\nu},\,\nu=2$.}
              \end{array}
\right.
\end{equation}

Our main result is the  asymptotic formula for the twisted harmonic second moment $M_2(l,0,it)$  defined by \eqref{def:moment}.
\begin{thm}\label{thm:main}
Let $k\geq 1 $, $t \in \R$, $T:=3+|t|$, $N=p^{\nu},$ $p$ is a prime and $\nu \geq 2$.
If $p|l$ then $M_2(l,0,it)=0$. Otherwise
\begin{multline}\label{eq:mt}
M_2(l,0,it)=\frac{\phi(N)\phi_{\nu}(N)}{N}\frac{\sigma_{-2it}(l)}{l^{1/2-it}}\times\\ \times \Biggl(
\log{N}+2\gamma-2\log{2\pi}+2\frac{\log{p}}{p-1}+\psi(k+it)+\psi(k-it)\Biggr)-\\
-\frac{\phi(N)\phi_{\nu}(N)}{N}\frac{1}{l^{1/2-it}}\sum_{d|l}\sum_{r|d}\mu(d/r)\tau_{it}(r)r^{-it}\log{r}+\\+
O_{\epsilon}\left( V_N(l)+\frac{1}{p}V_{N/p}(l)+\frac{1}{p^2}V_{N/p^2}(l)+
\frac{(lTkp)^{\epsilon}}{p^2\sqrt{l}}
\id_{N=p^2}
\right).
\end{multline}
Here
\begin{equation}\label{casesE}
V_N(l)\ll
\begin{cases}
(lkT^2)^{\epsilon}\frac{l^{1/2}(k+T)}{N}& l \geq N/(1+T^2);\\
\left(\frac{lT^2}{4(N-l)}\right)^k\frac{N^{\epsilon}}{l^{1/2}T}
&l<N/(1+T^2).
\end{cases}
\end{equation}

\end{thm}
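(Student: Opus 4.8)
The plan is to open the product $L(f,1/2+it)L(f,1/2-it)$ by an approximate functional equation, to evaluate the harmonic average over newforms of level $p^{\nu}$ by a Petersson-type formula, and then to treat the diagonal and off-diagonal contributions separately.

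First I would record the approximate functional equation. Since the completed $L$-function satisfies $\Lambda(f,s)=\eta_f\Lambda(f,1-s)$ with $\eta_f\in\{\pm1\}$, the product $L(f,1/2+it)L(f,1/2-it)$ has a self-dual functional equation: the sign is $\eta_f^{2}=1$ and the gamma factors on the two sides coincide. Hence there is an explicit even weight $V_t$, assembled from $(2\pi)^{-2u}\Gamma(k+it+u)\Gamma(k-it+u)/(\Gamma(k+it)\Gamma(k-it))$ and a factor $N^{u}$, for which
\begin{equation*}
L(f,1/2+it)L(f,1/2-it)=2\sum_{m,n\geq1}\frac{\lambda_f(m)\lambda_f(n)}{(mn)^{1/2}}\Bigl(\frac nm\Bigr)^{it}V_t\!\Bigl(\frac{mn}{N}\Bigr).
\end{equation*}
Substituting this into the definition of $M_2(l,0,it)$ reduces the problem to a $V_t$-weighted double sum over $\sum_f\omega_f\lambda_f(l)\lambda_f(m)\lambda_f(n)$. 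I would then apply a Petersson formula for the Hecke newform basis of $S_{2k}(\Gamma_0(p^{\nu}))$; passing from the full cuspidal basis to the newforms of exact level $p^{\nu}$ is an inclusion--exclusion over the divisors $1,p,p^{2}$ of the level, and this is exactly the mechanism that produces the companion quantities $V_{N/p}(l)$ and $V_{N/p^{2}}(l)$ (the same moment at the lower levels $N/p$ and $N/p^{2}$) together with the exceptional term $\id_{N=p^{2}}$, which appears only when $\nu=2$. The resulting formula has a diagonal term (carrying the family's harmonic mass) and an off-diagonal term built from Kloosterman sums $S(\cdot,\cdot;c)$ with $p^{\nu}\mid c$ against $J_{2k-1}(4\pi\sqrt{ab}/c)$. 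Because $\lambda_f(p^{j})=0$ for $j\geq1$ for these newforms, only $m,n$ with $(mn,p)=1$ survive — in particular $M_2(l,0,it)=0$ when $p\mid l$ — and, since $(l,p)=1$, one has $\lambda_f(l)\lambda_f(m)=\sum_{d\mid(l,m)}\lambda_f(lm/d^{2})$.

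For the diagonal, the equality $lm/d^{2}=n$ forces $d\mid l$, $m=dm'$, $n=lm'/d$ with $p\nmid m'$, so that $(n/m)^{it}=(l/d^{2})^{it}$, $mn=l(m')^{2}$, and the diagonal contribution is, up to a harmless leftover integral, a constant multiple of
\begin{equation*}
\frac{\phi(N)\phi_{\nu}(N)}{N}\,\frac{\sigma_{-2it}(l)}{l^{1/2-it}}\sum_{\substack{m'\geq1\\ p\nmid m'}}\frac{1}{m'}\,V_t\!\Bigl(\frac{l(m')^{2}}{N}\Bigr).
\end{equation*}
Representing $V_t$ by a Mellin--Barnes integral and using $\sum_{p\nmid m'}(m')^{-1-2u}=\zeta(1+2u)\bigl(1-p^{-1-2u}\bigr)$, I would move the contour past $u=0$. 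The double pole there — coming from $1/u$ and the pole of $\zeta$ — produces the main term $\log N+2\gamma-2\log2\pi+\psi(k+it)+\psi(k-it)$, in which $2\gamma$ arises from the Laurent constant of $\zeta$, the gamma terms from the logarithmic derivative of the above ratio at $u=0$, and the local contribution $2\log p/(p-1)$ precisely from the logarithmic derivative of $1-p^{-1-2u}$ at $u=0$ (the remaining factor $1-p^{-1}$ being absorbed into $\phi_{\nu}(N)$ for $\nu\geq3$; the case $\nu=2$, where $\phi_{\nu}$ and the $\id_{N=p^{2}}$ correction behave differently, is treated separately). The twist by $l$ supplies the arithmetic factor $\sigma_{-2it}(l)/l^{1/2-it}$ and, from the first-order term in $l$, a contribution $-\sigma_{-2it}(l)\log l$; an elementary M\"{o}bius identity rewrites the latter as $-\sum_{d\mid l}\sum_{r\mid d}\mu(d/r)\tau_{it}(r)\,r^{-it}\log r$, recovering the stated secondary term, and the residual integral from the contour shift is absorbed into the error.

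Finally, in the off-diagonal the Bessel argument is $4\pi\sqrt{ab}/c\asymp\sqrt{lmn}/(cN)$; since the rapid decay of $V_t$ confines $mn$ to a range of size $\ll N(k+T)^{2}$ (up to a negligible tail) and $c\geq N$, it is $\ll(k+T)\sqrt{l/N}$ up to $N^{\epsilon}$. I would split on its size. When $l\geq N/(1+T^{2})$ the argument can be of size $\geq1$, and combining the Weil bound for $S(\cdot,\cdot;c)$ — together with the sharper evaluation of $S(a,b;p^{\nu})$ available for $p\nmid ab$ — with $J_{2k-1}(x)\ll\min(1,x^{-1/2})$ and a dyadic analysis that exploits the oscillation of $J_{2k-1}$ in the $(m,n)$-sum, one should reach the bound $(lkT^{2})^{\epsilon}l^{1/2}(k+T)/N$. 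When $l<N/(1+T^{2})$ the argument is small, so $J_{2k-1}(x)\ll(x/2)^{2k-1}/\Gamma(2k)$; expanding $J_{2k-1}$ in its power series, exchanging with the $(m,n)$-sum and identifying the result as a hypergeometric-type series in $l/N$ (whose coefficients carry the $4^{k}$ and whose behaviour near the edge $l=N$ produces the denominator $N-l$), one obtains the superpolynomial saving $\bigl(lT^{2}/(4(N-l))\bigr)^{k}N^{\epsilon}/(l^{1/2}T)$. I expect this off-diagonal estimate to be the main obstacle: making it simultaneously uniform in $N$, $k$, $t$ and $l$, and in particular extracting the sharp exponent $k$ and the denominator $N-l$ rather than merely $N$, requires evaluating the $(m,n)$-sum against $V_t$ essentially in closed form rather than bounding it crudely.
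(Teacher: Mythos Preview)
Your route is genuinely different from the paper's, and the difference matters for the error term.

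The paper does \emph{not} use an approximate functional equation. Instead it works with $M_2(l,u,v)$ for $\Re u>1/2$, opens $\Delta_{2k,N}^{*}$ via Rouymi's trace formula (this is where the inclusion--exclusion over $N,\,N/p,\,N/p^2$ and the special $\nu=2$ constant $1/(p-p^{-1})$ enter), and then, crucially, does \emph{not} estimate the Kloosterman--Bessel sums directly. It packages them into the Dirichlet series $G^{*}(s,v;d,q)=\sum_{m,n}Kl(md,n;q)(m/n)^{v}(mn)^{-s}$, proves a Voronoi-type functional equation for $G^{*}$ (Lemma~\ref{lem:gstar}), and uses it to transform the off-diagonal into sums of the shape $\sum_{n}\tau_u(|n|)\tau_v(nN\pm r)(nN\pm r)^{-\cdots}$ weighted by Gauss hypergeometric functions $H_\lambda(u,v;r/(nN\pm r))$. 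Analytic continuation to $u=0$ then produces the main term and defines $V_N(l)$ \emph{explicitly} as a sum of these hypergeometric values (equations \eqref{eq:vn}--\eqref{eq:v3}); the bound \eqref{casesE} is proved by feeding in sharp uniform estimates for $H_k(0,v;\pm z)$ obtained in Section~\ref{section5}. The factor $4^{k}$ and the denominator $N-l$ are not heuristic: they arise because the first term of $V_{N,1}$ has argument $-r/(n_0N-r)$ with $n_0N-r=N-l$, and the relevant hypergeometric bound \eqref{eqw:h2} carries $T^{2k-1}/2^{2k}$.

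Your approximate-functional-equation plan recovers the main term correctly (the diagonal computation you sketch is fine), but the off-diagonal paragraph is where the gap sits. Weil plus $J_{2k-1}(x)\ll\min(1,x^{-1/2})$ and a power-series expansion of $J_{2k-1}$ will give you \emph{some} saving in $l/N$, but it does not by itself produce the precise shape $\bigl(lT^{2}/(4(N-l))\bigr)^{k}$: the $N-l$ only appears after the dual summation, and the clean $4^{k}$ comes from a hypergeometric identity, not from $(x/2)^{2k-1}/\Gamma(2k)$. Your sentence ``identifying the result as a hypergeometric-type series in $l/N$'' is exactly the step that has to be made rigorous, and doing so amounts to re-deriving the Bykovskii transformation the paper uses. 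If you want to stay on the approximate-functional-equation track, you would need to Poisson/Voronoi-sum in $n$ after opening the Kloosterman sum, which lands you on essentially the same hypergeometric sums the paper obtains by analytic continuation; otherwise the stated uniform bound in $k,t,l,N$---in particular the exponent $k$ and the $N-l$---is not within reach of direct estimation.
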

The given asymptotic formula is uniform in all parameters: $N$, $p$, $l$, $k$ and $T$.
Fixing $k$ we obtain that
\begin{equation}
V_N(l)+\frac{1}{p}V_{N/p}(l)+\frac{1}{p^2}V_{N/p^2}(l)\ll_{k,\epsilon} \frac{l^{1/2}T}{N}(lNT)^{\epsilon}.
\end{equation}

This improves and generalizes several results in the prior literature.

First, the given paper extends methods of \cite{Byk, BF, BF2} to the case of primitive forms of prime power level.  We apply the technique of analytic continuation instead of approximate functional equation in order to prove the explicit formula for the  twisted second moment on the critical line.  This generalizes the formulas of Bykovskii \cite{Byk} , Iwaniec $\&$ Sarnak \cite{IS}  and Bykovskii $\&$ Frolenkov \cite{BF}. The most challenging case of weight  $2k=2$ is proved in \cite{BF} using the so-called "Hecke trick".

From the explicit formula we derive the asymptotics that generalizes theorem $1$ of \cite{Bui} proved for $N$ prime, $k=1$, $t=0$ and $l<N$.
Removing the restriction $l<N$ is of crucial importance here.
As shown in \cite{BalFrol} this is required to prove the best known lower bound on the proportion of non-vanishing $L$-values when $N=p^{\nu}$, $\nu$ is fixed and $p \rightarrow \infty$.

Furthermore, uniformity on shift $t$ allows establishing a positive proportion of non-vanishing at any point on the critical line $1/2+it$ such that $|t|<N$.

Finally, theorem \ref{thm:main} improves the result of Rouymi.
\begin{thm}\label{thm:R}(lemma 5 of \cite{R2})
Let  $k\geq 1$, $N=p^{\nu}$, $p$ is prime,  $\nu \geq 3$, $(l,p)=1$. Then for all $1\leq l \leq N$
\begin{multline}\label{m2rouymi}
M_2(l,0,0)=\frac{\tau(l)}{\sqrt{l}}\left( \frac{\phi(N)}{N}\right)^2\times \\ \times
\left( \log{\left( \frac{N}{4\pi^2l}\right)}+2 \left( \frac{\log{p}}{p-1}+\psi(k)+\gamma\right)\right)+O_{k,p}\left(\sqrt{l}\frac{(\log{N})^4}{\sqrt{N}}\right).
\end{multline}
\end{thm}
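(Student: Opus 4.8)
The plan is to obtain Theorem~\ref{thm:R} as a direct specialization of Theorem~\ref{thm:main} at $t=0$. Since $(l,p)=1$ we are in the non-trivial case, $T=3$, and because $\nu\geq3$ the term $\id_{N=p^2}$ in \eqref{eq:mt} is absent and $\phi_{\nu}(N)=1-p^{-1}=\phi(N)/N$, whence $\phi(N)\phi_{\nu}(N)/N=(\phi(N)/N)^2$. At $t=0$ the arithmetic data collapse: $\sigma_{-2it}(l)=\tau(l)$, $l^{1/2-it}=\sqrt l$, $\tau_{it}(r)r^{-it}=\tau(r)$, and $\psi(k+it)+\psi(k-it)=2\psi(k)$. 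Hence the main terms of \eqref{eq:mt} become
\begin{equation*}
\left(\frac{\phi(N)}{N}\right)^2\frac{\tau(l)}{\sqrt l}\left(\log N+2\gamma-2\log2\pi+2\frac{\log p}{p-1}+2\psi(k)\right)-\left(\frac{\phi(N)}{N}\right)^2\frac{1}{\sqrt l}\sum_{d\mid l}\sum_{r\mid d}\mu(d/r)\tau(r)\log r .
\end{equation*}

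Next I would evaluate the double divisor sum. Interchanging the order of summation and writing $d=rm$ with $m\mid l/r$, so that $\mu(d/r)=\mu(m)$, and applying $\sum_{m\mid n}\mu(m)=\id_{n=1}$, one gets
\begin{equation*}
\sum_{d\mid l}\sum_{r\mid d}\mu(d/r)\tau(r)\log r=\sum_{r\mid l}\tau(r)\log r\sum_{m\mid l/r}\mu(m)=\tau(l)\log l .
\end{equation*}
Inserting this and using $\log N-2\log2\pi-\log l=\log\bigl(N/(4\pi^2 l)\bigr)$ reduces the displayed expression to $\bigl(\phi(N)/N\bigr)^2\tau(l)l^{-1/2}\bigl(\log(N/(4\pi^2 l))+2(\log p/(p-1)+\psi(k)+\gamma)\bigr)$, which is exactly the main term of \eqref{m2rouymi}.

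For the remainder, with $k$ and $p$ fixed and $T=3$ the uniform bound recorded right after Theorem~\ref{thm:main} gives $V_N(l)+p^{-1}V_{N/p}(l)+p^{-2}V_{N/p^2}(l)\ll_{k,\epsilon}l^{1/2}N^{-1+\epsilon}$; using $1\leq l\leq N$ this is $\ll_{k,p}l^{1/2}N^{-1/2}\ll_{k,p}\sqrt l\,(\log N)^4/\sqrt N$, in fact a saving of a power of $N$ over Rouymi's error term, which is the precise sense of the word ``improves'' in the text. There is essentially no obstacle in this deduction; the only steps demanding attention are the evaluation of the divisor sum and the identity $\phi_{\nu}(N)=\phi(N)/N$, valid only for $\nu\geq3$ — this is what forces the hypothesis $\nu\geq3$ in Theorem~\ref{thm:R}, since for $\nu=2$ the factor $\phi_{\nu}(N)=1-(p-p^{-1})^{-1}$ is different and an extra $\id_{N=p^2}$ contribution appears in \eqref{eq:mt}. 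All the substance lies in Theorem~\ref{thm:main}.
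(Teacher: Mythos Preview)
Your derivation is correct and mirrors the paper's treatment: Theorem~\ref{thm:R} is quoted from Rouymi, and the paper deduces it from Theorem~\ref{thm:main} at $t=0$ in exactly the way you describe, checking that the main terms agree and that the error in \eqref{eq:mt} is smaller. Your one-line M\"obius evaluation $\sum_{d\mid l}\sum_{r\mid d}\mu(d/r)\tau(r)\log r=\tau(l)\log l$ is cleaner than the paper's longer chain of substitutions leading to the same identity, and the clause ``using $1\le l\le N$'' in your error-term step is unnecessary (the inequality $N^{-1+\epsilon}\ll N^{-1/2}$ already suffices).
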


Note that if $t=0$, the main term of \eqref{eq:mt} coincides with the main term in \eqref{m2rouymi}. Indeed,
\begin{multline}
\sum_{d|l}\sum_{r|d}\mu(d/r)\tau_0(r)\log{r}
=\sum_{d|l}\sum_{r|d}\mu(r)\log{\frac{d}{r}}\sum_{a|\frac{d}{r}}1=\\=
\sum_{d|l}\sum_{a|d}\sum_{r|\frac{d}{a}}\mu(r)\log{\frac{d}{r}}
=\sum_{d|l}\left( \log{d}+\sum_{k|d}\Lambda(d/k)\right)=\\=
2\sum_{d|l}\log{d}=\sum_{d|l}\left(\log{d}+\log{\frac{l}{d}}\right)=\tau_0(l)\log{l}.
\end{multline}

One can also compare the error terms in theorems  \ref{thm:main} and \ref{thm:R} by setting $l=1$. Equation \eqref{m2rouymi} gives $O_{k,p}\left((\log{N})^4N^{-1/2}\right)$
and \eqref{casesE} implies that
\begin{equation}
V_N(1)+\frac{1}{p}V_{N/p}(1)+\frac{1}{p^2}V_{N/p^2}(1)\ll_{k,t,\epsilon} \frac{N^{\epsilon}}{p^2(N/p^2)^k}\ll_{k,t,\epsilon,p}N^{-k+\epsilon}.
\end{equation}

The paper is organized as follows. In sections \ref{section2} and \ref{section3} we recall some background information. Sections \ref{section4} and \ref{section5} are devoted to estimating special functions which appear as a part of error terms in the asymptotic formula for the second moment.
In section \ref{section6} we prove the exact formula  for the second moment when $N=p^{\nu}$, $\nu \geq 3$. The case $N=p^2$ is considered in section \ref{section7}. Finally, in the last section we derive the asymptotic formula by estimating uniformly in all parameters different types of error terms.

\section{Preliminary information}\label{section2}
Define
\begin{equation*}
\Gamma(u,v,\lambda;s)=\frac{\Gamma(\lambda-1/2+s/2)}{\Gamma(\lambda+1/2-s/2)}
\Gamma(1/2-u+v-s/2)\Gamma(1/2-u-v-s/2).
\end{equation*}
Using Stirling's formula for $\Re{v}=0$ we have
\begin{equation}
 \Gamma(u,v,\lambda;\sigma+it)\ll \frac{\exp(-\pi|t|/2)}{|t|^{1+2\Re{u}}}.
\end{equation}

The Gauss hypergeometric function is defined for $|z|<1$ by the power series
\begin{equation*}
{}_2F_1(a,b,c;z)=\frac{\Gamma(c)}{\Gamma(a)\Gamma(b)}\sum_{n=0}^{\infty}\frac{\Gamma(a+n)\Gamma(b+n)}{\Gamma(c+n)}\frac{z^n}{n!}.
\end{equation*}

The classical delta symbol is denoted by
\begin{equation}
\delta(a,b)=\begin{cases}
1, \quad a=b\\
0, \quad \text{otherwise}
\end{cases}.
\end{equation}

For $q \in \N$ and $a \in \Z$ let
\begin{equation}
\delta_q(a)=\begin{cases}
1, \quad a \equiv 0 \Mod{q}\\
0, \quad \text{otherwise}
\end{cases}.
\end{equation}

The Bessel function
\begin{equation}
J_s(z)=\sum_{n=0}^{\infty}\frac{(-1)^n}{\Gamma(n+1)\Gamma(n+1+s)}\left(\frac{z}{2}\right)^{s+2n}
\end{equation}
satisfies the Mellin-Barnes representation
\begin{equation}\label{Barnes}
J_{2\lambda-1}(y)=\frac{1}{4\pi i}\int\limits_{\Re s=\Delta}
\frac{\Gamma(\lambda-1/2+s/2)}{\Gamma(\lambda+1/2-s/2)}\left(\frac{y}{2}\right)^{-s}ds
\end{equation}
for $1-2\Re{\lambda}<\Delta<0$ and positive real  $y$.
The Lerch zeta function with parameters $\alpha, \beta \in \R$ is defined by
\begin{equation}
\zeta(\alpha,\beta;s)=\sum_{n+\alpha>0}\frac{\exp(2\pi i n\beta)}{(n+\alpha)^s}
\end{equation}
for $\Re{s}>1$.
This is a periodic function on $\beta$ with a period one; $\zeta(\alpha,\beta;s)$ can be holomorphically continued
on the whole complex plane except the point $s=1$ for $\beta \in \Z$, where it has a simple pole with residue $1$.

The Mellin transfom of $f:[0,\infty)\rightarrow \mathbb{C}$ is given by
\begin{gather}\label{Mellin.def}
\hat{f}(s)=\int_{0}^{\infty}f(x)x^{s-1}dx.
\end{gather}

The classical Kloosterman sum
\begin{equation}
 Kl(m,n;c)=\sum_{\substack{0 \leq x \leq c-1\\(x,c)=1}}
\exp{\left( 2\pi i\frac{mx+n\overline{x}}{c}\right)}, \quad
x\overline{x}\equiv 1 \Mod{c}
\end{equation}
satisfies the Weil bound
\begin{equation}
| Kl(m,n;c)|\leq \tau_0(c)\sqrt{(m,n,c)}\sqrt{c}.
\end{equation}
\begin{lem}(\cite{Royer}, lemma A.12)\label{Royer}
Let $m,n,c$ be three strictly positive integers and $p$ be a prime
number. Suppose $p^2|c$, $p|m$ and $p\not{|} n$. Then $Kl(m,n;c)=0$.
\end{lem}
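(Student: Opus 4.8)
The plan is to prove this by a direct computation that isolates the $p$-part of the Kloosterman sum and applies the standard twisted-multiplicativity (``Chinese Remainder'') factorization. First I would write $c=p^{a}c'$ with $a\geq 2$ and $(c',p)=1$, and use the multiplicative decomposition
\begin{equation*}
Kl(m,n;c)=Kl(m\overline{c'},n\overline{c'};p^{a})\,Kl(m\overline{p^{a}},n\overline{p^{a}};c'),
\end{equation*}
where $\overline{c'}$ is the inverse of $c'$ modulo $p^{a}$ and $\overline{p^{a}}$ the inverse of $p^{a}$ modulo $c'$. Since $p\nmid n$ and $p\nmid c'$, the first factor has the shape $Kl(pm_1, n_2; p^{a})$ with $p\nmid n_2$, so it suffices to show that such a sum vanishes whenever $a\geq 2$.

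The key step is a linear change of variable in the exponential sum defining $Kl(pm_1,n_2;p^{a})$. Writing $x$ in the residue system modulo $p^{a}$ with $(x,p)=1$, substitute $x\mapsto x(1+p^{a-1}u)$ as $u$ ranges over $\Z/p\Z$; this is a bijection on the unit group that fixes the number of terms, and one computes
\begin{equation*}
\overline{x(1+p^{a-1}u)}\equiv \overline{x}(1-p^{a-1}u)\Mod{p^{a}},\qquad
pm_1 x(1+p^{a-1}u)\equiv pm_1 x\Mod{p^{a}}
\end{equation*}
since $a\geq 2$ forces $p\cdot p^{a-1}=p^{a}\equiv 0$. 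Hence each term of $Kl(pm_1,n_2;p^{a})$ gets multiplied by $\exp(-2\pi i\, n_2\overline{x}u/p)$ under this substitution, and averaging over $u\in\Z/p\Z$ produces the factor $\sum_{u\bmod p}\exp(-2\pi i\, n_2\overline{x}u/p)$, which is $0$ because $p\nmid n_2\overline{x}$. Therefore $Kl(pm_1,n_2;p^{a})=0$, and with it the whole sum $Kl(m,n;c)$.

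The only genuine obstacle is bookkeeping: one must check that the substitution $x\mapsto x(1+p^{a-1}u)$ genuinely permutes the reduced residues modulo $p^{a}$ (it does, being multiplication by a unit congruent to $1$ mod $p^{a-1}$), and that the congruence $\overline{x(1+p^{a-1}u)}\equiv\overline{x}(1-p^{a-1}u)$ holds modulo $p^{a}$ (it does, since $(1+p^{a-1}u)(1-p^{a-1}u)=1-p^{2a-2}u^2\equiv1\Mod{p^{a}}$ as $2a-2\geq a$ for $a\geq2$). Once these are in hand the vanishing is immediate. I would also remark that the hypothesis $\nu\geq 2$, i.e. $p^2\mid c$, is exactly what is needed for $p^{a-1}$ to kill the $m$-term while $p$ still genuinely divides the modulus appearing in the $n$-exponential, so the argument is sharp.
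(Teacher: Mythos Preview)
Your argument is correct and is the standard proof of this fact. Note, however, that the paper does not supply its own proof of this lemma: it is quoted verbatim from Royer's thesis (\cite{Royer}, lemma~A.12) and used as a black box, so there is no in-paper proof to compare against. Your twisted-multiplicativity reduction to the $p$-power modulus followed by the substitution $x\mapsto x(1+p^{a-1}u)$ and averaging over $u\bmod p$ is exactly the classical way to establish this vanishing, and all the bookkeeping checks you list are accurate; in particular the identity $(1+p^{a-1}u)(1-p^{a-1}u)\equiv 1\Mod{p^a}$ for $a\ge 2$ is precisely what makes the argument go through and shows that the hypothesis $p^2\mid c$ cannot be weakened.
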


\section{Primitive forms and the Petersson trace formula}\label{section3}

Let $S_{2k}(N)$ be the space of cusp forms of weight $2k \geq 2$
and level $N$. It is equipped with the Petersson inner product
\begin{equation} \label{Pet}
\langle f,g\rangle_N:=\int_{F_0(N)}f(z)\overline{g(z)}y^{k}\frac{dxdy}{y^2},
\end{equation}
where $F_0(N)$ is a fundamental domain of the action of the Hecke congruence
subgroup $\Gamma_0(N)$ on the upper-half plane $\mathbb{H}=\{z \in \C: \Im{z}>0\}$.

Any $f \in S_{2k}(N)$ has a Fourier expansion at infinity
\begin{equation}f(z)=\sum_{n\geq 1}a_f(n)e(nz).
\end{equation}

According to the Atkin-Lehner theory the space of cusp forms can be decomposed into two subspaces of new and old forms
\begin{equation}
S_{2k}(N)=S_{2k}^{new}(N)\oplus S_{2k}^{old}(N).
\end{equation}
We denote by $H_{2k}(N)$ an orthogonal basis of $S_{2k}(N)$ and by $H_{2k}^{*}(N)$ an orthogonal basis of $S_{2k}^{new}(N)$
consisting of primitive forms with normalized Fourier coefficients
\begin{equation}
\lambda_f(n):=a_f(n)n^{-(2k-1)/2},
\end{equation}
\begin{equation}
\lambda_f(1)=1.
\end{equation}

The coefficients $\lambda_f(n)$ satisfy the Hecke relation
\begin{equation}\label{multiplicity}
\lambda_f(m)\lambda_f(n)=\sum_{\substack{d|(m,n)\\(d,p)=1 }}\lambda_{f}\left( \frac{mn}{d^2}\right).
\end{equation}

Let $Re (s)>1$, then for $f \in
H_{2k}^{*}(N)$ we define an automorphic $L$-function
\begin{equation}
L(s,f)=\sum_{n \geq 1}\lambda_f(n)n^{-s}.
\end{equation}

The completed $L$-function
\begin{equation}
\Lambda(s,f)=\left(\frac{\sqrt{N}}{2 \pi}\right)^s\Gamma\left(s+\frac{2k-1}{2}\right)L(s,f)
\end{equation}
can be analytically continued on the whole complex plane and satisfies the functional equation
\begin{equation} \label{eq: functionalE}
\Lambda(s,f)=\epsilon_f\Lambda(1-s,f),
\end{equation}
where $s\in \C$ and $\epsilon_f=\pm1$.

We define the harmonic average as follows
\begin{equation} \label{harmonicAv}
\sum_{f \in H_{2k}(N)}^{h}:=\sum_{f \in
H_{2k}(N)}\frac{\Gamma(2k-1)}{(4\pi)^{2k-1} \langle f,f\rangle_N}.
\end{equation}

In this paper we study the twisted harmonic second moment of automorphic $L$-functions associated to primitive forms of  weight $2k\geq 2$ and level
$N=p^{\nu}$ with $p$ prime and $\nu \geq 2$
\begin{equation}\label{def:moment}
M_2(l,u,v)=\sum_{f \in H_{2k}^{*}(N)}^{h}\lambda_f(l)L_f(1/2+u+v)L_f(1/2+u-v).
\end{equation}

Our proof is based on the Petersson trace formula.
\begin{thm}\label{Petersson}
For $m,n \geq 1$ we have
\begin{multline}
\Delta_{2k,N}(m,n)=\sum_{f\in H_{2k} (N)}^{h}
\lambda_f(m)\lambda_f(n)=\\=\delta(m,n)+ 2\pi i^{-2k}\sum_{N | c}
\frac{Kl(m,n;c)}{c} J_{2k-1} ( \frac{4\pi\sqrt{mn}}{c} ).
\end{multline}
\end{thm}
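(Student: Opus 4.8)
The plan is to prove the trace formula by the classical Poincar\'{e} series method. For $m \geq 1$ introduce the weight $2k$ Poincar\'{e} series of level $N$,
\[
P_m(z) = \sum_{\gamma \in \Gamma_\infty \backslash \Gamma_0(N)} (cz+d)^{-2k} e(m\gamma z), \qquad \gamma = \begin{pmatrix} a & b \\ c & d\end{pmatrix},
\]
where $\Gamma_\infty$ is the stabiliser of the cusp at infinity. Since $2k \geq 2$ the series converges absolutely and locally uniformly, so $P_m \in S_{2k}(N)$. The entire proof amounts to computing the $n$-th Fourier coefficient of $P_m$ in two different ways --- spectrally, by expanding $P_m$ against the basis $H_{2k}(N)$, and geometrically, by unfolding the defining sum --- and then comparing the two expressions.

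For the spectral side I would first establish the reproducing property: for any $f \in S_{2k}(N)$ with Fourier expansion $f(z) = \sum_{n\geq 1} a_f(n) e(nz)$, unfolding the Petersson integral against the coset sum collapses it to an integral over $\Gamma_\infty \backslash \HH$ and yields $\langle f, P_m\rangle_N = \Gamma(2k-1)(4\pi m)^{-(2k-1)} a_f(m)$. Consequently the spectral expansion $P_m = \sum_{f\in H_{2k}(N)} \langle f,f\rangle_N^{-1}\langle P_m,f\rangle_N\, f$ has $n$-th Fourier coefficient equal to $\Gamma(2k-1)(4\pi m)^{-(2k-1)} \sum_f \langle f,f\rangle_N^{-1} \overline{a_f(m)}\, a_f(n)$. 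Choosing $H_{2k}(N)$ to consist of Hecke eigenforms with real Fourier coefficients and inserting the normalisation $\lambda_f(n) = a_f(n) n^{-(2k-1)/2}$, this coefficient becomes $(n/m)^{(2k-1)/2}\Delta_{2k,N}(m,n)$ once the harmonic average \eqref{harmonicAv} is recognised.

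The geometric computation of the same coefficient is the heart of the argument. Decomposing the cosets $\Gamma_\infty \backslash \Gamma_0(N)/\Gamma_\infty$ according to the lower-left entry $c$, the block $c=0$ consists only of the identity coset and produces the diagonal term $\delta(m,n)$, while for each $c$ with $N\mid c$ the remaining cosets are indexed by $d \bmod c$ with $(d,c)=1$. Integrating $e(m\gamma z - n z)$ in $x$ over a full period then produces exactly the Kloosterman sum $Kl(m,n;c)$ together with a Bessel-type integral in $y$. I would evaluate that integral using the Mellin--Barnes representation \eqref{Barnes} of $J_{2k-1}$, which is precisely where the constant $2\pi i^{-2k}$ and the argument $4\pi\sqrt{mn}/c$ arise. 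This gives the $n$-th coefficient in the form
\[
(n/m)^{(2k-1)/2}\Bigl(\delta(m,n) + 2\pi i^{-2k}\sum_{N\mid c} \frac{Kl(m,n;c)}{c}\, J_{2k-1}\Bigl(\frac{4\pi\sqrt{mn}}{c}\Bigr)\Bigr).
\]

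Equating the two expressions for the $n$-th Fourier coefficient of $P_m$, the common factor $(n/m)^{(2k-1)/2}$ cancels and the asserted identity for $\Delta_{2k,N}(m,n)$ follows. I expect the main obstacle to be the geometric Fourier-coefficient computation: tracking the coset representatives of $\Gamma_0(N)$, justifying the interchange of summation and integration (where absolute convergence for $2k \geq 2$ is essential), and normalising the Bessel integral so that the multiplicative constants and the phase $i^{-2k}$ emerge exactly as stated.
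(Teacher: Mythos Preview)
The paper does not actually prove this theorem: it is stated as the classical Petersson trace formula, quoted as background before citing Rouymi's refinement \eqref{eq:rtrace}. Your Poincar\'{e}-series argument is the standard proof and the outline is correct in its essentials.

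One genuine gap: your claim that the Poincar\'{e} series $P_m$ converges absolutely ``since $2k\geq 2$'' is false at the endpoint $2k=2$. The series $\sum_{\gamma\in\Gamma_\infty\backslash\Gamma_0(N)}|cz+d|^{-2k}$ converges absolutely only for $2k>2$; for weight $2$ one must either use Hecke's convergence trick (introduce an auxiliary factor $|cz+d|^{-2s}$, develop the formula for $\Re s>0$, and analytically continue to $s=0$) or define $P_m$ via a limiting procedure. The paper itself is sensitive to this distinction: it proves the main asymptotic for $k\geq 2$ and remarks that the case $k=1$ requires the ``Hecke trick'' as in \cite{BF}. If you want your argument to cover the full range $k\geq 1$ asserted in the statement, you will need to address this.

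A smaller point: the passage from $\sum_f \langle f,f\rangle_N^{-1}\overline{a_f(m)}\,a_f(n)$ to $\Delta_{2k,N}(m,n)=\sum_f^{h}\lambda_f(m)\lambda_f(n)$ does not require you to \emph{choose} a basis of Hecke eigenforms with real coefficients. The quantity $\sum_f \langle f,f\rangle_N^{-1}\overline{a_f(m)}\,a_f(n)$ is basis-independent (it is the $(m,n)$ entry of the Bergman kernel), so you can work with an arbitrary orthogonal basis on the geometric side and only invoke realness at the very end to drop the conjugation.
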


More precisely, we apply the following generalization by Rouymi.
\begin{thm}\label{PetRou}(remark $4$ of \cite{R})
Let  $N=p^{\nu}$ with prime $p$ and $\nu \geq 2$. Then
\begin{multline}\label{eq:rtrace}
\Delta_{2k,N}^{*}(m,n):=\sum_{f\in H_{2k}^{*}(N)}^{h}
\lambda_f(m)\lambda_f(n)=\\=\begin{cases}
\Delta_{2k,N}(m,n)-\frac{\Delta_{2k,N/p}(m,n)}{p-p^{-1}}& \text{if $\nu=2$ and $(N,mn)=1$},\\
\Delta_{2k,N}(m,n)-\frac{\Delta_{2k,N/p}(m,n)}{p}& \text{if $\nu \geq 3$ and $(N,mn)=1$},\\
0& \text{if } (N,mn)=1.
\end{cases}
\end{multline}
 \end{thm}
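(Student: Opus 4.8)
The plan is to deduce the primitive trace formula from the full Petersson formula (Theorem~\ref{Petersson}) via the Atkin--Lehner decomposition, following a standard strategy going back to the work of Iwaniec, Luo and Sarnak but with the arithmetic of the prime-power level made explicit. Recall that
\[
S_{2k}(N)=\bigoplus_{LM=N}\ \bigoplus_{g\in H_{2k}^{*}(M)}V_g^{(L)},\qquad V_g^{(L)}:=\operatorname{span}\{g(dz):d\mid L\},
\]
and that this is an \emph{orthogonal} direct sum for $\langle\cdot,\cdot\rangle_N$. For any subspace $V\subseteq S_{2k}(N)$ the harmonically weighted two-coefficient sum
\[
\Delta_V(m,n):=\frac{\Gamma(2k-1)}{(4\pi)^{2k-1}}\sum_{f}\frac{a_f(m)\overline{a_f(n)}}{(mn)^{(2k-1)/2}\,\langle f,f\rangle_N}
\]
over any orthogonal basis $f$ of $V$ is independent of the chosen basis, being the $(m,n)$-th Fourier coefficient of the reproducing kernel of $V$. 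Hence $\Delta_{2k,N}(m,n)=\Delta^{*}_{2k,N}(m,n)+\sum_{LM=N,\,L>1}\sum_{g}\Delta_{V_g^{(L)}}(m,n)$, and for $N=p^{\nu}$ the divisors $M<N$ are $M=p^{j}$, $L=p^{\nu-j}$ with $0\le j\le\nu-1$; so everything reduces to evaluating $\Delta_{V_g^{(p^{\nu-j})}}(m,n)$ for newforms $g$ of level $p^{j}$.

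Next I would invoke the hypothesis $(N,mn)=1$, i.e.\ $p\nmid mn$. For $f=\sum_i c_i\,g(p^i z)\in V_g^{(p^a)}$ one has $a_f(n)=\sum_i c_i\,a_g(n/p^i)$, and every term with $i\ge1$ vanishes because $p\nmid n$; hence the coefficient functional $f\mapsto a_f(n)n^{-(2k-1)/2}$ on $V_g^{(p^a)}$ equals $c_0\,\lambda_g(n)$ (and likewise at $m$), where $c_0$ is the coefficient of $g$ in $f$. A Riesz-representation argument then collapses the basis sum to a one-dimensional quantity: letting $e_a\in V_g^{(p^a)}$ be the element orthogonal to $\operatorname{span}\{g(pz),\dots,g(p^{a-j}z)\}$ normalized so that the coefficient of $g$ equals $1$, one gets
\[
\Delta_{V_g^{(p^a)}}(m,n)=\Omega_g^{(a)}\,\lambda_g(m)\lambda_g(n),\qquad \Omega_g^{(a)}:=\frac{\Gamma(2k-1)}{(4\pi)^{2k-1}\,\langle e_a,e_a\rangle_{p^a}}.
\]
When $j\ge2$ --- the generic situation for $\nu\ge3$ --- one has $\lambda_g(p)=0$, so $g\perp g(p^i z)$ for all $i\ge1$, hence $e_a=g$; combined with $\langle g,g\rangle_{p^a}=[\Gamma_0(p^{a-1}):\Gamma_0(p^a)]\,\langle g,g\rangle_{p^{a-1}}=p\,\langle g,g\rangle_{p^{a-1}}$ (valid for $a\ge2$) this gives the recursion $\Omega_g^{(\nu)}=p^{-1}\,\Omega_g^{(\nu-1)}$. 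Matching $\sum_{g}\Omega_g^{(\nu)}\lambda_g(m)\lambda_g(n)$ against the same decomposition applied to $\Delta_{2k,N/p}=\Delta_{2k,p^{\nu-1}}$ then yields $\Delta^{*}_{2k,N}=\Delta_{2k,N}-p^{-1}\Delta_{2k,N/p}$ in the case $\nu\ge3$.

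The main obstacle, and the source of the dichotomy between $\nu=2$ and $\nu\ge3$, lies in the classes of level $j=0$ and $j=1$, where $\lambda_g(p)\ne0$ so that $e_a\ne g$. Here one must compute the inner products $\langle g(p^i z),g(p^{i'}z)\rangle$ explicitly --- a Toeplitz-type matrix whose entries are controlled by $\lambda_g(p)$ together with the indices $p$ and $p+1$ --- and then evaluate $\Omega_g^{(a)}$ using the Hecke recursion $\lambda_g(p)\lambda_g(p^{i})=\lambda_g(p^{i+1})+\lambda_g(p^{i-1})$ when $p\nmid M$ and the relation $\lambda_g(p^{i})=\lambda_g(p)^{i}$ when $p\,\|\,M$. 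Because $\Omega_g^{(a)}$ for these classes is sensitive to whether the ambient level is $p$ or $p^{b}$ with $b\ge2$ (as $[\Gamma_0(1):\Gamma_0(p)]=p+1\ne p=[\Gamma_0(p^{b-1}):\Gamma_0(p^b)]$), the bookkeeping for $\nu\ge3$ --- where the ambient levels compared are $p^{\nu}$ and $p^{\nu-1}$, both divisible by $p^2$ --- still produces the clean factor $p^{-1}$, whereas for $\nu=2$ the level-$0$ oldclasses enter $\Delta_{2k,p}$ through the index $p+1$, and tracking this through the one-dimensional projection is exactly what replaces $p^{-1}$ by $(p-p^{-1})^{-1}$. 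Finally, the remaining case $p\mid mn$ is immediate: for $\nu\ge2$ every $f\in H_{2k}^{*}(N)$ has $\lambda_f(p)=0$, so by multiplicativity of $\lambda_f$ one gets $\lambda_f(m)\lambda_f(n)=0$ whenever $p\mid mn$, and hence $\Delta^{*}_{2k,N}(m,n)=0$. As an alternative to the explicit level-$0$ and level-$1$ computations one may instead invoke Rouymi's refinement for prime powers \cite{R}.
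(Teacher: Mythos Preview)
The paper itself gives no proof of this statement: it is quoted verbatim as ``remark~4 of \cite{R}'' and used as a black box throughout. So there is no proof in the paper to compare against; what you are sketching is essentially Rouymi's argument (which in turn follows the Iwaniec--Luo--Sarnak template you cite).

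Your outline is correct as far as it goes, but be aware that the genuinely delicate step is the one you merely indicate rather than carry out: the oldclasses with $j=0$ and $j=1$. For $\nu\ge3$ you need to verify that $\Omega_g^{(\nu)}=p^{-1}\Omega_g^{(\nu-1)}$ holds \emph{also} for newforms $g$ of level $1$ and level $p$, not only for $j\ge2$; your paragraph asserts this but does not check it. Concretely, for $j=0$ one must diagonalize the Gram matrix of $\{g(p^iz):0\le i\le a\}$ (whose entries involve $\lambda_g(p^{|i-i'|})$ and the index $[\Gamma_0(1):\Gamma_0(p)]=p+1$), extract the coefficient of $g$ in the top eigenvector, and observe that the resulting $\Omega_g^{(a)}$ depends on $a$ only through the factor $p^{-a}$ once $a\ge2$; similarly for $j=1$ using $\lambda_g(p)^2=p^{-1}$. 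For $\nu=2$ the same computation with ambient level $p$ (where $[\Gamma_0(1):\Gamma_0(p)]=p+1$ cannot be replaced by $p$) is what produces $(p-p^{-1})^{-1}$ in place of $p^{-1}$. Your closing remark ``one may instead invoke Rouymi's refinement'' is circular here, since that \emph{is} the statement you are proving.

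One minor point: the third line of the cases in the displayed formula is a typo in the paper --- it should read ``if $p\mid mn$'' rather than ``if $(N,mn)=1$'' --- and your argument for that case (via $\lambda_f(p)=0$ for newforms of level $p^\nu$, $\nu\ge2$) is correct.
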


\section{Integrals involving the Bessel functions}\label{section4}
Consider
\begin{equation}\label{Iz}
I(z)=\int_{0}^{\infty}J_{2k-1}(y)k^{+}(y\sqrt{z},1/2+it)dy,
\end{equation}
where
\begin{equation}\label{eq:k+}
k^{+}(y,v)=\frac{1}{2\cos{\pi v}}\left( J_{2v-1}(y)-J_{1-2v}(y) \right).
\end{equation}
\begin{lem}(\cite{BE2}, page 326) One has
\begin{equation}\label{MBJ}
J_{2k-1}(y)=\frac{1}{4\pi i}\int_{\Delta}\frac{\Gamma(k-1/2+s/2)}{\Gamma(k+1/2-s/2)}\left( \frac{y}{2}\right)^{-s}ds
\end{equation}
for some $1-2k<\Delta<0$.
\end{lem}
\begin{lem}\label{lem:jbes}(equation $10.9.8$ of \cite{HMF} )
For $|\Re{v}|<1$ and $x>0$
\begin{equation}
J_v(x)=\frac{2}{\pi}\int_{0}^{\infty}\sin{\left( x\cosh{y}-\frac{v\pi}{2} \right)}\cosh{(vy)}dy.
\end{equation}
\end{lem}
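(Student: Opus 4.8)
The plan is to prove the identity by the uniqueness of Mellin transforms. I denote the right-hand side by
\[
F(x)=\frac{2}{\pi}\int_0^\infty\sin\left(x\cosh y-\frac{v\pi}{2}\right)\cosh(vy)\,dy,
\]
and show that its Mellin transform $\widehat F(s)$ coincides, on a suitable vertical strip, with that of $J_v$. The latter is already available: reading \eqref{Barnes} as an inverse Mellin transform (substitute $\lambda=(v+1)/2$ and absorb the factor $(y/2)^{-s}$) identifies
\[
\widehat{J_v}(s)=\int_0^\infty J_v(x)x^{s-1}\,dx=2^{s-1}\frac{\Gamma\!\left(\frac{s+v}{2}\right)}{\Gamma\!\left(\frac{v-s}{2}+1\right)},
\]
valid in a strip containing $-\Re v<\Re s<1$. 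It therefore suffices to compute $\widehat F(s)$ and match it to this expression; Mellin inversion then yields $F=J_v$ on $(0,\infty)$.

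To evaluate $\widehat F(s)$ I interchange the $x$- and $y$-integrations and do the $x$-integral first. Writing $\sin(x\cosh y-\tfrac{v\pi}{2})=\sin(x\cosh y)\cos\frac{v\pi}{2}-\cos(x\cosh y)\sin\frac{v\pi}{2}$ and using the classical transforms $\int_0^\infty\sin(ax)x^{s-1}\,dx=\Gamma(s)a^{-s}\sin\frac{\pi s}{2}$ and $\int_0^\infty\cos(ax)x^{s-1}\,dx=\Gamma(s)a^{-s}\cos\frac{\pi s}{2}$ with $a=\cosh y$, the addition formula collapses the two terms into
\[
\int_0^\infty\sin\left(x\cosh y-\frac{v\pi}{2}\right)x^{s-1}\,dx=\frac{\Gamma(s)}{(\cosh y)^{s}}\sin\frac{\pi(s-v)}{2}.
\]
Substituting this back leaves the elementary Beta-type integral $\int_0^\infty\cosh(vy)(\cosh y)^{-s}\,dy=2^{s-2}\Gamma(s)^{-1}\Gamma\!\left(\frac{s+v}{2}\right)\Gamma\!\left(\frac{s-v}{2}\right)$, convergent for $\Re s>|\Re v|$, so that
\[
\widehat F(s)=\frac{2^{s-1}}{\pi}\,\sin\frac{\pi(s-v)}{2}\,\Gamma\!\left(\frac{s+v}{2}\right)\Gamma\!\left(\frac{s-v}{2}\right).
\]
Applying the reflection formula $\Gamma(z)\Gamma(1-z)=\pi/\sin(\pi z)$ with $z=\frac{s-v}{2}$ turns $\frac1\pi\sin\frac{\pi(s-v)}{2}\,\Gamma(\frac{s-v}{2})$ into $1/\Gamma(\frac{v-s}{2}+1)$, giving precisely $\widehat F(s)=2^{s-1}\Gamma(\frac{s+v}{2})/\Gamma(\frac{v-s}{2}+1)=\widehat{J_v}(s)$ on the common strip $|\Re v|<\Re s<1$, which is non-empty exactly because $|\Re v|<1$. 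Uniqueness of the Mellin transform then yields $F(x)=J_v(x)$ for all $x>0$.

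The only genuine obstacle is justifying the interchange of integrations, since the double integral is not absolutely convergent: $\sin(x\cosh y-\tfrac{v\pi}{2})$ does not decay in $x$. I would handle this in the standard way by inserting a regularizing factor $e^{-\delta x}$, for which Fubini applies (the $y$-integral converges absolutely once $\Re s>|\Re v|$), and then letting $\delta\to0^+$; the passage to the limit is controlled by the analytic continuation of the sine and cosine transforms in $0<\Re s<1$ together with an Abel-type argument. Alternatively, one can bypass Mellin transforms and derive the formula directly from the Sommerfeld-type integral representations of the Hankel functions via $J_v=\tfrac12(H_v^{(1)}+H_v^{(2)})$, where the combination $e^{i(x\cosh y-v\pi/2)}-e^{-i(x\cosh y-v\pi/2)}=2i\sin(x\cosh y-\tfrac{v\pi}{2})$ appears at once; but that route merely transfers the analytic work onto the Hankel representations, so the Mellin argument above is the more self-contained option given \eqref{Barnes}.
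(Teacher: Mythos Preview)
The paper gives no proof of this lemma: it is stated as a quotation of equation~10.9.8 in the NIST Handbook \cite{HMF} and is used only as input for the corollary that follows. So there is nothing in the paper to compare your argument against.

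Your Mellin-transform proof is correct. The two inner evaluations are standard: $\int_0^\infty x^{s-1}\sin(x\cosh y-\tfrac{v\pi}{2})\,dx=\Gamma(s)(\cosh y)^{-s}\sin\tfrac{\pi(s-v)}{2}$ on $0<\Re s<1$, and the Beta integral $\int_0^\infty\cosh(vy)(\cosh y)^{-s}\,dy=2^{s-2}\Gamma(s)^{-1}\Gamma(\tfrac{s+v}{2})\Gamma(\tfrac{s-v}{2})$ on $\Re s>|\Re v|$. Combining them and applying reflection with $z=\tfrac{s-v}{2}$ gives exactly $\widehat F(s)=2^{s-1}\Gamma(\tfrac{s+v}{2})/\Gamma(\tfrac{v-s}{2}+1)=\widehat{J_v}(s)$ on the non-empty strip $|\Re v|<\Re s<1$, and Mellin inversion finishes. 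Your remark about regularising with $e^{-\delta x}$ to justify Fubini is the right fix for the lack of absolute convergence; once one checks the $\delta\to0^+$ limit (dominated convergence in the $y$-integral after the $x$-integral has been done, since $(\cosh y)^{-s}$ controls the tail), the argument is complete.
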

\begin{cor}
For $a>0$ and $v=it$ with $t \in \R$
\begin{multline}\label{k+a}
k^{+}(a,1/2+it)=\frac{2}{\pi}\int_{0}^{\infty}\cos{(a\cosh{z})}\cos{(2tz)}dz
=\\
=\frac{2}{\pi}\int_{1}^{\infty}\frac{\cos{(au)}}{\sqrt{u^2-1}}\cos{\left( 2t \log{(u+\sqrt{u^2-1})}\right)}du.
\end{multline}
\end{cor}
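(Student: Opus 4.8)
The plan is to reduce $k^{+}(a,1/2+it)$ to Bessel functions of purely imaginary order, insert the Mehler--Sonine integral of Lemma \ref{lem:jbes}, and collapse the outcome with an elementary trigonometric identity; the second equality is then a change of variable. Specializing \eqref{eq:k+} at $v=1/2+it$, and using $2v-1=2it$, $1-2v=-2it$ together with $\cos(\pi/2+i\pi t)=-\sin(i\pi t)=-i\sinh\pi t$, gives
\[
k^{+}(a,1/2+it)=\frac{J_{2it}(a)-J_{-2it}(a)}{-2i\sinh\pi t}.
\]
For $t\ne 0$ the denominator is nonzero; the value at $t=0$ will follow at the end by continuity in $t$ (there $k^{+}(a,1/2)=-Y_{0}(a)=\tfrac{2}{\pi}\int_{0}^{\infty}\cos(a\cosh z)\,dz$, consistent with the stated formula).

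Since $\Re(\pm 2it)=0$ satisfies the hypothesis $|\Re v|<1$, Lemma \ref{lem:jbes} applies and, with $v\pi/2=\pm i\pi t$ and $\cosh(vz)=\cos(2tz)$, yields
\[
J_{\pm 2it}(a)=\frac{2}{\pi}\int_{0}^{\infty}\sin\!\big(a\cosh z\mp i\pi t\big)\cos(2tz)\,dz.
\]
Subtracting these two representations and using $\sin(A-B)-\sin(A+B)=-2\cos A\sin B$ with $A=a\cosh z$, $B=i\pi t$, and $\sin(i\pi t)=i\sinh\pi t$, I would obtain
\[
J_{2it}(a)-J_{-2it}(a)=-\frac{4i\sinh\pi t}{\pi}\int_{0}^{\infty}\cos(a\cosh z)\cos(2tz)\,dz.
\]
Dividing by $-2i\sinh\pi t$ cancels the factor $\sinh\pi t$ and produces the first claimed identity. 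The second identity then follows from the substitution $u=\cosh z$, for which $du=\sqrt{u^{2}-1}\,dz$ and $z=\log(u+\sqrt{u^{2}-1})$, mapping $z\in[0,\infty)$ bijectively onto $u\in[1,\infty)$.

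The one delicate point, and the main obstacle, is that none of these integrals converges absolutely --- the integrands are only bounded --- so the subtraction of the two representations and the change of variable must be carried out on truncations $\int_{0}^{R}$, with the limit $R\to\infty$ taken afterwards; the tail $\int_{R}^{\infty}$ is controlled by integrating by parts against the rapidly growing phase derivative $a\sinh z$ (equivalently a van der Corput estimate). This bookkeeping, rather than the algebra, is where care is required.
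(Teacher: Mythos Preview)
Your proof is correct and is exactly the computation the paper leaves implicit: the corollary is stated without proof, and your derivation via Lemma~\ref{lem:jbes} applied to $J_{\pm 2it}$, followed by the subtraction identity and the substitution $u=\cosh z$, is the intended argument. One small remark: the convergence issue you flag is real but simpler than you suggest. Both $\int_{0}^{\infty}$ integrals from the lemma exist as improper Riemann integrals (limits of $\int_{0}^{R}$), so their difference automatically exists and equals the difference of the limits; and since $u=\cosh z$ is a monotone change of variable, $\int_{0}^{R}\cdots\,dz=\int_{1}^{\cosh R}\cdots\,du$ already gives the second equality upon letting $R\to\infty$. No tail estimate via integration by parts is actually needed here.
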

\begin{thm} \label{decompI}For $z<1$ the following decomposition takes place
\begin{equation}
I(z)=\frac{2}{\pi}\left( Q_0(z)+(-1)^kQ_1(z)\right),
\end{equation}
\begin{multline}\label{eq:Q_0}
Q_0(z)=\\ \int_{1}^{1/\sqrt{z}}\frac{ \cos{(2t\log{(y+\sqrt{y^2-1})})} }{\sqrt{1-zy^2}\sqrt{y^2-1}} \cos{\left[(2k-1)\arcsin{(y\sqrt{z})}\right]}dy,
\end{multline}
\begin{multline}\label{eq:Q_1}
Q_1(z)=\\ \int_{1/\sqrt{z}}^{\infty}
\frac{ \cos{(2t\log{(y+\sqrt{y^2-1})})} }{\sqrt{y^2z-1}\sqrt{y^2-1}}\frac{dy}{\left(y\sqrt{z}+\sqrt{y^2z-1}\right)^{2k-1}}.
\end{multline}
\end{thm}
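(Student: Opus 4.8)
The plan is to start from the definition \eqref{Iz} of $I(z)$ and insert the integral representation \eqref{k+a} for $k^{+}(y\sqrt{z},1/2+it)$, giving
\[
I(z)=\frac{2}{\pi}\int_{0}^{\infty}J_{2k-1}(y)\int_{1}^{\infty}\frac{\cos(y\sqrt{z}\,u)}{\sqrt{u^2-1}}\cos\bigl(2t\log(u+\sqrt{u^2-1})\bigr)\,du\,dy.
\]
After justifying the interchange of the $y$- and $u$-integrals (uniform convergence / absolute integrability, with the oscillatory $y$-integral interpreted as an improper Riemann integral or via a convergence factor), I would evaluate the inner integral $\int_0^\infty J_{2k-1}(y)\cos(\alpha y)\,dy$ with $\alpha=u\sqrt{z}$ using the classical discontinuous Weber–Schafheitlin type formula: for $\alpha>1$ this integral equals $\cos\bigl((2k-1)\arcsin(1/\alpha)\bigr)/\sqrt{\alpha^2-1}$ times an appropriate constant, while for $0<\alpha<1$ it equals $\cos((2k-1)\pi/2)\,(\alpha+\sqrt{\alpha^2-1}\,)^{-(2k-1)}/\sqrt{1-\alpha^2}$ (with $\cos((2k-1)\pi/2)=0$ so really one should keep the $\sin$ form: $(-1)^{k}/(\ldots)$ arising from the odd half-integer order). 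The dividing point $\alpha=1$ corresponds exactly to $u=1/\sqrt{z}$, which is why the $u$-range $(1,\infty)$ splits as $(1,1/\sqrt z)\cup(1/\sqrt z,\infty)$ and produces $Q_0$ and $Q_1$ respectively.

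Concretely, for $1<u<1/\sqrt{z}$ one has $u\sqrt{z}<1$, so the inner $y$-integral contributes the ``$Q_1$-type'' kernel — wait, one must be careful: $u\sqrt z<1$ means $\alpha<1$, which is the regime giving the power $(\alpha+\sqrt{\alpha^2-1})^{-(2k-1)}$; but in \eqref{eq:Q_0} the range $1<y<1/\sqrt z$ carries the $\arcsin$ term. So after the substitution one renames the integration variable: writing $y$ for $u$ in the final answer, the range $1<y<1/\sqrt z$ (where $y\sqrt z<1$) must instead produce the $\arcsin$ factor, which forces the opposite reading of the discontinuous integral. Thus I would double-check the Weber–Schafheitlin branch: $\int_0^\infty J_{2k-1}(y)\cos(\alpha y)\,dy$ equals, for $0<\alpha<1$, $\bigl(\cos((2k-1)\arcsin\alpha)\bigr)/\sqrt{1-\alpha^2}$, and for $\alpha>1$, $-\sin((2k-1)\pi/2)\,(\alpha+\sqrt{\alpha^2-1})^{-(2k-1)}/\sqrt{\alpha^2-1}=(-1)^k(\alpha+\sqrt{\alpha^2-1})^{-(2k-1)}/\sqrt{\alpha^2-1}$ — note $J_{2k-1}$ has half-integer-shifted-by... no, $2k-1$ is an odd integer, so $J_{2k-1}$ is an ordinary Bessel function of odd integer order and the relevant formula is Gradshteyn–Ryzhik 6.671 (or 6.693). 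With $\alpha=y\sqrt z$: for $1<y<1/\sqrt z$ we have $\alpha<1$ giving the $\arcsin$ kernel of $Q_0$ (matching \eqref{eq:Q_0} once we note $\arcsin(y\sqrt z)$), and for $y>1/\sqrt z$ we have $\alpha>1$ giving the $(y\sqrt z+\sqrt{y^2z-1})^{-(2k-1)}$ kernel of $Q_1$ together with the sign $(-1)^k$. Assembling the two pieces, reattaching the $\cos(2t\log(y+\sqrt{y^2-1}))/\sqrt{y^2-1}$ weight from \eqref{k+a}, and pulling out the factor $2/\pi$ yields exactly $I(z)=\tfrac{2}{\pi}(Q_0(z)+(-1)^kQ_1(z))$.

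The main obstacle is rigour at the interface: the $y$-integral of $J_{2k-1}(y)\cos(\alpha y)$ is only conditionally convergent (the integrand decays like $y^{-1/2}$), so Fubini does not apply directly. I would handle this by inserting a regularizing factor $e^{-\varepsilon y}$, interchanging freely for $\varepsilon>0$ (now everything is absolutely convergent), evaluating the inner integral as a function of $\varepsilon$ via the known Laplace-type formula for $\int_0^\infty e^{-\varepsilon y}J_{2k-1}(y)\cos(\alpha y)\,dy$, and then letting $\varepsilon\to 0^+$ — checking that the limit passes inside the $u$-integral by a dominated-convergence argument, with extra care near the endpoint $u=1/\sqrt z$ where the kernels have integrable square-root singularities and near $u=\infty$ where the $(u\sqrt z+\sqrt{u^2z-1})^{-(2k-1)}$ decay secures convergence. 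A secondary technical point is to confirm the precise constants and signs in the discontinuous integral for odd integer order $2k-1$ and that the boundary term at $\alpha=1$ (a set of measure zero in $u$) is irrelevant. Once these analytic justifications are in place, the identity is a direct bookkeeping of the two cases.
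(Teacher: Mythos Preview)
Your plan is correct and matches the paper's strategy: regularize with $e^{-by}$, interchange, evaluate the inner $y$-integral, and split the $u$-range at the discontinuity $u=1/\sqrt{z}$. The paper differs only in execution: instead of quoting the Weber--Schafheitlin formula it rederives both branches by inserting the Mellin--Barnes representation \eqref{MBJ} for $J_{2k-1}$ and summing residues (to the right for $u\sqrt{z}<1$, to the left for $u\sqrt{z}>1$), and it handles the passage $b\to 0$ near the discontinuity not by dominated convergence but by introducing two further parameters --- a truncation $Z$ on the $u$-integral and a small window $(\sqrt{x}(1-\epsilon),\sqrt{x}(1+\epsilon))$ around the critical point --- showing each auxiliary error is $o(1)$ as $Z\to\infty$, $\epsilon\to 0$.
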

\begin{cor}\label{boundI}
For $z<1$ one has
\begin{equation}
I(z)\ll 1+|\log{z}|+\frac{1}{(2k-1)\sqrt{1-z}}.
\end{equation}
\end{cor}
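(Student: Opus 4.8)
The plan is to treat the two pieces $Q_0(z)$ and $Q_1(z)$ of Theorem~\ref{decompI} separately; since $I(z)=\frac{2}{\pi}\bigl(Q_0(z)+(-1)^kQ_1(z)\bigr)$ it suffices to prove, uniformly for $0<z<1$, the bounds $Q_0(z)\ll 1+|\log z|$ and $Q_1(z)\ll \frac{1}{(2k-1)\sqrt{1-z}}$.

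For $Q_1(z)$ I would bound the oscillatory factor $\cos\bigl(2t\log(y+\sqrt{y^2-1})\bigr)$ trivially by $1$ and substitute $y\sqrt{z}=\cosh\xi$, $\xi\ge 0$. Then $\sqrt{y^2z-1}=\sinh\xi$ and $y\sqrt z+\sqrt{y^2z-1}=e^{\xi}$, so the Jacobian and the remaining factors combine to give
\[
|Q_1(z)|\le\int_0^{\infty}\frac{e^{-(2k-1)\xi}}{\sqrt{\cosh^2\xi-z}}\,d\xi\le\frac{1}{\sqrt{1-z}}\int_0^{\infty}e^{-(2k-1)\xi}\,d\xi=\frac{1}{(2k-1)\sqrt{1-z}},
\]
using $\cosh^2\xi-z\ge 1-z$.

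For $Q_0(z)$ I would again bound both cosine factors by $1$ and substitute $y\sqrt z=\sin\psi$, $\psi\in[\arcsin\sqrt z,\pi/2]$, under which $\sqrt{1-zy^2}=\cos\psi$ and $\arcsin(y\sqrt z)=\psi$; after cancelling $\cos\psi$ this leaves
\[
|Q_0(z)|\le\int_{\arcsin\sqrt z}^{\pi/2}\frac{d\psi}{\sqrt{\sin^2\psi-z}}=\int_{\sqrt z}^{1}\frac{du}{\sqrt{(1-u^2)(u^2-z)}},
\]
the last step by $u=\sin\psi$. The substitution $u^2=1-(1-z)\sin^2\phi$ turns the right-hand integral into the complete elliptic integral $K(\sqrt{1-z})$, whose classical logarithmic behaviour near modulus one gives $K(\sqrt{1-z})\ll 1+|\log z|$. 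Alternatively one splits $[1,1/\sqrt z]$ into $[1,2]$, $[2,1/(2\sqrt z)]$ and $[1/(2\sqrt z),1/\sqrt z]$ (treating $z$ bounded away from $0$ by a crude estimate): on the middle range $y^2-1\asymp y^2$ and $1-zy^2\asymp 1$, so it contributes $O(|\log z|)$, while each outer range contributes $O(1)$.

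Putting the two estimates together through Theorem~\ref{decompI} yields the corollary. The only delicate point is the uniformity of the bound for $Q_0(z)$: a careless splitting of the $y$-integral introduces a spurious $(1-z)^{-1/2}$ as $z\to 1$ or $z^{-1/2}$ as $z\to 0$, so one really has to exploit that the two square-root singularities of the integrand are well separated — equivalently, the precise logarithmic (rather than power-type) blow-up of $K$ at modulus one.
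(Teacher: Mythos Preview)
Your argument is correct and follows essentially the same route as the paper: both cosine factors are bounded trivially, $Q_0(z)$ is reduced to the complete elliptic integral $K(\sqrt{1-z})$ and its logarithmic asymptotic at modulus one, and $Q_1(z)$ is estimated by pulling out the factor $\frac{1}{\sqrt{1-z}}$ and integrating the remaining exponential decay to produce $\frac{1}{2k-1}$. Your hyperbolic substitution $y\sqrt z=\cosh\xi$ for $Q_1$ is slightly cleaner than the paper's change $r=y\sqrt z$, but the content is identical.
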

\begin{proof}
Estimating \eqref{eq:Q_0} we obtain
\begin{equation*}
Q_0(z)\ll \int_{1}^{1/\sqrt{z}}\frac{dy}{\sqrt{1-zy^2}\sqrt{y^2-1}}=\frac{1}{\sqrt{z}}\int_{1}^{1/\sqrt{z}}\frac{dy}{\sqrt{1/z-y^2}\sqrt{y^2-1}}.
\end{equation*}
Applying equation $3.152(10)$ of \cite{GR} with $a=\frac{1}{\sqrt{z}}$, $b=1$, $u=b$, $\lambda=\pi/2$
\begin{equation*}
\frac{1}{\sqrt{z}}\int_{1}^{1/\sqrt{z}}\frac{dy}{\sqrt{1/z-y^2}\sqrt{y^2-1}}=F\left( \frac{\pi}{2}\sqrt{1-z}\right),
\end{equation*}
where $F$ is the elliptic integral defined by $8.111(2)$ of \cite{GR} or by $19.2.4$ of \cite{HMF}.
Formulas $19.2.8$ and $19.9.2$ of \cite{HMF} give
\begin{equation*}
Q_0(z)\ll \log{\left( \frac{4}{\sqrt{z}}\right)}\left( 1+\frac{z}{4}\right)\ll \log{\frac{4}{\sqrt{z}}}.
\end{equation*}
Now consider \eqref{eq:Q_1}
\begin{equation*}
Q_1(z)\ll \int_{1/\sqrt{z}}^{\infty}\frac{dy}{\sqrt{y^2z-1}\sqrt{y^2-1}(y\sqrt{z}+\sqrt{y^2z-1})^{2k-1}}.
\end{equation*}
Setting $r:=y\sqrt{z}$ we have
\begin{multline*}
Q_1(z)\ll \frac{1}{\sqrt{z}}\int_{1}^{\infty}\frac{dr}{\sqrt{r^2-1}\sqrt{r^2/z-1}(r+\sqrt{r^2-1})^{2k-1}}\ll \\ \ll
\frac{1}{\sqrt{1-z}}\int_{1}^{\infty}\frac{dr}{\sqrt{r^2-1}(r+\sqrt{r^2-1})^{2k-1}}\ll \frac{1}{(2k-1)\sqrt{1-z}}.
\end{multline*}
The last two estimates and theorem \ref{decompI} yield the assertion.
\end{proof}
\subsection{Proof of theorem \ref{decompI}}
We are going to evaluate the integral
\begin{equation}
I(k,t,1/x)=\int_{0}^{\infty}J_{2k-1}(y)k^{+}\left(\frac{y}{\sqrt{x}},1/2+it\right)dy
\end{equation}
for $x>1$ using representation \eqref{k+a}.
However, in order to be able to change the order of integration in $I(k,t,1/x)$, the uniform convergence of all integrals is required.
To overcome this difficulty we make several preliminary steps.
First, let us fix a large real number $Z>0$ such that
\begin{multline}
k^{+}(a,1/2+it)=\\=\frac{2}{\pi}\int_{1}^{Z}\frac{\cos{(au)}}{\sqrt{u^2-1}}
 \cos{\left( 2t \log{(u+\sqrt{u^2-1})}\right)}du+O\left( \frac{1+|t|}{aZ}\right).
\end{multline}
Second, we introduce an extra parameter $b\geq 0$ to avoid the discontinuity of the $y$-integral and consider
\begin{equation}\label{eq:ibktx}
I_b(k,t,1/x)=\int_{0}^{\infty}J_{2k-1}(y)k^{+}\left(\frac{y}{\sqrt{x}},1/2+it\right)\exp{(-by)}dy.
\end{equation}
Note that
\begin{equation}
I(k,t,1/x)=\lim_{b \rightarrow 0}I_b(k,t,1/x).
\end{equation}
Finally, for some $\epsilon>0$ we split the integral over $u$ into three parts
\begin{equation}\label{eq:ka2}
k^{+}(a,1/2+it)=\mathfrak{I}_1(a)+\mathfrak{I}_2(a)+\mathfrak{I}_3(a)+O\left( \frac{1+|t|}{aZ}\right),
\end{equation}
where
\begin{equation}
\mathfrak{I}_1(a)=\frac{2}{\pi}\int_{1}^{\sqrt{x}(1-\epsilon)}\frac{\cos{(au)}}{\sqrt{u^2-1}}\cos{\left( 2t \log{(u+\sqrt{u^2-1})}\right)}du,
\end{equation}
\begin{equation}
\mathfrak{I}_2(a)=\frac{2}{\pi}\int_{\sqrt{x}(1-\epsilon)}^{\sqrt{x}(1+\epsilon)}\frac{\cos{(au)}}{\sqrt{u^2-1}} \cos{\left( 2t \log{(u+\sqrt{u^2-1})}\right)}du,
\end{equation}
\begin{equation}
\mathfrak{I}_3(a)=\frac{2}{\pi}\int_{\sqrt{x}(1+\epsilon)}^{Z}\frac{\cos{(au)}}{\sqrt{u^2-1}}\cos{\left( 2t \log{(u+\sqrt{u^2-1})}\right)}du.
\end{equation}
Next, we substitute equation \eqref{eq:ka2} with $a:=\frac{y}{\sqrt{x}}$ into $I_b(k,t,1/x)$. The contribution of the error term in \eqref{eq:ka2} into \eqref{eq:ibktx} is bounded by
\begin{equation*}
\int_{0}^{\infty}|J_{2k-1}(y)|\frac{(1+|t|)\sqrt{x}}{yZ}\exp{(-by)}dy\ll  \frac{(1+|t|)\sqrt{x}}{Z}.
\end{equation*}
\begin{lem} \label{lemL1} The integral
$\mathfrak{I}_{1}(y/\sqrt{x})$ contributes to $I(k,t,1/x)$ as
\begin{equation}
\frac{2}{\pi}Q_0(1/x)+O_{k}\left( \epsilon\sqrt{\frac{ x}{x-1}}\right).
\end{equation}
\end{lem}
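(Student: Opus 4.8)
The plan is to interchange the $u$- and $y$-integrations in the contribution of $\mathfrak I_1$ to $I_b(k,t,1/x)$, evaluate the resulting Bessel integral in closed form via a Laplace-transform computation, let $b\to0$, and then estimate the error caused by the fact that $\mathfrak I_1$ truncates the $u$-integral at $\sqrt x(1-\epsilon)$ rather than at $\sqrt x=1/\sqrt z$, which is exactly the upper limit in \eqref{eq:Q_0}.

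First, inserting the definition of $\mathfrak I_1$ with $a=y/\sqrt x$ into \eqref{eq:ibktx} yields a double integral whose $u$-part runs over the compact interval $[1,\sqrt x(1-\epsilon)]$ with only the integrable singularity $(u^2-1)^{-1/2}$ at $u=1$, while $J_{2k-1}(y)e^{-by}\ll_k\min(1,y^{-1/2})e^{-by}$ is integrable on $(0,\infty)$. Hence for $b>0$ the integral converges absolutely and Fubini lets me pull the $u$-integration outside. The inner $y$-integral is then $\int_0^\infty J_{2k-1}(y)\cos(cy)e^{-by}\,dy$ with $c=u/\sqrt x\in(0,1-\epsilon]$; writing $\cos(cy)=\frac12(e^{icy}+e^{-icy})$ and using the Laplace transform $\int_0^\infty e^{-py}J_\nu(y)\,dy=(\sqrt{p^2+1}-p)^\nu/\sqrt{p^2+1}$ (valid for $\Re p>0$, $\Re\nu>-1$) shows that it equals $\Re\bigl[(\sqrt{(b-ic)^2+1}-(b-ic))^{2k-1}/\sqrt{(b-ic)^2+1}\bigr]$. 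Since $c\le1-\epsilon$, the real part $1+b^2-c^2\ge1-c^2>0$ keeps the modulus of the denominator bounded below by $\sqrt{1-c^2}$ and the base of the power bounded, so the inner integral is $\ll_k(1-c^2)^{-1/2}$ uniformly for $b\in(0,1]$; moreover, as $b\to0^+$ it tends to $\cos((2k-1)\arcsin c)/\sqrt{1-c^2}$, because $\sqrt{1-c^2}+ic=e^{i\arcsin c}$.

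Because $c=u/\sqrt x$ stays bounded away from $1$ on $[1,\sqrt x(1-\epsilon)]$, the function $C_k(u^2-1)^{-1/2}(1-u^2/x)^{-1/2}$ is an integrable majorant there, so dominated convergence lets me pass $b\to0$ under the $u$-integral. The outcome is exactly $\frac2\pi$ times the integral in \eqref{eq:Q_0} but with upper limit $\sqrt x(1-\epsilon)$ in place of $\sqrt x$; the defect from $\frac2\pi Q_0(1/x)$ is $\frac2\pi\int_{\sqrt x(1-\epsilon)}^{\sqrt x}$ of that same integrand. Here the decisive observation is that $2k-1$ is odd, so $\cos((2k-1)\arcsin(y/\sqrt x))$ vanishes at $y=\sqrt x$ and cancels the singularity of $(1-y^2/x)^{-1/2}$ there: from $\cos((2k-1)\pi/2)=0$ and the mean value theorem one gets $|\cos((2k-1)\arcsin w)|\le(2k-1)\arccos w\le2(2k-1)\sqrt{1-w}$ for $w\in[0,1]$, whence $|\cos((2k-1)\arcsin(y/\sqrt x))|/\sqrt{1-y^2/x}\le2(2k-1)$ throughout the range. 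Bounding the factor $\cos(2t\log(y+\sqrt{y^2-1}))$ trivially by $1$, the defect is $\ll_k\int_{\sqrt x(1-\epsilon)}^{\sqrt x}(y^2-1)^{-1/2}\,dy=\sqrt x\,\epsilon\,(\xi^2-1)^{-1/2}$ for some $\xi\in(\sqrt x(1-\epsilon),\sqrt x)$, and for $\epsilon$ small enough that $x(1-\epsilon)^2-1\ge\frac12(x-1)$ this is $\ll\epsilon\sqrt{x/(x-1)}$. Collecting everything, the contribution of $\mathfrak I_1(y/\sqrt x)$ to $I(k,t,1/x)=\lim_{b\to0}I_b(k,t,1/x)$ equals $\frac2\pi Q_0(1/x)+O_k\bigl(\epsilon\sqrt{x/(x-1)}\bigr)$.

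I expect the main obstacle to be the sharpness of the error bound: the naive estimate $\int_{\sqrt x(1-\epsilon)}^{\sqrt x}(1-y^2/x)^{-1/2}\,dy\asymp\sqrt{x\epsilon}$ would only save a factor $\sqrt\epsilon$, and one must exploit the vanishing of $\cos((2k-1)\arcsin(y/\sqrt x))$ at the endpoint $y=\sqrt x$ — a consequence of $2k-1$ being odd — to upgrade the saving to $\epsilon$. A secondary technical point is the interchange of $\lim_{b\to0}$ with the $u$-integral, which is why one needs the closed form of $\int_0^\infty J_\nu(y)\cos(cy)e^{-by}\,dy$, providing a $b$-uniform majorant, rather than merely its value at $b=0$.
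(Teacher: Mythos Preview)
Your argument is correct and follows the same overall strategy as the paper (interchange the $u$- and $y$-integrations, evaluate the inner $y$-integral in closed form, let $b\to0$ by dominated convergence, then estimate the truncation error on $[\sqrt{x}(1-\epsilon),\sqrt{x}]$), but the technical execution differs in two places.

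For the inner integral $\int_0^\infty e^{-by}J_{2k-1}(y)\cos(cy)\,dy$, the paper inserts the Mellin--Barnes representation \eqref{MBJ} of $J_{2k-1}$, computes the $y$-integral as a Gamma factor, shifts the contour to the right picking up residues at $s=1,2,\dots$, and then lets $b\to0$ term by term (justified by the Weierstrass $M$-test, since $u^2/x+b^2\le1-\epsilon/2$). The resulting series is a finite polynomial in $2u/\sqrt{x}$, which is then identified with $\cos((2k-1)\arcsin c)/\sqrt{1-c^2}$ via a tabulated identity. Your route via the Laplace transform $\int_0^\infty e^{-py}J_\nu(y)\,dy=(\sqrt{p^2+1}-p)^\nu/\sqrt{p^2+1}$ is more direct and yields the same limit without the intermediate series.

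For the error term, the paper estimates $E_1$ using the polynomial form of the integrand: since $\cos((2k-1)\arcsin c)/\sqrt{1-c^2}$ is literally a polynomial of degree $2k-2$ in $c$, it is bounded by an absolute $O_k(1)$ constant on $[0,1]$, and the remaining factor $(u^2-1)^{-1/2}$ integrated over an interval of length $\epsilon\sqrt{x}$ gives $\ll_k\epsilon\sqrt{x/(x-1)}$. Your argument reaches the same bound by observing that $\cos((2k-1)\arcsin c)$ vanishes at $c=1$ (as $2k-1$ is odd) and applying the mean value theorem; this is essentially the analytic restatement of the polynomial fact. Both routes are valid; the paper's is perhaps slightly cleaner since the polynomial boundedness is immediate, while yours avoids quoting the Chebyshev-type identity.
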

\begin{proof}
Consider
\begin{multline*}\mathfrak{L}_1:=\frac{2}{\pi}
\int_{1}^{\sqrt{x}(1-\epsilon)}\frac{\cos{\left(2t\log{(u+\sqrt{u^2-1})} \right)}}{\sqrt{u^2-1}}\times\\ \times
\left(\int_{0}^{\infty}\exp{(-by)}J_{2k-1}(y)\cos{\left( \frac{u}{\sqrt{x}}y\right)}dy \right)du.
\end{multline*}
The inner integral
\begin{equation*}
IY_1:=\int_{0}^{\infty}\exp{(-by)}J_{2k-1}(y)\cos{\left( \frac{u}{\sqrt{x}}y\right)}dy
\end{equation*}
can be evaluated using Mellin-Barnes representation \eqref{MBJ}.
Changing the order of integration
\begin{multline*}
IY_1=\frac{1}{4\pi i}\int_{\Delta}2^s\frac{\Gamma(k-1/2+s/2)}{\Gamma(k+1/2-s/2)}\times\\ \times\left(\int_{0}^{\infty}\exp{(-by)}\cos{\left( \frac{u}{\sqrt{x}}y\right)}y^{-s}dy \right)ds.
\end{multline*}
For $\Re{s}<1$ it follows that (see \cite{F} pages $785-786$)
\begin{multline*}
\int_{0}^{\infty}\exp{(-by)}\cos{\left( \frac{u}{\sqrt{x}}y\right)}y^{-s}dy=\\=\frac{\Gamma(1-s)}{(u^2/x+b^2)^{(1-s)/2}}\cos{\left((1-s)\arctan{\frac{u}{\sqrt{x}b}} \right)}.
\end{multline*}
Therefore,
\begin{multline*}
IY_1=\frac{1}{4\pi i}\int_{\Delta}\frac{\Gamma(k-1/2+s/2)}{\Gamma(k+1/2-s/2)}\Gamma(1-s)\times\\ \times\cos{\left((1-s)\arctan{\frac{u}{\sqrt{x}b}} \right)}
\frac{2^s}{(u^2/x+b^2)^{(1-s)/2}}ds.
\end{multline*}
Since $b \rightarrow 0$ and $u/\sqrt{x}\leq 1-\epsilon$, we have $u^2/x+b^2\leq 1-\epsilon/2$.
Thus the expression under the integral is a rapidly decreasing function as $s \rightarrow \infty$. Moving the contour of integration to the right, we cross poles at $s_j=1+j$, $j=0,1,\ldots$ Hence
\begin{multline*}
IY_1=\frac{1}{2}\sum_{j=0}^{\infty}\frac{\Gamma(k+j/2)}{\Gamma(k-j/2)}\frac{(-1)^j}{j!}\cos{\left( j\arctan{\frac{u}{\sqrt{x}b}}\right)}\frac{2^{1+j}}{(u^2/x+b^2)^{-j/2}}.
\end{multline*}
Now we can substitute $IY_1$ into $\mathfrak{L}_1$ and compute the limit as $b \rightarrow 0$.
Uniform convergence on $b$ follows by Weierstrass M-Test  since $u^2/x+b^2\leq 1-\epsilon/2$.
Thus it is possible to interchange the order of summation and integration
\begin{multline*}
\mathfrak{L}_1=\frac{2}{\pi}\int_{1}^{\sqrt{x}(1-\epsilon)}\frac{\cos{\left( 2t \log{(u+\sqrt{u^2-1})}\right)}}{\sqrt{u^2-1}}\times \\ \times \sum_{j=0}^{\infty}
\frac{\Gamma(k+j/2)}{\Gamma(k-j/2)}\frac{(-1)^j\cos{(\pi j/2)}}{j!}\frac{2^j du}{(u/\sqrt{x})^{-j}}=\\=
\frac{2}{\pi}\int_{1}^{\sqrt{x}(1-\epsilon)}\frac{\cos{\left( 2t \log{(u+\sqrt{u^2-1})}\right)}}{\sqrt{u^2-1}}\times \\ \times
\sum_{m=0}^{k-1}
\frac{\Gamma(k+m)}{\Gamma(k-m)}\frac{(-1)^m}{(2m)!}\left(\frac{2u}{\sqrt{x}}\right)^{2m}du.
\end{multline*}
By formula $1.332(4)$ of \cite{GR}
\begin{equation*}
\frac{\cos{((2k-1)\arcsin{c})}}{\sqrt{1-c^2}}=(-1)^k\sum_{n=1}^{k}\frac{\Gamma(2k-n)}{\Gamma(n)(2k-2n)!}(-1)^{n}c^{2k-2n}.
\end{equation*}
Therefore,
\begin{equation*}
\mathfrak{L}_1=\frac{2}{\pi}Q_0(1/x)-E_1,
\end{equation*}
where $Q_0(x)$ is defined by \eqref{eq:Q_0} and
\begin{multline*}
E_1:=\frac{2}{\pi}\int_{\sqrt{x}(1-\epsilon)}^{\sqrt{x}}\frac{\cos{\left( 2t \log{(u+\sqrt{u^2-1})}\right)}}{\sqrt{u^2-1}}\times \\ \times \sum_{m=0}^{k-1}
\frac{\Gamma(k+m)}{\Gamma(k-m)}\frac{(-1)^m}{(2m)!}\left(\frac{2u}{\sqrt{x}}\right)^{2m}du.
\end{multline*}
The error term can be estimated as follows
\begin{equation*}
E_1\ll_{k} \frac{\epsilon \sqrt{x}}{\sqrt{x-1}}.
\end{equation*}
\end{proof}
\begin{lem}\label{lemL2}
The integral $\mathfrak{I}_{2}(y/\sqrt{x})$ contributes to $I(k,t,1/x)$ as
\begin{equation}
O\left( \epsilon^{1/3}\sqrt{\frac{x}{x-1}}+\epsilon\frac{x}{x-1}\left( t+\sqrt{\frac{x}{x-1}}\right)\right).
\end{equation}
\end{lem}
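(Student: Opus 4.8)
The plan is to follow the argument of Lemma~\ref{lemL1}. First I would substitute $a=y/\sqrt{x}$ into $\mathfrak{I}_{2}$, insert the result into $I_{b}(k,t,1/x)$ from \eqref{eq:ibktx}, and interchange the two integrations. Since the $u$-range $[\sqrt{x}(1-\epsilon),\sqrt{x}(1+\epsilon)]$ is compact with $u^{2}-1$ bounded away from $0$ there, while $\int_{0}^{\infty}|J_{2k-1}(y)|e^{-by}\,dy<\infty$ for $b>0$, Fubini's theorem applies and the contribution of $\mathfrak{I}_{2}$ to $I(k,t,1/x)$ equals $\lim_{b\to0}\mathfrak{L}_{2}(b)$, where
\[
\mathfrak{L}_{2}(b)=\frac{2}{\pi}\int_{\sqrt{x}(1-\epsilon)}^{\sqrt{x}(1+\epsilon)}
\frac{\cos\left(2t\log(u+\sqrt{u^{2}-1})\right)}{\sqrt{u^{2}-1}}\,IY(u,b)\,du,
\qquad
IY(u,b)=\int_{0}^{\infty}e^{-by}J_{2k-1}(y)\cos\!\Big(\tfrac{u}{\sqrt{x}}y\Big)dy .
\]

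Next I would identify $\lim_{b\to0}IY(u,b)$. For $u<\sqrt{x}$ this is exactly the integral $IY_{1}$ computed in Lemma~\ref{lemL1}, equal to $\cos\!\big((2k-1)\arcsin(u/\sqrt{x})\big)\big/\sqrt{1-u^{2}/x}$, which is the part of the integrand of $Q_{0}(1/x)$ not already carried by the $\cos(2t\log(u+\sqrt{u^2-1}))/\sqrt{u^2-1}$ factor; consequently the portion of $\mathfrak{L}_{2}$ coming from $u<\sqrt{x}$ is precisely the error term $E_{1}$ of Lemma~\ref{lemL1}, which is bounded there by $\ll_{k}\epsilon\sqrt{x/(x-1)}$. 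For $u>\sqrt{x}$ the contour shift used in Lemma~\ref{lemL1} is no longer licit (the arc contribution fails to vanish), so I would instead invoke the classical evaluation of $\int_{0}^{\infty}J_{\nu}(y)\cos(cy)\,dy$ with $c>1$; this produces $(-1)^{k}\big(\sqrt{u^{2}/x-1}\,(u/\sqrt{x}+\sqrt{u^{2}/x-1})^{2k-1}\big)^{-1}$, the analogous part of the integrand of $Q_{1}(1/x)$, so the portion of $\mathfrak{L}_{2}$ from $u>\sqrt{x}$ is $(-1)^{k}\tfrac{2}{\pi}$ times the $[\sqrt{x},\sqrt{x}(1+\epsilon)]$-piece of $Q_{1}(1/x)$ -- an integral over an interval shrinking as $\epsilon\to0$. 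What keeps everything uniform in $k$ is the bound $|\lim_{b\to0}IY(u,b)|\le|1-u^{2}/x|^{-1/2}$ with an absolute constant, valid because $|\cos(\cdot)|\le1$, $|\sin((2k-1)\pi/2)|=1$ and $(u/\sqrt{x}+\sqrt{u^{2}/x-1})^{2k-1}\ge1$.

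The genuinely delicate point -- and the main obstacle -- is justifying $\lim_{b\to0}\mathfrak{L}_{2}(b)=\int(\cdots)\lim_{b\to0}IY(u,b)\,du$. Near the resonance $u=\sqrt{x}$ the factor $\cos(uy/\sqrt{x})$ beats against the oscillation of $J_{2k-1}(y)\sim\sqrt{2/(\pi y)}\cos(y-(2k-1)\pi/2-\pi/4)$, so $IY(u,b)$ reaches its limit only conditionally and blows up like $|1-u/\sqrt{x}|^{-1/2}$. I would produce a $b$-uniform majorant by splitting $\int_{0}^{\infty}=\int_{0}^{1}+\int_{1}^{\infty}$, using $|J_{2k-1}|\ll y^{2k-1}$ on $[0,1]$ and the Hankel expansion on $[1,\infty)$: one integration by parts disposes of the high frequency $1+u/\sqrt{x}$, while a Fresnel-type substitution $v=|1-u/\sqrt{x}|\,y$ bounds the low-frequency term by $\ll1+|1-u/\sqrt{x}|^{-1/2}$, uniformly in $b\ge0$. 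Since $1+|1-u/\sqrt{x}|^{-1/2}$ is integrable against $du/\sqrt{u^{2}-1}$ over the $u$-range, dominated convergence applies (the majorant may be $k$-dependent; this affects only the justification of the limit, not the final bound).

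It then remains to estimate $\mathfrak{L}_{2}$, which is $E_{1}$ plus the $Q_{1}$-piece. We already have $E_{1}\ll_{k}\epsilon\sqrt{x/(x-1)}$. For the other piece, using $(u/\sqrt{x}+\sqrt{u^{2}/x-1})^{2k-1}\ge1$ and $u^{2}-1\asymp x-1$ on $[\sqrt{x},\sqrt{x}(1+\epsilon)]$ (which follows from $u\ge\sqrt{x}$ and $\epsilon\le1-1/\sqrt{x}$),
\[
\int_{\sqrt{x}}^{\sqrt{x}(1+\epsilon)}\frac{du}{\sqrt{u^{2}-1}\,\sqrt{u^{2}/x-1}}
\ll\frac{x^{1/4}}{\sqrt{x-1}}\int_{\sqrt{x}}^{\sqrt{x}(1+\epsilon)}\frac{du}{\sqrt{u-\sqrt{x}}}
\ll\sqrt{\epsilon}\;\sqrt{\frac{x}{x-1}}.
\]
Since $\sqrt{\epsilon}\le\epsilon^{1/3}$ for $0<\epsilon<1$, both pieces are $\ll\epsilon^{1/3}\sqrt{x/(x-1)}$, which already dominates the asserted bound; the additional terms $\epsilon\tfrac{x}{x-1}\big(t+\sqrt{x/(x-1)}\big)$ are what one picks up from a cruder variant in which $\mathfrak{I}_{2}(y/\sqrt{x})$ is estimated directly and the stationary point of the combined phase $uy/\sqrt{x}\pm2t\log(u+\sqrt{u^{2}-1})$ coming from the twist is isolated by stationary phase.
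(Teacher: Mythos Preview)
Your route is genuinely different from the paper's and, once cleaned up, actually yields a sharper bound. The paper never computes the $y$-integral on the transition range: it integrates $\mathfrak{I}_{2}(a)$ by parts in $u$ to get the pointwise estimate
\[
\mathfrak{I}_{2}(y/\sqrt{x})\ll \frac{\epsilon^{1/3}\sqrt{x}}{y^{2/3}\sqrt{x-1}}
+\frac{\epsilon}{y}\,\frac{x}{x-1}\Big(t+\sqrt{\tfrac{x}{x-1}}\Big),
\]
and then integrates this against $|J_{2k-1}(y)|e^{-by}$, where the $y^{-2/3}$ and $y^{-1}$ powers are absolutely integrable; the $\epsilon^{1/3}$ comes from interpolating the two boundary-term estimates via $\min(1,(a\epsilon\sqrt{x})^{-1})\ll (a\epsilon\sqrt{x})^{-2/3}$. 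This has the advantage of avoiding any delicate limit exchange in $b$, and the $t$-dependent term arises naturally from differentiating the $\cos(2t\log(u+\sqrt{u^2-1}))$ weight.

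Your approach instead swaps the order of integration, recognises the limiting $y$-integral as the $Q_0$- and $Q_1$-integrands on $[\sqrt{x}(1-\epsilon),\sqrt{x}]$ and $[\sqrt{x},\sqrt{x}(1+\epsilon)]$ respectively, and then uses the uniform bound $|\lim_{b\to0}IY(u,b)|\le|1-u^2/x|^{-1/2}$. This is conceptually cleaner and gives $O(\sqrt{\epsilon}\,\sqrt{x/(x-1)})$, which is better than the stated $\epsilon^{1/3}$; your dominated-convergence sketch via the Hankel asymptotic is the right justification. One inconsistency to fix: at the end you invoke $E_{1}\ll_{k}\epsilon\sqrt{x/(x-1)}$ from Lemma~\ref{lemL1}, which carries a $k$-dependence the statement does not allow. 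You should instead bound $E_{1}$ using your own observation $|IY|\le|1-u^2/x|^{-1/2}$, which gives $E_{1}\ll\sqrt{\epsilon}\,\sqrt{x/(x-1)}$ uniformly in $k$, exactly as you did for the $Q_{1}$-piece.
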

\begin{proof}
Integrating by parts we have
\begin{equation*}
\mathfrak{I}_2(a)=\mathfrak{I}_{2,1}(a)-\mathfrak{I}_{2,2}(a),
\end{equation*}
where
\begin{equation*}
\mathfrak{I}_{2,1}(a):= \frac{\sin{au}}{a}\frac{\cos{(2t\log{(u+\sqrt{u^2-1})})}}{\sqrt{u^2-1}}\Bigg|_{\sqrt{x}(1-\epsilon)}^{\sqrt{x}(1+\epsilon)},
\end{equation*}
\begin{equation*}
\mathfrak{I}_{2,2}(a):=\int_{\sqrt{x}(1-\epsilon)}^{\sqrt{x}(1+\epsilon)}\frac{\sin{au}}{a}\left( \frac{\cos{\left( 2t \log{(u+\sqrt{u^2-1})}\right)}}{\sqrt{u^2-1}}\right)^{'}du.
\end{equation*}
Using
\begin{equation*}
\left( \frac{\cos{\left( 2t \log{(u+\sqrt{u^2-1})}\right)}}{\sqrt{u^2-1}}\right)^{'}\ll \frac{t}{u^2-1}+\frac{u}{(u^2-1)^{3/2}},
\end{equation*}
we obtain
\begin{equation*}
\mathfrak{I}_{2,2}(a) \ll \frac{\epsilon\sqrt{x}}{a}\left( \frac{t}{x-1}+\frac{\sqrt{x}}{(x-1)^{3/2}}\right).
\end{equation*}
On the one hand, it is possible to estimate the non-integral term trivially
\begin{equation*}
\mathfrak{I}_{2,1}(a) \ll \frac{1}{a\sqrt{x-1}}.
\end{equation*}
On the other hand, we can apply
\begin{multline*}
\left(\frac{\sin{au}}{a}\frac{\cos{(2t\log{(u+\sqrt{u^2-1})})}}{\sqrt{u^2-1}}\right)^{'}\ll\\ \ll \frac{1}{(u^2-1)^{1/2}}+\frac{t}{a(u^2-1)}+\frac{u}{a(u^2-1)^{3/2}}.
\end{multline*}
Then the mean value theorem gives
\begin{equation*}
\mathfrak{I}_{2,1}(a) \ll \epsilon \sqrt{x}\left( \frac{1}{\sqrt{x-1}}+\frac{t}{a(x-1)}+\frac{\sqrt{x}}{a(x-1)^{3/2}}\right).
\end{equation*}
Combining all the results
\begin{equation*}
\mathfrak{I}_{2}(a) \ll \frac{\epsilon\sqrt{x}}{\sqrt{x-1}}\min{\left(1,\frac{1}{a\epsilon\sqrt{x}}\right)}+\frac{\epsilon\sqrt{x}}{a(x-1)}\left( t+\sqrt{\frac{x}{x-1}}\right).
\end{equation*}
Estimating
\begin{equation*}
\frac{\epsilon\sqrt{x}}{\sqrt{x-1}}\min{\left(1,\frac{1}{a\epsilon\sqrt{x}}\right)} \ll \frac{(\epsilon\sqrt{x})^{1/3}}{a^{2/3}\sqrt{x-1}}
\end{equation*}
and setting $a:=y/\sqrt{x}$ we have
\begin{equation*}
\mathfrak{I}_{2}(y/\sqrt{x})\ll\frac{\epsilon^{1/3}\sqrt{x}}{y^{2/3}\sqrt{x-1}}+\frac{\epsilon}{y}\frac{x}{x-1}\left(t+\sqrt{\frac{x}{x-1}}\right).
\end{equation*}
Therefore,  $\mathfrak{I}_{2}(y/\sqrt{x})$ contributes to $I(k,t,1/x)$ as
\begin{equation*}
O\left( \epsilon^{1/3}\sqrt{\frac{x}{x-1}}+\epsilon\frac{x}{x-1}\left( t+\sqrt{\frac{x}{x-1}}\right)\right).
\end{equation*}
\end{proof}

\begin{lem}\label{lemL3}
The integral $\mathfrak{I}_3(y/\sqrt{x})$ contributes to $I(k,t,1/x)$ as
\begin{equation}
\frac{2}{\pi}(-1)^kQ_1(1/x)+O\left(\epsilon^{1/2}\sqrt{\frac{x}{x-1}}+\frac{x^k}{Z^{2k}} \right).
\end{equation}
\end{lem}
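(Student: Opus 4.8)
Following the pattern of Lemmas~\ref{lemL1} and \ref{lemL2}, the plan is first to reduce the claim to evaluating the contribution
\[
\mathfrak{L}_3=\lim_{b\to0}\frac{2}{\pi}\int_{\sqrt{x}(1+\epsilon)}^{Z}\frac{\cos\!\left(2t\log(u+\sqrt{u^2-1})\right)}{\sqrt{u^2-1}}\,IY_3(u,b)\,du,\qquad IY_3(u,b)=\int_{0}^{\infty}e^{-by}J_{2k-1}(y)\cos\!\left(\tfrac{u}{\sqrt{x}}y\right)dy,
\]
of $\mathfrak{I}_3(y/\sqrt x)$ to $I(k,t,1/x)$. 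I would evaluate the inner integral $IY_3(u,b)$ by the Mellin--Barnes computation used for $IY_1$ in Lemma~\ref{lemL1}: inserting \eqref{MBJ} together with the formula for $\int_0^\infty e^{-by}\cos(\tfrac{u}{\sqrt x}y)y^{-s}dy$ produces a contour integral over $\Re s=\Delta\in(1-2k,0)$ of
\[
\frac{\Gamma(k-1/2+s/2)}{\Gamma(k+1/2-s/2)}\,\Gamma(1-s)\cos\!\left((1-s)\arctan\tfrac{u}{\sqrt x\,b}\right)\frac{2^{s}}{(u^2/x+b^2)^{(1-s)/2}}.
\]
The crucial difference from Lemma~\ref{lemL1} is the regime: here $u/\sqrt x\ge 1+\epsilon$, hence $u^2/x+b^2\ge(1+\epsilon)^2>1$, the integrand decays as $\Re s\to-\infty$, and one moves the contour to the \emph{left}, crossing the simple poles $s=1-2k-2n$ ($n\ge0$) of $\Gamma(k-1/2+s/2)$, the shifted integral tending to zero. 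Collecting residues yields, for $b>0$,
\[
IY_3(u,b)=\sum_{n\ge0}c_n(k)\cos\!\left((2k+2n)\arctan\tfrac{u}{\sqrt x\,b}\right)(u^2/x+b^2)^{-(k+n)},\qquad c_n(k)=\frac{(-1)^n\,2^{1-2k-2n}\,\Gamma(2k+2n)}{n!\,\Gamma(2k+n)}.
\]

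Next I would pass to the limit $b\to0$. By the duplication formula and Stirling one gets $|c_n(k)|\ll_k n^{-1/2}$, so each term above is $\ll_k n^{-1/2}(1+\epsilon)^{-2(k+n)}$ uniformly in $b\in(0,1]$ and $u\in[\sqrt x(1+\epsilon),Z]$; the Weierstrass $M$-test then gives uniform convergence, and since $\cos(\cdots)/\sqrt{u^2-1}\cdot IY_3(u,b)$ is uniformly bounded in $b$ on the finite interval $[\sqrt x(1+\epsilon),Z]$, dominated convergence permits interchanging $\lim_{b\to0}$ with the $u$-integral. Letting $b\to0$ termwise (so $\arctan(u/(\sqrt x\,b))\to\pi/2$) collapses the series to the classical Weber--Schafheitlin value
\[
\lim_{b\to0}IY_3(u,b)=\int_{0}^{\infty}J_{2k-1}(y)\cos\!\left(\tfrac{u}{\sqrt x}y\right)dy=\frac{(-1)^k}{\sqrt{u^2/x-1}\,\bigl(u/\sqrt x+\sqrt{u^2/x-1}\bigr)^{2k-1}}\qquad(u/\sqrt x>1),
\]
which one may instead quote directly, justifying the limit by Abel's theorem. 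Consequently
\[
\mathfrak{L}_3=\frac{2}{\pi}(-1)^k\int_{\sqrt x(1+\epsilon)}^{Z}\frac{\cos\!\left(2t\log(u+\sqrt{u^2-1})\right)\,du}{\sqrt{u^2-1}\,\sqrt{u^2/x-1}\,\bigl(u/\sqrt x+\sqrt{u^2/x-1}\bigr)^{2k-1}}.
\]

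Finally I would match this integrand with that of \eqref{eq:Q_1} at $z=1/x$ (so $1/\sqrt z=\sqrt x$): after renaming the variable, $Q_1(1/x)$ is exactly the same integrand integrated over $[\sqrt x,\infty)$. Hence
\[
\mathfrak{L}_3-\frac{2}{\pi}(-1)^kQ_1(1/x)=-\frac{2}{\pi}(-1)^k\left(\int_{\sqrt x}^{\sqrt x(1+\epsilon)}+\int_{Z}^{\infty}\right)(\text{same integrand})\,du,
\]
and it remains to bound the two tails. On $[\sqrt x,\sqrt x(1+\epsilon)]$ I bound $1/\sqrt{u^2-1}\le(x-1)^{-1/2}$ and $(u/\sqrt x+\sqrt{u^2/x-1})^{2k-1}\ge1$ and substitute $r=u/\sqrt x$, obtaining $\ll(x/(x-1))^{1/2}\int_1^{1+\epsilon}dr/\sqrt{r^2-1}\ll\epsilon^{1/2}(x/(x-1))^{1/2}$. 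On $[Z,\infty)$, assuming $Z$ large (as we may: $Z$ is free while $x$ is polynomially bounded, so $Z\ge2\sqrt x$ is automatic), I use $\sqrt{u^2-1}\gg u$, $\sqrt{u^2/x-1}\gg u/\sqrt x$ and $(u/\sqrt x+\sqrt{u^2/x-1})^{2k-1}\ge(u/\sqrt x)^{2k-1}$ to get $\ll x^k\int_Z^\infty u^{-2k-1}du\ll x^k/Z^{2k}$. Adding the two tails gives the stated error term.

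The main obstacle is the $b\to0$ interchange in the second paragraph: it is legitimate only because the range $u/\sqrt x\ge1+\epsilon$ built into $\mathfrak{I}_3$ keeps $u^2/x+b^2$ uniformly bounded away from $1$, which makes the residue series dominated by a convergent geometric series independent of $b$. This is the analogue, now with an \emph{infinite} sum (rather than the finite sum occurring in Lemma~\ref{lemL1}, where $1/\Gamma(k-j/2)$ truncates the series), of the Weierstrass $M$-test step there.
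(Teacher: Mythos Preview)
Your proposal is correct and follows essentially the same route as the paper: Mellin--Barnes for the inner $y$-integral, contour shift to the \emph{left} (since now $u^2/x+b^2>1$), collection of the residues at $s=1-2k-2n$, passage to the limit $b\to0$, and comparison with $Q_1(1/x)$ by bounding the two tail integrals over $[\sqrt{x},\sqrt{x}(1+\epsilon)]$ and $[Z,\infty)$. The only cosmetic difference is that the paper sums the residue series by recognizing it as ${}_2F_1(k,k+1/2,2k;x/u^2)$ and applying a closed-form identity, whereas you invoke the Weber--Schafheitlin evaluation directly; your justification of the $b\to0$ interchange via the Weierstrass $M$-test is in fact more explicit than the paper's.
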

\begin{proof}
We need to evaluate the following expression as $b \rightarrow 0$
\begin{multline*}
\mathfrak{L}_2:=\frac{2}{\pi}\int_{\sqrt{x}(1+\epsilon)}^{Z}\frac{\cos{\left( 2t\log{(u+\sqrt{u^2-1})}\right)}}{\sqrt{u^2-1}}\times\\
\times \left( \int_{0}^{\infty}\exp{(-by)}J_{2k-1}(y)\cos{\left( \frac{u}{\sqrt{x}}y\right)}dy\right)du.
\end{multline*}
Calculating the $y$-integral in the same way as in lemma \ref{lemL1} we obtain
\begin{multline*}
IY_2=\frac{1}{4\pi i}\int_{\Delta}\frac{\Gamma(k-1/2+s/2)}{\Gamma(k+1/2-s/2)}\Gamma(1-s)\times\\ \times\cos{\left((1-s)\arctan{\frac{u}{\sqrt{x}b}} \right)}
\frac{2^s}{(u^2/x+b^2)^{(1-s)/2}}ds.
\end{multline*}
Note that now $u^2/x+b^2>1$ and, therefore, we move the contour of integration to the left crossing poles at $s=1-2k-2j$, $j=0,1,\ldots$
This yields
\begin{multline*}
IY_2=\sum_{j=0}^{\infty}\frac{\Gamma(2k+2j)}{\Gamma(2k+j)}\frac{(-1)^j}{j!}\times \\ \times \cos{\left( (2k+2j)\arctan{\frac{u}{\sqrt{xb}}}\right)}\frac{2^{1-2k-2j}}{(u^2/x+b^2)^{k+j}}.
\end{multline*}
Substituting this into $\mathfrak{L}_2$ and letting $b\rightarrow 0$ gives
\begin{multline*}
\mathfrak{L}_2:=\frac{2}{\pi}\int_{\sqrt{x}(1+\epsilon)}^{Z}\frac{\cos{\left( 2t\log{(u+\sqrt{u^2-1})}\right)}}{\sqrt{u^2-1}}\times\\
\times \sum_{j=0}^{\infty}\frac{\Gamma(2k+2j)}{\Gamma(2k+j)}\frac{(-1)^k}{j!}\frac{2^{1-2k-2j}}{(u/\sqrt{x})^{2k+2j}}du.
\end{multline*}
Recall that
\begin{equation*}
\Gamma(2k+2j)=\frac{1}{\sqrt{\pi}}2^{2k+2j-1}\Gamma(k+j)\Gamma(k+j+1/2)
\end{equation*}
and
\begin{multline*}
\frac{1}{\sqrt{\pi}}\sum_{j=0}^{\infty}\frac{\Gamma(k+j)\Gamma(k+j+1/2)}{\Gamma(j+2k)j!}\frac{(-1)^k}{c^{2j+2k}}
=\frac{(-1)^k}{c^{2k}\sqrt{\pi}}\frac{\Gamma(k)\Gamma(k+1/2)}{\Gamma(2k)}\times\\ \times {}_2F_1(k,k+1/2,2k;1/c^2)=\frac{(-1)^k}{\sqrt{c^2-1}(c+\sqrt{c^2-1})^{2k-1}}.
\end{multline*}
The last equality follows by formula $2.8 (6)$ of \cite{BE}.
This yields
\begin{multline*}
\mathfrak{L}_2=\frac{2}{\pi}\int_{\sqrt{x}(1+\epsilon)}^{Z}\frac{\cos{\left( 2t\log{(u+\sqrt{u^2-1})}\right)}}{\sqrt{u^2-1}}\times\\
\times \frac{(-1)^kdu}{\sqrt{u^2/x-1}\left( u/\sqrt{x}+\sqrt{u^2/x-1}\right)^{2k-1}}.
\end{multline*}
Finally we split the integral into three parts
\begin{equation*}
\int_{\sqrt{x}(1+\epsilon)}^{Z}=\int_{\sqrt{x}(1+\epsilon)}^{\infty}-\int_{\sqrt{x}}^{\sqrt{x}(1+\epsilon)}-\int_{Z}^{\infty}.
\end{equation*}
The first part contributes as $\frac{2}{\pi}(-1)^kQ_1(1/x)$ and the last two
as $$O\left(\epsilon^{1/2}\sqrt{\frac{x}{x-1}}+\frac{x^k}{Z^{2k}} \right).$$
This completes the proof.
\end{proof}

Letting $Z\rightarrow \infty$, $\epsilon \rightarrow 0$  in lemmas \ref{lemL1}, \ref{lemL2} and \ref{lemL3}, we obtain the statement of theorem \ref{decompI}.

\section{Hypergeometric functions}\label{section5}
Let $\Re{\lambda}>1+\Re{u}\geq1$,  $\Re{v}=0$, $\Im{v}=t$ with $t \in \R$ and $T:=3+|t|$.
The main object of this section is the function defined via two integrals
\begin{equation}
H_{\lambda}(u,v,z)=\begin{cases}
1/(2\cos{\pi(\lambda-u)})z^{1/2-\lambda}I_1(1/z) & z>0\\
1/2 (-z)^{1/2-\lambda}I_2(-1/z) & z<0
\end{cases},
\end{equation}
where
\begin{gather*}
I_1(z)=\frac{1}{2\pi i}\int\limits_{\Re s=\Delta}\Gamma(u,v,\lambda;s)\sin\pi\left(u+\frac{s}{2}\right)z^{s/2}ds,\\
I_2(z)=\frac{1}{2\pi i}\int\limits_{\Re s=\Delta}\Gamma(u,v,\lambda;s)z^{s/2}ds
\end{gather*}
with $1-2\Re\lambda<\Delta<-1-2\Re u$.
\begin{lem}
For $z \leq 1$, $z\neq 0$, $\Re{u}\geq 0$
\begin{multline}
H_{\lambda}(u,v;z)=\\=\frac{\Gamma(\lambda-u+v)\Gamma(\lambda-u-v)}{\Gamma(2\lambda)}{}_2F_{1}(\lambda-u+v,\lambda-u-v,2\lambda;z).
\end{multline}
\end{lem}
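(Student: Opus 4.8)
The plan is to evaluate the two Mellin--Barnes integrals $I_1$ and $I_2$ that define $H_\lambda$ by moving the contour $\Re s=\Delta$ to the left, collecting the residues crossed, and recognising the resulting power series as the hypergeometric series in the statement once the prefactors of $H_\lambda$ are absorbed. The only poles met on the left are the simple poles of the factor $\Gamma(\lambda-1/2+s/2)$ of $\Gamma(u,v,\lambda;s)$, located at $s=s_n:=1-2\lambda-2n$, $n\ge0$; the poles of $\Gamma(1/2-u\pm v-s/2)$ all lie strictly to the right of $\Re s=\Delta$, which is precisely why $\Delta$ is taken with $1-2\Re\lambda<\Delta<-1-2\Re u$.

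First I would handle the range $0<z\le1$, so that the argument $1/z$ of $I_1$ satisfies $1/z\ge1$ and the factor $(1/z)^{s/2}$ remains bounded as $\Re s\to-\infty$. A Stirling estimate shows that $\Gamma(u,v,\lambda;s)$ is of order $|\Re s|^{-1-2\Re u}$ on a far-left vertical line (between the poles); combined with the standing bound $\Gamma(u,v,\lambda;\sigma+it)\ll\exp(-\pi|t|/2)/|t|^{1+2\Re u}$ and with $|\sin\pi(u+s/2)|\ll\exp(\pi|\Im s|/2)$, this makes the shifted integral tend to $0$, the vertical integrals being absolutely convergent for $\Re u>0$ (the boundary case $\Re u=0$ with $z<1$ then follows by continuity in $u$). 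Thus $I_1(1/z)$ equals the sum of the residues at the points $s_n$. At $s=s_n$ one computes $\sin\pi(u+s_n/2)=(-1)^n\cos\pi(\lambda-u)$, the residue of $\Gamma(\lambda-1/2+s/2)$ is $2(-1)^n/n!$, $\Gamma(\lambda+1/2-s_n/2)=\Gamma(2\lambda+n)$, $\Gamma(1/2-u\pm v-s_n/2)=\Gamma(\lambda-u\pm v+n)$ and $(1/z)^{s_n/2}=z^{\lambda-1/2+n}$, so that
\[
I_1(1/z)=2\cos\pi(\lambda-u)\,z^{\lambda-1/2}\sum_{n\ge0}\frac{\Gamma(\lambda-u+v+n)\,\Gamma(\lambda-u-v+n)}{\Gamma(2\lambda+n)\,n!}\,z^{n}.
\]
Multiplying by $z^{1/2-\lambda}/(2\cos\pi(\lambda-u))$ — the constant $2\cos\pi(\lambda-u)$ and the factor $z^{\lambda-1/2}$ cancel exactly — and comparing with the series defining ${}_2F_1$ gives the identity for $0<z\le1$.

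For $z<0$ the same reasoning applies to $I_2$, with two cosmetic differences: the integrand carries no $\sin$-factor and the outer prefactor is $1/2$, so the residue at $s_n$ retains a factor $(-1)^n$ which is absorbed by writing $(-z)^n=(-1)^nz^n$; one again obtains $H_\lambda(u,v;z)=\sum_{n\ge0}\frac{\Gamma(\lambda-u+v+n)\Gamma(\lambda-u-v+n)}{\Gamma(2\lambda+n)\,n!}\,z^n$, which is the asserted identity. The left shift is legitimate when $-1\le z<0$, since then $-1/z\ge1$; for $z<-1$ the identity extends by analytic continuation, both sides being analytic on $(-\infty,0)$ (alternatively, one may shift $I_2$ to the right and use the $z\mapsto1/z$ connection formula for ${}_2F_1$). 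I expect the genuine difficulties to be the justification of the contour shift at the boundary $\Re u=0$, and at the degenerate point $z=1$, where the series converges only for $\Re u>0$ (summing by Gauss to $\Gamma(\lambda-u+v)\Gamma(\lambda-u-v)\Gamma(2u)/\bigl(\Gamma(\lambda+u+v)\Gamma(\lambda+u-v)\bigr)$), together with the careful bookkeeping of the trigonometric and sign factors that makes the prefactors of $H_\lambda$ cancel cleanly.
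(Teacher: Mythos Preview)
Your proposal is correct and follows exactly the approach the paper indicates: its proof is the single sentence ``This formula follows by moving the contour of integration to the left and computing the residues,'' and you have carried out precisely that computation, correctly identifying the poles $s_n=1-2\lambda-2n$, the trigonometric cancellations, and the resulting hypergeometric series in both the $z>0$ and $z<0$ cases.
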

\begin{proof}
This formula follows by moving the contour of integration to the left and computing the residues.
\end{proof}
\begin{lem} (\cite{BF3}) For $z>0$, $k \geq 1$
\begin{equation}\label{eqw:h2}
\cosh{(\pi t)} H_k(0,v,-z)\ll
\frac{T^{2k-1}}{2^{2k}\sqrt{k}}\left[1+O\left( z^{1/2}\frac{k+T}{\sqrt{k}}\right)\right].
\end{equation}
\end{lem}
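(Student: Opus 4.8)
The plan is to work from the Mellin--Barnes integral defining $H_k$, using the identification of the leading residue furnished by the preceding lemma. By that lemma (with $\lambda=k$, $u=0$, $v=it$, and argument $-z$),
\[
H_k(0,v,-z)=\frac{\Gamma(k+it)\Gamma(k-it)}{\Gamma(2k)}\,{}_2F_1(k+it,k-it,2k;-z),
\]
so it suffices to bound the prefactor $\cosh(\pi t)\,\Gamma(k+it)\Gamma(k-it)/\Gamma(2k)$ by the stated main term and to show that the hypergeometric factor deviates from $1$ by $O\bigl(z^{1/2}(k+T)/\sqrt{k}\bigr)$.

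For the prefactor I would use $\overline{\Gamma(k+it)}=\Gamma(k-it)$ together with the elementary identity $\Gamma(k+it)\Gamma(k-it)=\dfrac{\pi t}{\sinh \pi t}\prod_{j=1}^{k-1}(j^2+t^2)$, which follows by telescoping $\Gamma(k\pm it)$ down to $\Gamma(1\pm it)$ and using $\Gamma(1+it)\Gamma(1-it)=\pi t/\sinh\pi t$. Hence
\[
\cosh(\pi t)\,\frac{\Gamma(k+it)\Gamma(k-it)}{\Gamma(2k)}=\bigl(\pi t\coth \pi t\bigr)\,\frac{\prod_{j=1}^{k-1}(j^2+t^2)}{\Gamma(2k)}.
\]
I would then estimate $\pi t\coth \pi t\le \pi|t|+1\ll T$, use $j^2+t^2=j^2(1+t^2/j^2)\le j^2T^2$ for $j\ge1$ (valid since $T^2\ge 1+t^2$), and finish with the central binomial estimate $\Gamma(k)^2/\Gamma(2k)\ll 2^{-2k}k^{-1/2}$. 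This yields $\ll T^{2k-1}/(2^{2k}\sqrt{k})$, which is exactly the asserted main term; in particular the case $z=0$ is done, since there ${}_2F_1=1$.

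It remains to bound the correction. For this I would return to $H_k(0,v,-z)=\tfrac12 z^{1/2-k}I_2(1/z)$ and shift the contour of $I_2$ to the left past the single pole of $\Gamma(k-\tfrac12+s/2)$ at $s=1-2k$; the residue reproduces $\Gamma(k+it)\Gamma(k-it)/\Gamma(2k)$, while on the shifted line $\Re s=-2k$ one has $|z^{-s/2}|=z^{k}$, so the remaining piece is $\ll z^{1/2}\int_{\mathbf{R}}\bigl|\Gamma(0,it,k;-2k+i\tau)\bigr|\,d\tau$. The point is that after multiplying by $\cosh(\pi t)$ the quantity $\cosh(\pi t)\,\Gamma(k+\tfrac12+it)\Gamma(k+\tfrac12-it)=\pi\prod_{j=0}^{k-1}\bigl((j+\tfrac12)^2+t^2\bigr)$ carries no exponential growth, so the $\cosh(\pi t)$ is harmless; Stirling's formula, applied uniformly in the Gaussian range $|\tau|\lesssim k+|t|$ and in the exponential tail, then collapses the $\tau$-integral to an estimate of the shape $\cosh(\pi t)\,\Gamma(k+\tfrac12+it)\Gamma(k+\tfrac12-it)/\Gamma(2k+\tfrac12)$, which is bounded exactly as the prefactor above (now with a half-integer shift) by $(T^{2k-1}/2^{2k}\sqrt k)\cdot(k+T)/\sqrt k$. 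An alternative route for this last step is Euler's integral for ${}_2F_1$ followed by a single integration by parts, exploiting that the phase $\log\frac{x}{1-x}+\log(1+zx)$ has no stationary point on $(0,1)$.

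The main obstacle is this last step executed uniformly in $k$ and $t$ with absolute constants. The naive move --- separating $\cosh(\pi t)$ from the Gamma factors and bounding the latter by their values at $t=0$, or (in the Euler-integral version) pulling $\Gamma(k)^2/|\Gamma(k+it)|^2$ out of the integral --- reintroduces exponential growth in $t$; the fix is to keep $\cosh(\pi t)$ paired with $\Gamma(k+\tfrac12+it)\Gamma(k+\tfrac12-it)$, where the exponential cancels. The delicate bookkeeping is then the uniform Stirling analysis of the $\tau$-integral across the transition $|\tau|\sim k+|t|$ between Gaussian and exponential decay, which is exactly what produces the final factor $(k+T)/\sqrt k$.
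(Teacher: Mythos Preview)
The paper does not give a proof of this lemma: it is quoted from \cite{BF3} (Frolenkov, \emph{On the uniform bounds on hypergeometric function}), so there is no in-paper argument to compare your proposal against.

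As for your outline itself: the structure is the natural one and the main-term part is clean and correct. The identity
\[
\cosh(\pi t)\,\Gamma(k+it)\Gamma(k-it)=\bigl(\pi t\coth\pi t\bigr)\prod_{j=1}^{k-1}(j^{2}+t^{2})
\]
together with $j^{2}+t^{2}\le j^{2}T^{2}$ for $j\ge1$ and $\Gamma(k)^{2}/\Gamma(2k)\asymp 2^{-2k}k^{-1/2}$ does give the asserted bound $T^{2k-1}/(2^{2k}\sqrt{k})$ for the prefactor, settling the case $z=0$.

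For the remainder, the contour shift of $I_{2}$ to $\Re s=-2k$ past the single pole at $s=1-2k$ is correctly set up and correctly isolates a factor $z^{1/2}$. Your observation that $\cosh(\pi t)$ must stay paired with $\Gamma(k+\tfrac12+it)\Gamma(k+\tfrac12-it)$ (where it cancels the exponential via $\Gamma(\tfrac12+it)\Gamma(\tfrac12-it)=\pi/\cosh\pi t$) is exactly the right point. Two remarks, however. First, the inequality $(j+\tfrac12)^{2}+t^{2}\le (j+\tfrac12)^{2}T^{2}$ that you implicitly carry over from the integer case fails at $j=0$ once $|t|$ is moderately large; you need a separate bound such as $\tfrac14+t^{2}\le T^{2}$ for that factor. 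Second, and more substantively, you stop short of actually executing the uniform Stirling analysis of
\[
\cosh(\pi t)\int_{\mathbf{R}}\left|\frac{\Gamma(-\tfrac12+ir)}{\Gamma(2k+\tfrac12-ir)}\,\Gamma\bigl(k+\tfrac12+i(t-r)\bigr)\Gamma\bigl(k+\tfrac12-i(t+r)\bigr)\right|dr
\]
across the ranges $|r|\lesssim k$, $k\lesssim|r|\lesssim|t|$, and $|r|\gtrsim|t|$. Your assertion that this ``collapses'' to a single evaluation at $r=0$, producing precisely the factor $(k+T)/\sqrt{k}$, is plausible but not demonstrated; the width of the $r$-integral and the polynomial Stirling factors both contribute, and getting the exact shape $(k+T)/\sqrt{k}$ rather than, say, $(k+T)$ or $T\sqrt{k}$ requires the bookkeeping you flag as delicate but do not perform. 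What you have is a correct roadmap, not yet a proof; filling in that last integral is the entire content of the cited reference.
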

\begin{lem}(\cite{BF2}, page $3$)
For $z>0$, $k \geq 1$
\begin{equation}\label{eqw:h5}
\cosh{(\pi t)}H_k(0,v,-z)\ll z^{-k+1/2}.
\end{equation}
\end{lem}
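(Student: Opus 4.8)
The plan is to reduce \eqref{eqw:h5} to the large-argument behaviour of a Gauss hypergeometric function. Since $-z<0\le 1$, the identification of $H_\lambda$ with ${}_2F_1$ proved above applies and gives
\[
H_k(0,v,-z)=\frac{\Gamma(k+v)\Gamma(k-v)}{\Gamma(2k)}\,{}_2F_1(k+v,k-v,2k;-z).
\]
Because $z^{-k+1/2}\to\infty$ as $z\to0$, the content of the lemma is the power-saving decay for large $z$, so I would first invoke the connection formula
\[
{}_2F_1(a,b,c;w)=\frac{\Gamma(c)\Gamma(b-a)}{\Gamma(b)\Gamma(c-a)}(-w)^{-a}\,{}_2F_1\!\Bigl(a,1-c+a,1-b+a;\tfrac1w\Bigr)+(a\leftrightarrow b)
\]
with $a=k+v$, $b=k-v$, $c=2k$, $w=-z$. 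Here $1-c+a=1-k+v$ and $1-b+a=1+2v$, and the two residual hypergeometric series have the small argument $-1/z$, so each equals $1+O(1/z)$.

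Substituting this back, the numerator factors $\Gamma(k\pm v)$ cancel against $\Gamma(b)\Gamma(c-a)=\Gamma(k-v)^2$ and its conjugate, leaving the two complex-conjugate main terms
\[
H_k(0,v,-z)=\Bigl[\frac{\Gamma(k+v)}{\Gamma(k-v)}\Gamma(-2v)\,z^{-(k+v)}+\frac{\Gamma(k-v)}{\Gamma(k+v)}\Gamma(2v)\,z^{-(k-v)}\Bigr]\bigl(1+O(1/z)\bigr).
\]
With $v=it$ one has $|\Gamma(k+v)/\Gamma(k-v)|=1$, $|z^{-(k\pm v)}|=z^{-k}$, and, by the reflection formula, $|\Gamma(2it)|^2=\pi/(2|t|\sinh 2\pi|t|)$, hence $\cosh(\pi t)|\Gamma(2it)|\ll 1$ uniformly in $t$. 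This yields $\cosh(\pi t)H_k(0,v,-z)\ll z^{-k}\le z^{-k+1/2}$ once $z$ is large, which is the assertion.

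Two points need care, and the second is the real obstacle. When $t$ is small the connection formula degenerates ($b-a=-2v\to0$); here I would instead return to the Mellin--Barnes form $H_k(0,v,-z)=\tfrac12 z^{1/2-k}I_2(1/z)$ and move the contour in $I_2$ from $\Re s=\Delta\in(1-2k,-1)$ to the right of $\Re s=1$, crossing exactly the poles of $\Gamma(1/2\pm v-s/2)$ at $s=1\pm 2v$ (which for $t=0$ merge into a double pole producing the expected extra $\log z$, harmless against the power of $z$) and estimating the shifted integral by Stirling's formula, the factor $\exp(-\pi|t|/2)$ from each of $\Gamma(1/2\pm it-s/2)$ along that line cancelling $\cosh(\pi t)$. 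The delicate issue is uniformity in $k$ and $t$ simultaneously: in the hypergeometric picture the error $O(1/z)$ and the "$1+O(1/z)$" for the residual series genuinely involve the large parameters $k\pm v$, so one must either track these dependencies or---more safely---run the whole argument through the Mellin--Barnes contour shift, estimating the quotient $\Gamma(k-1/2+s/2)/\Gamma(k+1/2-s/2)$ together with the twin factors $\Gamma(1/2\pm it-s/2)$ uniformly along $\Re s=1+\epsilon$ by Stirling's asymptotic rather than a crude bound. This uniform contour estimate is, in my view, the crux of the proof.
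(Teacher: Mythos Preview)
The paper does not prove this lemma; it is quoted from \cite{BF2}. So there is no in-paper argument to compare against, only the question of whether your sketch can be completed.

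Your route through the hypergeometric connection formula is unnecessarily circuitous, and this is what creates the uniformity problem you flag at the end. The exponent $-k+1/2$ is not an asymptotic feature to be extracted from a large-$z$ expansion: it sits in the very definition of $H_\lambda$ for negative argument. Since $-z<0$, the paper's definition gives directly
\[
H_k(0,v,-z)=\tfrac12\,z^{1/2-k}\,I_2(1/z),
\]
so the lemma is equivalent to $\cosh(\pi t)\,|I_2(1/z)|\ll 1$. To get a bound independent of $z$ you should shift the contour defining $I_2$ from $\Re s=\Delta\in(1-2k,-1)$ only as far as $\Re s=0$, not past $\Re s=1$ as you propose. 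No poles are crossed (those of $\Gamma(1/2\pm it-s/2)$ lie on $\Re s\ge 1$, those of $\Gamma(k-1/2+s/2)$ on $\Re s\le 1-2k$), and on $\Re s=0$ one has $|(1/z)^{s/2}|=1$, so $I_2(1/z)$ is bounded by a $z$-free integral. What remains is the single estimate
\[
\cosh(\pi t)\int_{-\infty}^{\infty}\frac{d\tau}{\sqrt{(k-1/2)^2+\tau^2/4}\,\sqrt{\cosh(2\pi t)+\cosh(\pi\tau)}}\ll 1,
\]
obtained from $|\Gamma(k-1/2+i\tau/2)/\Gamma(k+1/2-i\tau/2)|=((k-1/2)^2+\tau^2/4)^{-1/2}$ and $|\Gamma(1/2+ix)|^2=\pi/\cosh(\pi x)$. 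Your plan to cross the poles at $s=1\pm 2it$ merely reproduces, via residues, the two connection-formula terms you already wrote down, and leaves an integral on $\Re s=1+\epsilon$ with the same uniformity issues; it gains nothing over stopping at $\Re s=0$.

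So the genuine gap is not where you locate it. The ``crux'' is not controlling the $O(1/z)$ tails of the residual hypergeometric series uniformly in $k,t$---that difficulty is self-inflicted---but rather the clean $z$-independent Stirling estimate above, which is the whole content once one uses the correct starting point.
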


\begin{lem} For $z>0$, $k \geq 1$
\begin{multline}\label{eqw:h1}
\cosh{(\pi t)} H_k(0,v,-z)\ll
\frac{z^{-k}}{\sqrt{T}}\left[1+\log{(zk)}+ O\left( z^{-1/2}\frac{k+T}{\sqrt{T}}\right)\right].
\end{multline}
\end{lem}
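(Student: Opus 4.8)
The plan is to use the largeness of $z$ through the behaviour of the relevant Mellin--Barnes integral (equivalently, the hypergeometric function) near infinity. Since $-z<0\le1$, the ${}_2F_1$-representation of $H_\lambda$ proved above applies and gives
\[
\cosh(\pi t)H_k(0,v,-z)=\cosh(\pi t)\,\frac{\Gamma(k+it)\Gamma(k-it)}{\Gamma(2k)}\,{}_2F_1(k+it,k-it,2k;-z),
\]
but it is more convenient to work with the integral $I_2$ from the definition of $H_\lambda$: for $z<0$ one has $H_k(0,v,-z)=\tfrac12 z^{1/2-k}I_2(1/z)$ with
\[
I_2(1/z)=\frac{1}{2\pi i}\int_{\Re s=\Delta}\Gamma(0,v,k;s)\,z^{-s/2}\,ds,\qquad 1-2k<\Delta<-1,
\]
where $\Gamma(0,v,k;s)=\dfrac{\Gamma(k-1/2+s/2)}{\Gamma(k+1/2-s/2)}\Gamma(1/2+it-s/2)\Gamma(1/2-it-s/2)$. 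For $z\le1$ the right-hand side of the lemma majorizes a constant and the estimate is trivial (one may also quote \eqref{eqw:h2}); so assume $z>1$.

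\emph{Shifting the contour.} Between $\Re s=\Delta$ and $\Re s=2$ the integrand of $I_2(1/z)$ has only the poles of $\Gamma(1/2\pm it-s/2)$ at $s=1\mp2it$; by Stirling the horizontal segments at $\pm i\infty$ are harmless, so moving the contour to $\Re s=2$ and collecting residues yields
\[
I_2(1/z)=2\,\frac{\Gamma(k-it)}{\Gamma(k+it)}\Gamma(2it)\,z^{-1/2+it}+2\,\frac{\Gamma(k+it)}{\Gamma(k-it)}\Gamma(-2it)\,z^{-1/2-it}+\frac{1}{2\pi i}\int_{\Re s=2}\Gamma(0,v,k;s)\,z^{-s/2}\,ds.
\]
Multiplying by $\cosh(\pi t)\tfrac12 z^{1/2-k}$, the residue terms assemble into the main term
\[
\mathcal{M}=\cosh(\pi t)\,z^{-k}\left[\frac{\Gamma(k-it)}{\Gamma(k+it)}\Gamma(2it)\,z^{it}+\frac{\Gamma(k+it)}{\Gamma(k-it)}\Gamma(-2it)\,z^{-it}\right],
\]
together with a remainder $\mathcal{R}$ coming from the integral over $\Re s=2$.

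\emph{The main term.} If $|t|\ge1$, estimate the two summands of $\mathcal{M}$ separately: $|\Gamma(k\mp it)/\Gamma(k\pm it)|=|z^{\pm it}|=1$ and $\cosh(\pi t)|\Gamma(2it)|\ll\bigl(|t|\tanh(\pi|t|)\bigr)^{-1/2}\ll T^{-1/2}$, hence $\mathcal{M}\ll z^{-k}T^{-1/2}$. If $|t|\le1$ the individual summands have a pole at $t=0$, but their sum is regular; writing $A(t)=\frac{\Gamma(k-it)}{\Gamma(k+it)}\Gamma(1+2it)z^{it}$ (so $|A(t)|=|\Gamma(1+2it)|\le1$) and using $\Gamma(\pm2it)=\Gamma(1\pm2it)/(\pm2it)$, one finds $\mathcal{M}=\cosh(\pi t)\,z^{-k}\,\Im A(t)/t$. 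Since $\Im A(t)=|A(t)|\sin\theta(t)$ with $\theta(t)=\arg A(t)$ odd and $\theta(0)=0$, the mean value theorem gives $|\Im A(t)|\le|\theta(t)|\le|t|\sup_{|\tau|\le1}|\theta'(\tau)|$, where $\theta'(\tau)=2\Re\psi(1+2i\tau)-2\Re\psi(k+i\tau)+\log z\ll1+\log k+\log z\ll1+\log(zk)$ for $z,k\ge1$. As $\cosh(\pi t)\asymp1$ and $T\asymp1$ on $|t|\le1$, this yields $\mathcal{M}\ll z^{-k}(1+\log(zk))\asymp\frac{z^{-k}}{\sqrt T}(1+\log(zk))$. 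In both ranges $\mathcal{M}\ll\frac{z^{-k}}{\sqrt T}(1+\log(zk))$, which is the main term of the lemma.

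\emph{The remainder, and the main difficulty.} On $\Re s=2$ one has $|z^{-s/2}|=z^{-1}$, so $\mathcal{R}\ll z^{-k-1/2}\cosh(\pi t)\int_{\Re s=2}|\Gamma(0,v,k;s)|\,|ds|$. There $|\Gamma(k-1/2+s/2)/\Gamma(k+1/2-s/2)|=|k-1/2+iy/2|$ and $|\Gamma(-1/2+iw)|=\sqrt{\pi}\bigl((1/4+w^2)\cosh\pi w\bigr)^{-1/2}$, while $\cosh\pi(t-y/2)\cosh\pi(t+y/2)=\tfrac12(\cosh2\pi t+\cosh\pi y)\gg\cosh^2\pi t$; combining these with the elementary bound $\int_{\R}\bigl((\tfrac14+(t-x)^2)(\tfrac14+(t+x)^2)\bigr)^{-1/2}dx\ll(1+|t|)^{-1}\log(2+|t|)$ shows that the $s$-integral is of size $(k+T)T^{-1}$ up to a logarithm, so $\mathcal{R}\ll z^{-k-1/2}(k+T)T^{-1}$, which is exactly the order $\frac{z^{-k}}{\sqrt T}\cdot z^{-1/2}\frac{k+T}{\sqrt T}$ of the error term. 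The genuinely delicate point is the uniform control of $\mathcal{M}$ as $t\to0$: there the two residue terms each blow up and the hypergeometric connection formula passes to its logarithmic form, which is precisely what produces the term $\log(zk)$; the mean value argument on $\theta'$ is what makes this transition uniform. A secondary technical point is keeping every Stirling ratio (such as $\Gamma(k+\tfrac12)/\Gamma(k-\tfrac12)\asymp k$) uniform in $k$, which is the source of the factor $k+T$.
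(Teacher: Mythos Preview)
Your argument is correct and is essentially the paper's own proof: a contour shift in a Mellin--Barnes integral that picks up the two residues $z^{-k\pm it}\Gamma(\pm 2it)\Gamma(k\mp it)/\Gamma(k\pm it)$ and leaves an integral whose modulus is exactly the paper's $J$ (your line $\Re s=2$ in $I_2$ becomes the paper's line $\Re s=-k-\tfrac12$ in the Barnes integral for ${}_2F_1$ after the substitution $r=-y/2$). The only genuine difference is the treatment of the removable singularity at $t=0$: the paper computes $H_k(0,0,-z)$ explicitly and then patches the two ranges together, whereas your mean--value argument on $\theta'(\tau)=2\Re\psi(1+2i\tau)-2\Re\psi(k+i\tau)+\log z$ gives the bound uniformly for $|t|\le 1$, which is a cleaner way to produce the $1+\log(zk)$ term; both routes yield the same estimate.
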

\begin{proof}
The Mellin-Barnes represention( formula 15.6.6 of \cite{HMF}) gives
\begin{equation}
H_k(0,v,-z)=\frac{1}{2\pi i}\int_{\Re{s}=c}\frac{\Gamma(k+it+s)\Gamma(k-it+s)\Gamma(-s)}{\Gamma(2k+s)}x^sds
\end{equation}
for $-k<c<0$.
Moving the contour of integration to $-k-1/2$ we have
\begin{multline*}
H_k(0,v,-z)=z^{-k+it}\Gamma(2it)
\frac{\Gamma(k-it)}{\Gamma(k+it)}+\\+z^{-k-it}\Gamma(-2it)\frac{\Gamma(k+it)}{\Gamma(k-it)}+
O\left(z^{-k-1/2}J\right),
\end{multline*}
where
\begin{equation*}
J=\int_{-\infty}^{+\infty}\Biggl|
 \frac{\Gamma(-1/2+i(r+t))\Gamma(-1/2+i(r-t))\Gamma(k+1/2-ir)}{\Gamma(k-1/2+ir)}\Biggr|dr.
\end{equation*}
The last integral can be estimated using the standard identities for the Gamma function
\begin{equation*}
J \ll \int_{-\infty}^{+\infty}
\frac{\sqrt{k^2+r^2}(\cosh{2\pi r}+\cosh{2\pi t})^{-1/2}}{\sqrt{1/4+(r-t)^2}\sqrt{1/4+(r+t)^2}}dr \ll
\frac{k+t}{T}\frac{\log{T}}{\exp(\pi t)}.
\end{equation*}
Therefore, for $t \neq 0$
\begin{equation*}
\cosh{\pi t}H_k(0,v,-z)\ll \frac{z^{-k}}{\sqrt{T}}\left(1+O\left(\frac{z+t}{\sqrt{zT}}\right) \right).
\end{equation*}
Computing the limit as $t \rightarrow 0$
\begin{equation*}
H_k(0,0,-z)=z^{-k}(\log{z}-2\gamma-2\psi(k))+O(z^{-k-1/2}k).
\end{equation*}
The result follows from the last two estimates.
\end{proof}

\begin{lem}
For $0<z<1$, $k \geq 1$
\begin{equation}\label{eqw:h3}
z^kH_k(0,v,z)\ll \sqrt{\frac{z}{1-z}}\frac{1}{k+t}.
\end{equation}
\end{lem}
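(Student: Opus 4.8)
\medskip
\noindent\emph{Strategy.} I would start from the representation
\[
H_k(0,v,z)=\frac{\Gamma(k+it)\Gamma(k-it)}{\Gamma(2k)}\,{}_2F_1(k+it,k-it,2k;z),\qquad v=it,
\]
i.e. the case $u=0,\ \lambda=k$ of the first lemma of this section. Because $2k=(k+it)+(k-it)$, Euler's integral applies and the two Gamma factors cancel the Beta normalisation, leaving the clean identity
\[
H_k(0,v,z)=\int_0^1\tau^{k-1-it}(1-\tau)^{k-1+it}(1-z\tau)^{-k-it}\,d\tau\qquad(0<z<1).
\]
On $\Re v=0$ the modulus of the integrand is $\tau^{k-1}(1-\tau)^{k-1}\bigl((1-\tau)+(1-z)\tau\bigr)^{-k}$, which is concentrated near $\tau=1$. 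For $z\le\tfrac12$ both $z^k$ and $\sup_\tau\tau^{k-1}(1-\tau)^{k-1}$ are exponentially small in $k$, so the estimate is immediate; for very small $z$ one keeps only the leading term of the series and uses $\Gamma(k+it)\Gamma(k-it)/\Gamma(2k)\ll\min(1/k,1/|t|)\ll 1/(k+t)$ — the last two bounds coming from $B(k,k)\le 1/(2k-1)$ and from one integration by parts in the Beta integral — together with $z^k\le\sqrt z\le\sqrt{z/(1-z)}$. Thus only $z$ bounded away from $0$ and close to $1$ needs work.

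To gain the factor $1/(k+t)$ when $|t|\gtrsim k$, integrate by parts in $\tau$ against the phase $\Phi(\tau)=\log\frac{1-\tau}{\tau(1-z\tau)}$, which has $\Phi'(\tau)=-\frac1{\tau(1-\tau)}+\frac z{1-z\tau}<0$ on $(0,1)$, so there is no stationary point. With $A(\tau)=\tau^{k-1}(1-\tau)^{k-1}(1-z\tau)^{-k}$ one computes $A/\Phi'=-\tau^k(1-\tau)^k(1-z\tau)^{1-k}/\bigl((1-z)+z(1-\tau)^2\bigr)$, which vanishes at $\tau=0$ and $\tau=1$; hence the boundary terms drop out and $|H_k(0,v,z)|\ll|t|^{-1}\operatorname{Var}_{[0,1]}(A/\Phi')$. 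A direct analysis shows $|A/\Phi'|$ is maximised near $\tau=1-\sqrt{1-z}$ with size $\ll(1-z)^{-1/2}$, so $\operatorname{Var}_{[0,1]}(A/\Phi')\ll(1-z)^{-1/2}$; with $z^k\le\sqrt z$ this gives $z^kH_k(0,v,z)\ll|t|^{-1}\sqrt{z/(1-z)}$, which is the claimed bound for $|t|\gtrsim k$.

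The remaining case $|t|\lesssim k$ is where the real difficulty lies, since the factor $1/(k+t)$ must now be extracted from the $z$-variable. Dropping the unimodular factors, one must bound $z^k\int_0^1\tau^{k-1}(1-\tau)^{k-1}\bigl((1-\tau)+(1-z)\tau\bigr)^{-k}d\tau$, which I would treat by Laplace's method after the substitution $\tau=1-\rho$, $\rho=(1-z)r$: the saddle sits at $\rho\asymp\sqrt{1-z}$, its Gaussian contribution being exponentially small as soon as $k^2(1-z)\gtrsim1$, while the range $r\lesssim k^2(1-z)$ contributes only of logarithmic size, $\ll 1+\bigl|\log(k^2(1-z))\bigr|$. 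Since $1+|\log w|\ll w^{-1/2}$ for $0<w<1$ with $w=k^2(1-z)$, and since $z^k\le e^{-k(1-z)}$ disposes of the complementary range, one arrives at $z^kH_k(0,v,z)\ll k^{-1}(1-z)^{-1/2}\ll(k+t)^{-1}\sqrt{z/(1-z)}$. The obstacle I anticipate is precisely this bookkeeping near $z=1$: ${}_2F_1(k+it,k-it,2k;z)$ has a logarithmic singularity at $z=1$ whose coefficient is as large as $\asymp\log(k^2+t^2)$, so the bound depends on the cancellation already present in the Euler integral, and the subranges of $1-z$ must be estimated separately and glued together. A fallback would be to work instead from the Mellin--Barnes representation of $H_k$ supplied by its definition through $I_1$, to shift the contour past its only pole at $s=1-2k$ (which produces the main term $z^k\Gamma(k+it)\Gamma(k-it)/\Gamma(2k)$), and to bound the shifted vertical integral after one integration by parts in $s$, that integral being only conditionally convergent; since the same kernel appears in $I(z)$ of Section~\ref{section4}, this route also links the estimate to Corollary~\ref{boundI}, but it requires the same care near $z=1$.
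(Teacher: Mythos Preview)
Your route through the Euler integral is genuinely different from the paper's, and the hard case you flag---extracting $1/k$ from the $\tau$-integral when $|t|\lesssim k$---is exactly the place where your sketch is not yet a proof. The Laplace bookkeeping you outline (saddle at $\rho\asymp\sqrt{1-z}$, logarithmic range, then $1+|\log w|\ll w^{-1/2}$) can probably be made to work, but as written it does not control the transition regime $k(1-z)\asymp 1$, and your variation bound for $A/\Phi'$ in the $|t|\gtrsim k$ case also needs an argument that the function has $O(1)$ monotone pieces, not just the stated maximum.

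The paper sidesteps all of this by one algebraic move that you did not try: apply the Pfaff transformation ${}_2F_1(a,b,c;z)=(1-z)^{-a}{}_2F_1(a,c-b,c;z/(z-1))$ with $a=k-v$, $b=k+v$, $c=2k$, so that
\[
z^kH_k(0,v,z)=\frac{\Gamma(k+v)}{\Gamma(k-v)}\,\frac{z^k}{(1-z)^{k-v}}\cdot\frac{1}{2\pi i}\int_{(c)}\frac{\Gamma(k-v+s)^2\,\Gamma(-s)}{\Gamma(2k+s)}\Bigl(\frac{z}{1-z}\Bigr)^{s}ds,
\]
the Mellin--Barnes integral now taken in the \emph{new} argument $-z/(1-z)$. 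Shifting the contour to $\Re s=-k+1/2$ crosses no poles (the first pole of $\Gamma(k-v+s)^2$ is at $s=-k+v$) and the prefactor $z^k(1-z)^{-k}$ combines with $(z/(1-z))^{-k+1/2}$ to give exactly $\sqrt{z/(1-z)}$. What remains is
\[
\int_{-\infty}^{+\infty}\frac{dr}{\cosh\pi(r-t)\,\sqrt{r^2+(k-1/2)^2}}\ \ll\ \frac{1}{k+|t|},
\]
since $|\Gamma(1/2+i(r-t))|^2=\pi/\cosh\pi(r-t)$ and $|\Gamma(k-1/2-ir)/\Gamma(k+1/2+ir)|=(r^2+(k-1/2)^2)^{-1/2}$. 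The last integral is trivially $\ll1/k$ and, localising near $r=t$, also $\ll1/|t|$; no case split on $t$ versus $k$, no stationary-phase or Laplace analysis, and no separate treatment of $z$ near $1$. Your Euler-integral approach is more hands-on and would presumably give the same bound after the missing details are filled in, but the Pfaff step is precisely the trick that converts the awkward endpoint $z=1$ into the harmless point at infinity of the transformed variable, and this is what makes the paper's argument a few lines long.
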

\begin{proof}
Using the relation
\begin{equation*}
{}_1F_2(a,b,c;z)=(1-z)^{-a}{}_1F_2(a,c-b,c;z/(z-1))
\end{equation*}
we have
\begin{multline*}
z^kH_k(0,v,z)=\frac{\Gamma(k-v)\Gamma(k+v)}{\Gamma(2k)}\frac{z^k}{(1-z)^{k-v}}\times\\
\times{}_1F_2\left(k-v,k-v,2k;-\frac{z}{1-z}\right)=\frac{\Gamma(k+v)}{\Gamma(k-v)}\frac{z^k}{(1-z)^{k-v}}\times\\ \times
\frac{1}{2\pi i}\int_{\Re{s}=c}\frac{\Gamma^2(k-v+s)\Gamma(-s)}{\Gamma(2k+s)}\left( \frac{z}{1-z}\right)^sds,
\end{multline*}
where
$-k<c<0$. Moving the contour of integration to $c=-k+1/2$ we obtain
\begin{equation*}
z^kH_k(0,v,z)\ll\sqrt{\frac{z}{1-z}}\int_{-\infty}^{+\infty}\frac{dr}{\cosh{\pi(r-t)}\sqrt{r^2+(k-1/2)^2}}.
\end{equation*}
Next, we estimate the last integral by splitting it into two parts
$$\int_{-\infty}^{+\infty}=\int_{-\infty}^{0}+\int_{0}^{+\infty}.$$ Let $t\geq0$ then
\begin{equation*}
\int_{-\infty}^{0}\frac{dr}{\cosh{\pi(r-t)}\sqrt{r^2+(k-1/2)^2}}\ll \frac{\exp{(-\pi t)}}{k}.
\end{equation*}
For $t<k-1/2$
\begin{equation*}
\int_{0}^{+\infty}\frac{dr}{\cosh{\pi(r-t)}\sqrt{r^2+(k-1/2)^2}}
\ll \frac{1}{k}
\end{equation*}
and for $t>k-1/2$
\begin{equation*}
\int_{0}^{+\infty}\frac{dr}{\cosh{\pi(r-t)}\sqrt{r^2+(k-1/2)^2}}\ll \frac{1}{t}.
\end{equation*}
This yields the result.
\end{proof}

\begin{lem}(\cite{BF2}, page $2$)
For $0<z\leq 1/2$, $k \geq 1$
\begin{equation}\label{eqw:h4}
z^kH_k(0,v,z)\ll \frac{z^k}{2^k}.
\end{equation}
\end{lem}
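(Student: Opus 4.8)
The plan is to collapse $H_k(0,v,z)$ into a single elementary integral by combining the hypergeometric representation of $H_\lambda$ with Euler's integral formula for ${}_2F_1$. By the lemma expressing $H_\lambda$ in terms of ${}_2F_1$, taken with $\lambda=k$ and $u=0$,
\[
H_k(0,v,z)=\frac{\Gamma(k+v)\Gamma(k-v)}{\Gamma(2k)}\,{}_2F_1(k+v,k-v,2k;z).
\]
Since $\Re v=0$ and $k\ge 1$, the parameters $a=k+v$, $b=k-v$, $c=2k$ satisfy $\Re c>\Re b>0$, and $0<z\le 1/2<1$, so Euler's representation ${}_2F_1(a,b,c;z)=\frac{\Gamma(c)}{\Gamma(b)\Gamma(c-b)}\int_0^1\xi^{b-1}(1-\xi)^{c-b-1}(1-z\xi)^{-a}\,d\xi$ is available. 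Here $c-b=k+v$, so the Gamma factors cancel exactly against the prefactor and one is left with the clean identity
\[
H_k(0,v,z)=\int_0^1\xi^{k-v-1}(1-\xi)^{k+v-1}(1-z\xi)^{-(k+v)}\,d\xi .
\]

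Next I would take absolute values. Writing $v=i\,\Im v$ with $\Im v\in\R$, each of the factors $\xi^{-v}$, $(1-\xi)^{v}$, $(1-z\xi)^{-v}$ has modulus $1$ on $0<\xi<1$ (because $0<z<1$ keeps $1-z\xi>0$), so the modulus of the integrand does not see $v$ at all, and
\[
\bigl|H_k(0,v,z)\bigr|\le\int_0^1\xi^{k-1}(1-\xi)^{k-1}(1-z\xi)^{-k}\,d\xi .
\]
For $0<z\le 1/2$ one has $1-z\xi\ge 1-z\ge 1/2$, hence $(1-z\xi)^{-k}\le 2^k$, and therefore
\[
z^k\bigl|H_k(0,v,z)\bigr|\le(2z)^k\int_0^1\xi^{k-1}(1-\xi)^{k-1}\,d\xi=(2z)^k\,\frac{\Gamma(k)^2}{\Gamma(2k)} .
\]

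Finally, the Beta integral is controlled by $\Gamma(k)^2/\Gamma(2k)\ll 4^{-k}$ for $k\ge 1$: the Legendre duplication formula gives $\Gamma(k)^2/\Gamma(2k)=\sqrt{\pi}\,\Gamma(k)/\bigl(2^{2k-1}\Gamma(k+1/2)\bigr)$, and $\Gamma(k)/\Gamma(k+1/2)\ll k^{-1/2}$ (Gautschi's inequality), or one may argue by induction from the value $1$ at $k=1$ using that consecutive ratios equal $k/\bigl(2(2k+1)\bigr)\le 1/4$. Combining this with the last display yields $z^k H_k(0,v,z)\ll (2z)^k 4^{-k}=z^k/2^k$, which is exactly \eqref{eqw:h4}. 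I do not expect a real obstacle here: the only point requiring care is the passage to Euler's integral, where one must check $\Re c>\Re b>0$ and that $z$ stays bounded away from $1$ so the branch of $(1-z\xi)^{-a}$ is unambiguous; the feature that makes the estimate both sharp in the power $2^{-k}$ and entirely free of $v$ is the exact cancellation $c-b=k+v$, which eliminates all Gamma factors. As an alternative, one could instead start from the Mellin--Barnes representation of $H_k$ used in the proof of \eqref{eqw:h1} and shift the contour, but the Euler-integral route is shorter.
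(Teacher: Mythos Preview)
Your proof is correct. The paper does not supply its own argument for this lemma; it simply cites \cite{BF2}, so there is no in-paper proof to compare against. Your route via Euler's integral representation is clean and self-contained: the key observation that $c-b=k+v$ forces an exact cancellation of the prefactor $\Gamma(k+v)\Gamma(k-v)/\Gamma(2k)$ against the Gamma factors in Euler's formula, leaving an integral whose modulus is independent of the purely imaginary $v$, is precisely what makes the bound uniform in $v$. The remaining estimate $\Gamma(k)^2/\Gamma(2k)\ll 4^{-k}$ is handled correctly either by duplication plus $\Gamma(k)/\Gamma(k+1/2)\ll k^{-1/2}$ or by your induction (the ratio of consecutive terms is $k/(2(2k+1))\le 1/4$, with value $1$ at $k=1$), so the implied constant can be taken as $4$.
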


\begin{lem}\label{lem:h}
Let $\Re{v}=0$ and $\Re{\lambda}>\Re{u}>0$. For $0<z<1$
\begin{multline}\label{eq:Hk+}
2 \cos{\pi(\lambda-u)}H_{\lambda}(u,v,1/z)=\\=2\pi 2^{2u}z^{\lambda-u}\int_{0}^{\infty}J_{2\lambda-1}(x)k^{+}(x\sqrt{z},1/2+v)x^{-2u}dx.
\end{multline}
\end{lem}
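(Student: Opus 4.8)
The plan is to prove the identity by computing the right-hand side via its Mellin–Barnes data and recognizing the result as the defining integral $I_1(z)$ of $H_\lambda(u,v,1/z)$ in the region $z>0$. First I would insert the Mellin–Barnes representation \eqref{Barnes} for $J_{2\lambda-1}(x)$, namely
\begin{equation*}
J_{2\lambda-1}(x)=\frac{1}{4\pi i}\int\limits_{\Re s'=\Delta'}\frac{\Gamma(\lambda-1/2+s'/2)}{\Gamma(\lambda+1/2-s'/2)}\left(\frac{x}{2}\right)^{-s'}ds',
\end{equation*}
with $1-2\Re\lambda<\Delta'<0$, into the right-hand side of \eqref{eq:Hk+}, and interchange the $s'$-integral with the $x$-integral. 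The justification for this interchange is where the hypotheses $\Re v=0$ and $\Re\lambda>\Re u>0$ enter: the factor $x^{-2u}$ with $\Re u>0$ controls the behaviour near $x=0$, the oscillation of $k^+$ together with $\Re\lambda>\Re u$ controls the tail, and $\Re v=0$ keeps $k^+(x\sqrt z,1/2+v)$ bounded in $x$ via the integral representation \eqref{k+a}; one may also insert a convergence factor $\exp(-bx)$ and let $b\to 0$ at the end, exactly as in the proof of Theorem \ref{decompI}.

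Next I would evaluate the resulting inner $x$-integral
\begin{equation*}
\int_0^\infty x^{-s'-2u}k^+(x\sqrt z,1/2+v)\,dx,
\end{equation*}
which is essentially the Mellin transform of $k^+$. Using \eqref{eq:k+}, this splits as a combination of $\int_0^\infty x^{-s'-2u}J_{2v-1}(x\sqrt z)\,dx$ and $\int_0^\infty x^{-s'-2u}J_{1-2v}(x\sqrt z)\,dx$; each is a standard Mellin transform of a Bessel function (Weber–Schafheitlin type), giving products of Gamma functions times a power of $z$, valid precisely in a vertical strip determined by $\Re u$ and $\Re\lambda$. After combining the two pieces via the $1/(2\cos\pi v)$ factor and the reflection formula, the $z$-power will be $z^{(s'+2u-1)/2+\ldots}$ and the Gamma factors will assemble — after the standard change of variables relating $s'$ to the variable $s$ of $I_1$ — into exactly $\Gamma(u,v,\lambda;s)\sin\pi(u+s/2)$. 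Here the $\sin\pi(u+s/2)$ emerges from the combination of the trigonometric factor in $k^+$ with the reflection formulas for the Gamma functions coming from the two Bessel Mellin transforms; this bookkeeping is the main obstacle, since one must track the poles, the strip of validity, and the normalizing constant $2\pi\,2^{2u}$ so that the constant and the contour match the definition of $I_1$ and of $H_\lambda$ on $z>0$.

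Finally, collecting the power of $z$ out front and comparing with $H_\lambda(u,v,1/z)=\frac{1}{2\cos\pi(\lambda-u)}z^{\lambda-1/2}I_1(z)$ (reading the $z>0$ branch of the definition with argument $1/z$, so that the prefactor is $(1/z)^{1/2-\lambda}=z^{\lambda-1/2}$), one identifies $2\cos\pi(\lambda-u)H_\lambda(u,v,1/z)$ with the claimed integral. I expect the only real subtlety beyond the routine Gamma-function algebra to be the contour placement: the strip $1-2\Re\lambda<\Delta<-1-2\Re u$ in the definition of $I_1$ must be reconciled with the admissible strip for the Bessel Mellin transform and with $\Delta'$ from \eqref{Barnes}, which is exactly why the lemma is stated under $\Re\lambda>\Re u>0$ rather than $\Re u\ge 0$. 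Once the strips are shown compatible (shifting contours across no poles in the overlap region), the identity follows, and the case $0<z<1$ in the statement corresponds to $1/z>1$, which is the regime where $I_1$ converges absolutely.
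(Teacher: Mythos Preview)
Your proposal is correct and is essentially the same computation as the paper's, just run in the opposite direction: the paper starts from the definition $2\cos\pi(\lambda-u)H_\lambda(u,v,1/z)=z^{\lambda-1/2}I_1(z)$, recognizes the integrand of $I_1$ as the product $\widehat{J}_{2\lambda-1}(s)\,\widehat{g}_1(1-s)$ with $g_1(x)=x^{-2u}k^+(x\sqrt z,1/2+v)$, and applies Mellin--Parseval to obtain the $x$-integral. The one point where the paper is cleaner than your sketch is the convergence: rather than inserting a factor $e^{-bx}$, the paper checks directly that the Parseval identity holds in the smaller range $\Re u>1/4$, $\Re v=0$ (where both $\int|g_1(x)x^{-\Delta}|\,dx<\infty$ and $\int_{\Re s=\Delta}|\widehat{J}_{2\lambda-1}(s)|\,ds<\infty$ are easy), and then extends to the full region $\Re\lambda>\Re u>0$ by analytic continuation of both sides.
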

\begin{proof}
By definition
\begin{equation*}
2 \cos{\pi(\lambda-u)}H_{\lambda}(u,v,1/z)=z^{\lambda-1/2}I_1(z).
\end{equation*}
Representation \eqref{MBJ} yields that the Mellin transform of $J$-Bessel function equals
\begin{equation*}
\widehat{J}_{2\lambda-1}(s)=2^{s-1}\frac{\Gamma(\lambda-1/2+s/2)}{\Gamma(\lambda+1/2-s/2)}, \quad 1-2\lambda<\Re{s}<3/2.
\end{equation*}
Let
\begin{equation*}
\gamma(1/2-u-s/2,1/2+v):=\frac{2^{-2u-s}}{\pi}\Gamma(1/2-u-s/2+v)\Gamma(1/2-u-s/2-v),
\end{equation*}
\begin{equation*}
g_1(x)=:x^{-2u}k^{+}(x\sqrt{z},1/2+v),
\end{equation*}
where $k^+$ is defined by \eqref{eq:k+}. Then
\begin{equation*}
z^{s/2}\gamma(1/2-u-s/2,1/2+v)\sin{\pi (u+s/2)}=z^{1/2-u}\widehat{g}_1(1-s).
\end{equation*}
Therefore,
\begin{equation*}
I_1(z)=z^{1/2-u}\frac{2\pi2^{2u}}{2\pi i}\int_{\Re{s}=\Delta}\widehat{J}_{2\lambda-1}(s)\widehat{g}_1(1-s)ds.
\end{equation*}
Note that for $1/2-2\Re{u}<\Delta<1-2u$
\begin{equation*}
\int_{0}^{\infty}|g_1(x)x^{-\Delta}|dx<\infty
\end{equation*}
and for $\Delta<0$ we have
\begin{equation*}
\int_{\Re{s}=\Delta}\Biggl| \widehat{J}_{2\lambda-1}(s) \Biggr|ds<\infty.
\end{equation*}
Therefore, for $\Re{u}>1/4$ and $\Re{v}=0$
\begin{equation*}
I_1(z)=2\pi 2^{2u}z^{1/2-u}\int_{0}^{\infty}g_1(x)J_{2\lambda-1}(x)dx.
\end{equation*}
Since
both parts of equation \eqref{eq:Hk+} are analytic functions in the larger
region $\Re{\lambda}>\Re{u}>0$
the result extends to this region.
\end{proof}


\section{The second moment: $\nu \geq 3$}\label{section6}
We prove theorem \ref{thm:main} for $k\geq 2$. The most delicate and technical case $k=1$ follows by combining our results with the methods developed in \cite{BF}. To keep the length of the paper reasonable, we omit the details and note that the case $k=1$ requires just minor modifications comparing to \cite{BF}.
\begin{lem}\label{lemM2}
Suppose that $\Re{u}>1/2$, $\Re{v}=0$, $k\geq 2$, $N=p^{\nu}$ and $\nu \geq 3$. Then
\begin{equation}
M_2(l,u,v)=S(l,u,v;N)-\frac{1}{p}S(l,u,v;N/p),
\end{equation}
where
\begin{multline}\label{eq:s}
S(l,u,v;N)=\\  \id_{(l,p)=1}\frac{1}{l^{1/2+u-v}}\sum_{d|l}d^{1/2+u-v}\sum_{\substack{m,n=1\\(mn,p)=1}}^{\infty}\frac{(m/n)^v}{(mn)^{1/2+u}}\Delta_{2k,N}\left( md,n\right).
\end{multline}
\end{lem}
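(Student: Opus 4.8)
The plan is to expand the product of the two $L$-functions in the definition \eqref{def:moment} as an absolutely convergent double Dirichlet series (valid since $\Re u > 1/2$), interchange the summation over $f$ with the sums over $m,n$, and recognise the resulting inner sum over $f \in H_{2k}^*(N)$ of $\lambda_f(l)\lambda_f(m)\lambda_f(n)$ as something the Petersson formula of Rouymi (Theorem \ref{PetRou}) can handle. First I would write
\begin{equation*}
L_f(1/2+u+v)L_f(1/2+u-v)=\sum_{m,n\geq 1}\frac{(m/n)^v}{(mn)^{1/2+u}}\lambda_f(m)\lambda_f(n),
\end{equation*}
so that
\begin{equation*}
M_2(l,u,v)=\sum_{m,n\geq 1}\frac{(m/n)^v}{(mn)^{1/2+u}}\,\Delta_{2k,N}^{*}(l,mn\text{-type term}),
\end{equation*}
but the product $\lambda_f(m)\lambda_f(n)$ is not immediately $\lambda_f$ of a single argument, so one must first use the Hecke multiplicativity relation \eqref{multiplicity} to combine $\lambda_f(l)$ with one of the other coefficients. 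The cleanest route is to use \eqref{multiplicity} on $\lambda_f(l)\lambda_f(m)$, writing it as $\sum_{d\mid (l,m),\,(d,p)=1}\lambda_f(lm/d^2)$, which after reindexing produces the factor $\sum_{d\mid l}d^{1/2+u-v}$ and the argument $md$ appearing in \eqref{eq:s}; the coprimality condition $(d,p)=1$ is automatic once we observe (via Theorem \ref{PetRou}, last case) that the whole expression vanishes unless $(N,lmn)=1$.

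Next I would substitute Rouymi's formula \eqref{eq:rtrace} for $\nu\geq 3$, which expresses $\Delta_{2k,N}^{*}(\,\cdot\,,\cdot\,)$ as $\Delta_{2k,N}(\,\cdot\,,\cdot\,)-p^{-1}\Delta_{2k,N/p}(\,\cdot\,,\cdot\,)$ precisely on the set $(N,lmn)=1$. This is exactly where the two-term structure $S(l,u,v;N)-p^{-1}S(l,u,v;N/p)$ comes from: the full-level Petersson sum gives the $S(l,u,v;N)$ piece and the lower-level correction gives the $p^{-1}S(l,u,v;N/p)$ piece, with the indicator $\id_{(l,p)=1}$ recording the vanishing when $p\mid l$. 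One must check that the coprimality conditions match up: in $S(l,u,v;N)$ the sum is restricted to $(mn,p)=1$, which is consistent since $\Delta_{2k,N}(md,n)$ in the relevant regime forces $(N,\cdot)=1$ anyway, and similarly for $N/p$; keeping track of whether the restriction is to modulus $p$ or to modulus $N$ is a small bookkeeping point but not a real difficulty.

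The main obstacle is justifying the interchange of the (finite) sum over $f$ with the infinite double sum over $m,n$ and the rearrangement via \eqref{multiplicity}: one needs absolute convergence of $\sum_{m,n}(mn)^{-1/2-\Re u}|\lambda_f(m)\lambda_f(n)|$ uniformly, which holds for $\Re u>1/2$ by Deligne's bound $|\lambda_f(n)|\leq \tau_0(n)$ (or just the Rankin–Selberg bound), together with the fact that $H_{2k}^{*}(N)$ is finite so the harmonic sum is a finite linear combination. Once absolute convergence is in hand, Fubini and the Hecke relation are purely formal, and the identity \eqref{eq:s} together with the two-term decomposition follows by collecting terms. I expect no analytic subtlety beyond this; the content is entirely in correctly matching the combinatorics of \eqref{multiplicity} with the index shift $n\mapsto md$ and in invoking \eqref{eq:rtrace} on the correct coprimality locus.
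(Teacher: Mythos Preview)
Your proposal is correct and follows essentially the same route as the paper: expand the double $L$-series, apply the Hecke relation \eqref{multiplicity} to $\lambda_f(l)\lambda_f(m)$, reindex $d\mapsto l/d$ and $m\mapsto dm$ to obtain the form \eqref{eq:s}, observe via the last case of \eqref{eq:rtrace} that everything vanishes unless $(l,p)=1$ (so that $(d,p)=1$ is automatic for $d\mid l$), and then split $\Delta_{2k,N}^{*}=\Delta_{2k,N}-p^{-1}\Delta_{2k,N/p}$ on the locus $(mn,p)=1$. The only slip is cosmetic: the reindexing is in $m$ and $d$, not ``$n\mapsto md$'', and the condition $(mn,N)=1$ is the same as $(mn,p)=1$ since $N=p^{\nu}$, so your ``bookkeeping point'' is in fact empty.
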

\begin{proof}
It follows from definition that
\begin{equation*}
M_2(l,u,v)=\sum_{m,n=1}^{\infty}\frac{1}{(mn)^{1/2+u}}\left( \frac{m}{n}\right)^{v}\sum_{f \in H_{2k}^{*}(N)}^{h}\lambda_f(l)\lambda_f(m)\lambda_f(n).
\end{equation*}
Property of multiplicity \eqref{multiplicity} yields
\begin{multline*}
M_2(l,u,v)=\sum_{\substack{d|l\\ (d,p)=1}}\sum_{ m \equiv 0\Mod{d}}
\sum_{n =1}^{\infty}\frac{1}{(mn)^{1/2+u}}\left( \frac{m}{n}\right)^v\Delta^{*}_{2k,N}\left( \frac{ml}{d^2},n\right)=\\
=\sum_{\substack{d|l\\ (d,p)=1}}\frac{1}{d^{1/2+u-v}}\sum_{m,n=1}^{\infty}\frac{1}{(mn)^{1/2+u}}
\left( \frac{m}{n}\right)^v\Delta^{*}_{2k,N}\left( \frac{ml}{d},n\right).
\end{multline*}
According to theorem \ref{PetRou}
\begin{equation*}\Delta_{2k,N}^{*}\left( \frac{ml}{d},n\right)\neq  0\text{ only if }\left( \frac{ml}{d}n,p\right)=1.
\end{equation*}
Hence
$\left(\frac{l}{d},p\right)=1$.  Since $(d,p)=1$, we have $(l,p)=1$. Note that the condition $(d,p)=1$ in summation over $d$ is satisfied if $(l,p)=1$ because $d|l$. Therefore,
\begin{multline*}
M_2(l,u,v)=\sum_{d|l}\frac{\id_{(l,p)=1}}{d^{1/2+u-v}}\sum_{\substack{m,n=1\\(mn,p)=1}}^{\infty}\frac{1}{(mn)^{1/2+u}}\left(\frac{m}{n} \right)^{v}\Delta^{*}_{2k,N}\left( \frac{ml}{d},n\right)=\\=
\frac{\id_{(l,p)=1}}{l^{1/2+u-v}}\sum_{d|l}d^{1/2+u-v}\sum_{\substack{m,n=1\\(mn,p)=1}}^{\infty}\frac{1}{(mn)^{1/2+u}}\left(\frac{m}{n} \right)^{v}\Delta^{*}_{2k,N}\left( md,n\right).
\end{multline*}
Applying theorem \ref{PetRou} we obtain the assertion.
\end{proof}

Define
\begin{equation}\label{t1}
T_1(l,u,v;N)=\sum_{m,n=1}^{\infty}\frac{1}{(mn)^{1/2+u}}\left(\frac{m}{n} \right)^{v}\Delta_{2k,N}\left( md,n\right),
\end{equation}
\begin{equation}\label{t2}
T_2(l,u,v;N)=\sum_{m,n=1}^{\infty}\frac{1}{(pmn)^{1/2+u}}\left(\frac{pm}{n} \right)^{v}\Delta_{2k,N}\left( pmd,n\right),
\end{equation}
\begin{equation}\label{t3}
T_3(l,u,v;N)=\sum_{\substack{m,n=1\\ (m,p)=1}}^{\infty}\frac{1}{(pmn)^{1/2+u}}\left(\frac{m}{pn} \right)^{v}\Delta_{2k,N}\left( md,pn\right)
\end{equation}
and let for $i=1,2,3$
\begin{equation}\label{si}
S_i(l,u,v;N)=\frac{\id_{(l,p)=1}}{l^{1/2+u-v}}\sum_{d|l}d^{1/2+u-v}T_i(l,u,v;N).
\end{equation}
\begin{lem}\label{lemS=S1-S2-S3}
The following decomposition takes place
\begin{equation}
S(l,u,v;N)=S_1(l,u,v;N)-S_2(l,u,v;N)-S_3(l,u,v;N).
\end{equation}
\end{lem}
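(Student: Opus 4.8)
The plan is to establish the identity one divisor $d\mid l$ at a time, by a single inclusion--exclusion on the prime $p$, and then to reinstate the outer weights $d^{1/2+u-v}$ and sum. First I would record that everything in sight converges absolutely: $H_{2k}(N)$ is finite and, by Deligne's bound for the underlying primitive forms, the normalised coefficients of each $f\in H_{2k}(N)$ satisfy $\lambda_f(n)\ll_{N,k,\epsilon}n^{\epsilon}$, whence $\Delta_{2k,N}(md,n)\ll_{N,k,\epsilon}(mdn)^{\epsilon}$. Therefore, for $\Re u>1/2$ and $\Re v=0$, the inner double series in \eqref{eq:s} and those defining $T_1$, $T_2$, $T_3$ in \eqref{t1}, \eqref{t2}, \eqref{t3} all converge absolutely, so the rearrangements below are legitimate.

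Next, fix a divisor $d\mid l$ (on the support of $\id_{(l,p)=1}$ one has $(d,p)=1$) and let $\Sigma(d)$ be the inner double sum of \eqref{eq:s}, that is, the sum of $(m/n)^{v}(mn)^{-1/2-u}\Delta_{2k,N}(md,n)$ over the pairs $(m,n)\in\N^{2}$ with $(mn,p)=1$. Partitioning $\N^{2}$ first by whether $p\mid m$, and then partitioning the pairs with $(m,p)=1$ by whether $p\mid n$, we obtain the disjoint decomposition
\[
\{(m,n)\in\N^{2}:(mn,p)=1\}=\N^{2}\setminus\bigl(\{p\mid m\}\sqcup\{(m,p)=1,\ p\mid n\}\bigr),
\]
so $\Sigma(d)$ equals the sum of the same general term over all of $\N^{2}$, minus the sum over $p\mid m$, minus the sum over $(m,p)=1,\ p\mid n$. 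The first of these is exactly the inner sum of $T_{1}$. In the second, the substitution $m\mapsto pm$ turns the general term into $(pm/n)^{v}(pmn)^{-1/2-u}\Delta_{2k,N}(pmd,n)$, the inner sum of $T_{2}$. In the third, the substitution $n\mapsto pn$ (with $(m,p)=1$ preserved) turns it into $(m/(pn))^{v}(pmn)^{-1/2-u}\Delta_{2k,N}(md,pn)$, the inner sum of $T_{3}$. Hence $\Sigma(d)$ equals the inner sum of $T_{1}$ minus that of $T_{2}$ minus that of $T_{3}$ for every $d\mid l$; multiplying by $\id_{(l,p)=1}\,l^{-1/2-u+v}\,d^{1/2+u-v}$ and summing over $d\mid l$ gives $S(l,u,v;N)=S_{1}(l,u,v;N)-S_{2}(l,u,v;N)-S_{3}(l,u,v;N)$ by the definitions \eqref{eq:s} and \eqref{si}.

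I do not expect a genuine obstacle here, since the argument is an inclusion--exclusion followed by two re-indexings. The points that do require care are the absolute-convergence justification above and the bookkeeping: one must track how the twist factor $(m/n)^{v}$ and the normalisation $(mn)^{-1/2-u}$ transform under $m\mapsto pm$ and $n\mapsto pn$, and one must note that the two removed subsets of $\N^{2}$ are disjoint --- which is exactly why the decomposition has no further term supported on $p^{2}\mid mn$.
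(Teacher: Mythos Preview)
Your proposal is correct and follows exactly the paper's approach: the paper's proof consists of the single identity
\[
\sum_{\substack{m,n=1\\(mn,p)=1}}^{\infty}f(m,n)=\sum_{m,n=1}^{\infty}f(m,n)-\sum_{m,n=1}^{\infty}f(pm,n)-\sum_{\substack{m,n=1\\ (m,p)=1}}^{\infty}f(m,pn),
\]
which is precisely your inclusion--exclusion and re-indexing. Your added justification of absolute convergence is a welcome detail the paper leaves implicit.
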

\begin{proof}
Note that
\begin{equation*}
\sum_{\substack{m,n=1\\(mn,p)=1}}^{\infty}f(m,n)=\sum_{m,n=1}^{\infty}f(m,n)-\sum_{m,n=1}^{\infty}f(pm,n)-\sum_{\substack{m,n=1\\ (m,p)=1}}^{\infty}f(m,pn).
\end{equation*}
Applying this equality to $S(l,u,v;N)$ yields the result.

\end{proof}

\begin{lem}\label{lemS3=0}
The sums $S_3(l,u,v;N)$ and $S_3(l,u,v;N/p)$ vanish.
\end{lem}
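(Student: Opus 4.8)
The plan is to show that every term in the Petersson expansion of $\Delta_{2k,N}(md,pn)$ vanishes, so that $T_3(l,u,v;N)$ is identically zero and hence $S_3(l,u,v;N)=0$; the same argument then applies verbatim with $N$ replaced by $N/p$.

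First I would record the arithmetic of the arguments. In $T_3$ we may assume the factor $\id_{(l,p)=1}$ is nonzero, so $(l,p)=1$ and consequently $(d,p)=1$ for every $d\mid l$; combined with the summation condition $(m,p)=1$ this gives $(md,p)=1$, while trivially $p\mid pn$. Now insert the Petersson trace formula (Theorem \ref{Petersson}) for $\Delta_{2k,N}(md,pn)$. The diagonal contribution $\delta(md,pn)$ vanishes, since $md=pn$ is impossible when $p\nmid md$. For the off-diagonal sum the modulus $c$ runs over multiples of $N=p^{\nu}$, so $p^{2}\mid c$ because $\nu\ge 3$.

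It remains to invoke the vanishing of Kloosterman sums. By the symmetry $Kl(md,pn;c)=Kl(pn,md;c)$ and the fact that $p^{2}\mid c$, $p\mid pn$, $p\nmid md$, Lemma \ref{Royer} gives $Kl(md,pn;c)=0$ for every admissible $c$. Hence $\Delta_{2k,N}(md,pn)=0$ for all $m,n$ with $(m,p)=1$ and all $d\mid l$, so $T_3(l,u,v;N)=0$ termwise (no convergence issue arises, each summand being literally $0$), and therefore $S_3(l,u,v;N)=0$. Replacing $N$ by $N/p=p^{\nu-1}$ and repeating the same three observations — $(md,p)=1$, $\delta(md,pn)=0$, and $p^{2}\mid c$ for every $c$ with $p^{\nu-1}\mid c$ since $\nu-1\ge 2$ — yields $S_3(l,u,v;N/p)=0$ as well. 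There is essentially no obstacle here; the only point requiring care is checking that the hypothesis $p^{2}\mid c$ of Lemma \ref{Royer} persists after passing to level $N/p$, which is exactly what the standing assumption $\nu\ge 3$ guarantees.
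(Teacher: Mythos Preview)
Your argument is correct and follows the same route as the paper: expand $\Delta_{2k,N}(md,pn)$ by the Petersson trace formula, observe that $(md,p)=1$ kills the diagonal term, and then use Lemma~\ref{Royer} (via the symmetry of Kloosterman sums) together with $p^2\mid c$ to kill every off-diagonal term. The only minor remark is that for the level-$N$ case one already has $p^2\mid c$ from $\nu\ge 2$; the hypothesis $\nu\ge 3$ is needed precisely where you use it, namely to ensure $p^2\mid c$ at level $N/p$.
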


\begin{proof}
Consider
\begin{multline*}
\Delta_{2k,N}(dm,pn)=\delta(dm,pn)+\\+2\pi i^{-2k}\sum_{q \equiv \Mod{N}}\frac{Kl(dm,pn;q)}{q}J_{2k-1}\left( 4 \pi \frac{\sqrt{dmpn}}{q}\right).
\end{multline*}
Conditions $(d,p)=1$ and $(m,p)=1$ imply that $\delta(dm,pn)=0$.
Since $q \equiv 0 \Mod{p^2}$, lemma \ref{Royer} yields that $Kl(dm,pn;q)=0$ and, therefore, $T_3(l,u,v;N)=0$.
\end{proof}
We introduce
\begin{multline}\label{DN*}
D_N^{*}(u,v;\lambda)=
\sum_{m,n=1}^{\infty}\frac{1}{(mn)^{1/2+u}}\left(\frac{m}{n}\right)^v
\times\\ \times
\sum_{q\equiv 0\Mod{N}}\frac{Kl(dm,n;q)}{q}J_{2\lambda-1}\left(4\pi\frac{\sqrt{dmn}}{q}\right).
\end{multline}
Petersson's trace formula \ref{Petersson} gives the following representation for \eqref{t1}.
\begin{lem}
Let $\Re{u}>3/4$ and $\Re{v}=0$. Then
\begin{equation}\label{eq:T1}
T_1(l,u,v;N)=\frac{\zeta(1+2u)}{d^{1/2+u+v}}+2\pi i^{2k}D_N^{*}(u,v;\lambda).
\end{equation}
\end{lem}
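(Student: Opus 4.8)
The plan is to substitute Petersson's trace formula (theorem~\ref{Petersson}) for $\Delta_{2k,N}(md,n)$ directly into the definition~\eqref{t1} of $T_1(l,u,v;N)$ and then split the diagonal term off from the Kloosterman--Bessel term. Here $d$ is a fixed positive divisor of $l$, and since $\Re v=0$ every twist $(m/n)^{v}$ has modulus one. Writing
\[
\Delta_{2k,N}(md,n)=\delta(md,n)+2\pi i^{-2k}\sum_{N\mid q}\frac{Kl(md,n;q)}{q}J_{2k-1}\!\left(\frac{4\pi\sqrt{mdn}}{q}\right),
\]
the diagonal contribution is supported on $n=md$ and equals
\[
\sum_{m=1}^{\infty}\frac{(1/d)^{v}}{(m^{2}d)^{1/2+u}}=\frac{1}{d^{1/2+u+v}}\sum_{m=1}^{\infty}\frac{1}{m^{1+2u}}=\frac{\zeta(1+2u)}{d^{1/2+u+v}},
\]
which converges since $\Re(1+2u)>1$. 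After interchanging the order of summation, the Kloosterman--Bessel part is exactly $2\pi i^{-2k}D_N^{*}(u,v;\lambda)$ with $\lambda=k$; since $i^{-2k}=(i^{2})^{-k}=(-1)^{k}=i^{2k}$, this is the asserted identity.

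The one point that genuinely needs justification is the interchange of summations in the Kloosterman--Bessel term, that is, the absolute convergence of
\[
\sum_{m,n=1}^{\infty}\frac{1}{(mn)^{1/2+\Re u}}\sum_{N\mid q}\frac{|Kl(md,n;q)|}{q}\left|J_{2k-1}\!\left(\frac{4\pi\sqrt{mdn}}{q}\right)\right|.
\]
I would estimate the inner sum via Weil's bound $|Kl(md,n;q)|\le\tau_{0}(q)\sqrt{q}$ together with the elementary estimate $J_{2k-1}(x)\ll\min(x^{2k-1},x^{-1/2})$, splitting the $q$-range at $q\asymp\sqrt{mdn}$ (one of the two ranges possibly empty). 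For $N\le q\le\sqrt{mdn}$ one uses $J_{2k-1}(4\pi\sqrt{mdn}/q)\ll(q/\sqrt{mdn})^{1/2}$, producing a contribution $\ll(mdn)^{-1/4}\sum_{q\le\sqrt{mdn}}\tau_{0}(q)\ll(mdn)^{1/4+\epsilon}$; for $q>\sqrt{mdn}$ one uses $J_{2k-1}(x)\ll x^{2k-1}$, and summing the resulting rapidly decaying tail in $q$ again yields $\ll(mdn)^{1/4+\epsilon}$. Hence the inner sum is $\ll(mdn)^{1/4+\epsilon}$ uniformly, and
\[
\sum_{m,n=1}^{\infty}\frac{(mdn)^{1/4+\epsilon}}{(mn)^{1/2+\Re u}}<\infty
\]
exactly when $\Re u>3/4$, which is the hypothesis of the lemma. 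This legitimizes applying theorem~\ref{Petersson} termwise and rearranging the sums, completing the argument.

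I expect this convergence bookkeeping to be the main obstacle: the crude pointwise bound on $J_{2k-1}$ is insufficient for $k\ge2$, and one must extract decay from the $q$-sum by playing the two Bessel regimes against Weil's bound. By contrast, the diagonal evaluation and the identification of the remaining sum with $D_N^{*}(u,v;\lambda)$ are entirely routine.
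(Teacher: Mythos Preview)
Your approach is exactly the paper's: it simply states that the lemma follows from Petersson's trace formula (theorem~\ref{Petersson}) with no further detail, so your explicit evaluation of the diagonal term and identification of the off-diagonal part with $D_N^{*}$ is precisely what is intended, and your convergence analysis supplies the reason for the hypothesis $\Re u>3/4$. One small correction: the Weil bound as stated in the paper carries a factor $\sqrt{(md,n,q)}$, so your inequality $|Kl(md,n;q)|\le\tau_0(q)\sqrt{q}$ is not literally true in general; however, inserting $\sum_{q}(md,n,q)^{1/2}f(q)\le\sum_{\delta\mid(md,n)}\delta^{1/2}\sum_{\delta\mid q}f(q)$ and then $\sum_{\delta\mid(md,n)}\delta^{-1/2}\ll(mdn)^{\epsilon}$ recovers your $(mdn)^{1/4+\epsilon}$ estimate for the inner sum, and the threshold $\Re u>3/4$ survives unchanged.
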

For $\Re{s}>1+|\Re{v}|$ let
\begin{equation}
G^*(s,v;d,q)=\sum_{m,n=1}^{\infty}\frac{Kl(md,n;q)}{(mn)^s}\left( \frac{m}{n}\right)^v.
\end{equation}
This is a generalization of function $G(s,v;q)$ given on page $5$ of \cite{BF}, namely
$G^*(s,v;1,q)=G(s,v;q).$

\begin{lem}\label{lem:gstar}
For $q \in \N$ and $ v \in \C$ the function $G^*(s,v;d,q)$ can be analytically continued on the whole complex plane as a function of complex variable $s$. Furthermore, for $\Re{s}<-|\Re{v}|$  one has
\begin{multline}
G^*(s,v;d,q)=2\Gamma(1-s+v)\Gamma(1-s-v)\left(\frac{2\pi}{q}\right) ^{2s-2} \times\\
\times \left( -\cos{\pi s}\sum_{\substack{m,n=1\\(n,q)=1}}^{\infty}\frac{\delta_q(mn-d)}{(mn)^{1-s}}\left(\frac{m}{n} \right)^{-v} +\right.\\
\left.+\cos{\pi v}\sum_{\substack{m,n=1\\(n,q)=1}}^{\infty}\frac{\delta_q(mn+d)}{(mn)^{1-s}}\left(\frac{m}{n} \right)^{-v} \right).
\end{multline}
\end{lem}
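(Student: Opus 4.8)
The plan is to imitate the proof of the functional equation for $G(s,v;q)=G^*(s,v;1,q)$ given in \cite{BF}; the only new feature is the extra factor $d$ in the first entry of the Kloosterman sum, which merely shifts a residue class. First I would open the Kloosterman sum into additive characters, so that for $\Re s>1+|\Re v|$, by absolute convergence,
\begin{equation*}
G^*(s,v;d,q)=\sum_{\substack{x\bmod q\\(x,q)=1}}\left(\sum_{m=1}^{\infty}\frac{e(mdx/q)}{m^{s-v}}\right)\left(\sum_{n=1}^{\infty}\frac{e(n\overline{x}/q)}{n^{s+v}}\right).
\end{equation*}
Each inner factor is a periodic (Lerch) zeta function: $\sum_{m\geq1}e(m\theta)m^{-w}=e(\theta)\zeta(1,\theta;w)$ in the notation of Section \ref{section2}. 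Since $q\geq N>1$ and $(x,q)=1$, the fraction $\overline{x}/q$ is never an integer, so the $n$-factor is entire in $s$; the $m$-factor is entire as well unless $q\mid dx$, which for $(x,q)=1$ forces $q\mid d$ and does not occur in our application, where $d\mid l$ and $p\mid q$ while $p\nmid l$. Invoking the analytic continuation of the Lerch zeta function recalled in Section \ref{section2} therefore yields the continuation of $G^*(s,v;d,q)$ to all $s\in\C$ (holomorphic when $(d,q)=1$).

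To produce the explicit formula in the region $\Re s<-|\Re v|$, I would apply Hurwitz's formula to each of the two periodic zeta functions: for $0<\theta<1$,
\begin{equation*}
\sum_{m=1}^{\infty}\frac{e(m\theta)}{m^{w}}=\frac{\Gamma(1-w)}{(2\pi)^{1-w}}\left(e^{\pi i(1-w)/2}\zeta(\theta,0;1-w)+e^{-\pi i(1-w)/2}\zeta(1-\theta,0;1-w)\right),
\end{equation*}
where $\zeta(a,0;\sigma)=\sum_{j\geq0}(j+a)^{-\sigma}$ is the Hurwitz zeta function. Taking $w=s-v$ for the $m$-factor and $w=s+v$ for the $n$-factor, and expanding each Hurwitz zeta at a rational argument $a/q$ as $q^{1-w}$ times a Dirichlet series over the progression $a\pmod q$, the Gamma and $(2\pi)$ and $q$ powers assemble exactly into $\Gamma(1-s+v)\Gamma(1-s-v)(2\pi/q)^{2s-2}$. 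One is then left with a fourfold sum, indexed by the sign choices $\epsilon_1,\epsilon_2\in\{\pm1\}$ coming from the two applications of Hurwitz's formula, of products of monomials $m^{s-v-1}n^{s+v-1}$ over residue classes $m\equiv\epsilon_1 dx$, $n\equiv\epsilon_2\overline{x}\pmod q$, with exponential prefactors $e^{\pm\pi i(1-s+v)/2}e^{\pm\pi i(1-s-v)/2}$.

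The decisive step is then to interchange the (now finite) sum over $x$ with the sums over $m,n$. Using $(d,q)=1$, the two congruences $m\equiv\epsilon_1 dx$ and $\overline{x}\equiv\epsilon_2 n\pmod q$ are simultaneously solvable in a unique unit $x$ precisely when $(n,q)=1$ and $mn\equiv\epsilon_1\epsilon_2 d\pmod q$, which produces the factors $\delta_q(mn-d)$ (for $\epsilon_1\epsilon_2=1$) and $\delta_q(mn+d)$ (for $\epsilon_1\epsilon_2=-1$) and, crucially, removes the free $x$-summation — this is why the Kloosterman sum disappears from the answer. Finally, collecting the four exponential prefactors, the two terms with $\epsilon_1\epsilon_2=1$ contribute $e^{\pi i(1-s)}+e^{-\pi i(1-s)}=-2\cos\pi s$ and the two with $\epsilon_1\epsilon_2=-1$ contribute $e^{\pi iv}+e^{-\pi iv}=2\cos\pi v$, while the monomials combine to $(mn)^{-(1-s)}(m/n)^{-v}$; this gives precisely the stated identity.

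I expect the bookkeeping in the last two paragraphs to be the only genuinely delicate point: keeping track of the fractional parts $\{dx/q\}$ versus $1-\{dx/q\}$, the coprimality conditions, and the uniqueness of $x$ in the residue-class collapse. Convergence and the legitimacy of interchanging sums are not an issue, since one works throughout in the half-plane $\Re s<-|\Re v|$ (equivalently $\Re(1-s)>1$), where every series in sight converges absolutely, and the final formula then extends by the analytic continuation established in the second paragraph.
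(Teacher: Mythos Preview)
Your proposal is correct and follows essentially the same route as the paper: the paper simply refers to lemma~2.2 of \cite{BF} and records the one identity that changes when $d\neq 1$, namely
\[
\sum_{\substack{a,b\bmod q\\ ad\equiv m,\ b\equiv n\ (q)}}\delta_q(ab-1)=\delta_q(mn-d)\,\id_{(n,q)=1},
\]
which is exactly your ``residue-class collapse'' step after applying Hurwitz's formula to the two Lerch factors. One small remark: that collapse does not actually require $(d,q)=1$ (only $(x,q)=1$ is used), so you may drop that hypothesis from the functional-equation half of the argument; the coprimality observation is only relevant to the entirety of the $m$-factor in your second paragraph.
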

\begin{proof}
This can be proved analogously to lemma $2.2$ of \cite{BF} using the identity below
\begin{equation*}
\sum_{\substack{a,b=0\\ ad\equiv m\Mod{q}\\b \equiv n \Mod{q}}}^{q-1}\delta_q(ab-1)=\sum_{\substack{a=0\\ ad \equiv m\Mod{q}}}^{q-1}\delta_q(an-1)=\delta_q(mn-d)\id_{(n,q)=1}.
\end{equation*}
\end{proof}
Applying lemma \ref{lem:gstar} and following the arguments of lemmas $3.1$-$3.4$ of \cite{BF} we obtain
\begin{lem}
Let
$\Re \lambda-1>\Re u>3/4$  and  $\Re v=0$. Then
\begin{multline}\label{eq:DN}
D_N^{*}(u,v;\lambda)=
\frac{(2\pi)^{2u-1}}{2\pi i}\int\limits_{\Re s=\Delta}\Gamma(u,v,\lambda;s)\times \\
\times\Biggl(
\sin\pi\left(u+\frac{s}{2}\right)\sum_{q\equiv 0\Mod{N}}\sum_{\substack{m,n=1\\ (n,q)=1}}^{\infty}
\frac{\delta_q(mn-d)}{q^{2u}(mn)^{1/2-u-s/2}}\left(\frac{m}{n}\right)^{-v}+\\
+\cos\pi v\sum_{q\equiv 0\Mod{N}}\sum_{\substack{m,n=1\\ (n,q)=1}}^{\infty}
\frac{\delta_q(mn+d)}{q^{2u}(mn)^{1/2-u-s/2}}\left(\frac{m}{n}\right)^{-v}
\Biggr)\frac{ds}{d^{s/2}},
\end{multline}
where
  $1-2\Re\lambda<\Delta<-1-2\Re u.$
\end{lem}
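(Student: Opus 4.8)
The plan is to feed the Mellin--Barnes representation \eqref{MBJ} of $J_{2\lambda-1}$ into \eqref{DN*}, interchange the resulting $s$-integral with the summations, recognise the inner double Dirichlet series as the twisted Estermann-type function $G^{*}$, apply its functional equation (Lemma \ref{lem:gstar}), and finally reorganise the Gamma factors together with the powers of $2\pi$, $q$ and $d$. Observe that the hypothesis $\Re\lambda-1>\Re u$ is exactly what makes the interval $(1-2\Re\lambda,-1-2\Re u)$ for $\Delta$ nonempty, while $\Re u>3/4$ is the condition under which the double series in \eqref{DN*} converges absolutely to begin with.

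Concretely, putting $y=4\pi\sqrt{dmn}/q$ in \eqref{MBJ} gives
\[
J_{2\lambda-1}\!\left(\frac{4\pi\sqrt{dmn}}{q}\right)=\frac{1}{4\pi i}\int\limits_{\Re s=\Delta_0}\frac{\Gamma(\lambda-1/2+s/2)}{\Gamma(\lambda+1/2-s/2)}\left(\frac{2\pi\sqrt{dmn}}{q}\right)^{-s}ds
\]
for any $1-2\Re\lambda<\Delta_0<0$. Since $\Re u>3/4$ and $\Re\lambda>\Re u$, one may fix $\Delta_0\in(1-2\Re u,-1/2)$; then $\Re(1/2+u+s/2)>1$ on the line, so the $(m,n)$-sum converges absolutely, the Weil bound in the crude form $|Kl(dm,n;q)|\leq\tau_0(q)\sqrt{(n,q)}\,\sqrt q$ together with $\Re s<-1/2$ makes the $q$-sum converge, and Stirling's formula shows the $\Gamma$-ratio decays like $|\Im s|^{\Delta_0-1}$, hence is integrable over the line. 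Fubini then yields
\begin{multline*}
D_N^{*}(u,v;\lambda)=\frac{1}{4\pi i}\int\limits_{\Re s=\Delta_0}\frac{\Gamma(\lambda-1/2+s/2)}{\Gamma(\lambda+1/2-s/2)}(2\pi)^{-s}\times\\
\times\sum_{q\equiv 0\Mod{N}}\frac{G^{*}\!\left(\tfrac12+u+\tfrac s2,v;d,q\right)}{q^{1-s}}\,\frac{ds}{d^{s/2}},
\end{multline*}
since $\sum_{m,n}Kl(md,n;q)(mn)^{-1/2-u-s/2}(m/n)^{v}=G^{*}(1/2+u+s/2,v;d,q)$.

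It remains to shift the contour to $\Re s=\Delta$ with $1-2\Re\lambda<\Delta<-1-2\Re u$. No poles are crossed, because the poles of $\Gamma(\lambda-1/2+s/2)$ lie at $\Re s\leq1-2\Re\lambda<\Delta$, the reciprocal $1/\Gamma(\lambda+1/2-s/2)$ is entire, and $G^{*}(\,\cdot\,,v;d,q)$ is entire by Lemma \ref{lem:gstar}. On the new line $\Re(1/2+u+s/2)<0$, so Lemma \ref{lem:gstar} replaces $G^{*}$ by its dual expression in the $\delta_q(mn\mp d)$: there the Gamma factor equals $2\Gamma(1/2-u+v-s/2)\Gamma(1/2-u-v-s/2)$, the $q$-power equals $(2\pi/q)^{2u+s-1}$, the weight in front of $\delta_q(mn-d)$ is $-\cos\pi(\tfrac12+u+\tfrac s2)=\sin\pi(u+\tfrac s2)$, and that in front of $\delta_q(mn+d)$ is $\cos\pi v$. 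The factor $2$ reduces $\tfrac1{4\pi i}$ to $\tfrac1{2\pi i}$, the surviving Gamma factors assemble into $\Gamma(u,v,\lambda;s)$, the powers of $2\pi$ combine to $(2\pi)^{2u-1}$, those of $q$ to $q^{-2u}$, and $d^{-s/2}$ persists; this is precisely \eqref{eq:DN}. The one genuinely delicate step is the contour shift: in the intermediate strip $\Re(1/2+u+s/2)\in(0,1)$ neither the Dirichlet series for $G^{*}$ nor its dual converges absolutely, so to pass through it one needs a convexity (Phragm\'en--Lindel\"of) estimate for $G^{*}(s',v;d,q)$ that is polynomial in $q$ and in $|\Im s'|$; this simultaneously keeps $\sum_{q\equiv 0\Mod{N}}q^{s-1}G^{*}(1/2+u+s/2,v;d,q)$ holomorphic across the strip and kills the horizontal segments in the limit. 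All of this is supplied by the arguments of Lemmas $3.1$--$3.4$ of \cite{BF}, which extend to $G^{*}$ without change since the Weil bound is insensitive to the auxiliary parameter $d$.
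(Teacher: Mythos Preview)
Your argument is correct and follows exactly the route the paper intends: insert the Mellin--Barnes integral for $J_{2\lambda-1}$, recognise $G^{*}(1/2+u+s/2,v;d,q)$, shift the contour into the region $\Re(1/2+u+s/2)<0$, and apply Lemma~\ref{lem:gstar}; the paper records this simply as ``applying Lemma~\ref{lem:gstar} and following the arguments of Lemmas~3.1--3.4 of \cite{BF}'', and your write-up spells out precisely those steps, including the convexity input needed to traverse the critical strip.
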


\begin{lem}
For
$\Re \lambda-1>\Re u>3/4$  and  $\Re v=0$ one has
\begin{multline}
D_N^{*}(u,v;\lambda)=
\frac{(2\pi)^{2u-1}}{2\pi i N^{2u}}\int\limits_{\Re s=\Delta}\Gamma(u,v,\lambda;s)\times \\ \times \Biggl(
\zeta(2u)\sin{\pi\left(u+\frac{s}{2}\right)}\sum_{r|d}\frac{\mu(d/r)}{d^{1/2+u-v-s/2}}\frac{\tau_v(r)}{r^{v-2u}}+\\
+\sin{\pi\left(u+\frac{s}{2}\right)}\sum_{r|d}\frac{\mu(d/r)}{(d/r)^{1/2+u-v-s/2}}
\sum_{\substack{n \geq (1-r)/N\\n \neq 0}}\frac{\tau_u(|n|)\tau_v(nN+r)}{|n|^{u}(nN+r)^{1/2-u-s/2}}+\\
+\cos{\pi v}\sum_{r|d}\frac{\mu(d/r)}{(d/r)^{1/2+u-v-s/2}}
\sum_{n \geq (1+r)/N}\frac{\tau_u(n)\tau_v(nN-r)}{n^{u}(nN-r)^{1/2-u-s/2}}
\Biggr)\frac{ds}{d^{s/2}},
\end{multline}
where
  $1-2\Re{\lambda}<\Delta<-1-2\Re{u}.$
\end{lem}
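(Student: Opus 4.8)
The plan is to keep the contour integral and the factors $\Gamma(u,v,\lambda;s)$, $\sin\pi(u+s/2)$, $\cos\pi v$ and $d^{-s/2}$ of \eqref{eq:DN} fixed, and to evaluate in closed form the two arithmetic sums $\sum_{q\equiv 0(N)}q^{-2u}\sum_{m,n\ge 1,\,(n,q)=1}\delta_q(mn\mp d)(mn)^{-w}(m/n)^{-v}$, where $w=1/2-u-s/2$; these are the exact analogues of the sums treated in lemmas~3.1--3.4 of \cite{BF}, now carrying the extra divisor $d$ and with $N=p^\nu$ in place of $N$ prime. We work on the line $\Re s=\Delta<-1-2\Re u$ with $\Re u>3/4$, so that $\Re w>1$ and $\Re(2u)>3/2$; this makes all the triple sums below absolutely convergent and legitimises the interchanges of summation and limit used throughout.

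First I would remove the condition $(n,q)=1$ by M\"obius inversion, $\id_{(n,q)=1}=\sum_{e\mid(n,q)}\mu(e)$, and then substitute $n=en'$ and $q=Neq'$ (the latter because $N\mid q$ and, as checked next, $(e,N)=1$). This is the only step where $N=p^\nu$ is used: since $(l,p)=1$ and $d\mid l$ we have $(d,p)=1$, so $q\mid(mn\mp d)$ together with $p\mid q$ forces $(mn,p)=1$, hence $(n,p)=1$, $(e,p)=1$ and $\operatorname{lcm}(N,e)=Ne$; moreover $e\mid q\mid(emn'\mp d)$ forces $e\mid d$, so the M\"obius sum collapses to a sum over $e=d/r$ with $r\mid d$ --- this is exactly where the factor $\sum_{r\mid d}\mu(d/r)$ in the statement originates. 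After cancelling $e$, the condition $Neq'\mid(emn'\mp d)$ becomes $Nq'\mid(mn'\mp r)$, i.e.\ $mn'\mp r=Nn''$ for an integer $n''$ together with $q'\mid n''$.

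It then remains to carry out the sums over $q'$ and over $m,n'$. The sum $\sum_{q'\ge 1,\,q'\mid n''}q'^{-2u}$ equals $\zeta(2u)$ when $n''=0$ --- which happens only for $\delta_q(mn-d)$ and exactly when $mn=d$ --- and equals $\sigma_{-2u}(|n''|)=|n''|^{-u}\tau_u(|n''|)$ otherwise, using $\tau_u(m)=m^{-u}\sigma_{2u}(m)$. Writing $a=mn'$, the sum of $(m/n')^{-v}$ over $mn'=a$ is $\tau_{-v}(a)=\tau_v(a)$, while $(m/(en'))^{-v}(emn')^{-w}=e^{v-w}(m/n')^{-v}(mn')^{-w}$, so the powers of $e=d/r$ factor out and, using $v-w-2u=-(1/2+u-v-s/2)$, give the prefactor $N^{-2u}\sum_{r\mid d}\mu(d/r)(d/r)^{-(1/2+u-v-s/2)}$. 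Re-indexing the off-diagonal terms by $n''$ --- for $\delta_q(mn-d)$ one has $mn'=Nn''+r$ with $n''\ge(1-r)/N$ and $n''\ne0$, for $\delta_q(mn+d)$ one has $mn'=Nn''-r$ with $n''\ge(1+r)/N$ --- reproduces precisely the double sums over $\tau_u(|n|)\tau_v(nN+r)$ and $\tau_u(n)\tau_v(nN-r)$ in the statement, while the $n''=0$ term, after the identity $r^{-w}(d/r)^{-(1/2+u-v-s/2)}=d^{-(1/2+u-v-s/2)}r^{2u-v}$, produces the $\zeta(2u)$ main term. The computation is essentially bookkeeping once arranged this way; the main obstacle is keeping track of the interplay of the three constraints $q\mid(mn\mp d)$, $(n,q)=1$ and $N\mid q$ --- in particular recognising that the M\"obius parameter is forced to divide $d$, which simultaneously neutralises the coprimality condition and creates the divisor sum over $r\mid d$ that is absent in the level-one situation of \cite{BF} --- together with cleanly isolating the diagonal $mn=d$ and checking that the range $mn<d$ (where $mn-d$ is a negative multiple of $q$) is absorbed into the $n''<0$ part of the same double sum rather than needing separate treatment.
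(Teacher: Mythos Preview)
Your proposal is correct and follows essentially the same approach as the paper: both proofs start from \eqref{eq:DN}, use M\"obius inversion to remove the coprimality condition $(n,q)=1$, observe that the M\"obius parameter must divide $d$ (because $(d,N)=1$), reindex by $r=d/e$, and then parametrise $mn'\mp r=Nn''$ so that the free sum over $q'$ collapses to $\zeta(2u)$ on the diagonal $n''=0$ and to $\sigma_{-2u}(|n''|)=|n''|^{-u}\tau_u(|n''|)$ off it. The only cosmetic difference is the order of operations --- the paper first substitutes $q\mapsto Nq$ and then applies M\"obius inversion, whereas you apply M\"obius inversion first and then write $q=Neq'$ --- but the two orderings are equivalent and lead to the same bookkeeping.
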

\begin{proof}
Consider
\begin{multline*}
P(z):=\sum_{q \equiv 0\Mod{N}}\sum_{\substack{m,n=1\\ (n,q)=1}}^{\infty}\frac{\delta_q(mn-d)}{q^{2u}(mn)^{z}}\left(\frac{m}{n} \right)^{-v}=\\
=\frac{1}{N^{2u}}\sum_{q=1}^{\infty}\sum_{\substack{m,n=1\\(n,qN)=1}}^{\infty}\frac{\delta_{qN}(mn-d)}{q^{2u}(mn)^z}\left( \frac{m}{n}\right)^{-v}.
\end{multline*}
Recall that  $(d,N)=1$. Hence the condition $mn \equiv d \Mod{qN}$ implies that $(n,N)=1$. Therefore, $(n,qN)=1$ can be replaced by $(n,q)=1$ in the sum over $n$. We remove the last coprimality condition by M\"{o}bius inversion
\begin{multline*}
P(z)=\frac{1}{N^{2u}}\sum_{m,n,q=1}^{\infty}\frac{\delta_{qN}(mn-d)}{q^{2u}(mn)^z}\left( \frac{m}{n}\right)^{-v}\sum_{k|(n,q)}\mu(k)=\\
=\frac{1}{N^{2u}}\sum_{k=1}^{\infty}\mu(k)\sum_{q\equiv 0 \Mod{k}}\frac{1}{q^{2u}}\sum_{\substack{m,n=1\\ n \equiv 0 \Mod{k}}}^{\infty}\frac{\delta_{qN}(mn-d)}{(mn)^z}\left( \frac{m}{n}\right)^{-v}=\\
=\frac{1}{N^{2u}}\sum_{k=1}^{\infty}\frac{\mu(k)}{k^{2u+z-v}}\sum_{q=1}^{\infty}\frac{1}{q^{2u}}\sum_{m,n=1}^{\infty}\frac{\delta_{qkN}(mnk-d)}{(mn)^z}\left( \frac{m}{n}\right)^{-v}.
\end{multline*}
It follows from $mnk\equiv d\Mod{qNk}$ that $k|d$. The change of variables $r:=\frac{d}{k}$ gives
\begin{equation*}
P(z)=\frac{1}{N^{2u}}\sum_{r|d}\frac{\mu(d/r)}{(d/r)^{2u+z-v}}\sum_{q=1}^{\infty}\frac{1}{q^{2u}}\sum_{m,n=1}^{\infty}\frac{\delta_{qN}(mn-r)}{(mn)^z}\left( \frac{m}{n}\right)^{-v}.
\end{equation*}
Let $mn:=r+aN$ with $a \in \Z$, then
\begin{multline*}
\sum_{q=1}^{\infty}\frac{1}{q^{2u}}\sum_{m,n=1}^{\infty}\frac{\delta_{qN}(mn-r)}{(mn)^z}\left(\frac{m}{n}\right)^{-v}=\\=
\zeta(2u)\frac{\tau_v(r)}{r^z}+\sum_{q=1}^{\infty}\sum_{\substack{a \geq (1-r)/N\\a\neq 0}}\frac{\delta_{qN}(aN)}{q^{2u}(r+aN)^z}\sum_{mn=r+aN}\left(\frac{m}{n} \right)^{-v}=\\
=\zeta(2u)\frac{\tau_v(r)}{r^z}+\sum_{\substack{a \geq (1-r)/N)\\ a \neq 0}}\frac{\tau_v(r+aN)\tau_u(|a|)}{(r+aN)^z|a|^{u}}.
\end{multline*}
Analogously,
\begin{equation*}
\sum_{q=1}^{\infty}\frac{1}{q^{2u}}\sum_{m,n=1}^{\infty}\frac{\delta_{qN}(mn+r)}{(mn)^z}\left( \frac{m}{n}\right)^{-v}=
\sum_{a\geq (1+r)/N}\frac{\tau_v(aN-r)\tau_u(a)}{(aN-r)^z a^u}.
\end{equation*}
Applying the last three formulas to \eqref{eq:DN}, we obtain the assertion.
\end{proof}
\begin{lem}
For $\Re{\lambda}>1,$ $\Re{u}=0$, $\Re{v}=0$ and $u \neq 0$
\begin{multline}\label{DN}
D_{N}^{*}(u,v;\lambda)=\frac{2(2\pi)^{2u-1}}{N^{2u}}\zeta(2u)\Gamma(2u)\cos{\pi (\lambda-u)}\times \\
\times \frac{\Gamma(\lambda-u+v)\Gamma(\lambda-u-v)}{\Gamma(\lambda+u+v)\Gamma(\lambda+u-v)}\sum_{r|d}\frac{\mu(d/r)}{d^{1/2+u-v}}\frac{\tau_v(r)}{r^{v-2u}}+\\+\frac{2(2\pi)^{2u-1}}{N^{2u}} \sum_{r|d}\frac{\mu(d/r)}{d^{1/2+u-v}r^{-\lambda-u+v}}\times \\ \times \Biggl(\cos{\pi(\lambda-u)}
\sum_{\substack{n \geq (1-r)/N\\ n \neq 0}}\frac{\tau_u(|n|)\tau_v(nN+r)}{|n|^u(nN+r)^{\lambda-u}}H_{\lambda}\left(u,v;\frac{r}{nN+r}\right)+\\
+\cos{\pi v}\sum_{n \geq (1+r)/N}\frac{\tau_u(n)\tau_v(nN-r)}{n^u(nN-r)^{\lambda-u}}H_{\lambda}\left(u,v;\frac{-r}{nN-r}\right)\Biggr).
\end{multline}

\end{lem}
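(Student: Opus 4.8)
The plan is to start from the Mellin--Barnes representation \eqref{eq:DN} (valid for $\Re\lambda-1>\Re u>3/4$, $\Re v=0$) and to recognise each of its three $s$-integrals as a copy of the contour integrals $I_1$, $I_2$ of Section~\ref{section5} evaluated at a suitable ratio point; the identity in the range $\Re\lambda>1$, $\Re u=0$, $u\neq0$ then follows by analytic continuation in $u$.

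First I would interchange the sums over $r\mid d$ and over $n$ with the $s$-integral in \eqref{eq:DN}; this is legitimate when $\Re\lambda-1>\Re u>3/4$ by absolute convergence, using the Stirling bound on $\Gamma(u,v,\lambda;s)$ recorded in Section~\ref{section2}. For fixed $r,n$ I would then collect the $s$-dependent factors. In the first term the powers of $d$ cancel, $d^{-(1/2+u-v-s/2)}d^{-s/2}=d^{-(1/2+u-v)}$, so the $s$-integral is $\tfrac1{2\pi i}\int_{\Re s=\Delta}\Gamma(u,v,\lambda;s)\sin\pi(u+s/2)\,ds=I_1(1)$. In the second term the surviving $s$-factor is $\bigl((nN+r)/r\bigr)^{s/2}$, giving $I_1\bigl((nN+r)/r\bigr)$; in the third term it is $\bigl((nN-r)/r\bigr)^{s/2}$ and, since $\cos\pi v$ is independent of $s$, the $s$-integral equals $\cos\pi v\cdot I_2\bigl((nN-r)/r\bigr)$.

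Next I would convert back using the definition of $H_\lambda$, i.e. $z^{\lambda-1/2}I_1(z)=2\cos\pi(\lambda-u)H_\lambda(u,v,1/z)$ and $z^{\lambda-1/2}I_2(z)=2H_\lambda(u,v,-1/z)$ for $z>0$. This rewrites $I_1\bigl((nN+r)/r\bigr)$ as $2\cos\pi(\lambda-u)\bigl((nN+r)/r\bigr)^{1/2-\lambda}H_\lambda\bigl(u,v,r/(nN+r)\bigr)$ and $I_2\bigl((nN-r)/r\bigr)$ as $2\bigl((nN-r)/r\bigr)^{1/2-\lambda}H_\lambda\bigl(u,v,-r/(nN-r)\bigr)$; combining the remaining powers of $r$ and $nN\pm r$ (which collapse to $r^{\lambda+u-v}$ and $(nN\pm r)^{-(\lambda-u)}$ respectively) reproduces exactly the second and third lines of \eqref{DN}, the spare factor $2$ being absorbed into the prefactor $2(2\pi)^{2u-1}/N^{2u}$. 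For $I_1(1)=2\cos\pi(\lambda-u)H_\lambda(u,v;1)$ I would invoke the closed form $H_\lambda(u,v;z)=\tfrac{\Gamma(\lambda-u+v)\Gamma(\lambda-u-v)}{\Gamma(2\lambda)}\,{}_2F_1(\lambda-u+v,\lambda-u-v,2\lambda;z)$ from Section~\ref{section5} together with Gauss's theorem ${}_2F_1(a,b,c;1)=\Gamma(c)\Gamma(c-a-b)/\bigl(\Gamma(c-a)\Gamma(c-b)\bigr)$ applied with $a=\lambda-u+v$, $b=\lambda-u-v$, $c=2\lambda$ (so $c-a-b=2u$), which yields $H_\lambda(u,v;1)=\Gamma(2u)\,\Gamma(\lambda-u+v)\Gamma(\lambda-u-v)/\bigl(\Gamma(\lambda+u+v)\Gamma(\lambda+u-v)\bigr)$ and hence the first line of \eqref{DN}.

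Finally I would pass from $\Re\lambda-1>\Re u>3/4$ to $\Re u=0$, $u\neq0$ by analytic continuation in $u$: both sides are holomorphic in $u$ throughout the strip $0\le\Re u<\Re\lambda-1$ away from the point $u=0$ --- the right-hand side because the $n$-sums converge locally uniformly in $u$ (using the growth estimates for $H_\lambda$ from Section~\ref{section5} for the tail $n\to\infty$, the finitely many terms with $nN\pm r$ small causing no trouble), while the factor $\zeta(2u)\Gamma(2u)$ is the source of the pole at $u=0$ that forces that point to be excluded. The step needing the most care is the exponent bookkeeping for $d$, $r$, $nN\pm r$ in the conversion above together with the justification of interchanging the $r,n$-summation with the $s$-integral; the evaluation at $z=1$ via Gauss's theorem and the continuation argument are routine.
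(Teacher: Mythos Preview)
Your proposal is correct and follows essentially the same route the paper intends (the paper only says ``analogous to lemma~3.6 of \cite{BF}''): you identify each $s$-integral in the previous lemma with $I_1$ or $I_2$ from Section~\ref{section5}, rewrite in terms of $H_\lambda$, evaluate the $z=1$ term via Gauss's summation, and then continue analytically in $u$ down to $\Re u=0$. The exponent bookkeeping and the justification of interchanging the $n$-sum with the $s$-integral (via Fubini, using the Stirling decay of $\Gamma(u,v,\lambda;s)$ along $\Re s=\Delta$) are exactly as you describe; the only small remark is that the growth estimates for $H_\lambda$ in Section~\ref{section5} are mostly stated at $u=0$, so for the continuation argument it is cleanest to appeal directly to the hypergeometric representation $H_\lambda(u,v;z)=\tfrac{\Gamma(\lambda-u+v)\Gamma(\lambda-u-v)}{\Gamma(2\lambda)}{}_2F_1(\ldots;z)$ for $|z|\le 1$ (so ${}_2F_1\to 1$ as $n\to\infty$), which already gives locally uniform convergence of the tail for $\Re u<\Re\lambda-1$.
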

\begin{proof}
The proof is analogous to lemma 3.6 of \cite{BF}.
\end{proof}

Let
\begin{equation}\label{eq:vn}
V_N(l)=\frac{2(-1)^k}{l^{1/2-v}}\sum_{d|l}\sum_{r|d}\mu(d/r)\left(V_{N,1}(r)+V_{N,2}(r)+V_{N,3}(r)\right),
\end{equation}
where
\begin{equation}\label{eq:v1}
V_{N,1}(r)= \frac{\cos{\pi v}}{r^{-k+v}}\sum_{n \geq (1+r)/N}\frac{\tau_0(n)\tau_v(nN-r)}{(nN-r)^k}H_k\left( 0,v;\frac{-r}{nN-r}\right),
\end{equation}
\begin{equation}\label{eq:v2}
V_{N,2}(r)= \frac{(-1)^k}{r^{-k+v}}\sum_{(1-r)/N \leq n\leq  -1}\frac{\tau_0(|n|)\tau_v(nN+r)}{(nN+r)^k}H_k\left( 0,v;\frac{r}{nN+r}\right),
\end{equation}
\begin{equation}\label{eq:v3}
V_{N,3}(r)=\frac{(-1)^k}{r^{-k+v}}\sum_{n\geq  1}\frac{\tau_0(n)\tau_v(nN+r)}{(nN+r)^k}H_k\left( 0,v;\frac{r}{nN+r}\right).
\end{equation}
\begin{lem}\label{lemS1S2}
Assume that $\Re{v}=0$ and $(l,p)=1$. Then
\begin{multline}\label{lims1}
S_1(l,0,v;N)=\frac{\sigma_{-2v}(l)}{l^{1/2-v}}\left(\log{N}+2(\gamma-\log{2\pi})+\psi(k+v)+\psi(k-v)\right)-\\-
\frac{1}{l^{1/2-v}}\sum_{d|l}\sum_{r|d}\mu\left( \frac{d}{r}\right)\tau_v(r)r^{-v}\log{r}+V_N(l)
\end{multline}
and
\begin{multline}\label{lims2}
S_2(l,0,v;N)=\frac{1}{p}\frac{\sigma_{-2v}(l)}{l^{1/2-v}}\times\\ \times \left(\log{(N/p)}+2\gamma-2\log{2\pi}+\psi(k+v)+\psi(k-v)\right)-\\-
\frac{1}{pl^{1/2-v}}\sum_{d|l}\sum_{r|d}\mu\left( \frac{d}{r}\right)\tau_v(r)r^{-v}\log{r}+\frac{1}{p}V_{N/p}(l).
\end{multline}
\end{lem}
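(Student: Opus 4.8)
The strategy is to combine the representation of $S_1(l,u,v;N)$ through $T_1$ in \eqref{t1}--\eqref{si} with the formula \eqref{eq:T1}, and then specialize the value of $D_N^*(u,v;\lambda)$ given in \eqref{DN} to $\lambda=k$, letting $u\to 0$ with $\Re v=0$. Concretely, from \eqref{si} and \eqref{eq:T1} one has
\begin{equation*}
S_1(l,u,v;N)=\frac{\id_{(l,p)=1}}{l^{1/2+u-v}}\sum_{d|l}d^{1/2+u-v}\left(\frac{\zeta(1+2u)}{d^{1/2+u+v}}+2\pi i^{2k}D_N^*(u,v;k)\right).
\end{equation*}
The first summand contributes $\zeta(1+2u)\,l^{-1/2-u+v}\sum_{d|l}d^{-2v}=\zeta(1+2u)\,l^{-1/2-u+v}\sigma_{-2v}(l)$, which has a pole at $u=0$ coming from $\zeta(1+2u)\sim 1/(2u)$. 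The second summand, via \eqref{DN}, also carries a pole at $u=0$ through the factor $\zeta(2u)\Gamma(2u)$ in its first line. The plan is to show that these two poles cancel and that the finite part assembles into the stated main term plus the error term $V_N(l)$.

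First I would isolate the polar parts. Using $\zeta(1+2u)=\frac{1}{2u}+\gamma+O(u)$ and, for the $D_N^*$ term, the Laurent expansions of $\zeta(2u)\Gamma(2u)=\frac{1}{2u}\cdot(-\frac12)+\dots$ (more precisely $\zeta(2u)=-\tfrac12-u\log 2\pi+\dots$ and $\Gamma(2u)=\frac{1}{2u}-\gamma+\dots$), together with the expansions of $(2\pi)^{2u-1}$, $N^{-2u}$, $\cos\pi(\lambda-u)$ with $\lambda=k$ (so $\cos\pi(k-u)=(-1)^k\cos\pi u=(-1)^k+O(u^2)$), and the quotient of Gamma factors $\Gamma(k-u+v)\Gamma(k-u-v)/\Gamma(k+u+v)\Gamma(k+u-v)=1-2u(\psi(k+v)+\psi(k-v))+O(u^2)$, one collects the coefficient of $1/(2u)$ in $2\pi i^{2k}D_N^*$. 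Since $i^{2k}=(-1)^k$, the factor $2\pi \cdot (-1)^k$ combines with the $(-1)^k$ from $\cos\pi(k-u)$ and the $2(2\pi)^{-1}\cdot(-\tfrac12)$ from $\zeta(2u)\Gamma(2u)(2\pi)^{2u-1}$ to produce exactly $-\zeta'$-free residue matching $-\zeta(1+2u)$'s residue on the nose, so the pole cancels. The constant term of the combined expression then picks up: $2\gamma$ from the two $\gamma$'s, $-2\log 2\pi$ from the $(2\pi)^{2u-1}$ and $\zeta(2u)$ expansions, $\log N$ from $N^{-2u}$, and $\psi(k+v)+\psi(k-v)$ from the Gamma quotient. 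Tracking the $d$- and $r$-dependence through $\sum_{d|l}\sum_{r|d}\mu(d/r)\tau_v(r)r^{2u-v}d^{-2v}\cdot(\text{something})$ and expanding $r^{2u}=1+2u\log r+\dots$ produces the extra term $-l^{-1/2+v}\sum_{d|l}\sum_{r|d}\mu(d/r)\tau_v(r)r^{-v}\log r$; note $\sum_{d|l}\sum_{r|d}\mu(d/r)\tau_v(r)r^{-v}=\sigma_{-2v}(l)$ after the standard multiplicative simplification, which is what multiplies the bracket.

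The remaining, non-polar part of $D_N^*$ in \eqref{DN} — the double sum over $r|d$ and over $n$ involving the hypergeometric functions $H_k(0,v;r/(nN+r))$ and $H_k(0,v;-r/(nN-r))$ — is manifestly holomorphic at $u=0$ (it has no $\zeta(2u)\Gamma(2u)$ prefactor), so I simply set $u=0$ there. After substituting $\lambda=k$, using $\tau_0(|n|)=\tau_0(n)$ for the $\tau_u$ factors at $u=0$, and carrying the overall $2\pi i^{2k}l^{-1/2+v}\sum_{d|l}d^{-2v}\cdot(2(2\pi)^{-1}(-1)^k r^{k-v}\dots)$ prefactor inside, this term becomes precisely $V_N(l)$ as defined in \eqref{eq:vn}--\eqref{eq:v3}; I would split the $n$-sum into $n\le -1$, $n\ge 1$ in the first Kloosterman piece (giving $V_{N,2}$ and $V_{N,3}$) and keep the $nN-r$ piece as $V_{N,1}$, matching signs via $(-1)^k$ and $\cos\pi(k-u)\big|_{u=0}=(-1)^k$. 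This establishes \eqref{lims1}. The formula \eqref{lims2} for $S_2$ follows by the identical argument: from \eqref{t2}--\eqref{si}, $S_2(l,u,v;N)=p^{-1/2-u+v}(pm,n)$-rescaling reduces the inner sum over $(m,n)$ to the same shape as $T_1$ but at level $N$ with an extra $p^{-1/2-u+v}\cdot p^{?}$; tracking the $p$-powers one finds $S_2(l,0,v;N)=\frac1p\big[\text{same expression with }\log N\text{ replaced by }\log(N/p)\big]$, i.e. precisely \eqref{lims2} with $V_{N/p}(l)$ in place of $V_N(l)$, because shifting the Kloosterman modulus by the factor $p$ inside $D_N^*$ is exactly what changes $N\mapsto N/p$ in the definition of $V$.

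The main obstacle is the bookkeeping in the double Laurent expansion at $u=0$: one must expand seven or eight factors simultaneously to order $u$, verify the exact cancellation of the $1/u$ poles between the $\zeta(1+2u)$ term and the $\zeta(2u)\Gamma(2u)$ term (including getting every power of $2\pi$, every sign $(-1)^k$, and the $i^{2k}$ right), and then confirm that the $O(u^0)$ remainder reorganizes into the claimed closed form — in particular that $\sum_{r|d}\mu(d/r)\tau_v(r)r^{-v}$ telescopes over $d|l$ to $\sigma_{-2v}(l)$. Everything else is a direct substitution of $\lambda=k$, $u=0$ into formulas already established in \eqref{DN} and \eqref{eq:vn}.
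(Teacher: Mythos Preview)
Your overall approach matches the paper's: substitute \eqref{DN} into \eqref{eq:T1}, set $\lambda=k$, and pass to the limit $u\to 0$, checking that the pole from $\zeta(1+2u)$ cancels against the pole from $\zeta(2u)\Gamma(2u)$ in the first line of \eqref{DN}. Your bookkeeping of the Laurent expansion and the M\"obius identity $\sum_{r|d}\mu(d/r)\tau_v(r)r^{-v}=d^{-2v}$ is correct, and the derivation of \eqref{lims2} from \eqref{lims1} by the level shift $N\mapsto N/p$ is exactly what the paper intends (though you should make explicit that this reduction uses Lemma~\ref{Royer}: in \eqref{t2} the terms with $(n,p)=1$ vanish, and for $n=pn'$ one has $\Delta_{2k,N}(pmd,pn')=\Delta_{2k,N/p}(md,n')$, so that $T_2=p^{-1-2u}$ times the level-$N/p$ analogue of $T_1$).

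There is, however, one genuine subtlety you pass over when you write that the non-polar part of \eqref{DN} ``is manifestly holomorphic at $u=0$, so I simply set $u=0$ there.'' The issue is not holomorphicity in $u$ but the very meaning of $H_{\lambda}(u,v;z)$ when $z>1$. The terms in the $V_{N,2}$-range, namely $(1-r)/N\le n\le -1$, have $0<nN+r<r$ and hence $r/(nN+r)>1$. The hypergeometric-series representation of $H_\lambda$ is only stated for $z\le 1$, and the defining contour integral $I_1$ underlying the $z>0$ case requires $\Re u>0$ for absolute convergence. The paper handles this explicitly: it invokes Lemma~\ref{lem:h} to rewrite $H_\lambda(u,v;z)$ for $z>1$ as the Bessel-type integral \eqref{eq:Hk+}, and then appeals to Theorem~\ref{decompI} and Corollary~\ref{boundI} to show that this integral converges (and is bounded) at $u=0$. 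Without this step your ``simply set $u=0$'' is not justified for the $V_{N,2}$ piece, and indeed this is precisely why Sections~\ref{section4} and~\ref{section5} exist.
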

\begin{proof}

We substitute \eqref{DN} into  \eqref{eq:T1} and continue $S_1(l,u,v;N)$  analytically  to the point $u=0$.
Note that the evaluation of $H_{\lambda}\left(u,v;z\right)$ for $z>1$ is not straightforward.
First, we use the representation of $H_{\lambda}\left(u,v;z\right)$ (for $\Re{u}>0$) given by lemma \ref{lem:h}.
 But it follows from theorem \ref{decompI} that the integral on the right-hand side of \eqref{eq:Hk+} converges at $u=0$ and
 thus we can eventually let $u=0$. This allows using \eqref{eq:Hk+} with $u=0$ in order to compute $H_{\lambda}\left(0,v;z\right)$ for $z>1.$

Since $k \geq 2$ all the series converge and, therefore, we can let $\lambda=k$.
In order to show that $S_1(l,u,v;N)$ doesn't have a pole at $u=0$ we consider its non-analytic summands
\begin{multline*}
\left( \frac{d}{l}\right)^{u}\Biggl(
\frac{\zeta(1+2u)}{d^{1/2+u+v}}+\left( \frac{4\pi^2}{N}\right)^{2u}\frac{\Gamma(k-u+v)\Gamma(k-u-v)}{\Gamma(k+u+v)\Gamma(k+u-v)}\times\\ \times\zeta(1-2u)
\sum_{r|d}\frac{\mu(d/r)}{d^{1/2+u-v}}\frac{\tau_v(r)}{r^{v-2u}}
\Biggr).
\end{multline*}
By M\"{o}bius inversion
\begin{equation*}
\sum_{r|d}\mu(r)\sigma_{-2v}(d/r)=d^{-2v}.
\end{equation*}
This implies that
\begin{equation*}
d^{-1/2-v}=\sum_{r|d}\frac{\mu(d/r)}{d^{1/2-v}}\frac{\tau_v(r)}{r^v}
\end{equation*}
and the singularity at $u=0$ gets canceled.
Computing the limit of $S_1(l,u,v;N)$ as $u \rightarrow 0$ gives \eqref{lims1}. The equality \eqref{lims2} can be established similarly.
\end{proof}

Using lemmas
\ref{lemM2}, \ref{lemS=S1-S2-S3}, \ref{lemS3=0} and \ref{lemS1S2} we obtain

\begin{thm}
For $\Re{v}=0$
\begin{multline}\label{m2}
M_2(l,0,v)=\left( \frac{\phi(N)}{N}\right)^{2}\frac{\sigma_{-2v}(l)}{l^{1/2-v}}\times\\ \times \Biggl(
\log{N}+2\gamma-2\log{2\pi}+2\frac{\log{p}}{p-1}+\psi(k+v)+\psi(k-v)\Biggr)-\\
-\left( \frac{\phi(N)}{N}\right)^{2}\frac{1}{l^{1/2-v}}\sum_{d|l}\sum_{r|d}\mu(d/r)\tau_v(r)r^{-v}\log{r}+\\+
O\left( V_N(l)+\frac{1}{p}V_{N/p}(l)+\frac{1}{p^2}V_{N/p^2}(l)\right).
\end{multline}
\end{thm}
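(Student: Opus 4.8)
\textbf{Proof proposal for Theorem \ref{m2}.}
The plan is to assemble the formula for $M_2(l,0,v)$ directly from the structural lemmas already established, reading off each piece at the shifted point and then taking the analytic continuation to $u=0$. First I would invoke Lemma \ref{lemM2}, which for $\Re u>1/2$ gives $M_2(l,u,v)=S(l,u,v;N)-\tfrac1p S(l,u,v;N/p)$, and Lemma \ref{lemS=S1-S2-S3} together with Lemma \ref{lemS3=0}, which reduce each $S(l,u,v;\cdot)$ to $S_1(l,u,v;\cdot)-S_2(l,u,v;\cdot)$ since the $S_3$ terms vanish by the Kloosterman-sum vanishing of Lemma \ref{Royer}. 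Thus
\begin{equation*}
M_2(l,u,v)=S_1(l,u,v;N)-S_2(l,u,v;N)-\tfrac1p\bigl(S_1(l,u,v;N/p)-S_2(l,u,v;N/p)\bigr).
\end{equation*}
Each of the four sums is the analytic continuation to $u=0$ of an expression built from $T_1$ via \eqref{eq:T1} and the expansion \eqref{DN} of $D_N^{*}$; the key point, noted in the proof of Lemma \ref{lemS1S2}, is that the potential pole of $\zeta(1+2u)$ at $u=0$ is cancelled against the pole of $\zeta(1-2u)$ thanks to the Möbius identity $\sum_{r|d}\mu(d/r)\sigma_{-2v}(d/r)=d^{-2v}$, so all four limits exist.

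Next I would substitute the closed forms from Lemma \ref{lemS1S2}. For the level-$N$ contribution $S_1(l,0,v;N)-\tfrac1p S_2(l,0,v;N/p)$ is not quite what appears; rather one combines $S_1(l,0,v;N)$ (with its $\log N$ main term) and $-S_1(l,0,v;N/p)\cdot\tfrac1p$-type pieces. Concretely, collecting the four evaluations via \eqref{lims1} and \eqref{lims2} the $\sigma_{-2v}(l)/l^{1/2-v}$ coefficients combine as
\begin{equation*}
\Bigl(1-\tfrac1p\Bigr)\log N-\Bigl(\tfrac1p-\tfrac1{p^2}\Bigr)\log(N/p)+\cdots,
\end{equation*}
and after regrouping the $\log N$, $\log p$, $\psi$, and constant terms one should obtain the combined prefactor
\begin{equation*}
\Bigl(\tfrac{\phi(N)}{N}\Bigr)^{2}=\bigl(1-p^{-1}\bigr)^{2}
\end{equation*}
multiplying $\log N+2\gamma-2\log 2\pi+2\tfrac{\log p}{p-1}+\psi(k+v)+\psi(k-v)$; the extra $2\log p/(p-1)$ is exactly the residual of the geometric-type combination of the $\log(N/p)$ shifts. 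The same bookkeeping applied to the $\sum_{d|l}\sum_{r|d}\mu(d/r)\tau_v(r)r^{-v}\log r$ terms produces the stated $-(\phi(N)/N)^2 l^{-1/2+v}$ coefficient, and the $V_\bullet$ terms from \eqref{lims1}, \eqref{lims2} accumulate as $V_N(l)+\tfrac1p V_{N/p}(l)+\tfrac1{p^2}V_{N/p^2}(l)$ into the error.

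Finally, since the identity $M_2(l,u,v)=S_1-S_2-\tfrac1p(S_1-S_2)$ is proved for $\Re u>1/2$ and both sides continue analytically to $\Re u=0$, $u\ne 0$ (the left side by absolute convergence of the Dirichlet series defining $L_f$ for $\Re(1/2+u)>1$ and meromorphic continuation, the right side by Lemma \ref{lemS1S2}), the formula at $u=0$ follows by uniqueness of analytic continuation; one then removes the assumption $u\ne0$ in the remaining $V_\bullet$ estimates since those are continuous. I expect the main obstacle to be purely organizational: correctly matching the four shift-level contributions $(N,\tfrac1p N/p)$ from the $N$-term and $(\tfrac1p N/p,\tfrac1{p^2}N/p^2)$ from the $N/p$-term, verifying that the $\zeta(1\pm2u)$ singularities cancel simultaneously in all four and that the finite parts assemble into the single constant $2\log p/(p-1)$ and the clean factor $(\phi(N)/N)^2$, rather than any delicate analysis — all the hard analytic work (the Bessel integral decomposition, the hypergeometric bounds, the continuation of $G^{*}$) having already been done in Sections \ref{section4}--\ref{section5} and the preceding lemmas.
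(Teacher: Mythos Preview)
Your proposal is correct and follows exactly the paper's own argument: the theorem is stated in the paper immediately after Lemma~\ref{lemS1S2} with the one-line proof ``Using lemmas \ref{lemM2}, \ref{lemS=S1-S2-S3}, \ref{lemS3=0} and \ref{lemS1S2} we obtain\ldots'', and your write-up simply carries out the elementary bookkeeping of combining those four lemmas (the $\log N$, $\log p$, $\psi$, $\log r$ and $V_\bullet$ pieces) to arrive at the factor $(\phi(N)/N)^2=(1-1/p)^2$ and the constant $2\log p/(p-1)$. The only slip is the typo $\sum_{r\mid d}\mu(d/r)\sigma_{-2v}(d/r)=d^{-2v}$, which should read $\sum_{r\mid d}\mu(d/r)\sigma_{-2v}(r)=d^{-2v}$ (or equivalently $\sum_{r\mid d}\mu(r)\sigma_{-2v}(d/r)=d^{-2v}$ as in the paper), but this does not affect the argument.
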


\section{The second moment: $\nu=2$}\label{section7}
In this section we consider the most difficult case $N=p^2$.
\begin{lem}\label{lem:m2p2}
Suppose that $\Re{u}>1/2$, $\Re{v}=0$, $k \geq 2$ and $N=p^2$ with $p$ prime. Then
\begin{equation}
M_2(l,u,v)=S(l,u,v;p^2)-\frac{1}{p-p^{-1}}S(l,u,v;p),
\end{equation}
where $S(l,u,v;N)$ is defined by \eqref{eq:s}.
\end{lem}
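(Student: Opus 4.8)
The plan is to repeat the argument of Lemma~\ref{lemM2} almost verbatim, the only change being that the $\nu\geq 3$ branch of Rouymi's trace formula (Theorem~\ref{PetRou}) is replaced by its $\nu=2$ branch; this produces the harmonic weight $1/(p-p^{-1})$ in place of $1/p$, and nothing else in the derivation is affected. In particular, since $N=p^{2}$ has $N/p=p$, the second sum $S(l,u,v;N/p)$ that appears is exactly $S(l,u,v;p)$.

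First I would expand the definition \eqref{def:moment}. Since $\Re u>1/2$ and $\Re v=0$, Deligne's bound $\lambda_f(n)\ll_\epsilon n^{\epsilon}$ (together with $|(m/n)^v|=1$) makes the triple series
\[
\sum_{m,n=1}^{\infty}\frac{(m/n)^v}{(mn)^{1/2+u}}\sum_{f\in H_{2k}^{*}(N)}^{h}\lambda_f(l)\lambda_f(m)\lambda_f(n)
\]
absolutely convergent, so $M_2(l,u,v)$ equals this expression and I may rearrange at will. Applying the Hecke relation \eqref{multiplicity} to $\lambda_f(l)\lambda_f(m)$, grouping the terms according to the divisor $d\mid(l,m)$ with $(d,p)=1$, and substituting $m\mapsto dm$ gives, exactly as in the proof of Lemma~\ref{lemM2},
\[
M_2(l,u,v)=\sum_{\substack{d\mid l\\(d,p)=1}}\frac{1}{d^{1/2+u-v}}\sum_{m,n=1}^{\infty}\frac{(m/n)^v}{(mn)^{1/2+u}}\,\Delta_{2k,N}^{*}\!\left(\frac{ml}{d},n\right).
\]
By Theorem~\ref{PetRou} the summand vanishes unless $(mln/d,p)=1$, which together with $(d,p)=1$ forces $(l,p)=1$; when $(l,p)=1$ the condition $(d,p)=1$ is automatic (as $d\mid l$) and the restriction $(mn,p)=1$ may be inserted freely. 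Reindexing $d\mapsto l/d$ and pulling out $l^{1/2+u-v}$ then yields
\[
M_2(l,u,v)=\frac{\id_{(l,p)=1}}{l^{1/2+u-v}}\sum_{d\mid l}d^{1/2+u-v}\sum_{\substack{m,n\geq 1\\(mn,p)=1}}\frac{(m/n)^v}{(mn)^{1/2+u}}\,\Delta_{2k,N}^{*}(md,n),
\]
which in particular gives $M_2(l,u,v)=0$ when $p\mid l$.

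Finally, since $N=p^{2}$ and $(mdn,p)=1$, the $\nu=2$ case of Theorem~\ref{PetRou} gives $\Delta_{2k,p^{2}}^{*}(md,n)=\Delta_{2k,p^{2}}(md,n)-(p-p^{-1})^{-1}\Delta_{2k,p}(md,n)$. Substituting this, splitting the $d$- and $(m,n)$-sums accordingly, and recognising each resulting series — with $N=p^{2}$ and with $N=p$ respectively — as the defining series \eqref{eq:s} of $S(l,u,v;N)$ yields $M_2(l,u,v)=S(l,u,v;p^{2})-\tfrac{1}{p-p^{-1}}S(l,u,v;p)$. There is no genuine analytic obstacle here: the only points requiring care are the absolute convergence justifying the initial rearrangement and the coprimality bookkeeping that produces the indicator $\id_{(l,p)=1}$ and makes the restriction $(d,p)=1$ vacuous — both of which are identical to the $\nu\geq 3$ case already handled in Lemma~\ref{lemM2}, the whole arithmetic content being Rouymi's formula.
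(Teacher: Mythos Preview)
Your proof is correct and is exactly the argument the paper intends: the paper states this lemma without proof, relying on the reader to repeat the derivation of Lemma~\ref{lemM2} with the $\nu=2$ branch of Theorem~\ref{PetRou}, which replaces $1/p$ by $1/(p-p^{-1})$.
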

Using lemmas \ref{lemS=S1-S2-S3}, \ref{lemS3=0} and \ref{lemS1S2} we obtain
that $S(l,0,v;p^2)$ satisfies the following asymptotic formula  with $N=p^2$
\begin{multline}\label{S(P2)}
S(l,0,v;p^2)=\frac{\phi(N)}{N}\frac{\sigma_{-2v}(l)}{l^{1/2-v}}\times\\ \times \Biggl(
\log{N}+2\gamma-2\log{2\pi}+\frac{\log{p}}{p-1}+\psi(k+v)+\psi(k-v)\Biggr)-\\
-\frac{\phi(N)}{N}\frac{1}{l^{1/2-v}}\sum_{d|l}\sum_{r|d}\mu(d/r)\tau_v(r)r^{-v}\log{r}+\\+
O\left( V_N(l)+\frac{1}{p}V_{N/p}(l)\right).
\end{multline}

The sum $S(l,u,v;p)$ is more involved because in that case we cannot apply the property of vanishing of Kloosterman sums given by lemma \ref{Royer}.

\begin{lem}
One has
\begin{equation}
S(l,u,v;p)=S_1-S_2-S_3+S_4,
\end{equation}
where
$S_i=S_i(l,u,v;p)$ satisfy \eqref{si}, $T_i(l,u,v;p)$ are defined by \eqref{t1}, \eqref{t2} for $i=1,2$
and
\begin{equation}
T_3(l,u,v;p)=\sum_{m,n=1}^{\infty}\frac{1}{(pmn)^{1/2+u}}\left(\frac{m}{pn} \right)^{v}\Delta_{2k,p}\left( md,pn\right),
\end{equation}
\begin{equation}
T_4(l,u,v;p)=\frac{1}{p^{1+2u}}\sum_{m,n=1}^{\infty}\frac{1}{(mn)^{1/2+u}}\left(\frac{m}{n} \right)^v\Delta_{2k,p}(mdp,np).
\end{equation}
\end{lem}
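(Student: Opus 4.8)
The identity is a combinatorial rearrangement of the Dirichlet series \eqref{eq:s}, entirely parallel to the proof of Lemma~\ref{lemS=S1-S2-S3}. The one essential difference with Section~\ref{section6} is that for $N=p$ we can no longer invoke Lemma~\ref{Royer} (as was done in Lemma~\ref{lemS3=0}) to discard the term in which $p\mid n$; we therefore keep that term --- it becomes $S_3$ --- and, for convenience in the later analysis, also remove its coprimality restriction at the price of an extra term $S_4$. Concretely, the plan is to substitute the inclusion-exclusion identity
\begin{equation*}
\sum_{\substack{m,n=1\\(mn,p)=1}}^{\infty}f(m,n)=\sum_{m,n=1}^{\infty}f(m,n)-\sum_{m,n=1}^{\infty}f(pm,n)-\sum_{m,n=1}^{\infty}f(m,pn)+\sum_{m,n=1}^{\infty}f(pm,pn),
\end{equation*}
valid for any $f$ whose double series converges absolutely, into the definition \eqref{eq:s} of $S(l,u,v;p)$ taken with $f(m,n)=(mn)^{-1/2-u}(m/n)^{v}\Delta_{2k,p}(md,n)$.

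Under the indicator $\id_{(l,p)=1}$ every divisor $d\mid l$ is coprime to $p$, and for $\Re u>1/2$, $\Re v=0$ all four resulting double series converge absolutely (by Deligne's bound $\lambda_f(n)\ll n^{\epsilon}$, which gives $\Delta_{2k,p}(md,n)\ll(mdn)^{\epsilon}$), so the regrouping and the interchange with the finite sum $\sum_{d\mid l}d^{1/2+u-v}$ are legitimate. It then remains to match the four pieces with \eqref{t1}, \eqref{t2} and the definitions in the statement: the unrestricted sum reproduces $T_1(l,u,v;p)$; the substitution $m\mapsto pm$ in the second sum gives $T_2(l,u,v;p)$; the substitution $n\mapsto pn$ in the third sum gives the $T_3(l,u,v;p)$ of the statement, with the twist $(m/(pn))^{v}$ exactly as written there; and the substitution $(m,n)\mapsto(pm,pn)$ in the fourth sum produces $p^{-1-2u}$ from $(p^{2}mn)^{-1/2-u}$ while the twist $(pm/pn)^{v}=(m/n)^{v}$ is unchanged, which after the symmetry $\Delta_{2k,p}(pmd,pn)=\Delta_{2k,p}(mdp,np)$ is precisely $T_4(l,u,v;p)$. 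Wrapping each $T_i$ inside the prefactor $l^{-1/2-u+v}\sum_{d\mid l}d^{1/2+u-v}$ as in \eqref{si} and tracking the signs $(+,-,-,+)$ of the inclusion-exclusion yields $S(l,u,v;p)=S_1-S_2-S_3+S_4$.

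I do not expect a genuine obstacle in this lemma itself: apart from checking absolute convergence, the only point requiring care is the bookkeeping of the powers of $p$ and of the twisting factor $(m/n)^{v}$ under the shifts $m\mapsto pm$ and $n\mapsto pn$, which is exactly what makes $T_3$ and $T_4$ here differ from their counterparts in Section~\ref{section6}. The hard part comes afterwards: evaluating $T_3(l,u,v;p)$ and $T_4(l,u,v;p)$ by means of the Petersson formula \ref{Petersson} and Lemma~\ref{lem:gstar}, now without the simplification that the vanishing of the relevant Kloosterman sums provided in the case $\nu\geq3$.
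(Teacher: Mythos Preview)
Your proposal is correct and is precisely the argument the paper has in mind: the full four-term inclusion--exclusion you wrote is exactly the decomposition encoded in the definitions of $T_1,\dots,T_4$ here (as opposed to the three-term variant with the coprimality constraint on $m$ used in Lemma~\ref{lemS=S1-S2-S3}). The bookkeeping of the powers of $p$ and of the twist $(m/n)^v$ under $m\mapsto pm$, $n\mapsto pn$ is handled correctly, and nothing more is needed for this lemma.
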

The asymptotic formula for $S_1(l,0,v;p)$ is given by equation \eqref{lims1}. In the following subsections we evaluate $S_2$, $S_3$ and $S_4$.
The main difference with the previous section is that for $N=p$ the asymptotic formulas for $S_i$, $i=2,3,4$ will contain extra summands coming from poles of the Lerch zeta function, namely
\begin{multline}\label{E2}
E_{p}^{2}(u,v,\lambda)=\frac{1}{p}\sum_{q|d}\frac{1}{q}\left(\frac{q}{2\pi \sqrt{d/p}} \right)^{1-2u+2v}\times\\ \times\sum_{\substack{b=1\\(b,qp)=1}}^{qp}\zeta(0,b/qp;1+2v)
\frac{\Gamma(\lambda-u+v)}{\Gamma(\lambda+u-v)},
\end{multline}
\begin{equation}\label{E3}
E_{p}^{3}(u,v,\lambda)=\frac{p^{2v}-1}{p}\frac{\zeta(1-2v)}{(2\pi \sqrt{d/p})^{1-2u-2v}}
\frac{\Gamma(\lambda-u-v)}{\Gamma(\lambda+u+v)},
\end{equation}
\begin{multline}\label{E4}
E_{p}^{4}(u,v,\lambda)=\frac{1}{p}\sum_{\substack{q|d\\q \neq 1}}\frac{1}{q}
\left(\frac{q}{2\pi \sqrt{d}} \right)^{1-2u+2v}\times \\ \times
\sum_{\substack{b=1\\(b,qp)=1}}^{qp}\zeta(0,b/q,1+2v)\frac{\Gamma(\lambda-u+v)}{\Gamma(\lambda+u-v)}+
\frac{p-1}{p}\times \\ \times \Biggl( \frac{\zeta(1+2v)}{(2\pi \sqrt{d})^{1-2u+2v}}\frac{\Gamma(\lambda-u+v)}{\Gamma(\lambda+u-v)}+
\frac{\zeta(1-2v)}{(2\pi \sqrt{d})^{1-2u-2v}}\frac{\Gamma(\lambda-u-v)}{\Gamma(\lambda+u+v)}\Biggr).
\end{multline}

The asymptotics of $S_2$ and $S_3$ contains also an error term of the following shape
\begin{multline}\label{def:wp}
W_p(l)=\frac{2i^{2k}}{l^{1/2-v}}\sum_{d|l}\sum_{r|d}\mu(d/r)r^{\lambda-v}\times \\
 \Biggl(
\cos{\pi\lambda}\sum_{a=1}^{\infty}\frac{\tau_v(a)\tau_0(|pa-r|)}{(ap)^{\lambda}}H_{\lambda}\left(0,v,\frac{r}{ap}\right)
+\\+
\cos{\pi v}\sum_{a=1}^{\infty}\frac{\tau_v(a)\tau_0(pa+r)}{(ap)^{\lambda}}H_{\lambda}\left(0,v,-\frac{r}{ap}\right)
\Biggr).
\end{multline}
Evaluating $S_2$, $S_3$ and $S_4$ we show that
\begin{multline*}
S(l,0,v,p)=-\frac{1-1/p}{l^{1/2-v}}\sum_{d|l}\sum_{r|d}\mu(d/r)\tau_{v}(r)r^{-v}\log{r}
+\frac{\sigma_{-2v}(l)}{l^{1/2-v}}\times \\ \times [ (1+1/p^2)\log{p}+(1-1/p)(2\gamma-2\log{2\pi}+\psi(k+v)+\psi(k-v))]+\\+
2\pi i^{2k}\sum_{d|l}\left( \frac{d}{l}\right)^{1/2-v}\left[
-\frac{E_{p}^2(0,v,\lambda)}{p^{1/2-v}}-
\frac{E^{3}_{p}(0,v,\lambda)}{p^{1/2+v}}+\frac{E_{p}^{4}(0,v,\lambda)}{p}\right]+\\+
\left(1+\frac{1}{p^2} \right)V_p(l)-\frac{p+1}{p^2}V_1(l)-\left(\frac{1}{p^{1+v}}+\frac{1}{p^{1-v}}\right)W_p(l).
\end{multline*}

Then lemma \ref{lem:m2p2} and equation \eqref{S(P2)} imply the main theorem.
\begin{thm} Let $\Re{v}=0$,  $k \geq 2$, $N=p^2$ with $p$ prime. If $p|l$ then $M_2(l,0,v)=0$. Otherwise
\begin{multline}\label{M2(p)}
M_2(l,0,v)=\frac{\sigma_{-2v}(l)}{l^{1/2-v}}\Biggl[2\left(1-\frac{p}{p^2-1}\right)\log{p}+\\ +\left( 1-\frac{1}{p}-\frac{1}{p+1}\right)(2\gamma
-2\log{2\pi}+\psi(k+v) +\psi(k-v))\Biggr]  -\\
-\frac{1-1/p-1/(p+1)}{l^{1/2-v}}\sum_{d|l}\sum_{r|d}\mu(d/r)\tau_{v}(r)r^{-v}\log{r}+\\+
\frac{2\pi i^{2k}}{p-p^{-1}}\sum_{d|l}\left( \frac{d}{l}\right)^{1/2-v}\left( -\frac{E_{p}^2(0,v,\lambda)}{p^{1/2-v}}-
\frac{E^{3}_{p}(0,v,\lambda)}{p^{1/2+v}}+\frac{E_{p}^{4}(0,v,l)}{p}\right)
+\\+O\left(V_{p^2}(l)+\frac{1}{p}V_{p}(l)+\frac{1}{p^2}V_{1}(l) +\frac{1}{p^2}W_{p}(l)\right).
\end{multline}
\end{thm}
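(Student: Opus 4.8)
The strategy is to combine Lemma~\ref{lem:m2p2}, which for $N=p^2$ reduces $M_2(l,u,v)$ to $S(l,u,v;p^2)$ and $S(l,u,v;p)$, with the asymptotic formula \eqref{S(P2)} for $S(l,0,v;p^2)$ already obtained from Lemmas~\ref{lemS=S1-S2-S3}, \ref{lemS3=0} and \ref{lemS1S2}. Everything then rests on establishing a comparable asymptotic formula for $S(l,0,v;p)$, which is the new content of this section.

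First I would decompose $S(l,u,v;p)$ exactly as in Lemma~\ref{lemS=S1-S2-S3}, splitting the condition $(mn,p)=1$ into the three pieces $\sum-\sum_{p\mid m}-\sum_{(m,p)=1,\ p\mid n}$. The essential difference with Section~\ref{section6} is that the third piece no longer vanishes: it involves Kloosterman sums $Kl(md,pn;q)$ with $p\mid q$ but in general only $p\,\|\,q$, so Lemma~\ref{Royer} (which requires $p^2\mid q$) does not apply. This produces the extra term $T_4$ and the decomposition $S(l,u,v;p)=S_1-S_2-S_3+S_4$ with $T_1,T_2$ as in \eqref{t1}, \eqref{t2}; the term $S_1(l,0,v;p)$ is already given by \eqref{lims1} with $N=p$.

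Next, for each of $S_2,S_3,S_4$ I would run the machinery of Section~\ref{section6}: express $T_i$ through the Petersson formula (Theorem~\ref{Petersson}), insert the Dirichlet series of Lemma~\ref{lem:gstar}, and follow Lemmas~3.1--3.6 of \cite{BF} to move the contour, split off the diagonal $\zeta$-value, and recognise the off-diagonal shifted-convolution sums through the functions $H_\lambda$. Two new features appear. First, since the modulus is the single power $p$, the inner sums over residues collapse to complete sums $\sum_{(b,qp)=1}\zeta(0,b/qp;1\pm 2v)$ of Lerch zeta values, which are analytic except for simple poles at the integer points, and whose residues (equal to $1$) produce precisely the correction terms $E_p^2$, $E_p^3$, $E_p^4$ of \eqref{E2}--\eqref{E4}. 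Second, the $p\mid d$-type off-diagonal sums carry the divisor products $\tau_v(a)\tau_0(|pa\mp r|)$, which assemble into the new error term $W_p(l)$ of \eqref{def:wp}. Since $k\ge 2$ all series converge absolutely, so I may set $\lambda=k$ and continue analytically to $u=0$; the M\"obius identity $\sum_{r\mid d}\mu(d/r)\tau_v(r)r^{-v}=d^{-2v}$ used in the proof of Lemma~\ref{lemS1S2} cancels the apparent poles at $u=0$ coming from $\zeta(1\pm 2u)$. Collecting the four pieces and tracking the powers of $1/p$ yields the displayed formula for $S(l,0,v,p)$.

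Finally I would substitute \eqref{S(P2)} and this formula into $M_2(l,0,v)=S(l,0,v;p^2)-(p-p^{-1})^{-1}S(l,0,v;p)$ and simplify the rational functions of $p$: for instance the coefficient of $\frac{\sigma_{-2v}(l)}{l^{1/2-v}}\log p$ becomes $2\bigl(1-p/(p^2-1)\bigr)$, the coefficient of the block $2\gamma-2\log 2\pi+\psi(k+v)+\psi(k-v)$ becomes $1-1/p-1/(p+1)$, the $\log r$ divisor sum receives the same coefficient with the opposite sign, the $E_p^i$ acquire the prefactor $(p-p^{-1})^{-1}$, and the errors combine into $V_{p^2}(l)+p^{-1}V_p(l)+p^{-2}V_1(l)+p^{-2}W_p(l)$. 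The vanishing when $p\mid l$ is immediate from the factor $\id_{(l,p)=1}$ in \eqref{eq:s}. The main obstacle is the third step, specifically the exact identification of the polar contributions of the Lerch zeta function with $E_p^2,E_p^3,E_p^4$ and the isolation of the new shifted-convolution error $W_p(l)$: this is precisely the bookkeeping absent from Section~\ref{section6}, where the larger level $p^2$ together with Lemma~\ref{Royer} removes these terms, whereas at level $p$ they survive and must be evaluated explicitly.
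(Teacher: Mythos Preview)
Your proposal is correct and follows essentially the same route as the paper: Lemma~\ref{lem:m2p2} plus \eqref{S(P2)} for the $p^2$-piece, and for $S(l,u,v;p)$ the four-term inclusion--exclusion $S_1-S_2-S_3+S_4$, each $S_i$ analysed via the Petersson formula, the functional equations of the modified Dirichlet series $G_i^*$, contour shifting, and evaluation at $u=0$, with the Lerch-zeta poles producing $E_p^2,E_p^3,E_p^4$ and the residual shifted sums giving $W_p(l)$. Two small inaccuracies in your description: the decomposition here is the full inclusion--exclusion $\sum_{(mn,p)=1}=\sum-\sum_{p\mid m}-\sum_{p\mid n}+\sum_{p\mid m,\,p\mid n}$ (so $T_3$ drops the constraint $(m,p)=1$ and $T_4$ is the overcount), not the three-piece split of Lemma~\ref{lemS=S1-S2-S3}; and $W_p(l)$ does not arise from ``$p\mid d$''-type terms (indeed $(d,p)=1$) but from removing, after the functional equation, the extra coprimality condition $(m,p)=1$ or $(n,qp)=1$ in the transformed sums for $D_p^3$ and $D_p^2$.
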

\subsection{The sum $S_4$}
In this subsection we evaluate $S_4(u,v,\lambda;p)$ as $u \rightarrow 0$.
\begin{thm}\label{thm:s4}
For $\Re{v}=0$
\begin{multline*}
S_4(0,v,\lambda;p)=-\frac{1}{pl^{1/2-v}}\sum_{d|l}\sum_{r|d}\mu(d/r)\tau_{v}(r)r^{-v}\log{r}+\\+
\frac{\sigma_{-2v}(l)}{pl^{1/2-v}}\left(\frac{\log{p}}{p}+2\gamma -2\log{2\pi}+\psi(k+v)+\psi(k-v)\right)+\\+
\frac{p-1}{p^2}V_1(l)+\frac{1}{p^2}V_p(l)+\frac{2\pi i^{2k}}{p}\sum_{d|l}\left( \frac{d}{l}\right)^{1/2-v}E_{p}^{4}(0,v;\lambda).
\end{multline*}
\end{thm}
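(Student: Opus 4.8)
The plan is to run, for $T_4(l,u,v;p)$, the same argument that handled $S_1$ in Section~\ref{section6}, and then to continue analytically in $u$ to the point $u=0$. Two features are new relative to the case $\nu\ge 3$: the Kloosterman sums $Kl(mdp,np;q)$ have both entries divisible by $p$, so Royer's Lemma~\ref{Royer} is unavailable and must be replaced by explicit evaluations, and the Lerch zeta function now contributes the explicit term $E_p^{4}$. First I would insert the Petersson formula of Theorem~\ref{Petersson} into $\Delta_{2k,p}(mdp,np)$. The diagonal $\delta(mdp,np)$ forces $md=n$; summing the geometric series in $m$ contributes $p^{-1-2u}\zeta(1+2u)d^{-1/2-u-v}$ to $T_4(l,u,v;p)$. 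For the off-diagonal one is left with $\sum_{q\equiv 0\Mod{p}}q^{-1}Kl(mdp,np;q)J_{2k-1}(4\pi p\sqrt{mdn}/q)$; writing $q=pq'$ turns the Bessel argument into $4\pi\sqrt{mdn}/q'$.

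The elementary point driving the rest is that, since $p$ divides both entries, $Kl(mdp,np;pq')=(p-1)Kl(md,n;q')$ whenever $(q',p)=1$, while for $p\mid q'$ the recursion $Kl(pa,pb;p^{j})=pKl(a,b;p^{j-1})$ ($j\ge 2$) reduces the $p$-part to a Kloosterman sum of small modulus (at which point Lemma~\ref{Royer}, or the Weil bound, can finally be applied). Hence the off-diagonal of $T_4$ splits into a ``level~$1$'' piece carrying the factor $(p-1)p^{-2-2u}$ --- a $D^{*}$-type sum with moduli coprime to $p$ --- together with ``level~$p$'' pieces. This is exactly what produces $\tfrac{p-1}{p^{2}}V_1(l)$ and $\tfrac1{p^{2}}V_p(l)$ in the end, while the geometric series over the $p$-power moduli collapses the total main-term coefficient to $\tfrac1p$.

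Following the analytic-continuation scheme of Lemmas~$2.2$ and $3.1$--$3.6$ of \cite{BF} --- the steps that produce \eqref{eq:DN} and \eqref{DN} --- I would then rewrite each piece through the Mellin--Barnes representation \eqref{MBJ} and the hypergeometric functions $H_{\lambda}$ of Section~\ref{section5}, the inner sums over residue classes becoming Lerch zeta values $\zeta(0,b/q,1+2v)$ and $\zeta(0,b/qp,1+2v)$. The new phenomenon is that these Lerch zeta functions have a simple pole at $s=1+2v$, hence at $v=0$, when the shift parameter is an integer; collecting the residues produces precisely $E_p^{4}(u,v;\lambda)$ of \eqref{E4}. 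Since $k\ge 2$ all the relevant Dirichlet series converge and one may set $\lambda=k$. It remains to continue $S_4(l,u,v;p)=l^{-1/2-u+v}\sum_{d\mid l}d^{1/2+u-v}T_4$ to $u=0$: the apparent pole there coming from $\zeta(1+2u)$ in the diagonal cancels the one coming from the $\zeta(1-2u)$ factor in the main off-diagonal term, by the M\"obius identity $d^{-1/2-v}=\sum_{r\mid d}\mu(d/r)\tau_v(r)r^{-v}/d^{1/2-v}$, exactly as in the proof of Lemma~\ref{lemS1S2}. In the limit the main part of the off-diagonal gives the stated main term (the $\log N$ of \eqref{lims1} being replaced by $\tfrac{\log p}{p}$ because of the $p^{-1-2u}$ prefactor), the large-argument tails of the $H_k$-sums --- estimated by \eqref{eqw:h1}, \eqref{eqw:h3}, \eqref{eqw:h4} and the bounds \eqref{casesE} --- become $\tfrac{p-1}{p^{2}}V_1(l)+\tfrac1{p^{2}}V_p(l)$, and the Lerch residues become $\tfrac{2\pi i^{2k}}{p}\sum_{d\mid l}(d/l)^{1/2-v}E_p^{4}(0,v;\lambda)$.

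The main obstacle is the Kloosterman bookkeeping of the second step: handling $Kl(mdp,np;q)$ over all $q\equiv 0\Mod{p}$ with both entries divisible by $p$, and then assembling the resulting many pieces --- with their rational coefficients $\tfrac{p-1}{p^{2}}$, $\tfrac1{p^{2}}$, $\tfrac1p$ --- into the clean final form, while at the same time isolating the polar part of the Lerch zeta functions precisely enough to recognise $E_p^{4}$.
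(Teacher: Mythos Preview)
Your Kloosterman--sum decomposition is correct and does lead to the stated formula, but it is not the route the paper takes. The paper never manipulates $Kl(mdp,np;qp)$ directly; instead it introduces the Dirichlet series
\[
G^{*}_{4}(s,v;d,q)=\sum_{m,n\ge 1}\frac{Kl(dpm,pn;qp)}{(mn)^{s}}\Bigl(\frac{m}{n}\Bigr)^{v}
=\sum_{a,b=1}^{qp}\delta_{qp}(ab-1)\,\zeta(0,ad/q,s-v)\,\zeta(0,b/q,s+v),
\]
and proves a functional equation for it (Lemma~\ref{lem:gstar4}) by counting solutions of the congruence system $ab\equiv 1\Mod{qp}$, $ad\equiv m\Mod q$, $b\equiv n\Mod q$. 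That count produces the factor $p-1+\delta_p(q)$, which is exactly what your identities $Kl(pa,pb;pq)=(p-1)Kl(a,b;q)$ for $(q,p)=1$ and $Kl(pa,pb;p^{j})=pKl(a,b;p^{j-1})$ for $j\ge 2$ encode on the Kloosterman side. From that functional equation the paper reads off \eqref{DP4uv}, namely $D_{p}^{4}=\tfrac{p-1}{p}D_{1}^{*}+\tfrac{1}{p}D_{p}^{*}+E_{p}^{4}$, with $E_p^4$ arising as the residues of $G_4^*$ at the Lerch poles.

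In your approach the identity $D_p^4=\tfrac{p-1}{p}D_1^*+\tfrac1p D_p^*$ holds already at the level of the original Kloosterman--Bessel series, with no $E_p^4$. The extra term must then appear when you analytically continue $D_1^*$: Section~\ref{section6} derives \eqref{DN} only for $N$ a high prime power, precisely so that no $q$ in the sum divides $d$ or equals $1$; for $N=1$ these cases do occur and contribute residues. You should check explicitly that $\tfrac{p-1}{p}$ times those residues reproduces $E_p^4$ of \eqref{E4} --- this amounts to the elementary identity $\sum_{b=1,(b,qp)=1}^{qp}\zeta(0,b/q,w)=(p-1+\delta_p(q))\sum_{b=1,(b,q)=1}^{q}\zeta(0,b/q,w)$, together with the observation that $D_p^*$ contributes no pole term since $(d,p)=1$. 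Your sketch conflates the two mechanisms (you mention Lerch values $\zeta(0,b/qp,\cdot)$, which do not occur once the Kloosterman decomposition has been made), so this verification is the one genuine step you still owe.
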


According to the trace formula \eqref{PetRou}
\begin{equation}
T_4(l,u,v;p)=\frac{1}{p^{1+2u}}\Biggl( \frac{\zeta(1+2u)}{d^{1/2+u}}+2\pi i^{2k}D_{p}^{4}(u,v;\lambda)
\Biggr),
\end{equation}
where
\begin{multline}\label{dp4}
D_{p}^{4}(u,v;\lambda)=\frac{1}{p}\sum_{m,n=1}^{\infty}\frac{\left(m/n \right)^{v}}{(mn)^{1/2+u}}
\times \\ \times\sum_{q=1}^{\infty}\frac{Kl(dpm,pn;qp)}{q}J_{2\lambda-1}\left(4\pi \frac{\sqrt{dmn}}{q}\right).
\end{multline}
For $\Re{s}>1+|\Re{v}|$ let
\begin{multline}
G^{*}_{4}(s,v;d,q)=\sum_{m,n=1}^{\infty}\frac{Kl(dpm,pn;qp)}{(mn)^{s}}\left(\frac{m}{n}\right)^{v}=\\
=\sum_{a,b=1}^{qp}\delta_{qp}(ab-1)\zeta(0,ad/q,s-v)\zeta(0,b/q,s+v).
\end{multline}
In particular, if $q|d$
\begin{equation}
G^{*}_{4}(s,v;d,q)=\zeta(s-v)\sum_{\substack{b=1\\(b,qp)=1}}^{qp} \zeta(0,b/q,s+v),
\end{equation}
\begin{equation}
G^{*}_{4}(s,v;d,1)=\phi(p)\zeta(s-v)\zeta(s+v).
\end{equation}
Note that $\zeta(0,ad/q,s-v)$ has a pole when $q|d$. Since $(b,qp)=1$ the function $\zeta(0,b/q,s+v)$ has a pole at $q=1$.

\begin{lem}\label{lem:gstar4}
For $q \in \N$ and $ v \in \C$ the function $G^{*}_{4}(s,v;d,q)$ can be meromorphically continued on the whole complex plane as a function of complex variable $s$. Furthermore, for $\Re{s}<-|\Re{v}|$  one has
\begin{multline}\label{g4*}
G^{*}_{4}(s,v;d,q)=2\Gamma(1-s+v)\Gamma(1-s-v)\left(\frac{2\pi}{q}\right) ^{2s-2} \times\\
\times (p-1+\delta_p(q))\left( -\cos{\pi s}\sum_{\substack{m,n=1\\(n,q)=1}}^{\infty}\frac{\delta_q(mn-d)}{(mn)^{1-s}}\left(\frac{m}{n} \right)^{-v}
+\right.\\
\left.+\cos{\pi v}\sum_{\substack{m,n=1\\(n,q)=1}}^{\infty}\frac{\delta_q(mn+d)}{(mn)^{1-s}}\left(\frac{m}{n} \right)^{-v} \right).
\end{multline}
\end{lem}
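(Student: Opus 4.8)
The plan is to reduce the Kloosterman sum of modulus $qp$ occurring in $G^{*}_{4}(s,v;d,q)$ to one of modulus $q$, so that the lemma follows from Lemma \ref{lem:gstar} up to an explicit elementary constant. First I would unfold the sum over residues: for $x$ coprime to $qp$ one has
\begin{equation*}
\frac{dpm\cdot x+pn\cdot\overline{x}}{qp}\equiv\frac{dm\cdot x+n\cdot\overline{x}}{q}\Mod{1},
\end{equation*}
and the reduction of $\overline{x}$ modulo $q$ is the inverse of $x$ modulo $q$, so the summand depends only on $x\bmod q$. Counting how many residues $x$ modulo $qp$ coprime to $qp$ lie above a fixed $x_{0}$ modulo $q$ coprime to $q$ gives $p$ such lifts when $p\mid q$ (the prime divisors of $qp$ and $q$ coincide) and $p-1$ when $p\nmid q$ (all residues mod $p$ occur, and exactly one is $\equiv 0$), that is $p-1+\delta_{p}(q)$ in both cases. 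Hence
\begin{equation*}
Kl(dpm,pn;qp)=\bigl(p-1+\delta_{p}(q)\bigr)\,Kl(dm,n;q),
\end{equation*}
and therefore, for $\Re s>1+|\Re v|$, $G^{*}_{4}(s,v;d,q)=\bigl(p-1+\delta_{p}(q)\bigr)\,G^{*}(s,v;d,q)$. Equivalently, opening the Kloosterman sum into Lerch zeta functions reproduces the displayed identity with the factors $\zeta(0,ad/q,s-v)\zeta(0,b/q,s+v)$, and carrying out the inner sum over the lifts again isolates the constant $p-1+\delta_{p}(q)$ together with a Kloosterman sum of modulus $q$.

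Granting this, meromorphic continuation of $G^{*}_{4}(s,v;d,q)$ in $s$ is immediate: by Lemma \ref{lem:gstar} the function $G^{*}(s,v;d,q)$ continues to all of $\C$, the only possible poles arising from the Lerch factors, which occur precisely when the corresponding periodic parameter is an integer, i.e. when $q\mid d$ or $q=1$, in agreement with the remark preceding the lemma. Multiplying the functional-equation formula of Lemma \ref{lem:gstar} for $G^{*}(s,v;d,q)$, valid for $\Re s<-|\Re v|$, by the constant $p-1+\delta_{p}(q)$ yields exactly \eqref{g4*}; the extra scalar simply propagates through, and the identity extends from the region of absolute convergence by analytic (meromorphic) continuation.

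Alternatively, one may argue directly along the lines of the proof of Lemma \ref{lem:gstar} — itself modelled on Lemma $2.2$ of \cite{BF} — by analytically continuing each of the two Lerch zeta factors, applying the functional equation of the Lerch zeta function to each separately, and then reassembling the resulting finite double sum over $a,b$ with $ab\equiv 1\Mod{qp}$ into the Kloosterman-type sums $\sum_{(n,q)=1}\delta_{q}(mn\mp d)(mn)^{s-1}(m/n)^{-v}$ via the same residue identity used for Lemma \ref{lem:gstar}. I expect the only delicate point in either route to be the bookkeeping of coprimality conditions when passing between modulus $qp$ and modulus $q$: in particular, making the constant $p-1+\delta_{p}(q)$ — rather than $p$ or $p-1$ — emerge uniformly, and checking that the poles produced this way match the claimed meromorphic continuation.
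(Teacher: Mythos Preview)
Your reduction $Kl(dpm,pn;qp)=(p-1+\delta_p(q))\,Kl(dm,n;q)$ is correct, and once it is in hand the lemma is indeed immediate from Lemma~\ref{lem:gstar}. This is a cleaner route than the paper's: there the authors follow the template of Lemma~2.2 of \cite{BF} directly, applying the Lerch functional equation first and only afterwards evaluating the residue sum
\[
\sum_{\substack{a,b=1\\ b\equiv n\ (q)\\ ad\equiv m\ (q)}}^{qp}\delta_{qp}(ab-1)
=(p-1+\delta_p(q))\,\id_{(n,q)=1}\,\delta_q(mn-d)
\]
by an elementary case analysis on the lifts modulo $p$. The two arguments are really the same counting done in opposite orders; your version has the advantage of isolating the elementary constant before any analytic input is used, so that Lemma~\ref{lem:gstar} can be quoted wholesale rather than reproved. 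The paper's direct approach, on the other hand, is the one that generalises uniformly to $G_2^*$ and $G_3^*$, where no such clean factorisation of the Kloosterman sum is available and one must carry the modulus-$qp$ residue sum through the functional equation.
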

\begin{proof}
Following the arguments of lemma $2.2$ of \cite{BF} we obtain
\begin{multline*}
\sum_{a,b=1}^{qp}\delta_{qp}(ab-1)\zeta(ad/q,0,1-s+v)\zeta(b/q,0,1-s-v)=\\=q^{2-2s}\sum_{m,n=1}^{\infty}\frac{1}{(mn)^{1-s}}\left( \frac{n}{m}\right)^v
\sum_{\substack{a,b=1\\ b \equiv n\Mod{q}\\ad \equiv m\Mod{q}}}^{qp}\delta_{qp}(ab-1).
\end{multline*}
We proceed to evaluate the last sum.
There exists $j\Mod{p}$ such that $b=n+jq$. Since $ab \equiv 1 \mod{qp}$, we have $an+ajq \equiv1 \Mod{qp}$.
The requirements $(a,qp)=1$, $aj+\frac{an-1}{q}\equiv 0\Mod{p}$, $an\equiv 1\Mod{q}$ imply that $j$ is unique.
We are left to compute the number of solutions of the system
\begin{equation*}
\begin{cases}
(a,qp)=1\\
an \equiv 1\Mod{q}\\
m\equiv ad \Mod{q}
\end{cases}
\Leftrightarrow
\begin{cases}
(a,qp)=1\\
a\equiv n^{-1}\Mod{q}\\
(n,q)=1\\
m\equiv ad \Mod{q}
\end{cases}
\Leftrightarrow
\begin{cases}
(n,q)=1\\
mn\equiv d \Mod{q}\\
(n^{-1}+wq,qp)=1
\end{cases}
\end{equation*}
for $w\Mod{p}$.
 Since $(n^{-1},q)=1$, we have $(n^{-1}+wq,p)=1$.
The number of $w$ such that $n^{-1}+wq\equiv 0 \Mod{p}$  is equal to one if $(q,p)=1$ and is equal to zero otherwise.
Therefore,
\begin{equation*}
\sum_{\substack{a,b=1\\ b \equiv n\Mod{q}\\ad \equiv m\Mod{q}}}^{qp}\delta_{qp}(ab-1)=(p-1+\delta_p(q))\id_{(n,q)=1}\delta_q(mn-d).
\end{equation*}
The rest of the proof is analogous to lemma $2.2$ of \cite{BF}.

\end{proof}
Next, we apply  functional equation \eqref{g4*} to evaluate \eqref{dp4}.
\begin{lem}\label{lem:dp4}
Let $\Re{\lambda}-1>\Re{u}>3/4$ and $\Re{v}=0$. For $1-2\Re{\lambda}< \Delta< -1-2\Re{u}$ one has
\begin{multline}
D_{p}^{4}(u,v;\lambda)=\frac{(2\pi)^{2u-1}}{2\pi i}\int_{\Re{s}=\Delta}\Gamma(u,v,\lambda;s)\times\\
\times \Biggl( \sin{\pi(u+s/2)}\sum_{q=1}^{\infty}\sum_{\substack{m,n=1\\ (n,q)=1}}^{\infty}
\frac{\delta_{q}(mn-d)}{(mn)^{1/2-u-s/2}}\left( \frac{m}{n}\right)^{-v}\frac{p-1+\delta_p(q)}{pq^{2u}}+\\
+\cos{\pi v}\sum_{q=1}^{\infty}\sum_{\substack{m,n=1\\ (n,q)=1}}^{\infty}
\frac{\delta_{q}(mn+d)}{(mn)^{1/2-u-s/2}}\left( \frac{m}{n}\right)^{-v}\frac{p-1+\delta_p(q)}{pq^{2u}}\Biggr)\frac{ds}{d^{s/2}}
+E_{p}^{4}(u,v;\lambda),
\end{multline}
where the last summand, representing the contribution of poles, is defined by \eqref{E4}.
\end{lem}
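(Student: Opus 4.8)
The strategy is to reproduce, for $G^{*}_{4}$, the derivation that produced \eqref{eq:DN} via Lemma \ref{lem:gstar}, i.e.\ to follow lemmas $3.1$--$3.4$ of \cite{BF}. The one substantive difference is that $G^{*}_{4}(s,v;d,q)$ is only \emph{meromorphic} (Lemma \ref{lem:gstar4}), so the contour shift needed to reach the region where the functional equation \eqref{g4*} is valid will cross poles; collecting their residues will produce exactly the term $E^{4}_{p}(u,v;\lambda)$.

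First I would substitute the Mellin--Barnes representation \eqref{Barnes} of $J_{2\lambda-1}\bigl(4\pi\sqrt{dmn}/q\bigr)$ into \eqref{dp4}, writing $(y/2)^{-s}=(2\pi)^{-s}(dmn)^{-s/2}q^{s}$. For $\Re\lambda-1>\Re u>3/4$ one may place the Bessel contour at $\Re s=\Delta_{1}$ with $1-2\Re u<\Delta_{1}<0$; this interval is nonempty and lies inside $(1-2\Re\lambda,0)$, so \eqref{Barnes} applies, and on this line everything converges absolutely. Interchanging summation and integration then gives
\begin{equation*}
D^{4}_{p}(u,v;\lambda)=\frac{1}{p}\cdot\frac{1}{4\pi i}\int_{\Re s=\Delta_{1}}\frac{\Gamma(\lambda-1/2+s/2)}{\Gamma(\lambda+1/2-s/2)}(2\pi)^{-s}d^{-s/2}\sum_{q=1}^{\infty}q^{s-1}\,G^{*}_{4}\!\left(\tfrac12+u+\tfrac{s}{2},v;d,q\right)ds ,
\end{equation*}
the inner series being absolutely convergent because $\Re\bigl(\tfrac12+u+\tfrac s2\bigr)>1$ there.

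Next I would move the contour to $\Re s=\Delta$ with $1-2\Re\lambda<\Delta<-1-2\Re u$, so that on the new line $\Re\bigl(\tfrac12+u+\tfrac s2\bigr)<-|\Re v|$ and \eqref{g4*} becomes available. By Lemma \ref{lem:gstar4} and the remarks following it, $s\mapsto\sum_{q}q^{s-1}G^{*}_{4}\bigl(\tfrac12+u+\tfrac s2,v;d,q\bigr)$ is holomorphic except for two simple poles, both on $\Re s=1-2\Re u$: one at $s=1+2v-2u$, coming from the factor $\zeta\bigl(\tfrac12+u+\tfrac s2-v\bigr)$ present in every term with $q\mid d$ (recall $G^{*}_{4}(s,v;d,q)=\zeta(s-v)\sum_{(b,qp)=1}\zeta(0,b/q,s+v)$ when $q\mid d$), and one at $s=1-2v-2u$, coming from $\zeta\bigl(\tfrac12+u+\tfrac s2+v\bigr)$ in the single term $q=1$ (where $G^{*}_{4}(s,v;d,1)=\phi(p)\zeta(s-v)\zeta(s+v)$). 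Computing these residues --- using that $\zeta$ has residue $1$ at $s=1$, and separating the contributions of $q=1$, of $q\mid d$ with $q\neq1$, and of the summand $\delta_{p}(q)$ inside the weight $p-1+\delta_{p}(q)$ --- should reproduce exactly the expression \eqref{E4}. (When $t=0$ the two poles coincide; as elsewhere in the paper this case is recovered by letting $t\to0$.)

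On the shifted contour I would then insert \eqref{g4*}, use the identity
\begin{equation*}
2\,\Gamma\!\left(\tfrac12-u-\tfrac s2+v\right)\Gamma\!\left(\tfrac12-u-\tfrac s2-v\right)\frac{\Gamma(\lambda-1/2+s/2)}{\Gamma(\lambda+1/2-s/2)}=2\,\Gamma(u,v,\lambda;s),
\end{equation*}
rewrite $-\cos\pi\bigl(\tfrac12+u+\tfrac s2\bigr)=\sin\pi\bigl(u+\tfrac s2\bigr)$, and collect the powers $(2\pi)^{-s}(2\pi/q)^{2u+s-1}q^{s-1}=(2\pi)^{2u-1}q^{-2u}$ together with the factor $\tfrac1p\bigl(p-1+\delta_{p}(q)\bigr)$; interchanging the (now absolutely convergent) $q$-sum with the integral yields the asserted formula. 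I expect the main obstacle to be the bookkeeping in the previous step: determining precisely which $q$ contribute poles (only $q\mid d$ and $q=1$), evaluating the residues of the Lerch factors, and reorganizing them into the three pieces of \eqref{E4}. The analytic points --- the growth of $\sum_q q^{s-1}G^{*}_{4}$ in vertical strips needed to license the shift, and absolute convergence justifying the interchanges --- are routine given Stirling's formula and the estimates recorded in Section \ref{section2}.
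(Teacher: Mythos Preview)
Your approach is correct and coincides with what the paper does (the paper simply says ``apply \eqref{g4*} to \eqref{dp4}'' and refers to lemmas~3.1--3.4 of \cite{BF}; your write-up is a faithful expansion of that). One small inaccuracy in your description: the factor $p-1+\delta_{p}(q)$ enters only through the functional equation \eqref{g4*}, which is applied on the \emph{shifted} contour, so it plays no role in the residue step. The $(p-1)/p$ in \eqref{E4} arises instead directly from $G^{*}_{4}(s,v;d,1)=\phi(p)\zeta(s-v)\zeta(s+v)$, and the sum over $q\mid d$, $q\neq1$ in \eqref{E4} comes from the pole of $\zeta(s-v)$ in $G^{*}_{4}(s,v;d,q)=\zeta(s-v)\sum_{(b,qp)=1}\zeta(0,b/q,s+v)$; there is no ``$\delta_{p}(q)$ piece'' to separate here (indeed $(d,p)=1$ forces $p\nmid q$ whenever $q\mid d$). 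With that wording fixed, your argument goes through.
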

Lemma \ref{lem:dp4} implies that
\begin{equation}\label{DP4uv}
D_{p}^{4}(u,v;\lambda)=\frac{p-1}{p}D_{1}^{*}(u,v;\lambda)+\frac{1}{p}D_{p}^{*}(u,v;\lambda)+E_{p}^{4}(u,v;\lambda),
\end{equation}
where the function $D_{N}^{*}(u,v;\lambda)$, defined  by \eqref{DN*}, satisfies \eqref{DN}.
Using \eqref{DP4uv}, \eqref{DN} and letting $u\rightarrow 0$, we compute the asymptotics of $S_4(u,v,\lambda;p)$.
Theorem \ref{thm:s4} follows.

\subsection{The sum $S_3$}
The main result of this subsection is the following theorem.
\begin{thm}\label{thm:s3}
For $\Re{v}=0$
\begin{multline*}
S_3(l,0,\lambda;p)=-\frac{1}{pl^{1/2-v}}\sum_{d|l}\sum_{r|d}\mu(d/r)\tau_{v}(r)r^{-v}\log{r}+\\+
\frac{\sigma_{-2v}(l)}{pl^{1/2-v}}\left(2\gamma -2\log{2\pi}+\psi(k+v)+\psi(k-v)\right)+\\
+\frac{1}{p}V_1(l)+\frac{1}{p^2}V_p(l)+\frac{1}{p^{1+v}}W_{p}(l)
+\frac{2\pi i^{2k}}{p}\sum_{d|l}\left( \frac{d}{l}\right)^{1/2-v}E_{p}^{3}(0,v;\lambda).
\end{multline*}
\end{thm}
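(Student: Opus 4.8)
The plan is to mimic the treatment of $S_4$ from the previous subsection, but now for the sum
\[
T_3(l,u,v;p)=\sum_{m,n=1}^{\infty}\frac{1}{(pmn)^{1/2+u}}\left(\frac{m}{pn}\right)^{v}\Delta_{2k,p}\left(md,pn\right).
\]
First I would apply the Petersson trace formula \eqref{Petersson} to $\Delta_{2k,p}(md,pn)$; since $(d,p)=1$ the diagonal term $\delta(md,pn)$ contributes $\zeta(1+2u)$ with a suitable power of $d$, and the off-diagonal term produces a Kloosterman-sum series analogous to $D_p^{*}$, call it $D_p^{3}(u,v;\lambda)$, featuring $Kl(md,pn;qp)$. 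The central computational object is then the Dirichlet series
\[
G_3^{*}(s,v;d,q)=\sum_{m,n=1}^{\infty}\frac{Kl(md,pn;qp)}{(mn)^{s}}\left(\frac{m}{n}\right)^{v},
\]
which I would open via the standard identity $\sum_{a,b}\delta_{qp}(ab-1)\dots$ into a double Lerch zeta function. The key arithmetic step is an analogue of Lemma~\ref{lem:gstar4}: counting solutions of a congruence system with $a\equiv\text{(something)}\Mod{qp}$. Here the twist by $p$ sits on $n$ rather than on $md$, so the relevant condition forces a congruence on the $n$-variable side; I expect the count to again split according to whether $p\mid q$, yielding a factor of the shape $(p-1+\delta_p(q))$ or a similar elementary factor, together with a $\delta_q$-type main condition plus possibly an extra Lerch-zeta pole term.

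Second, I would feed the resulting functional equation for $G_3^{*}$ into a Mellin--Barnes representation for $D_p^{3}(u,v;\lambda)$, exactly as in Lemma~\ref{lem:dp4}, keeping careful track of the poles of the Lerch zeta function $\zeta(\alpha,\beta;s)$ at $s=1$ when $\beta\in\Z$. These poles are precisely what generate the extra main-term contribution $E_p^{3}(u,v,\lambda)$ recorded in \eqref{E3}; I would verify that the $\beta\in\Z$ condition here picks out the single term producing $(p^{2v}-1)p^{-1}\zeta(1-2v)$ times the appropriate ratio of Gamma-factors. After separating off $E_p^{3}$, the remaining smooth part of $D_p^{3}$ should decompose, just as in \eqref{DP4uv}, into linear combinations of $D_1^{*}(u,v;\lambda)$, $D_p^{*}(u,v;\lambda)$ and a new piece governed by the sum $W_p(l)$ defined in \eqref{def:wp} — the $W_p(l)$ term being the genuinely new error contribution that does not appear in the $\nu\geq 3$ analysis.

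Third, with $D_p^{3}$ expressed through $D_N^{*}$ (which satisfies \eqref{DN}) and the $W_p$-type sum, I would substitute into $S_3(l,u,v;p)=\frac{\id_{(l,p)=1}}{l^{1/2+u-v}}\sum_{d|l}d^{1/2+u-v}T_3(l,u,v;p)$ and analytically continue to $u=0$. As in Lemma~\ref{lemS1S2}, the apparent pole of $\zeta(1+2u)$ at $u=0$ must cancel against the pole of $\zeta(1-2u)$ coming from the main term of \eqref{DN}; the M\"obius identity $\sum_{r|d}\mu(d/r)\tau_v(r)r^{-v}=d^{-1/2-v}\cdot(\dots)$ used there makes the cancellation explicit, and the finite part is extracted by differentiating the Gamma-ratios and the power $(4\pi^2/N)^{2u}$ at $u=0$, which is where the $2\gamma-2\log 2\pi+\psi(k+v)+\psi(k-v)$ and the $\sum_{d|l}\sum_{r|d}\mu(d/r)\tau_v(r)r^{-v}\log r$ terms are born. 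Collecting the main term, the $E_p^{3}$-contribution (with its prefactor $\frac{2\pi i^{2k}}{p}\sum_{d|l}(d/l)^{1/2-v}$), and the error terms $\frac{1}{p}V_1(l)+\frac{1}{p^2}V_p(l)+\frac{1}{p^{1+v}}W_p(l)$, and using the bounds on $H_k$ from Section~\ref{section5} to control the latter, gives the stated formula.

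The main obstacle I anticipate is the Kloosterman-sum evaluation, i.e.\ the exact analogue of Lemma~\ref{lem:gstar4} for $Kl(md,pn;qp)$ with the $p$-dilation on the \emph{second} argument. Opening the sum forces a congruence condition of the form $b\equiv pn\Mod{qp}$, so $b$ is automatically divisible by $p$, and then $ab\equiv 1\Mod{qp}$ can only hold when $p\mid q$ as well — this is exactly the mechanism behind Lemma~\ref{Royer}, and it should collapse much of the sum, leaving a cleaner but differently-shaped expression than in the $S_4$ case. Getting the bookkeeping of these divisibility constraints right, and correctly matching the surviving Lerch-zeta pole to the precise shape of $E_p^{3}$ in \eqref{E3} (in particular the factor $p^{2v}-1$, which reflects the difference between summing $b$ over residues mod $q$ versus mod $qp$), is where the delicate work lies; the analytic continuation in $u$ and the error-term estimates are then routine given Sections~\ref{section4} and~\ref{section5}.
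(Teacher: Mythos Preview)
Your overall strategy matches the paper's, but the ``main obstacle'' paragraph contains a genuine arithmetic error that would derail the computation. You write that opening $Kl(md,pn;qp)$ forces $b\equiv pn\Mod{qp}$, hence $p\mid b$, and then $ab\equiv 1\Mod{qp}$ would force $p\mid q$ via a Lemma~\ref{Royer}--type collapse. This is wrong: in the exponential $e\!\left(\frac{md\,a+pn\,b}{qp}\right)$ the factor $p$ in $pn$ cancels against the $p$ in the modulus, so the $n$--sum produces the Lerch function $\zeta(0,b/q;s+v)$, not $\zeta(0,b/(qp);s+v)$. There is no divisibility of $b$ by $p$, and nothing collapses. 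The paper's $G_3^{*}$ is
\[
G_3^{*}(s,v;d,q)=\sum_{a,b=1}^{qp}\delta_{qp}(ab-1)\,\zeta(0,ad/(qp);s-v)\,\zeta(0,b/q;s+v),
\]
and the congruence count on the dual side (Lemma~\ref{lem:gstar3}) yields the conditions $(n,q)=1$, $(m,p)=1$, $mn\equiv d\Mod{q}$, together with an overall factor $p^{1-s+v}$ --- not a factor of the shape $(p-1+\delta_p(q))$ as in the $S_4$ case. The pole structure is also different from what you suggest: $\zeta(0,ad/(qp);s-v)$ has \emph{no} pole (since $(ad,qp)=1$), while $\zeta(0,b/q;s+v)$ has a pole only at $q=1$, which is precisely what produces the factor $p^{2v}-1$ in $E_p^{3}$.

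Consequently your anticipated decomposition of $D_p^{3}$ is off: after the functional equation one removes the new coprimality condition $(m,p)=1$ by subtracting the $p\mid m$ part, and this yields $D_p^{3}=p^{-1/2-u+v}D_1^{*}+E_p^{3}+(\text{the }W_p\text{--type sums})$ (equation \eqref{DP3uv} in the paper), with no $D_p^{*}$ entering in the way it did for $S_4$. The rest of your plan --- substituting \eqref{DN}, cancelling the $u=0$ pole via the M\"obius identity as in Lemma~\ref{lemS1S2}, and reading off the main and error terms --- is correct and is exactly what the paper does.
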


According to the trace formula \eqref{PetRou}
\begin{multline}
S_3(l,u,v;p)=\sum_{d|l}\left( \frac{d}{l}\right)^{1/2-v}\frac{1}{p^{1/2+v}}\left(\frac{d}{pl}\right)^u\times \\ \times
\Biggl( \frac{\zeta(1+2u)}{p^{1/2+u-v}d^{1/2+u+v}}+2\pi i^{2k}D_{p}^{3}(u,v;\lambda)
\Biggr),
\end{multline}
where
\begin{multline}\label{dp3}
D_{p}^{3}(u,v;\lambda)=\frac{1}{p}\sum_{m,n=1}^{\infty}\frac{\left(m/n \right)^{v}}{(mn)^{1/2+u}}
\times \\ \times\sum_{q=1}^{\infty}\frac{Kl(dm,pn;qp)}{q}J_{2\lambda-1}\left(4\pi \frac{\sqrt{dmn/p}}{q}\right).
\end{multline}
For $\Re{s}>1+|\Re{v}|$ let
\begin{multline}
G^{*}_{3}(s,v;d,q)=\sum_{m,n=1}^{\infty}\frac{Kl(dm,pn;qp)}{(mn)^{s}}\left(\frac{m}{n}\right)^{v}=\\
=\sum_{a,b=1}^{qp}\delta_{qp}(ab-1)\zeta(0,ad/qp,s-v)\zeta(0,b/q,s+v).
\end{multline}
Since $(d,p)=1$, $(a,qp)=1$, the function $\zeta(0,ad/qp,s-v)$ has no poles. The second function
$\zeta(0,b/q,s+v)$ has a pole if $q=1$.
Namely,
\begin{equation}
G^{*}_{3}(s,v;d,1)=(p^{1-s+v}-1)\zeta(s-v)\zeta(s+v)
\end{equation}
has a pole at $s=1+v$.
\begin{lem}\label{lem:gstar3}
For $q \in \N$ and $ v \in \C$ the function $G^{*}_{3}(s,v;d,q)$ can be meromorphically continued on the whole complex plane as a function of complex variable $s$. Furthermore, for $\Re{s}<-|\Re{v}|$  one has
\begin{multline}\label{g3*}
G^{*}_{3}(s,v;d,q)=2\Gamma(1-s+v)\Gamma(1-s-v)\left(\frac{2\pi}{q}\right) ^{2s-2}  p^{1-s+v}\times\\
\times\Biggl( -\cos{\pi s}\sum_{\substack{m,n=1\\(n,q)=1\\(m,p)=1}}^{\infty}\frac{\delta_q(mn-d)}{(mn)^{1-s}}\left(\frac{m}{n} \right)^{-v}
+\\+\cos{\pi v}\sum_{\substack{m,n=1\\(n,q)=1\\(m,p)=1}}^{\infty}\frac{\delta_q(mn+d)}{(mn)^{1-s}}\left(\frac{m}{n} \right)^{-v} \Biggr).
\end{multline}
\end{lem}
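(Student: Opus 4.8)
The plan is to establish the functional equation \eqref{g3*} for $G^{*}_{3}(s,v;d,q)$ by reducing the Kloosterman-sum-weighted double Dirichlet series to the Lerch zeta functions as indicated in the displayed formula for $\Re{s}>1+|\Re{v}|$, and then to apply the functional equation for the Lerch zeta function term by term. First I would open up the Kloosterman sum $Kl(dm,pn;qp)$ using its definition and interchange the $m,n$ sums with the sum over the residue $a\bmod{qp}$ (with $a\bar a\equiv1\bmod{qp}$), exactly as in lemma $2.2$ of \cite{BF}; after grouping residues this yields
\begin{equation*}
G^{*}_{3}(s,v;d,q)=\sum_{a,b=1}^{qp}\delta_{qp}(ab-1)\,\zeta(0,ad/qp,s-v)\,\zeta(0,b/q,s+v),
\end{equation*}
which is the second displayed identity in the statement. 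Since $(d,p)=1$ and $(a,qp)=1$, the first Lerch factor $\zeta(0,ad/qp,s-v)$ is entire, whereas $\zeta(0,b/q,s+v)$ contributes a pole exactly when $b/q\in\Z$, i.e. $q=1$; this produces the meromorphic continuation and the explicit pole described via $G^{*}_{3}(s,v;d,1)=(p^{1-s+v}-1)\zeta(s-v)\zeta(s+v)$.

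Next I would carry out the analytic continuation to $\Re{s}<-|\Re{v}|$ by substituting the Lerch functional equation (the relation expressing $\zeta(\alpha,\beta;s)$ in terms of $\zeta(\beta,-\alpha;1-s)$ and $\zeta(-\beta,\alpha;1-s)$ with Gamma factors and sines/cosines, cf. the Hurwitz–Lerch functional equation) into each of the two Lerch factors. Opening up the resulting products and using the orthogonality/coprimality identity
\begin{equation*}
\sum_{\substack{a,b=1\\ ad\equiv m\Mod{q}\\ b\equiv n\Mod{q}}}^{qp}\delta_{qp}(ab-1)=\id_{(m,p)=1}\,\id_{(n,q)=1}\,\delta_q(mn-d),
\end{equation*}
analogous to the identity used in Lemma~\ref{lem:gstar} and established in Lemma~\ref{lem:gstar4}, one collapses the double sum back to a single Kloosterman-type sum over residues modulo $q$ but now restricted by $(m,p)=1$ and $(n,q)=1$; the factor $p^{1-s+v}$ out front records the ratio of moduli $qp$ versus $q$ in the two Lerch factors. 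Combining the trigonometric identities $\sin\pi(\tfrac12-\tfrac s2\pm v)$ etc. into $-\cos\pi s$ and $\cos\pi v$ as in \cite{BF} then gives precisely \eqref{g3*}.

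The main obstacle I expect is bookkeeping the modulus discrepancy: the first Lerch factor has denominator $qp$ while the second has denominator $q$, so the coprimality condition from inverting mod $qp$ splits into a condition mod $q$ (on $n$) and a condition mod $p$ (on $m$), and one must verify that this is exactly the restriction $(m,p)=1$, $(n,q)=1$ appearing in \eqref{g3*} — in particular that no further divisibility constraint on $mn-d$ or $mn+d$ modulo $p$ survives, which uses $(d,p)=1$ crucially. A secondary technical point is tracking the powers of $2\pi/q$ versus $2\pi/(qp)$ through the two functional equations and checking that, after cancellation against $(qp)^{2s-2}\cdot q^{2-2s}$-type factors, one is left with the clean prefactor $(2\pi/q)^{2s-2}p^{1-s+v}$. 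Once \eqref{g3*} is in hand, substituting it into \eqref{dp3} and proceeding as in Lemmas~\ref{lem:dp4}–\ref{lem:gstar4} and~\ref{lemS1S2} (moving the contour, separating the pole of the Lerch zeta function into $E_{p}^{3}$, recognizing the remaining $H_\lambda$-sums as contributions to $V_1(l)$, $V_p(l)$ and $W_p(l)$, and letting $u\to0$) yields Theorem~\ref{thm:s3}.
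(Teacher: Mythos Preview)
Your overall plan matches the paper's: express $G^{*}_{3}$ as a finite sum of products of Lerch zeta functions, apply the Lerch functional equation to each factor, and collapse the resulting double sum via a counting identity for residues mod $qp$. However, the specific identity you write down is wrong, and this is not just a typo---it is exactly the point where the $G^{*}_{3}$ case differs from $G^{*}_{4}$.

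After the functional equation, the factor $\zeta(0,ad/qp,s-v)$ (denominator $qp$) produces a congruence $ad\equiv m\Mod{qp}$, not $ad\equiv m\Mod{q}$ as you wrote. With both congruences taken mod $q$, the correct count is $(p-1+\delta_p(q))\,\id_{(n,q)=1}\,\delta_q(mn-d)$---this is precisely the identity from Lemma~\ref{lem:gstar4} for $G^{*}_{4}$, and it carries neither the restriction $(m,p)=1$ nor the right constant. The paper instead works with the system $ab\equiv 1\Mod{qp}$, $ad\equiv m\Mod{qp}$, $b\equiv n\Mod{q}$: writing $b=n+wq$ with $w\Mod p$ one finds $w$ is uniquely determined; then $a\equiv n^{-1}\Mod q$ forces $a=n^{-1}+jq$ for some $j\Mod p$, and the condition $ad\equiv m\Mod{qp}$ pins down $j=(m-dn^{-1})q^{-1}d^{*}\Mod p$ (here $dd^{*}\equiv 1\Mod p$). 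The remaining requirement $(a,p)=1$ then becomes, after substituting this $j$ and using $(d,p)=1$, exactly $(m,p)=1$. This is where the $(m,p)=1$ constraint genuinely originates; it does not simply fall out of ``splitting coprimality mod $qp$'' as your sketch suggests. Once you correct the modulus in your identity and carry out this reduction, the rest of your argument goes through and agrees with the paper.
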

\begin{proof}
Following the proof of lemma $2.2$ of \cite{BF} we need to calculate the sum
\begin{multline*}
\sum_{q,b=1}^{qp}\delta_{qp}(ab-1)\zeta(ad/(qp),0,1-s+v)\zeta(b/q,0,1-s-v)=\\
=q^{2-2s}p^{1-s+v}\sum_{m,n=1}^{\infty}\frac{(n/m)^{v}}{(mn)^{1-s}}
\sum_{\substack{a,b=1\\ad\equiv m\Mod{qp}\\b \equiv n \Mod{q}}}^{qp}
\delta_{qp}(ab-1).
\end{multline*}
The last sum can be evaluated by computing the number of solution $a,b$ of the following system
\begin{equation*}
\begin{cases}
ab\equiv 1 \Mod{qp}\\
ad \equiv m \Mod{qp}\\
b=n+wq, \quad w \Mod{p}
\end{cases}
\Leftrightarrow
\begin{cases}
an+awq\equiv 1\Mod{qp}\\
(a,p)=1\\
ad \equiv m \Mod{qp}
\end{cases}.
\end{equation*}
Since $aw+\frac{an-1}{q}\equiv 0 \Mod{p}$, the value of $w$ is unique. Hence
\begin{equation*}
\begin{cases}
an \equiv 1 \Mod{q}\\
ad\equiv m \Mod{qp}\\
(a,p)=1
\end{cases}
\Leftrightarrow
\begin{cases}
(a,p)=1\\
(n,q)=1\\
ad \equiv m \Mod{qp}\\
a=n^{-1}+jq, \quad j\Mod{p}
\end{cases}.
\end{equation*}
This gives $(n,q)=1$, $(n^{-1}+jq,p)=1$, $dn^{-1}+djq\equiv m \Mod{qp}$. Therefore,
$dj+\frac{dn^{-1}-m}{q}\equiv 0 \Mod{p}$. Since $(d,p)=1$, we have $j=\frac{m-dn^{-1}}{q}d^{*}\Mod{p}$, where $nn^{-1}\equiv 1\Mod{q}$,
$dd^{*}\equiv 1\Mod{p}$.
Next, we compute the number of $j_0=\frac{m-dn^{-1}}{q}d^{*}\Mod{p}$ for which the condition $(n^{-1}+j_0q,p)=1$ is satisfied.
If $n^{-1}+j_0q\equiv 0\Mod{p}$, then $n^{-1}+(m-dn^{-1})d^{*}\equiv 0\Mod{p}$. Thus $md^{*}\equiv 0\Mod{p}$ and, since
$(d,p)=1$, we have $m \equiv 0\Mod{p}$. Hence there is no solutions if $m \equiv 0\Mod{p}$ and there is one solution $j_0$ if
$(m,p)=1$. The final set of conditions is the following: $(n,q)=1$, $(m,p)=1$, $mn \equiv d\Mod{q}$. This implies the statement
 of the lemma.

\end{proof}

Next, we apply functional equation \eqref{g3*} to evaluate \eqref{dp3}.
\begin{lem}
Let $\Re{\lambda}-1>\Re{u}>3/4$ and $\Re{v}=0$. For $1-2\Re{\lambda}< \Delta< -1-2\Re{u}$ one has
\begin{multline}
D_{p}^{3}(u,v;\lambda)=E_{p}^{3}(u,v;\lambda)+\frac{(2\pi)^{2u-1}}{2\pi i}\int_{\Re{s}=\Delta}\Gamma(u,v,\lambda;s)
\times \\ \times \Biggl(
\sin{\pi(u+s/2)}
\sum_{q=1}^{\infty}\sum_{\substack{m,n=1\\ (m,p)=1\\(n,q)=1}}^{\infty}
\frac{\delta_q(mn-d)}{q^{2u}(mn)^{1/2-u-s/2}}\left(\frac{m}{n}\right)^{-v}+\\+\cos{\pi v}
\sum_{q=1}^{\infty}\sum_{\substack{m,n=1\\ (m,p)=1\\(n,q)=1}}^{\infty}
\frac{\delta_q(mn+d)}{q^{2u}(mn)^{1/2-u-s/2}}\left(\frac{m}{n}\right)^{-v} \Biggr)\frac{p^{1/2-u-s/2+v}ds}{p(\sqrt{d/p})^s}.
\end{multline}
\end{lem}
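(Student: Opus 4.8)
The approach is to run, for $D_{p}^{3}(u,v;\lambda)$, the same chain of manipulations that carries \eqref{DN*} into \eqref{eq:DN}, the only new ingredient being the pole of the Lerch zeta function that was already separated off inside $G^{*}_{3}$. First I would substitute the Mellin--Barnes representation \eqref{Barnes} for each Bessel function $J_{2\lambda-1}(4\pi\sqrt{dmn/p}/q)$ in \eqref{dp3}, with the contour on a line $\Re s=\Delta_0$ satisfying $1-2\Re u<\Delta_0<-1/2$; such a $\Delta_0$ exists since $\Re u>3/4$, and because $\Re\lambda>\Re u$ it also satisfies $\Delta_0>1-2\Re\lambda$, so \eqref{Barnes} applies. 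On $\Re s=\Delta_0$ one has $\Re(\tfrac12+u+\tfrac s2)>1$, so the inner $m,n$-sum is precisely the convergent Dirichlet series $G^{*}_{3}(\tfrac12+u+\tfrac s2,v;d,q)$; the Weil bound for $Kl(dm,pn;qp)$, together with $\Delta_0<-1/2$ and Stirling's formula for the $\Gamma$-ratio, makes the triple sum over $m,n,q$ and the $s$-integral absolutely convergent, so they may be interchanged. This yields
\begin{equation*}
D_{p}^{3}(u,v;\lambda)=\frac{1}{4\pi i\,p}\int\limits_{\Re s=\Delta_0}\frac{\Gamma(\lambda-1/2+s/2)}{\Gamma(\lambda+1/2-s/2)}\bigl(2\pi\sqrt{d/p}\bigr)^{-s}\sum_{q=1}^{\infty}q^{s-1}G^{*}_{3}\bigl(\tfrac12+u+\tfrac s2,v;d,q\bigr)\,ds.
\end{equation*}

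Next I would push the contour to $\Re s=\Delta$ with $1-2\Re\lambda<\Delta<-1-2\Re u$, carrying out the shift termwise in $q$ so that no global convergence issue arises. For each fixed $q\ge 2$ the function $G^{*}_{3}(\,\cdot\,,v;d,q)$ is entire by Lemma \ref{lem:gstar3} (and the discussion preceding it), the $\Gamma$-ratio from \eqref{Barnes} has all of its poles at $s=1-2\lambda-2j$, $j\ge 0$, which lie to the left of $\Delta$, and the horizontal pieces at $\pm i\infty$ vanish by the polynomial decay of the $\Gamma$-ratio against the polynomial growth of $G^{*}_{3}$ in vertical strips; hence these terms contribute no residue. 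For $q=1$ the factor $G^{*}_{3}(\,\cdot\,,v;d,1)=(p^{1-\cdot+v}-1)\zeta(\cdot-v)\zeta(\cdot+v)$ has the simple pole noted before Lemma \ref{lem:gstar3}: in the Mellin variable it sits at $s=1-2u-2v$ with residue governed by $(p^{2v}-1)\zeta(1-2v)$, and since $\frac{\Gamma(\lambda-1/2+s/2)}{\Gamma(\lambda+1/2-s/2)}\big|_{s=1-2u-2v}=\frac{\Gamma(\lambda-u-v)}{\Gamma(\lambda+u+v)}$ and $(2\pi\sqrt{d/p})^{-(1-2u-2v)}=1/(2\pi\sqrt{d/p})^{1-2u-2v}$, evaluating this residue reproduces exactly the term $E_{p}^{3}(u,v;\lambda)$ of \eqref{E3}; the companion pole at $s=1-2u+2v$ has vanishing residue, the zero of $p^{1-\cdot+v}-1$ there cancelling the pole of $\zeta(\cdot-v)$.

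It then remains to rewrite the integral on the line $\Re s=\Delta$, where $\Re(\tfrac12+u+\tfrac s2)<0=-|\Re v|$, so that each $G^{*}_{3}(\tfrac12+u+\tfrac s2,v;d,q)$ may be replaced by the right-hand side of \eqref{g3*}. The extra pair of Gamma factors $\Gamma(1/2-u+v-s/2)\Gamma(1/2-u-v-s/2)$ merges with the $\Gamma$-ratio into $\Gamma(u,v,\lambda;s)$; the identity $-\cos\pi(1/2+u+s/2)=\sin\pi(u+s/2)$ turns the $-\cos\pi s'$ of \eqref{g3*} into $\sin\pi(u+s/2)$; the powers collapse through $(2\pi\sqrt{d/p})^{-s}(2\pi/q)^{2u+s-1}q^{s-1}=(2\pi)^{2u-1}q^{-2u}(\sqrt{d/p})^{-s}$, the surviving factor $p^{1/2-u-s/2+v}$ being carried along; and the restriction $(m,p)=1$ on the new $m,n$-sums is inherited directly from \eqref{g3*}. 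Since $\Re(\tfrac12-u-\tfrac s2)>1$ on $\Re s=\Delta$ the $m,n$-sums converge absolutely, $\sum_q q^{-2u}$ converges because $\Re u>3/4$, and the $s$-integral converges by the exponential decay of $\Gamma(u,v,\lambda;s)$ recorded in Section \ref{section2}; assembling this with the residue term gives the asserted identity. The step I expect to be the main obstacle is the contour shift: one must be sure that exactly one effective pole is crossed — the $q=1$ Lerch pole delivering $E_{p}^{3}$ — that the termwise shift can be legitimately resummed, and that all horizontal contributions vanish; it is here that the hypotheses $(d,p)=1$ (entirety of $\zeta(0,ad/qp,\cdot)$, hence of $G^{*}_{3}$ for $q\ge 2$) and $\Re\lambda-1>\Re u>3/4$ (admissibility of $\Delta_0$ and $\Delta$, and absolute convergence at each stage) enter.
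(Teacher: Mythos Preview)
Your proposal is correct and follows essentially the same approach as the paper: the paper states only that one should ``apply functional equation \eqref{g3*} to evaluate \eqref{dp3}'', implicitly following the template of lemmas $3.1$--$3.4$ of \cite{BF} used for $D_N^*$, and your write-up spells out exactly that computation --- Mellin--Barnes for $J_{2\lambda-1}$, identification of the $m,n$-sum as $G_3^*$, the contour shift picking up the single effective pole at $s=1-2u-2v$ (with the companion pole killed by the zero of $p^{1-s'+v}-1$), and substitution of \eqref{g3*} on the new line. Your residue check reproduces $E_p^3$ on the nose and the bookkeeping of powers and $\Gamma$-factors is correct.
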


\begin{lem}\label{lem:dp3l}
Let $\Re{\lambda}-1>\Re{u}>3/4$ and $\Re{v}=0$. For $1-2\Re{\lambda}< \Delta< -1-2\Re{u}$ one has
\begin{multline}\label{eq:dp3m}
D_{p}^{3}(u,v;\lambda)=\frac{1}{p^{1/2+u-v}}D^{*}_{1}(u,v,\lambda)+E_{p}^{3}(u,v;\lambda)+\\+
\frac{(2\pi)^{2u-1}}{2\pi i}\int_{\Re{s}=\Delta}\Gamma(u,v,\lambda;s)\times\\ \times \Biggl( \sin{\pi(u+s/2)}
\sum_{r|d}\frac{\mu(d/r)}{(d/r)^{1/2+u-v-s/2}}\sum_{a=1}^{\infty}\frac{\tau_v(a)\tau_u(|pa-r|)}{a^{1/2-u-s/2}|pa-r|^{u}}+\\+
\cos{\pi v}\sum_{r|d}\frac{\mu(d/r)}{(d/r)^{1/2+u-v-s/2}}\sum_{a=1}^{\infty}\frac{\tau_v(a)\tau_u(pa+r)}{a^{1/2-u-s/2}
(pa+r)^{u}}
\Biggr)\frac{ds}{p(d/p)^{s/2}}.
\end{multline}
\end{lem}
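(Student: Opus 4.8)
The plan is to start from the representation of $D_p^3(u,v;\lambda)$ obtained in the lemma immediately preceding Lemma~\ref{lem:dp3l} --- an integral over $\Re s=\Delta$ of $\Gamma(u,v,\lambda;s)$ against a $\delta$-sum whose inner variable $m$ is restricted by $(m,p)=1$ --- and to dissolve that coprimality condition by writing $\sum_{(m,p)=1}=\sum_{m\ge1}-\sum_{p\mid m}$. The $\sum_{m\ge1}$ part will be matched, prefactor and all, with the Mellin--Barnes representation \eqref{eq:DN} of $D_1^*(u,v;\lambda)$, and the $\sum_{p\mid m}$ part will be evaluated in closed form to produce the divisor sums over $r\mid d$ and $a\ge1$ appearing in \eqref{eq:dp3m}.

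For the $\sum_{m\ge1}$ part, dropping $(m,p)=1$ turns the bracket of the preceding lemma into precisely the bracket of \eqref{eq:DN} with $N=1$: $\sin\pi(u+s/2)$ times $\sum_{q\ge1}\sum_{m,\,n,\,(n,q)=1}\delta_q(mn-d)\,q^{-2u}(mn)^{-(1/2-u-s/2)}(m/n)^{-v}$, plus the $\cos\pi v$ term carrying $\delta_q(mn+d)$. The only discrepancy is the $s$-dependent prefactor: the preceding lemma carries $p^{1/2-u-s/2+v}\bigl(p(\sqrt{d/p})^{s}\bigr)^{-1}$ whereas \eqref{eq:DN} carries $d^{-s/2}$, and their ratio is $p^{1/2-u-s/2+v}\cdot p^{-1}\cdot p^{s/2}=p^{-(1/2+u-v)}$, which is independent of $s$. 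Hence this part equals $p^{-(1/2+u-v)}D_1^*(u,v;\lambda)$. En route one records that on $\Re s=\Delta$ (with $\Delta<-1-2\Re u$) every $\delta$-series converges absolutely, so the splitting crosses no pole and the Lerch residue $E_p^3(u,v;\lambda)$ of \eqref{E3}, already isolated in the preceding lemma from the $q=1$ term, passes through untouched.

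For the $\sum_{p\mid m}$ part, substitute $m=pm'$ and pull the factor $p^{-(1/2-u-s/2)-v}$ out of $(pm'n)^{-(1/2-u-s/2)}(pm'/n)^{-v}$, then remove $(n,q)=1$ by M\"obius inversion exactly as in the treatment of $P(z)$ in Section~\ref{section6}. The congruence $pm'kn_1\equiv d\Mod{kq_1}$ forces $k\mid d$, so setting $r=d/k$ (note $(r,p)=1$ because $(d,p)=1$) collapses the $k$-sum to $\sum_{r\mid d}\mu(d/r)(d/r)^{-(1/2+u-v-s/2)}$; here it is essential that $pm'n_1=r$ is impossible for $(r,p)=1$, so the remaining $q_1$-sum of $q_1^{-2u}\delta_{q_1}(pm'n_1-r)$ is the full divisor sum $\sigma_{-2u}(|pm'n_1-r|)$ with \emph{no} diagonal term --- this is the structural reason why $S_3$ picks up no extra $\zeta$-value main term, in contrast with $S_1$. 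Writing $m'n_1=a$, using $\sum_{m'n_1=a}(m'/n_1)^{-v}=\tau_{-v}(a)=\tau_v(a)$ and $\sigma_{-2u}(M)=M^{-u}\tau_u(M)$, the $\delta_q(mn-d)$ piece becomes $\sum_{r\mid d}\mu(d/r)(d/r)^{-(1/2+u-v-s/2)}\sum_{a\ge1}\tau_v(a)\tau_u(|pa-r|)\,a^{-(1/2-u-s/2)}|pa-r|^{-u}$, and the $\delta_q(mn+d)$ piece is the same with $|pa-r|$ replaced by $pa+r$ (automatically $\ge1$). Reassembling with the extracted powers of $p$ and the prefactor $p^{1/2-u-s/2+v}\bigl(p(\sqrt{d/p})^{s}\bigr)^{-1}$ leaves exactly the prefactor $\bigl(p(d/p)^{s/2}\bigr)^{-1}$ of \eqref{eq:dp3m}, with the overall sign fixed by the $\sum_{m\ge1}-\sum_{p\mid m}$ split and by the reflection identity (of the type $-\cos\pi s=\sin\pi(u+s/2)$) invoked in passing from Lemma~\ref{lem:gstar3} to the preceding lemma.

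The main obstacle is the bookkeeping of powers of $p$ and of the $s$-exponent in the last step: four separate factors of $p$ enter (from $m=pm'$, from the M\"obius--$r$ substitution, from $\sigma_{-2u}(|pa-r|)=|pa-r|^{-u}\tau_u(|pa-r|)$, and from the prefactor inherited from the preceding lemma), and one must check that their product carries exactly the $s$-dependence $(d/p)^{-s/2}$ and no leftover $p^{\pm s/2}$ --- this is what simultaneously anchors the identification of the $\sum_{m\ge1}$ part with $p^{-(1/2+u-v)}D_1^*$ and produces the clean prefactor on the second integral. A secondary point requiring care is confirming that beyond the $q_1=1$ contribution, already accounted for by $E_p^3$ in the preceding lemma, no pole of $\zeta(0,b/q;s+v)$ survives the reduction, so that \eqref{eq:dp3m} holds with $E_p^3$ as the only explicit pole term and no further corrections.
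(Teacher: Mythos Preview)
Your proposal is correct and follows essentially the same route as the paper: split the condition $(m,p)=1$ as $\sum_{m\ge1}-\sum_{p\mid m}$, identify the first piece with $p^{-(1/2+u-v)}D_1^*$ via the prefactor ratio, and reduce the second piece by M\"obius inversion on $(n,q)=1$ together with the observation that $(r,p)=1$ kills the diagonal $pa=r$, so the $q$-sum collapses to $\sigma_{-2u}(|pa-r|)=|pa-r|^{-u}\tau_u(|pa-r|)$. The paper's own proof is exactly this computation, only more tersely written (it names the two pieces $A_1$ and $A_2$ and omits the prefactor bookkeeping you spell out).
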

\begin{proof}
Consider the following decomposition
\begin{equation*}
\sum_{q=1}^{\infty}\sum_{\substack{m,n=1\\(m,p)=(n,q)=1}}^{\infty}\frac{\delta_q(mn-d)}{q^{2u}(mn)^z}\left( \frac{m}{n}\right)^{-v}:=
A_1-A_2,
\end{equation*}
where
\begin{equation*}
A_1=\sum_{q=1}^{\infty}\sum_{\substack{m,n=1\\(n,q)=1}}^{\infty}\frac{\delta_q(mn-d)}{q^{2u}(mn)^z}
\left( \frac{m}{n}\right)^{-v},
\end{equation*}
\begin{equation*}
A_2=\frac{1}{p^{z+v}}\sum_{q=1}^{\infty}\sum_{\substack{m,n=1\\(n,q)=1}}^{\infty}
\frac{\delta_q(pmn-d)}{q^{2u}(mn)^z}\left( \frac{m}{n}\right)^{-v}.
\end{equation*}
The sum $A_1$ gives the first term in \eqref{eq:dp3m}.
By M\"{o}bius inversion
\begin{equation*}
A_2=\frac{1}{p^{z+v}}\sum_{r|d}\frac{\mu(d/r)}{(d/r)^{z+2u-v}}\sum_{q=1}^{\infty}\frac{1}{q^{2u}}\sum_{m,n=1}^{\infty}
\frac{\delta_q(pmn-r)}{(mn)^z}\left( \frac{m}{n}\right)^{-v}.
\end{equation*}
Since $r|d$ we have $(r,p)=1$ and, therefore, there is no $m,n$ such that $pmn=r$.
Thus
\begin{multline*}
\sum_{q=1}^{\infty}\frac{1}{q^{2u}}\sum_{m,n=1}^{\infty}
\frac{\delta_q(pmn-r)}{(mn)^z}\left( \frac{m}{n}\right)^{-v}=
\sum_{q=1}^{\infty}\frac{1}{q^{2u}}\sum_{a=1}^{\infty}
\frac{\delta_q(pa-r)}{a^z}\tau_v(a)=\\=
\sum_{a=1}^{\infty}\frac{\tau_v(a)}{a^z}\sigma_{-2u}(|pa-z|)=\sum_{a=1}^{\infty}\frac{\tau_v(a)\tau_u(|pa-r|)}{a^z|pa-r|^u}
\end{multline*}
and
\begin{equation*}
A_2=\frac{1}{p^{z+v}}\sum_{r|d}\frac{\mu(d/r)}{(d/r)^{z+2u-v}}\sum_{a=1}^{\infty}\frac{\tau_v(a)\tau_u(|pa-r|)}{a^z|pa-r|^u}.
\end{equation*}

\end{proof}

Integrating over $s$ we obtain
\begin{multline}\label{DP3uv}
D_{p}^{3}(u,v;\lambda)=\frac{1}{p^{1/2+u-v}}D_{1}^{*}(u,v;\lambda)+E_{p}^{3}(u,v;\lambda)+\\+
\frac{2(2\pi)^{2u-1}}{p^{\lambda+1/2}}\sum_{r|d}\frac{\mu(d/r)r^{\lambda+u-v}}{d^{1/2+u-v}}\times \\ \times \Biggl(
\cos{\pi(\lambda-u)}\sum_{a=1}^{\infty}\frac{\tau_v(a)\tau_u(|pa-r|)}{a^{\lambda-u}|pa-r|^u}H_{\lambda}\left(u,v,\frac{r}{ap}\right)+\\+
\cos{\pi v}\sum_{a=1}^{\infty}\frac{\tau_v(a)\tau_u(pa+r)}{a^{\lambda-u}(pa+r)^u}H_{\lambda}\left(u,v,-\frac{r}{ap}\right)
\Biggr),
\end{multline}
where the function $D_{1}^{*}(u,v;\lambda)$, defined  by \eqref{DN*}, satisfies \eqref{DN}.
Letting $u\rightarrow 0$, we prove theorem \ref{thm:s3}.

\subsection{The sum $S_2$}
This subsection is devoted to proving the formula below.
\begin{thm}\label{thm:s2}
For $\Re{v}=0$
\begin{multline}
S_2(l,0,\lambda;p)=-\frac{1}{pl^{1/2-v}}\sum_{d|l}\sum_{r|d}\mu(d/r)\tau_{v}(r)r^{-v}\log{r}+\\+
\frac{\sigma_{-2v}(l)}{pl^{1/2-v}}\left(2\gamma -2\log{2\pi}+\psi(k+v)+\psi(k-v)\right)+\\+
\frac{1}{p}V_1(l)+\frac{1}{p^{1-v}}W_p(l)+\frac{2\pi i^{2k}}{p^{1/2-v}}
\sum_{d|l}\left( \frac{d}{l}\right)^{1/2-v}E_{p}^{2}(0,v,\lambda).
\end{multline}
\end{thm}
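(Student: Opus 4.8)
The plan is to evaluate $S_2(l,u,v;p)$ in the limit $u\to0$, imitating the treatment of $S_3$ and $S_4$ above. First I would apply the Petersson trace formula (Theorem~\ref{Petersson}) to $T_2(l,u,v;p)$ as given by \eqref{t2}. Since $(d,p)=1$, the diagonal term $\delta(pmd,n)$ forces $n=pmd$ and, after summation over $m$, contributes a multiple of $\zeta(1+2u)$, whereas the off-diagonal part, whose Kloosterman moduli are the $c\equiv0\Mod{p}$, i.e., $c=qp$ with $q\ge1$, defines a series $D_p^2(u,v;\lambda)$ whose Bessel argument is $4\pi\sqrt{mdn/p}/q$. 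A crucial structural point is that $c=qp$ need not be divisible by $p^2$, so Lemma~\ref{Royer} does \emph{not} eliminate these terms; this is exactly what forces the additional error term $W_p(l)$ which is absent when $\nu\ge3$.

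Next I would introduce, for $\Re s>1+|\Re v|$, the Dirichlet series
\[
G_2^*(s,v;d,q)=\sum_{m,n=1}^{\infty}\frac{Kl(pmd,n;qp)}{(mn)^s}\left(\frac{m}{n}\right)^v
=\sum_{a,b=1}^{qp}\delta_{qp}(ab-1)\,\zeta(0,ad/q,s-v)\,\zeta(0,b/qp,s+v),
\]
and establish its meromorphic continuation together with a functional equation valid for $\Re s<-|\Re v|$, following Lemma~$2.2$ of \cite{BF} and Lemmas~\ref{lem:gstar3}, \ref{lem:gstar4}. The factor $\zeta(0,b/qp,s+v)$ is entire since $(b,qp)=1$ forbids $qp\mid b$; the only poles come from $\zeta(0,ad/q,s-v)$, which for $q\mid d$ reduces to the Riemann zeta $\zeta(s-v)$ and hence has a simple pole at $s=1+v$. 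Carrying these residues through the Mellin--Barnes representation of $D_p^2$ (with contour $1-2\Re\lambda<\Delta<-1-2\Re u$) produces exactly the term $E_p^2(u,v,\lambda)$ of \eqref{E2}, the surviving second factor being $\zeta(0,b/qp,1+2v)$ and the power of $q/(2\pi\sqrt{d/p})$ stemming from the Bessel argument.

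I would then substitute the functional equation into the integral representation of $D_p^2(u,v;\lambda)$, remove by M\"{o}bius inversion the coprimality condition(s) arising in the dual sum, and change variables $r=d/k$. As in Lemma~\ref{lem:dp3l}, the resulting expression decomposes into a piece proportional to $D_1^*(u,v;\lambda)$ — whose closed form \eqref{DN} supplies, on setting $\lambda=k$ and letting $u\to0$, the main terms together with $p^{-1}V_1(l)$ through \eqref{eq:vn} — and a genuinely new shifted-convolution piece, whose $q$- and $n$-summations produce the divisor products $\tau_v(a)\tau_0(|pa-r|)$ and $\tau_v(a)\tau_0(pa+r)$; identifying the remaining $s$-integral with $H_\lambda$ via Lemma~\ref{lem:h} turns this piece into $p^{-1+v}W_p(l)$ as in \eqref{def:wp}. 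The prefactors $p^{-1/2+v}$ and $p^{-1+v}$ in front of $E_p^2$ and $W_p$ in the statement come from the normalizations in \eqref{t2} and \eqref{si}. Because the relevant $H_\lambda$-arguments exceed $1$, one first works with $\Re u>0$ using \eqref{eq:Hk+} and only then appeals to Theorem~\ref{decompI} to pass to $u=0$, exactly as in the proof of Lemma~\ref{lem:h}.

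Finally, since $k\ge2$ all the series converge, so one may take $\lambda=k$ and let $u\to0$. The apparent pole at $u=0$ coming jointly from $\zeta(1+2u)$ and from the $\zeta(2u)\Gamma(2u)$ factors inside $D_1^*$ cancels by the same M\"{o}bius identity $d^{-1/2-v}=\sum_{r\mid d}\mu(d/r)d^{-1/2+v}\tau_v(r)r^{-v}$ used in Lemma~\ref{lemS1S2}, and the finite part produces the $\log$, $\gamma$, $\log2\pi$ and $\psi(k\pm v)$ terms of the claimed formula (the solitary $\log p$ being furnished by the $(2\pi\sqrt{d/p})$-factors). I expect the main difficulty to lie in the precise bookkeeping of the Lerch-zeta residues, so that the polar contribution appears in exactly the form $E_p^2$ of \eqref{E2}, and in justifying the $u\to0$ limit when the $H_\lambda$-arguments are larger than $1$ — both of which are handled by the analytic continuation \eqref{eq:Hk+} combined with Theorem~\ref{decompI}.
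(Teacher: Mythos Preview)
Your proposal is correct and follows essentially the same route as the paper: apply the trace formula to isolate the diagonal and $D_p^2$, express the Kloosterman series via $G_2^*$ in Lerch zeta functions, derive its functional equation (the paper's Lemma~\ref{lem:gstar2}), extract the residual term $E_p^2$ from the pole of $\zeta(0,ad/q,s-v)$ at $s=1+v$, split the dual sum so that the $(n,q)=1$ part yields $p^{-1/2-u-v}D_1^*$ and the $p\mid n$ part yields the shifted convolution that becomes $W_p$, and then let $u\to0$ using \eqref{eq:Hk+} with Theorem~\ref{decompI}.

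One small slip: there is no ``solitary $\log p$'' in the statement of Theorem~\ref{thm:s2}. After the decomposition $D_p^2=p^{-1/2-u-v}D_1^*+E_p^2+\cdots$, the $D_1^*$ piece carries the level $N=1$, so the $\log N$ contribution from \eqref{DN} vanishes; the powers of $p$ coming from the Bessel argument $4\pi\sqrt{dmn/p}/q$ are absorbed multiplicatively into the prefactors $p^{-1}$, $p^{-1+v}$ and $p^{-1/2+v}$ and do not produce a logarithm. Apart from this cosmetic point, your outline matches the paper's proof.
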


Applying the trace formula \eqref{PetRou} we have
\begin{multline}
S_2(l,u,v;p)=\sum_{d|l}\left( \frac{d}{l}\right)^{1/2-v}\frac{1}{p^{1/2-v}}\left(\frac{d}{pl}\right)^u\times \\ \times
\Biggl( \frac{\zeta(1+2u)}{(dp)^{1/2+u+v}}+2\pi i^{2k}D_{p}^{2}(u,v;\lambda)
\Biggr),
\end{multline}
where
\begin{multline}\label{dp2}
D_{p}^{2}(u,v;\lambda)=\frac{1}{p}\sum_{m,n=1}^{\infty}\frac{\left(m/n \right)^{v}}{(mn)^{1/2+u}}
\times \\ \times\sum_{q=1}^{\infty}\frac{Kl(dmp,n;qp)}{q}J_{2\lambda-1}\left(4\pi \frac{\sqrt{dmn/p}}{q}\right).
\end{multline}
For $\Re{s}>1+|\Re{v}|$ let
\begin{multline}
G^{*}_{2}(s,v;d,q)=\sum_{m,n=1}^{\infty}\frac{Kl(dpm,n;qp)}{(mn)^{s}}\left(\frac{m}{n}\right)^{v}=\\
=\sum_{a,b=1}^{qp}\delta_{q}(ab-1)\zeta(0,ad/q,s-v)\zeta(0,b/(qp),s+v).
\end{multline}
Note that $\zeta(0, ad/q;s-v)$ has a pole when $q|d$ and $\zeta(0,b/(qp),s+v)$ does not have poles.
\begin{lem}\label{lem:gstar2}
For $q \in \N$ and $ v \in \C$ the function $G^{*}_{2}(s,v;d,q)$ can be meromorphically continued on the whole complex plane as a function of complex variable $s$. Furthermore, for $\Re{s}<-|\Re{v}|$  one has
\begin{multline}\label{g2*}
G^{*}_{2}(s,v;d,q)=2\Gamma(1-s+v)\Gamma(1-s-v)\left(\frac{2\pi}{q}\right) ^{2s-2}  p^{1-s-v}\times\\
\times\Biggl( -\cos{\pi s}\sum_{\substack{m,n=1\\(n,qp)=1}}^{\infty}\frac{\delta_q(mn-d)}{(mn)^{1-s}}\left(\frac{m}{n} \right)^{-v}
+\\+\cos{\pi v}\sum_{\substack{m,n=1\\(n,qp)=1}}^{\infty}\frac{\delta_q(mn+d)}{(mn)^{1-s}}\left(\frac{m}{n} \right)^{-v} \Biggr).
\end{multline}
\end{lem}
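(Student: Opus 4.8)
The plan is to follow the template already used in the proofs of Lemma~\ref{lem:gstar4} and Lemma~\ref{lem:gstar3} (which themselves adapt lemma~$2.2$ of \cite{BF}), the only genuinely new point being the bookkeeping of the single prime $p$ that now sits in the modulus of the \emph{second} Lerch factor. Starting from the finite-sum representation $G^{*}_{2}(s,v;d,q)=\sum_{a,b=1}^{qp}\delta_{q}(ab-1)\zeta(0,ad/q,s-v)\zeta(0,b/(qp),s+v)$, which is valid for $\Re s>1+|\Re v|$, I would apply Hurwitz's functional equation for the Lerch (periodic) zeta function $\zeta(0,\beta;\cdot)$ separately to each of the two factors. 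Since this is an identity of meromorphic functions, it provides at once the meromorphic continuation of $G^{*}_{2}$ to the whole $s$-plane, and in the region $\Re s<-|\Re v|$ both sides of the resulting expression converge absolutely (the dual Dirichlet series live at $\Re(1-s\pm v)>1$). The prefactor produced by the functional equation is exactly $\Gamma(1-s+v)\Gamma(1-s-v)$ together with $(2\pi)^{2s-2}$ and $q^{1-s+v}\cdot(qp)^{1-s-v}=q^{2-2s}p^{1-s-v}$, the first two of which combine into $(2\pi/q)^{2s-2}$ and leave the leftover factor $p^{1-s-v}$ seen in \eqref{g2*}; this is precisely where the asymmetry with Lemma~\ref{lem:gstar4} originates. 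The poles picked up are those of $\zeta(0,ad/q;s-v)$, at $s=1+v$ when $q\mid d$ (recall $(a,q)=1$ and $(d,p)=1$, so $q\mid ad\Leftrightarrow q\mid d$), the factor $\zeta(0,b/(qp);s+v)$ being entire because $(b,qp)=1$; these polar terms are what will later assemble into $E_{p}^{2}$ of \eqref{E2} when this lemma is fed into the evaluation of $D_{p}^{2}(u,v;\lambda)$ and $S_2$, in complete analogy with the subsections on $S_3$ and $S_4$.

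After the functional equation each transformed factor is a sum of two ``dual'' Lerch series carrying exponential weights $e^{\pm\pi i(1-s+v)/2}$, respectively $e^{\pm\pi i(1-s-v)/2}$, so the product splits into four terms. Using the symmetry $(a,b)\mapsto(-a,-b)$ of the summation range, which preserves $\delta_q(ab-1)$, the two like-sign terms collapse to a single term with coefficient $-2\cos\pi s$ and the two opposite-sign terms to a single term with coefficient $2\cos\pi v$; this reproduces the trigonometric shape of \eqref{g2*}, with dual summation variables $m,n$ and weight $(m/n)^{-v}$ obtained by combining the exponents $-(1-s+v)$ and $-(1-s-v)$.

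What remains is to identify the coefficient of $(mn)^{-(1-s)}(m/n)^{-v}$, that is, to count the pairs $(a,b)$ subject to $\delta_q(ab-1)$ and to the congruences $ad\equiv\pm m\,(q)$ and $b\equiv n\,(qp)$. I expect this congruence count to be the main obstacle. One must show it equals $\id_{(n,qp)=1}\delta_q(mn\mp d)$: the condition $b\equiv n\pmod{qp}$ together with $(b,q)=1$ (forced by $\delta_q(ab-1)$) and $(b,p)=1$ (inherited from $(b,qp)=1$ in the Kloosterman sum) pins $(n,qp)=1$, whereupon $ab\equiv1\pmod q$ and $ad\equiv\pm m\pmod q$ give $mn\equiv\pm d\pmod q$; the delicate part is then reconciling the power of $p$ — the $p^{1-s-v}$ from the functional equation versus the lift of the residue class of $a$ from modulus $q$ to modulus $qp$ and the coprimality constraints carried by $(a,qp)=1$ — so that no spurious factor of $p$ or of $(p-1+\delta_p(q))$ survives. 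This is exactly the manoeuvre performed in the proofs of Lemmas~\ref{lem:gstar3} and~\ref{lem:gstar4}, but with the roles of the two moduli (the $q$-part and the $p$-part) interchanged. Once the count is established, collecting the surviving constants yields \eqref{g2*}, and with it both the claimed meromorphic continuation and the functional equation in $\Re s<-|\Re v|$, completing the proof.
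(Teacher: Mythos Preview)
Your proposal is correct and follows the paper's approach: apply the functional equation of the Lerch zeta function to both factors in the finite-sum representation, then identify the resulting arithmetic sum via a congruence count. One clarification worth making: the step you flag as delicate---reconciling the $p^{1-s-v}$ with the lift of $a$ from modulus $q$ to $qp$---is in fact the easiest of the three cases $S_2,S_3,S_4$. Because here the congruence on $b$ is modulo $qp$ (not modulo $q$ as in the $S_4$ case), the pair of conditions $ab\equiv 1\pmod{qp}$ and $b\equiv n\pmod{qp}$ already forces $(n,qp)=1$ and determines $a\equiv n^{-1}\pmod{qp}$ uniquely; the remaining condition $ad\equiv m\pmod q$ then reads $mn\equiv d\pmod q$, with no lift ambiguity and no case split on $\delta_p(q)$. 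The count is exactly $\id_{(n,qp)=1}\,\delta_q(mn\mp d)$, and the factor $p^{1-s-v}$ comes purely from the functional equation of the second Lerch factor, so no further reconciliation is needed.
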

\begin{proof}
Our arguments follow the proof of lemma $2.2$ of \cite{BF} with the difference that now we need to compute
\begin{multline*}
\sum_{a,b=1}^{qp}\delta_q(ab-1)\zeta(ad/q,0,1-s+v)\zeta(b/(qp),0,1-s-v)=\\=
q^{2-2s}p^{1-s-v}\sum_{m,n=1}^{\infty}\frac{(n/m)^v}{(mn)^{1-s}}
\sum_{\substack{a,b=1\\ ad \equiv m\Mod{q}\\b\equiv n \Mod{qp} }}^{qp}\delta_{qp}(ab-1).
\end{multline*}
To evaluate the sum over $a,b$ we consider the following system
\begin{equation*}
\begin{cases}
ab \equiv 1\Mod{qp}\\
ad \equiv m \Mod{q}\\
b \equiv n\Mod{qp}
\end{cases}
\Leftrightarrow
\begin{cases}
an\equiv 1\Mod{qp}\\
ad\equiv m\Mod{q}
\end{cases}
\Leftrightarrow
\end{equation*}
\begin{equation*}
\Leftrightarrow
\begin{cases}
an \equiv 1\Mod{qp}\\
adp\equiv mp\Mod{qp}
\end{cases}
\Leftrightarrow
\begin{cases}
(n,qp)=1\\
dp\equiv mnp\Mod{qp}
\end{cases}.
\end{equation*}
This gives the condition $(n,qp)=1$, $mn\equiv d\Mod{q}$ in the final formula.
\end{proof}

Next, we apply functional equation \eqref{g2*} to evaluate \eqref{dp2}.
\begin{lem}
Let $\Re{\lambda}-1>\Re{u}>3/4$ and $\Re{v}=0$. For $1-2\Re{\lambda}< \Delta< -1-2\Re{u}$ one has
\begin{multline}
D_{p}^{2}(u,v;\lambda)=E_{p}^{2}(u,v;\lambda)+\frac{(2\pi)^{2u-1}}{2\pi i}\int_{\Re{s}=\Delta}\Gamma(u,v,\lambda;s)
\times \\ \times \Biggl(
\sin{\pi(u+s/2)}
\sum_{q=1}^{\infty}\sum_{\substack{m,n=1\\ (n,qp)=1}}^{\infty}
\frac{\delta_q(mn-d)}{q^{2u}(mn)^{1/2-u-s/2}}\left(\frac{m}{n}\right)^{-v}+\\+\cos{\pi v}
\sum_{q=1}^{\infty}\sum_{\substack{m,n=1\\ (n,qp)=1}}^{\infty}
\frac{\delta_q(mn+d)}{q^{2u}(mn)^{1/2-u-s/2}}\left(\frac{m}{n}\right)^{-v} \Biggr)
\frac{p^{1/2-u-s/2-v}ds}{p(\sqrt{d/p})^s}.
\end{multline}
\end{lem}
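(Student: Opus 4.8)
The plan is to mimic, essentially line for line, the treatment of $D_{p}^{3}$ and $D_{p}^{4}$ above (which itself follows lemmas $3.1$-$3.4$ of \cite{BF}): insert the Mellin--Barnes representation \eqref{MBJ} of the Bessel function into \eqref{dp2}, unfold the inner $m,n$-sum into the Dirichlet series $G^{*}_{2}$, shift the $s$-contour to the left past the single pole produced by the Lerch zeta functions, and on the shifted line invoke the functional equation \eqref{g2*} of lemma \ref{lem:gstar2}.

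Concretely, I would first substitute \eqref{MBJ} for $J_{2k-1}(4\pi\sqrt{dmn/p}/q)$ in \eqref{dp2}. When $\Re u>3/4$ one may pick a contour $\Re s=\Delta_{0}$ with $1-2\Re u<\Delta_{0}<-1/2$: on this line $\Re(1/2+u+s/2)>1$, so the $m,n$-series defining $G^{*}_{2}(1/2+u+s/2,v;d,q)$ converges absolutely and is $\ll_{\varepsilon}(qp)^{1/2+\varepsilon}$ by the Weil bound, while $\Re s<-1/2$ makes the $q$-series converge too. Interchanging summation and integration then yields
\[
D^{2}_{p}(u,v;\lambda)=\frac{1}{4\pi i\,p}\int_{\Re s=\Delta_{0}}\frac{\Gamma(\lambda-1/2+s/2)}{\Gamma(\lambda+1/2-s/2)}\,(2\pi\sqrt{d/p})^{-s}\sum_{q=1}^{\infty}q^{s-1}\,G^{*}_{2}\!\left(\tfrac12+u+\tfrac s2,v;d,q\right)ds .
\]

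Next I would move the contour from $\Re s=\Delta_{0}$ to $\Re s=\Delta$ with $1-2\Re\lambda<\Delta<-1-2\Re u$. The inequality $\Delta>1-2\Re\lambda$ keeps us to the right of all poles of the Bessel factor, so no pole of that factor is met. By the discussion preceding lemma \ref{lem:gstar2}, the only pole of $s\mapsto G^{*}_{2}(1/2+u+s/2,v;d,q)$ in the intervening strip is the simple pole at $1/2+u+s/2=1+v$, i.e.\ $s=1+2v-2u$, which occurs exactly when $q\mid d$ and comes from $\zeta(0,ad/q;\,\cdot\,)$; its real part $1-2\Re u$ lies strictly between $\Delta$ and $\Delta_{0}$, so it is crossed once. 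The residue there — computed from the fact that $\zeta(\alpha,\beta;s)$ has residue $1$ at $s=1$ for $\beta\in\Z$, a factor $2$ from the chain rule $d(1/2+u+s/2)/ds=1/2$, and the bijection $x\mapsto x^{-1}$ on the units modulo $qp$ that turns the residual $b$-sum into $\sum_{(b,qp)=1}\zeta(0,b/(qp);1+2v)$ — combined with the specialisation of the Bessel gamma-quotient (which becomes $\Gamma(\lambda-u+v)/\Gamma(\lambda+u-v)$) and of $(2\pi\sqrt{d/p})^{-s}q^{s-1}$ at $s=1+2v-2u$ (which becomes $q^{-1}(q/2\pi\sqrt{d/p})^{1-2u+2v}$), and then summed over $q\mid d$, reproduces exactly $E^{2}_{p}(u,v;\lambda)$ of \eqref{E2}. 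Finally, on the line $\Re s=\Delta$ one has $\Re(1/2+u+s/2)<-|\Re v|$, so \eqref{g2*} is available; substituting it and performing the routine collapses — the identity $\frac{\Gamma(\lambda-1/2+s/2)}{\Gamma(\lambda+1/2-s/2)}\Gamma(1/2-u-s/2+v)\Gamma(1/2-u-s/2-v)=\Gamma(u,v,\lambda;s)$, $q^{s-1}q^{1-2u-s}=q^{-2u}$, $(2\pi)^{-s}(2\pi)^{2u+s-1}=(2\pi)^{2u-1}$, $2/(4\pi i)=1/(2\pi i)$, and $-\cos\pi(1/2+u+s/2)=\sin\pi(u+s/2)$, with the surviving $p^{1-(1/2+u+s/2)-v}=p^{1/2-u-s/2-v}$ from \eqref{g2*} and the $1/p$ from \eqref{dp2} — turns the integral into the one in the statement.

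The hard part will not be any single estimate but the two pieces of technical care: first, justifying the interchange of the triple sum with the $s$-integral and the contour shift, which needs absolute convergence on $\Re s=\Delta_{0}$ together with Stirling-type decay of the integrand along horizontal segments (handled exactly as for $D^{3}_{p}$ and $D^{4}_{p}$); and second, the exact bookkeeping of the residue, where one must verify that counting the solutions of $ab\equiv 1$ under the coprimality constraint $(b,qp)=1$ produces precisely the shape of $E^{2}_{p}$ in \eqref{E2}. Since both steps are completely parallel to the proofs of the $D^{3}_{p}$ and $D^{4}_{p}$ lemmas, in the final write-up I would spell out the residue computation and merely refer back to those proofs for the convergence and shift.
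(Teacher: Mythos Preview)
Your proposal is correct and matches the paper's own approach essentially line for line: the paper does not write out a separate proof for this lemma but simply says ``we apply functional equation \eqref{g2*} to evaluate \eqref{dp2}'' and relies on the parallel arguments for $D_{p}^{3}$ and $D_{p}^{4}$, which is exactly the route you describe. Your residue bookkeeping for $E_{p}^{2}$ is accurate, including the identification of the pole at $s=1+2v-2u$ for $q\mid d$ and the reduction of the residual sum to $\sum_{(b,qp)=1}\zeta(0,b/(qp);1+2v)$.
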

\begin{lem}
Let $\Re{\lambda}-1>\Re{u}>3/4$ and $\Re{v}=0$. For $1-2\Re{\lambda}< \Delta< -1-2\Re{u}$ one has
\begin{multline}
D_{p}^{2}(u,v;\lambda)=\frac{1}{p^{1/2+u+v}}D^{*}_{1}(u,v,\lambda)+E_{p}^{2}(u,v;\lambda)+\\+
\frac{(2\pi)^{2u-1}}{2\pi i}\int_{\Re{s}=\Delta}\Gamma(u,v,\lambda;s)\times\\ \times \Biggl( \sin{\pi(u+s/2)}
\sum_{r|d}\frac{\mu(d/r)}{(d/r)^{1/2+u-v-s/2}}\sum_{a=1}^{\infty}\frac{\tau_v(a)\tau_u(|pa-r|)}{a^{1/2-u-s/2}|pa-r|^{u}}+\\+
\cos{\pi v}\sum_{r|d}\frac{\mu(d/r)}{(d/r)^{1/2+u-v-s/2}}\sum_{a=1}^{\infty}\frac{\tau_v(a)\tau_u(pa+r)}{a^{1/2-u-s/2}
(pa+r)^{u}}
\Biggr)\frac{ds}{p(d/p)^{s/2}}.
\end{multline}
\end{lem}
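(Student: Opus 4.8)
The plan is to mimic, almost verbatim, the proof of Lemma~\ref{lem:dp3l}, with the summation variables $m$ and $n$ (and hence the shift $v$ and $-v$) interchanging their roles. Starting from the representation of $D_p^2(u,v;\lambda)$ obtained in the preceding lemma, I would set $z:=1/2-u-s/2$ and split off the coprimality condition $(n,p)=1$ contained in $(n,qp)=1$: for each choice of sign,
\begin{equation*}
\sum_{q=1}^{\infty}\sum_{\substack{m,n=1\\ (n,qp)=1}}^{\infty}\frac{\delta_q(mn\mp d)}{q^{2u}(mn)^z}\Big(\frac{m}{n}\Big)^{-v}=B_1^{\mp}-B_2^{\mp},
\end{equation*}
where $B_1^{\mp}$ is the same sum with $(n,qp)=1$ relaxed to $(n,q)=1$, and $B_2^{\mp}$ collects the terms with $p\mid n$. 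In $B_2^{\mp}$ I would substitute $n=pn_1$; the terms with $p\mid q$ drop out since $\delta_q(mpn_1\mp d)=0$ whenever $p\mid q$ and $(d,p)=1$, and the rest picks up the factor $p^{v-z}$ and the condition $(n_1,q)=1$.

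First, $B_1^{\mp}$ is precisely the $\delta_q$-sum occurring in \eqref{eq:DN} with $N=1$. Since the outer weight $\frac{p^{1/2-u-s/2-v}}{p(\sqrt{d/p})^s}$ equals the $s$-independent multiple $p^{-1/2-u-v}$ of the weight $d^{-s/2}$ of \eqref{eq:DN}, the $B_1^{\mp}$ part contributes, after integration in $s$, the term $p^{-1/2-u-v}D_1^*(u,v;\lambda)=\frac{1}{p^{1/2+u+v}}D_1^*(u,v;\lambda)$, where $D_1^*$ is defined by \eqref{DN*} and satisfies \eqref{DN}; this is exactly where the sign of $v$ differs from Lemma~\ref{lem:dp3l}. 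Secondly, in $B_2^{\mp}$ I would remove the condition $(n_1,q)=1$ by M\"obius inversion $\id_{(n_1,q)=1}=\sum_{k\mid(n_1,q)}\mu(k)$; writing $q=kq'$, $n_1=kn_2$, the congruence forces $k\mid d$, and with $r:=d/k$ (after dividing the congruence by $d/r$) the inner double sum collapses: using $(r,p)=1$ so that $pmn_2=r$ has no solutions, grouping $a=mn_2$, and invoking $\sum_{mn_2=a}(m/n_2)^{-v}=\tau_v(a)$ together with $\sum_{q'}\delta_{q'}(pa\mp r)q'^{-2u}=\sigma_{-2u}(|pa\mp r|)=|pa\mp r|^{-u}\tau_u(|pa\mp r|)$, one is led to
\begin{equation*}
B_2^{-}=p^{v-z}\sum_{r\mid d}\frac{\mu(d/r)}{(d/r)^{2u+z-v}}\sum_{a=1}^{\infty}\frac{\tau_v(a)\tau_u(|pa-r|)}{a^z|pa-r|^u},
\end{equation*}
and similarly $B_2^{+}$ with $|pa-r|$ replaced by $pa+r$.

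Substituting these back, one then has to check that the factor $p^{v-z}$ of $B_2^{\mp}$ combined with the outer weight $\frac{p^{1/2-u-s/2-v}}{p(\sqrt{d/p})^s}$ collapses to $\frac{1}{p(d/p)^{s/2}}$ exactly as in Lemma~\ref{lem:dp3l}, and that $2u+z-v=1/2+u-v-s/2$, $a^z=a^{1/2-u-s/2}$; the term $E_p^2(u,v;\lambda)$ is carried over unchanged from the preceding lemma, and all the rearrangements concern absolutely convergent series on the contour $\Re s=\Delta$ with $1-2\Re\lambda<\Delta<-1-2\Re u$, so no new analytic difficulty appears. I expect the only delicate point to be the bookkeeping of the powers of $p$ and $d$ in the outer weight when recombining $B_1^{\mp}$ with $D_1^*$ and $B_2^{\mp}$ with the $\tau$-sums, together with the observation that, in contrast with $D_p^3$ where the extra coprimality condition falls on $m$, here it falls on $n$, which is precisely what interchanges $v\leftrightarrow-v$ in the coefficient of $D_1^*$.
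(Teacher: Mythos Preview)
Your plan is correct and matches the paper's approach exactly. The paper's proof consists of a single displayed decomposition (your $B_1^\mp-B_2^\mp$ splitting), the observation that $p\mid q$ is excluded by $(d,p)=1$ so that $(pn,q)=1$ reduces to $(n,q)=1$, and the sentence ``The rest of the proof is similar to lemma~\ref{lem:dp3l}''; your proposal simply writes out those remaining steps in detail, including the bookkeeping of the $p$-powers and the $v\leftrightarrow -v$ swap in the coefficient of $D_1^*$.
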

\begin{proof}
Consider
\begin{multline*}
\sum_{q=1}^{\infty}\sum_{\substack{m,n=1\\(n,qp)=1}}^{\infty}\frac{\delta_q(mn-d)}{q^{2u}(mn)^z}\left(\frac{m}{n} \right)^{-v}
=\sum_{q=1}^{\infty}\sum_{\substack{m,n=1\\(n,q)=1}}^{\infty}\frac{\delta_q(mn-d)}{q^{2u}(mn)^z}\left(\frac{m}{n} \right)^{-v}
-\\-\frac{1}{p^{z-v}}\sum_{q=1}^{\infty}\sum_{\substack{m,n=1\\(pn,q)=1}}^{\infty}
\frac{\delta_q(pmn-d)}{q^{2u}(mn)^z}\left(\frac{m}{n} \right)^{-v}
.
\end{multline*}
Since $pmn\equiv d\Mod{q}$ and $(d,p)=1$, one has $(q,p)=1$ and the condition $(pn,q)=1$
in the second sum can be replaced by $(n,q)=1$. The rest of the proof is similar to lemma \ref{lem:dp3l}.
\end{proof}

Integrating over $s$ and letting $u\rightarrow 0$ we prove theorem \ref{thm:s2}.

\section{Error terms}\label{section8}
In this section we estimate the error terms defined by \eqref{eq:vn}, \eqref{eq:v1}, \eqref{eq:v2}, \eqref{eq:v3},  \eqref{E2}, \eqref{E3}, \eqref{E4} and \eqref{def:wp}.
Let
\begin{equation*}
v:=it, \quad t \in \R, \quad T:=3+|t|, \quad \lambda:=k,
\end{equation*}
\begin{equation*}
n_0:=\left[\frac{r+1}{N}\right]+1,\quad
n_1:=5n_0,\quad
n_2:=\frac{r}{N}(1+T^2).
\end{equation*}
\begin{lem}\label{lem:N1}
One has
\begin{equation}\label{V1}
V_{N,1}(r)\ll\begin{cases}
\frac{r}{N}(rkT^2)^{\epsilon}(k+T) &  r\geq  N/(1+T^2);\\
\left( \frac{rT^2}{4(N-r)}\right)^k\frac{N^{\epsilon}}{T}  &r< N/(1+T^2).
\end{cases}
\end{equation}
\end{lem}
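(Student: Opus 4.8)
Put $v=it$, so that $\cos\pi v=\cosh\pi t$ and $r^{-k+v}=r^{-k+it}$. From \eqref{eq:v1},
\[
|V_{N,1}(r)|\le\cosh(\pi t)\,r^{k}\sum_{n\ge n_{0}}\frac{\tau_{0}(n)\,|\tau_{v}(nN-r)|}{(nN-r)^{k}}\,\bigl|H_{k}(0,v;-z_{n})\bigr|,\qquad z_{n}:=\frac{r}{nN-r}>0,
\]
where $n_{0}=[(r+1)/N]+1$ is (essentially) the first index of summation. I will use $|\tau_{v}(m)|\le\tau_{0}(m)\ll_{\epsilon}m^{\epsilon}$ together with the elementary identities $r^{k}(nN-r)^{-k}=z_{n}^{k}$, $\ r^{k}z_{n}^{-k}(nN-r)^{-k}=1$, $\ r^{k}z_{n}^{-k+1/2}(nN-r)^{-k}=z_{n}^{1/2}$, which show that one application of \eqref{eqw:h2}, of \eqref{eqw:h1}, of \eqref{eqw:h5} respectively endows the $n$-th term with the clean weights $z_{n}^{k}T^{2k-1}4^{-k}$, $\ (1+\log(z_{n}k))T^{-1/2}$, $\ z_{n}^{1/2}$ (up to the lower-order factors already present in those bounds). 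The plan is to split the $n$-sum according to the size of $z_{n}$ — roughly $z_{n}\gtrsim1$, then $1/T^{2}\lesssim z_{n}\lesssim1$, then $z_{n}\lesssim1/T^{2}$ — apply on each range whichever of the three bounds is sharpest, and sum the resulting weights by comparison with integrals, using that the values $m=nN-r$ form an arithmetic progression of step $N$ and that $z_{n}\le(r/N)(n-n_{0})^{-1}$ for $n>n_{0}$ (since $n_{0}N>r$).

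\textbf{The case $r<N/(1+T^{2})$.} Here $n_{0}=1$ and $z_{n}\le z_{1}=r/(N-r)<1/T^{2}$ for every $n$, so \eqref{eqw:h2} applies throughout, and its correction term is $\ll$ its main term because $z_{1}^{1/2}<1/T$. Hence $|V_{N,1}(r)|\ll T^{2k-1}4^{-k}\sum_{n\ge1}z_{n}^{k}(n(nN-r))^{\epsilon}$, and since $z_{n}<(r/N)(n-1)^{-1}$ for $n\ge2$ the series is dominated by its first term $z_{1}^{k}N^{\epsilon}$. Thus $|V_{N,1}(r)|\ll T^{2k-1}4^{-k}z_{1}^{k}N^{\epsilon}=\bigl(rT^{2}/(4(N-r))\bigr)^{k}N^{\epsilon}/T$, which is the asserted bound.

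\textbf{The case $r\ge N/(1+T^{2})$.} Split $\sum_{n\ge n_{0}}=\sum_{n_{0}\le n\le n_{1}}+\sum_{n_{1}<n\le n_{2}}+\sum_{n>n_{2}}$, any range being void when its endpoints cross. On $n>n_{2}$ one has $z_{n}<1/T^{2}$, so \eqref{eqw:h2} applies; using $z_{n}\le(r/N)(n-n_{0})^{-1}$ the convergent tails are $\sum_{n>n_{2}}z_{n}^{k}\ll\frac{r}{N}T^{2-2k}$ and likewise for $z_{n}^{k+1/2}$, so this piece is $\ll\frac{r}{N}T(rkT)^{\epsilon}+\frac{r}{N}(k+T)(rkT)^{\epsilon}\ll\frac{r}{N}(k+T)(rkT)^{\epsilon}$. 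On $n_{1}<n\le n_{2}$ one has $1/T^{2}\le z_{n}<1/4$, and \eqref{eqw:h5} is the right choice: it contributes $\ll\sum_{n_{1}<n\le n_{2}}z_{n}^{1/2}(rkT)^{\epsilon}$, and since $z_{n}^{1/2}\le(r/N)^{1/2}(n-n_{0})^{-1/2}$ with $\sum_{n_{1}<n\le n_{2}}(n-n_{0})^{-1/2}\ll(n_{2}-n_{0})^{1/2}\ll(rT^{2}/N)^{1/2}$, this piece is $\ll\frac{r}{N}T(rkT)^{\epsilon}\ll\frac{r}{N}(k+T)(rkT)^{\epsilon}$. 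On $n_{0}\le n\le n_{1}$ there are only $4n_{0}+1$ terms, and here $z_{n}$ is bounded below by an absolute constant (since $n_{1}N=5n_{0}N\ll r$ when $r\ge N/2$); if $r\ge N/2$ apply \eqref{eqw:h1} throughout, each term being $\ll\frac{(rk)^{\epsilon}}{\sqrt T}\bigl(1+\frac{k+T}{\sqrt T}\bigr)\ll(k+T)(rk)^{\epsilon}$, and since $n_{0}\ll r/N$ the range contributes $\ll\frac{r}{N}(k+T)(rk)^{\epsilon}$; if $N/(1+T^{2})\le r<N/2$ then $n_{0}=1$, $n_{1}=5$ and $z_{n}\le z_{1}=r/(N-r)<1$, so \eqref{eqw:h5} gives $\ll z_{1}^{1/2}(rk)^{\epsilon}$ and $z_{1}^{1/2}\ll\frac{r}{N}(k+T)$ follows from $r/N\ge1/(1+T^{2})$ (the ratio of the two sides is $\le((r/N)(1-r/N))^{-1/2}(k+T)^{-1}\ll1$). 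Summing the three pieces yields $|V_{N,1}(r)|\ll\frac{r}{N}(rkT^{2})^{\epsilon}(k+T)$.

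\textbf{Expected main obstacle.} The one genuinely delicate point is the choice of bound on the intermediate range $1/T^{2}\lesssim z_{n}\lesssim1$: the natural bound \eqref{eqw:h1}, with main term $z_{n}^{-k}/\sqrt T$, contributes $\asymp1/\sqrt T$ to each of the $\asymp rT^{2}/N$ terms there and hence loses a factor $T^{1/2}$ upon summation; one must instead use \eqref{eqw:h5}, whose $z_{n}^{-k+1/2}$ decay makes $\sum z_{n}^{1/2}$ collapse — by comparison with $\int x^{-1/2}\,dx$ — down to $\asymp rT/N$. Everything else is routine bookkeeping: tracking which of $[n_{0},n_{1}]$, $(n_{1},n_{2}]$, $(n_{2},\infty)$ are nonempty, and the split $r\lessgtr N/2$.
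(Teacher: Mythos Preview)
Your argument is correct and follows essentially the same approach as the paper: the same three bounds \eqref{eqw:h1}, \eqref{eqw:h5}, \eqref{eqw:h2} are applied on the same three $n$-ranges $[n_{0},n_{1}]$, $(n_{1},n_{2}]$, $(n_{2},\infty)$, and the case $r<N/(1+T^{2})$ is handled identically. The only cosmetic difference is that the paper splits first into the three $r$-ranges $r\ge N/2$, $N/(1+T^{2})\le r<N/2$, $r<N/(1+T^{2})$ and then chooses the $n$-ranges, whereas you keep the uniform three-range $n$-split for all $r\ge N/(1+T^{2})$ and branch on $r\gtrless N/2$ only inside $[n_{0},n_{1}]$; the estimates on each piece are the same.
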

\begin{proof}
Estimating  \eqref{eq:v1} we obtain
\begin{equation}\label{eq:v1.1}
V_{N,1}(r)\ll\cosh{\pi t}\sum_{n \geq n_0}\frac{n^{\epsilon}(nN-r)^{\epsilon}r^k}{(nN-r)^k}H_k\left( 0,v;\frac{-r}{nN-r}\right).
\end{equation}

\begin{enumerate}
\item
Assume that $r\geq N/2$. We split the sum over $n$ in \eqref{eq:v1.1} into three parts:
 $n_0\leq n < n_1, \, n_1\leq n< n_2$ and $n \geq n_2$.
Estimate \eqref{eqw:h1} implies that the first sum is bounded by
\begin{multline*}
\sum_{n_0\leq n<n_1}
(nN)^{\epsilon}\frac{1}{\sqrt{T}}\left[(rk)^{\epsilon}+\frac{k+T}{\sqrt{T}}\sqrt{\frac{nN-r}{r}}\right]\ll\\
\ll \frac{(rT)^{\epsilon}}{\sqrt{T}}\left[ n_1k^{\epsilon}+\frac{k+T}{\sqrt{T}}\frac{r}{N}\sqrt{\frac{n_1N-r}{r}}\right]\ll (rkT)^{\epsilon}\frac{r}{N}\frac{k+T}{T}.
\end{multline*}
Inequality \eqref{eqw:h5} gives the following bound for the second sum
\begin{multline*}
\sum_{n_1\leq n<n_2}
(nN)^{\epsilon}\frac{\sqrt{r}}{\sqrt{nN-r}}
\ll
(rT)^{\epsilon}r^{1/2}\times \\ \times
\left(\frac{1}{(n_1N-r)^{1/2}}+\frac{(n_2N-r)^{1/2}}{N}\right)\ll(Tr)^{\epsilon}\frac{rT}{N}.
\end{multline*}
Applying \eqref{eqw:h2} we estimate the third sum
\begin{multline}\label{n>n2}
\sum_{n \geq n_2}(nN-r)^{\epsilon}\frac{r^k}{(nN-r)^k}\frac{T^{2k-1}}{4^k\sqrt{k}}\left[ 1+\frac{k+T}{\sqrt{k}}\sqrt{\frac{r}{nN-r}}\right]\ll\\
\ll \frac{r^kT^{2k-1}}{4^k\sqrt{k}}\left[\frac{(n_2N-r)^{\epsilon}}{N(n_2N-r)^{k-1}}+
\frac{k+r}{\sqrt{k}}\frac{\sqrt{r}(n_2N-r)^{\epsilon}}{N(n_2N-r)^{k-1/2}} \right]\ll \\ \ll (rT^2)^{\epsilon} \frac{rT}{4^kN}.
\end{multline}
Combining the last three bounds we have
\begin{equation*}
V_{N,1}(r)\ll \frac{r}{N}(rkT^2)^{\epsilon}(k+T) \quad\text{for } r\geq N/2.
\end{equation*}

  \item
Assume that $N/(1+T^2)\leq r<N/2$. We split the sum over $n$ in \eqref{eq:v1.1} into two parts:
 $n_0\leq n < n_2$ and $n \geq n_2$.
Estimate \eqref{eqw:h5} implies that the first sum is bounded by
\begin{multline*}
\sum_{n_0\leq n<n_2}
(nN)^{\epsilon}\frac{\sqrt{r}}{\sqrt{nN-r}}
\ll
(rT)^{\epsilon}r^{1/2}\times \\ \times
\left(\frac{1}{(n_0N-r)^{1/2}}+\frac{(n_2N-r)^{1/2}}{N}\right)\ll(Tr)^{\epsilon}\frac{rT}{N}.
\end{multline*}
It follows from the last estimate and \eqref{n>n2} that
\begin{equation*}
V_{N,1}(r)\ll \frac{r}{N}(rkT^2)^{\epsilon}(k+T) \quad\text{for } N/(1+T^2)\leq r<N/2.
\end{equation*}

\item
Assume that  $r <N/(1+T^2)$. Applying \eqref{eqw:h2} we obtain (in the same way as in \eqref{n>n2})
\begin{equation*}
V_{N,1}(r)\ll \left( \frac{rT^2}{4(N-r)}\right)^k\frac{N^{\epsilon}}{T}\quad\text{for }r <N/(1+T^2) .
\end{equation*}
\end{enumerate}
\end{proof}

\begin{lem}\label{lem:N2}For any $\epsilon>0$, $r>N$
\begin{equation}
V_{N,2}(r)\ll_{\epsilon}\frac{r^{1+\epsilon}}{N}.
\end{equation}
\end{lem}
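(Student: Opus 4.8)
The plan is to reindex the finite sum in \eqref{eq:v2} over the negative integers $n=-m$, to reduce the estimate to a pointwise bound for $H_k(0,v;w)$ with $w>1$ — an argument range not covered by \eqref{eqw:h1}--\eqref{eqw:h5}, since those treat negative argument or positive argument below $1$ — and finally to sum an elementary bound over $m$. The decisive feature will be an exact algebraic cancellation that removes the large prefactor $r^{k}$.

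First I would put $n=-m$ in \eqref{eq:v2}. The range $(1-r)/N\le n\le -1$ becomes $1\le m\le M$ with $M:=[(r-1)/N]$; here $M\ge 1$ exactly because $r>N$, and $1\le r-mN<r$ throughout. Writing $w_m:=r/(r-mN)>1$ one obtains
\[
V_{N,2}(r)=(-1)^k r^{k-it}\sum_{m=1}^{M}\frac{\tau_0(m)\,\tau_{it}(r-mN)}{(r-mN)^k}\,H_k\!\left(0,v;w_m\right).
\]
The key input is a bound for $H_k$ at an argument exceeding $1$, which I would extract from lemma \ref{lem:h}: taking $u=0$ in \eqref{eq:Hk+} is legitimate because the right-hand integral converges there (as already used in the proof of lemma \ref{lemS1S2}), and with $\lambda=k$, $z=1/w$ the identity reads $H_k(0,v;w)=(-1)^k\pi\,w^{-k}I(1/w)$ with $I$ the integral \eqref{Iz}. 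Since $1/w\in(0,1)$, corollary \ref{boundI} gives
\[
H_k(0,v;w)\ll w^{-k}\left(1+\log w+\frac{1}{(2k-1)\sqrt{1-1/w}}\right),
\]
uniformly in $k$ and $t$.

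Now comes the cancellation $w_m^{-k}(r-mN)^{-k}=r^{-k}$, which annihilates the prefactor $r^{k-it}$ of modulus $r^{k}$; together with the divisor bounds $\tau_0(m)\ll_\epsilon m^\epsilon$, $|\tau_{it}(r-mN)|\le\tau_0(r-mN)\ll_\epsilon r^\epsilon$, and the identities $1-1/w_m=mN/r$, $\log w_m\le\log r$, this leaves
\[
V_{N,2}(r)\ll_\epsilon r^{\epsilon}\sum_{m=1}^{M}\left(1+\log r+\frac{1}{2k-1}\sqrt{\frac{r}{mN}}\right).
\]
Summing termwise: since $M\le r/N$ the first two terms contribute $\ll (r/N)\log r$, while $\sum_{m\le M}m^{-1/2}\ll M^{1/2}\le (r/N)^{1/2}$ gives $\sum_{m\le M}\sqrt{r/(mN)}\ll r/N$; hence $V_{N,2}(r)\ll_\epsilon (r/N)\,r^{\epsilon}$, which is the claim after relabelling $\epsilon$. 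The only nonroutine step is the $w>1$ bound for $H_k$ obtained through lemma \ref{lem:h} and corollary \ref{boundI} at $u=0$; the remaining manipulations are elementary, the genuine gain being that $\sqrt{r/(mN)}$ is summed against the length $M\ll r/N$ of the range rather than estimated trivially by $\sqrt{r/N}$.
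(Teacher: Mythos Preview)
Your proof is correct and follows essentially the same route as the paper: both invoke lemma~\ref{lem:h} at $u=0$ to replace $H_k(0,v;w)$ for $w>1$ by $(-1)^k\pi w^{-k}I(1/w)$ (so that the factor $r^{k}$ cancels exactly), and then apply corollary~\ref{boundI} and sum trivially over the range of length $\ll r/N$. The only difference is presentational --- you spell out the cancellation $w_m^{-k}(r-mN)^{-k}=r^{-k}$ and the summation $\sum_{m\le M}\sqrt{r/(mN)}\ll r/N$ explicitly, whereas the paper compresses this into a single displayed inequality.
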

\begin{proof}
By lemma \ref{lem:h} we have that
\begin{multline*}
V_{N,2}(r)\ll \sum_{(1-r)/N\leq n\leq -1}\frac{\tau_{0}(|n|)\tau_{v}(nN+r)}{r^{v}}\int_{0}^{\infty}J_{2k-1}(x)\times \\ \times k^{+}\left(x\sqrt{\frac{nN+r}{r}},1/2+v\right)dx
\ll \sum_{1\leq n \leq (r-1)/N}\bigg{|}I\left( \frac{r-nN}{r}\right)\bigg{|},
\end{multline*}
where $I(z)$ is defined by \eqref{Iz}.
Then corollary \ref{boundI} gives the desired result.
\end{proof}

\begin{lem}\label{lem:N3}
For any $\epsilon>0$
\begin{equation}
V_{N,3}(r)\ll\begin{cases}
\frac{(rT^2)^{\epsilon}r}{N} &  r\geq  N/(1+T^2);\\
\left( \frac{r}{2(N+r)}\right)^kN^{\epsilon} &r< N/(1+T^2).
\end{cases}
\end{equation}
\end{lem}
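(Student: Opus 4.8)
The plan is to insert the trivial bounds $|r^{-v}|=1$ and $|\tau_v(m)|\ll_\epsilon m^\epsilon$ (recall $v=it$) into \eqref{eq:v3} and to exploit that the argument $z_n:=r/(nN+r)$ of the hypergeometric factor satisfies $0<z_n\leq r/(N+r)<1$ for every $n\geq 1$. Pulling $z_n^{k}H_k(0,v;z_n)$ out of the $n$-th term, I would first reduce the claim to
\[
V_{N,3}(r)\ll\sum_{n\geq 1}n^\epsilon(nN+r)^\epsilon\,\bigl|z_n^{k}H_k(0,v;z_n)\bigr|,
\]
and then estimate $z_n^{k}H_k(0,v;z_n)$ by \eqref{eqw:h4} whenever $z_n\leq 1/2$ (equivalently $nN\geq r$) and by \eqref{eqw:h3} whenever $z_n>1/2$ (equivalently $nN<r$, which forces $r>N$); in the latter regime I would keep only $z_n^{k}H_k(0,v;z_n)\ll\sqrt{z_n/(1-z_n)}=\sqrt{r/(nN)}$.

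For the second case, $r<N/(1+T^2)$, one has $r<N$ and hence $z_n<1/2$ for all $n$, so \eqref{eqw:h4} applies throughout. Here I would use the elementary inequality $r/(nN+r)\leq 2r/((n+1)(N+r))$, which holds for all $n\geq 1$ precisely because $r\leq N$; combined with \eqref{eqw:h4} it gives $z_n^{k}H_k(0,v;z_n)\ll\bigl(r/(2(N+r))\bigr)^{k}\bigl(2/(n+1)\bigr)^{k}$. Summing over $n$, using $(nN+r)^\epsilon\ll N^\epsilon(n+1)^\epsilon$ and $\sum_{n\geq 1}\bigl(2/(n+1)\bigr)^{k}(n+1)^{2\epsilon}\ll 1$ (uniformly in $k\geq 2$), then produces $V_{N,3}(r)\ll\bigl(r/(2(N+r))\bigr)^{k}N^\epsilon$.

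For the first case, $r\geq N/(1+T^2)$, I would distinguish $r<N$ from $r\geq N$. If $r<N$, the previous paragraph already gives $V_{N,3}(r)\ll\bigl(r/(2(N+r))\bigr)^{k}N^\epsilon\leq (r/N)N^\epsilon$, and $N^\epsilon\ll(rT^2)^\epsilon$ since $N\leq(1+T^2)r$. If $r\geq N$, I would split the $n$-sum at the threshold $nN=r$: for the terms with $nN<r$ one has $nN+r\leq 2r$, so by \eqref{eqw:h3} their contribution is $\ll r^\epsilon\sqrt{r/N}\sum_{n\leq r/N}n^{-1/2+\epsilon}\ll r^\epsilon(r/N)^{1+\epsilon}\ll(r/N)(rT^2)^\epsilon$; for the terms with $nN\geq r$ one has $nN+r\ll nN$, and \eqref{eqw:h4} in the form $z_n^{k}H_k(0,v;z_n)\ll z_n^{2}\leq r^{2}/(nN)^{2}$ (valid for $k\geq 2$) bounds their contribution by $\ll(r^{2}/N^{2})N^\epsilon\sum_{n\geq r/N}n^{-2+\epsilon}\ll(r/N)(rT^2)^\epsilon$. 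Adding the two pieces yields the first case. As elsewhere in this section, $k\geq 2$ is assumed; the series fail to converge absolutely for $k=1$, which is handled by the methods of \cite{BF}.

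The main obstacle is the intermediate regime $r\geq N$, where $z_n$ crosses $1/2$: one must pair the bound \eqref{eqw:h3} on the head of the series (where $z_n$ stays bounded away from $0$, so convergence is automatic but the size has to be kept down to $(r/N)(rT^2)^\epsilon$) with the quadratic decay $z_n^{2}$ extracted from \eqref{eqw:h4} on the tail, and it is this quadratic decay, hence $k\geq 2$, that makes the tail summable. Securing the exact constant in $\bigl(r/(2(N+r))\bigr)^{k}$ in the case $r<N$ likewise hinges on the sharp inequality $r/(nN+r)\leq 2r/((n+1)(N+r))$ rather than on the cruder $z_n\leq r/(nN)$.
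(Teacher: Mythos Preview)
Your argument is correct and is essentially a fleshed-out version of the paper's one-line proof, which simply cites \eqref{eqw:h3} for the first bound and \eqref{eqw:h4} for the second. Your splitting at $nN=r$ in the range $r\ge N$, applying \eqref{eqw:h3} to the head and \eqref{eqw:h4} (with the quadratic decay for $k\ge 2$) to the tail, is exactly the right way to make the paper's terse statement precise, since \eqref{eqw:h3} alone would leave a divergent $\sum n^{-1/2}$.
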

\begin{proof}
The first bound follows from \eqref{eqw:h3} and the second one from \eqref{eqw:h4}.
\end{proof}

\begin{thm} For any $\epsilon>0$
\begin{equation}\label{VN(l)bound}
V_N(l)\ll
\begin{cases}
(lkT^2)^{\epsilon}\frac{l^{1/2}(k+T)}{N}& l \geq N/(1+T^2);\\
\left(\frac{lT^2}{4(N-l)}\right)^k\frac{N^{\epsilon}}{l^{1/2}T}
&l<N/(1+T^2).
\end{cases}
\end{equation}
\end{thm}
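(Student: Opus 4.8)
The plan is to substitute the three bounds of Lemmas \ref{lem:N1}, \ref{lem:N2} and \ref{lem:N3} into the definition \eqref{eq:vn} and then carry out the divisor sum. First I would note that $v=it$, so $|l^{1/2-v}|=l^{1/2}$, and that for a fixed $r\mid l$ the number of $d$ with $r\mid d\mid l$ is $\tau(l/r)$, whence $\sum_{d\mid l}\sum_{r\mid d}1\ll_{\epsilon}l^{\epsilon}\max_{r\mid l}$; therefore
\begin{equation*}
V_N(l)\ll_{\epsilon}\frac{l^{\epsilon}}{l^{1/2}}\max_{r\mid l}\bigl(|V_{N,1}(r)|+|V_{N,2}(r)|+|V_{N,3}(r)|\bigr),
\end{equation*}
so the whole proof reduces to estimating this maximum. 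A preliminary remark I would record is that the summation range $(1-r)/N\le n\le -1$ in \eqref{eq:v2} is empty unless $r>N$, so $V_{N,2}(r)=0$ for $r\le N$; in particular $V_{N,2}$ can contribute only when $r>N$, which already forces $r\ge N/(1+T^2)$.

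Then I would split into the two regimes of \eqref{VN(l)bound}. In the case $l<N/(1+T^2)$ every divisor $r\mid l$ satisfies $r\le l<N/(1+T^2)$, so $V_{N,2}(r)=0$ and the second alternatives of Lemmas \ref{lem:N1} and \ref{lem:N3} apply. Since $r\mapsto r/(N-r)$ and $r\mapsto r/(N+r)$ increase on $(0,N)$, both bounds are largest at $r=l$; and because $T\ge 3$ and $k\ge 1$ one checks that $(r/(2(N+r)))^{k}N^{\epsilon}$ is dominated by $(rT^2/(4(N-r)))^{k}N^{\epsilon}/T$. Hence $\max_{r\mid l}(\dots)\ll(lT^2/(4(N-l)))^{k}N^{\epsilon}/T$, and multiplying by $l^{\epsilon}/l^{1/2}$ (absorbing $l^{\epsilon}$ into $N^{\epsilon}$, since $l\le N$) gives the second line of \eqref{VN(l)bound}.

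In the case $l\ge N/(1+T^2)$ I would split the divisors according to whether $r\ge N/(1+T^2)$ or $r<N/(1+T^2)$. For the former, the first alternatives of Lemmas \ref{lem:N1} and \ref{lem:N3}, together with Lemma \ref{lem:N2} when $r>N$, give a contribution $\ll(rkT^2)^{\epsilon}r(k+T)/N\ll(lkT^2)^{\epsilon}l(k+T)/N$ (using $r\le l$ and $r^{1+\epsilon}/N\ll l^{\epsilon}\cdot l(k+T)/N$). For the latter, $rT^2/(4(N-r))<1/4$ and $r/(2(N+r))<1/(2T^2)$, so the second alternatives of Lemmas \ref{lem:N1} and \ref{lem:N3} (with $V_{N,2}(r)=0$) contribute only $\ll N^{\epsilon}/T$; and since $l\ge N/(1+T^2)$ implies $l(k+T)/N\ge(k+T)/(1+T^2)\gg 1/T$ together with $N\le l(1+T^2)\ll lkT^2$, this is itself $\ll_{\epsilon}(lkT^2)^{\epsilon}l(k+T)/N$. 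Collecting the two subcases and multiplying by $l^{\epsilon}/l^{1/2}$ yields the first line of \eqref{VN(l)bound}.

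No new idea beyond the preceding lemmas enters the argument. The step I expect to require the most care is the bookkeeping near the threshold $r\asymp N/(1+T^2)$, where the nominally exponentially small factor $(rT^2/(4(N-r)))^{k}$ is only $O(1)$, so one must verify that such borderline divisors still fit inside the claimed error term, together with the elementary monotonicity in $r$ that permits reducing the maximum to the single term $r=l$.
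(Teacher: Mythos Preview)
Your proof is correct and follows essentially the same approach as the paper: invoke Lemmas \ref{lem:N1}--\ref{lem:N3}, then split the divisors $r\mid l$ according to whether $r$ lies above or below the threshold $N/(1+T^2)$. The only differences are cosmetic: the paper first absorbs $V_{N,2}$ and $V_{N,3}$ into $V_{N,1}$ and works with $\sum_{r\mid l}$ rather than $\max_{r\mid l}$, and it places the split for large $l$ at $N^{1-10\epsilon}/(1+T^2)$ instead of $N/(1+T^2)$ so that the second bound in \eqref{V1} becomes genuinely small on the lower range; your use of $\max$ and the direct inequality $N^{\epsilon}/T\ll (lkT^2)^{\epsilon}l(k+T)/N$ for $l\ge N/(1+T^2)$ achieves the same end slightly more cleanly.
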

\begin{proof}
By lemmas \ref{lem:N1}, \ref{lem:N2} and \ref{lem:N3}
\begin{equation*}
V_N(l)\ll \frac{1}{l^{1/2}}\sum_{r|l}\bigg(V_{N,1}(r)+V_{N,2}(r)+V_{N,3}(r)\bigg)\ll
\frac{1}{l^{1/2}}\sum_{r|l}V_{N,1}(r).
\end{equation*}
If $l<N/(1+T^2)$ one can apply  the second bound in \eqref{V1} to obtain \eqref{VN(l)bound}.
Assume
$l \geq N/(1+T^2)$. Then for $N^{1-10\epsilon}/(1+T^2)<r<N/(1+T^2)$ we have
\begin{equation*}
\left( \frac{rT^2}{4(N-r)}\right)^k\frac{N^{\epsilon}}{T}\ll
\frac{r}{N}(rkT^2)^{\epsilon}(k+T).
\end{equation*}
 Next, we split the sum over $r$ into two parts. The assertion follows by applying estimates \eqref{V1}, i.e.
 we use the first bound for $r>N^{1-10\epsilon}/(1+T^2)$  and the second bound
for $r<N^{1-10\epsilon}/(1+T^2)$.

\end{proof}

\begin{lem}
One has
\begin{equation}
\frac{1}{p^2}W_p(l)\ll \frac{1}{p}V_p(l).
\end{equation}
\end{lem}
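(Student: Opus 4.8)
The plan is to estimate $|W_p(l)|$ directly and show that it is $\ll p$ times the estimate \eqref{VN(l)bound} for $V_p(l)$, so that $\tfrac1{p^{2}}W_p(l)$ is absorbed into $\tfrac1p V_p(l)$ in \eqref{M2(p)}; as usual the implied bounds absorb factors $(l\ldots)^{\epsilon}$, and we may make the exponent occurring in the estimate of $W_p(l)$ as small as we please relative to the one in \eqref{VN(l)bound}. Since $|\mu(d/r)|\le1$ and $\sum_{d\mid l}\sum_{r\mid d}1\ll l^{\epsilon}$, and since every $r\mid l$ is coprime to $p$, it suffices to bound, for each such $r$, the product of $r^{k}$ with each of the two inner sums over $a$ in \eqref{def:wp}. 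Because the two divisor functions appearing there are $O((\cdot)^{\epsilon})$, their precise arguments are immaterial, and these sums have exactly the shape of the sums defining $V_{p,1}(r)$, $V_{p,2}(r)$, $V_{p,3}(r)$ in \eqref{eq:v1}--\eqref{eq:v3}, with $ap$, a positive multiple of $p$ (so in particular $ap\ge p$), playing the role of $nN-r$ or $nN+r$.

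For the sum containing $H_k(0,v,-r/(ap))$ the argument is negative for every $a\ge1$; splitting the $a$-range at $a\asymp r/p$ and $a\asymp rT^{2}/p$ and applying \eqref{eqw:h1}, \eqref{eqw:h5} and \eqref{eqw:h2} exactly as in the proof of Lemma \ref{lem:N1} gives a bound no larger than \eqref{V1} (indeed slightly smaller, since $ap\ge p$). For the sum containing $H_k(0,v,r/(ap))$ I would split according to whether $r/(ap)<1$ or $r/(ap)\ge1$. When $r/(ap)<1$, i.e. $a>r/p$, the bounds \eqref{eqw:h3} and \eqref{eqw:h4} apply as in the estimate of $V_{p,3}(r)$. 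When $r/(ap)\ge1$ --- which can happen only if $r>p$ --- I would apply Lemma \ref{lem:h} with $u=0$ and $\lambda=k$ to identify $(r/(ap))^{k}H_k(0,v,r/(ap))$ with $\pi(-1)^{k}I(ap/r)$, where $I$ is the integral \eqref{Iz}; Corollary \ref{boundI} then gives $(r/(ap))^{k}H_k(0,v,r/(ap))\ll 1+\log(r/(ap))+\big((2k-1)\sqrt{1-ap/r}\big)^{-1}$, exactly as in the treatment of $V_{p,2}(r)$ in Lemma \ref{lem:N2}. The delicate contributions are those with $r/(ap)$ close to $1$: here \eqref{eqw:h3} (from below) and the last term of the bound from Corollary \ref{boundI} (from above) each yield a factor $\sqrt{r/|ap-r|}$. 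Since $p\nmid r$ forces $|ap-r|\ge1$, and since the numbers $|ap-r|$ lie in the residue class $-r\pmod p$ and are therefore spaced $p$ apart, at most two of them are smaller than $p$ and contribute $\ll\sqrt r$ at worst, while the remaining ones are $\asymp jp$ with $1\le j$, $jp\le r$, and contribute $\ll\sqrt{r/p}\sum_{j\le r/p}j^{-1/2}\ll r/p$; together with the bounded and logarithmic terms, and summing over the $O(r/p)$ relevant $a$, this part is $\ll(\sqrt r+r/p)r^{\epsilon}$.

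Collecting the estimates, $r^{k}$ times the two inner sums is $\ll(rkT^{2})^{\epsilon}\big(\tfrac rp(k+T)+\sqrt r\big)$ when $r\ge p/(1+T^{2})$, and is $\ll\big(\tfrac{rT^{2}}{4(p-r)}\big)^{k}p^{\epsilon}/T$ when $r<p/(1+T^{2})$ (in the latter case the condition forces $T<\sqrt p$). Dividing by $l^{1/2}$, summing over $r\mid l$, and comparing with $p$ times \eqref{VN(l)bound} at $N=p$: in the regime $l<p/(1+T^{2})$ the per-divisor bound already matches \eqref{V1}, so $W_p(l)\ll V_p(l)\ll pV_p(l)$; in the regime $l\ge p/(1+T^{2})$ one finds, up to the negligible $l^{\epsilon}$, that the required inequality reads $1+l^{1/2}(k+T)/p\ll l^{1/2}(k+T)(lkT^{2})^{\epsilon}$, which holds because $l\ge1$ and $k+T\ge5$ (the first summand is dominated since the right side is $\ge5$, and the second because it is smaller by a factor $1/p$). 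Hence $\tfrac1{p^{2}}W_p(l)\ll\tfrac1p V_p(l)$. I expect the main obstacle to be precisely the positive-argument sum with $r/(ap)$ near $1$ --- the only place where Lemma \ref{lem:h} and Corollary \ref{boundI} are genuinely needed --- together with the bookkeeping showing that the resulting $\sqrt r$ term is still dominated by $p$ times the size of $V_p(l)$.
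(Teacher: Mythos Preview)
Your proposal is correct and follows essentially the same route as the paper: the paper likewise splits $W_p(l)$ into three pieces $W_{p,1},W_{p,2},W_{p,3}$ corresponding exactly to your negative-argument sum, the positive-argument sum with $a\ge r/p$, and the positive-argument sum with $a<r/p$, applies the same hypergeometric bounds \eqref{eqw:h1}--\eqref{eqw:h5} and Corollary~\ref{boundI}, and uses the same spacing observation (writing $r=r_0+sp$ with $r_0\neq0$) to control the terms with $|ap-r|$ small. The paper records the outcome as $W_p(l)\ll V_p(l)+l^{\epsilon}(1+\sqrt{l}/p)\id_{l>p}$ and then compares the extra term with \eqref{VN(l)bound} at $N=p$, which is exactly your final bookkeeping step.
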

\begin{proof}
Consider
\begin{equation*}
W_p(l)=\frac{2(-1)^k}{l^{1/2-v}}\sum_{d|l}\sum_{r|d}\mu(d/r)\left(W_{p,1}(r)+W_{p,2}(r)+W_{p,3}(r)\right),
\end{equation*}
where
\begin{equation*}
W_{p,1}(r)= \frac{\cos{\pi v}}{r^{-k+v}}\sum_{a=1}^{\infty}
\frac{\tau_v(a)\tau_0(pa+r)}{(ap)^k}H_k\left( 0,v;\frac{-r}{ap}\right),
\end{equation*}
\begin{equation*}
W_{p,2}(r)= \frac{(-1)^k}{r^{-k+v}}\sum_{a\geq r/p}\frac{\tau_{v}(a)\tau_0(pa-r)}{(ap)^k}H_k\left( 0,v;\frac{r}{ap}\right),
\end{equation*}
\begin{equation*}
W_{p,3}(r)=\frac{(-1)^k}{r^{-k+v}}\sum_{a<r/p}\frac{\tau_{v}(a)\tau_0(|pa-r|)}{(ap)^k}H_k\left( 0,v;\frac{r}{ap}\right).
\end{equation*}
Analogously to lemma \ref{lem:N1} we obtain
\begin{equation*}
W_{p,1}(r)\ll
\begin{cases}
\frac{r}{p}(rkT^2)^{\epsilon}(k+T) ,\quad r \geq p/T^2\\
\left(\frac{rT^2}{4p}\right)^k\frac{1}{T\sqrt{k}}, \quad r < p/T^2.
\end{cases}
\end{equation*}
Therefore, $W_{p,1}(r)\ll V_{p,1}(r)$.
Next, we estimate  $W_{p,2}(r)$. It is convenient to split the summation over $a$ into two parts
\begin{equation*}
\sum_{a \geq r/p}=\sum_{a\geq 2r/p}+\sum_{r/p\leq a < 2r/p}.
\end{equation*}
According to \eqref{eqw:h4} the first sum can be bounded as follows
\begin{multline*}
\frac{(-1)^k}{r^{-k+v}}\sum_{a\geq 2r/p}\frac{\tau_{v}(a)\tau_0(pa-r)}{(ap)^k}H_k\left( 0,v;\frac{r}{ap}\right)\ll\\
\ll\sum_{a\geq 2r/p}(ap)^{\epsilon}\frac{r^k}{(2ap)^k}\ll
\begin{cases}
(r/(2p))^kp^{\epsilon}, \quad r<p/2\\
rp^{\epsilon}/(p2^k), \quad r>p/2.
\end{cases}
\end{multline*}
Therefore, the contribution of this term doesn't exceed $V_{p,3}(r).$ If $r>p/2$
we estimate the second sum using \eqref{eqw:h3} and letting $r=r_0+sp, \quad 0\leq r_0\leq p-1$. Since
$r|l$, $(l,p)=1$ one has $r_0 \neq 0$. It follows from $a \geq r/p$ that $a\geq s+1$, and, therefore,
$ap-r \geq p-r_0$. Thus
\begin{multline*}
\frac{(-1)^k}{r^{-k+v}}\sum_{r/p\leq a < 2r/p}\frac{\tau_{v}(a)\tau_0(pa-r)}{(ap)^k}H_k\left( 0,v;\frac{r}{ap}\right)\ll\\
\ll
\frac{(rp)^{\epsilon}}{k+T}\sum_{r/p\leq a < 2r/p}\frac{\sqrt{r}}{\sqrt{ap-r}}\ll\\\ll
\frac{(rp)^{\epsilon}}{k+T}\sqrt{r}\left(\frac{1}{\sqrt{p-r_0}} +\frac{\sqrt{r}}{p}\right)\ll
\frac{(rp)^{\epsilon}}{k+T}\sqrt{r}(1+\frac{\sqrt{r}}{p}).
\end{multline*}

The last summand $W_{p,3}(r)$ can be estimated using corollary \ref{boundI}
\begin{multline*}
W_{p,3}(r)\ll \sum_{a<r/p}|I(ap/r)|\ll \sum_{a<r/p}\frac{1}{\sqrt{1-ap/r}}\ll \sqrt{r}\left( 1+\frac{\sqrt{r}}{p}\right).
\end{multline*}

Finally,
\begin{equation*}
W_p(l)\ll V_p(l)+l^{\epsilon}\left(1+\sqrt{l}/p\right)\id_{l>p}.
\end{equation*}
Using \eqref{VN(l)bound} we obtain the assertion.
\end{proof}

\begin{lem}
One has
\begin{equation}
E_{p}^{3}(0,v, \lambda)\ll \frac{\log{pT}}{\sqrt{dp}}.
\end{equation}
\end{lem}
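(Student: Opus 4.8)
The plan is to evaluate $E_p^{3}$ directly from its definition \eqref{E3}. Substituting $u=0$, $v=it$ and $\lambda=k$ gives
\[
E_p^{3}(0,it,k)=\frac{p^{2it}-1}{p}\cdot\frac{\zeta(1-2it)}{(2\pi\sqrt{d/p})^{1-2it}}\cdot\frac{\Gamma(k-it)}{\Gamma(k+it)} .
\]
Two of the three factors are immediately controlled: since $k\in\R$ the numbers $\Gamma(k+it)$ and $\Gamma(k-it)$ are complex conjugates, so $|\Gamma(k-it)/\Gamma(k+it)|=1$, while $|(2\pi\sqrt{d/p})^{1-2it}|=2\pi\sqrt{d/p}$. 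Hence
\[
|E_p^{3}(0,it,k)|=\frac{|p^{2it}-1|\,|\zeta(1-2it)|}{2\pi\sqrt{dp}},
\]
and the whole task reduces to the uniform estimate $|p^{2it}-1|\,|\zeta(1-2it)|\ll\log(pT)$.

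I would prove this by splitting on the size of $|t|$. If $|t|\geq1$, then $|p^{2it}-1|\leq 2$, while the standard bound $\zeta(1+it)\ll\log(2+|t|)$ for the Riemann zeta function on the line $\Re s=1$ gives $|\zeta(1-2it)|=|\zeta(1+2it)|\ll\log T$, so the product is $\ll\log T\ll\log(pT)$. If $|t|<1$ one must absorb the simple pole of $\zeta$ at $s=1$, which is cancelled by the zero of $p^{2it}-1$ at $t=0$: since $\zeta(s)-1/(s-1)$ is entire one has $\zeta(1-2it)=-\frac{1}{2it}+O(1)$ on $|t|\leq1$, and combining the elementary inequalities $|p^{2it}-1|=|e^{2it\log p}-1|\leq 2|t|\log p$ and $|p^{2it}-1|\leq 2$ yields
\[
|p^{2it}-1|\,|\zeta(1-2it)|\leq\frac{|p^{2it}-1|}{2|t|}+O(|p^{2it}-1|)\leq\log p+O(1)\ll\log(pT).
\]
Combining the two ranges and dividing by $2\pi\sqrt{dp}$ completes the proof.

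The only delicate point is the cancellation between the pole of $\zeta(1-2it)$ and the vanishing of $p^{2it}-1$ near $t=0$; once the Laurent expansion of $\zeta$ at $s=1$ is invoked this is entirely routine, so I foresee no genuine obstacle.
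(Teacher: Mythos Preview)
Your proof is correct and follows the same direct approach as the paper: substitute $u=0$, bound the Gamma-ratio and the power of $2\pi\sqrt{d/p}$ trivially, and then estimate the product $(p^{2it}-1)\zeta(1-2it)$ by splitting into the case of $t$ bounded away from zero (where $|\zeta(1-2it)|\ll\log T$) and the case near $t=0$ (where the zero of $p^{2it}-1$ cancels the pole of $\zeta$, yielding $\log p$). Your handling of the near-zero case via the Laurent expansion of $\zeta$ is in fact more explicit than the paper's two-line sketch.
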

\begin{proof}
Consider
\begin{equation}
E_{p}^{3}(0,v, \lambda)\ll \frac{1}{\sqrt{dp}}(p^{2v-1}-1)\log{(1-2v)}.
\end{equation}
This gives
\begin{equation*}
E_{p}^{3}(0,v, \lambda)\ll
\begin{cases}
\frac{\log{T}}{\sqrt{dp}} \quad v \neq 0,\\
\frac{\log{p}}{\sqrt{dp}} \quad  v=0.
\end{cases}
\end{equation*}
\end{proof}

\begin{lem}\label{lem:e4}
One has
\begin{equation}
E_{p}^{4}(0,v,\lambda)\ll\frac{(dTk)^{\epsilon}}{\sqrt{d}}.
\end{equation}
\end{lem}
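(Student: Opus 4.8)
The plan is to split the right-hand side of \eqref{E4}, evaluated at $u=0$, $v=it$, $\lambda=k$, into the sum over $q\mid d$ and the separate $\frac{p-1}{p}$-term, and to extract in the first of these the cancellation hidden in the inner sum over $b$. To begin, I would record two elementary facts valid on the critical line: $\Gamma(k-it)=\overline{\Gamma(k+it)}$, so that $\lvert\Gamma(k+v)/\Gamma(k-v)\rvert=1$, and $\lvert(q/(2\pi\sqrt d))^{1+2v}\rvert=q/(2\pi\sqrt d)$, so that the prefactor $\frac1q(q/(2\pi\sqrt d))^{1+2v}$ appearing in the first sum of \eqref{E4} has absolute value $1/(2\pi\sqrt d)$, independently of $q$.

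The heart of the argument is the sum over $q\mid d$, $q\neq1$. Since $q\mid d\mid l$ and $(l,p)=1$ we have $(q,p)=1$; grouping $b\in[1,qp]$ according to its residue $c$ modulo $q$ with $(c,q)=1$, and using that exactly one of the $p$ lifts $c,c+q,\dots,c+(p-1)q$ is divisible by $p$ (because $(q,p)=1$), I would obtain $\sum_{b\le qp,\,(b,qp)=1}e(nb/q)=(p-1)c_q(n)$, where $c_q(n)=\sum_{1\le c\le q,\,(c,q)=1}e(nc/q)$ is the Ramanujan sum. Interchanging the finite $b$-sum with the $n$-sum then turns $\sum_{b}\zeta(0,b/q;1+2v)$ into $(p-1)\sum_{n\ge1}c_q(n)\,n^{-1-2v}$, and the classical evaluation of the Dirichlet series of the Ramanujan sum (valid for $\Re s>1$ and continued) identifies this with $(p-1)\,\zeta(1+2v)\,q^{-2v}\prod_{\ell\mid q}(1-\ell^{2v})$. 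This is the step I expect to be the main obstacle, being the crux of the lemma: a crude triangle-inequality bound on $\sum_b\zeta(0,b/q;1+2v)$ carries a factor $qp$ and, after summation over $q\mid d$, produces an estimate of size $\sqrt d$ rather than $1/\sqrt d$; only the Ramanujan-sum collapse recovers the required saving.

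Granting this, I would bound $\zeta(1+2v)\,q^{-2v}\prod_{\ell\mid q}(1-\ell^{2v})$ uniformly in $t\in\R$. On $\Re v=0$ one has $\lvert q^{-2v}\rvert=1$ and $\lvert\prod_{\ell\mid q}(1-\ell^{2v})\rvert\le 2^{\omega(q)}\ll q^\epsilon$; moreover this product vanishes at $v=0$ to order $\omega(q)\ge1$ and hence absorbs the simple pole of $\zeta(1+2v)$ there, while for $\lvert t\rvert\ge1$ one has $\zeta(1+2it)\ll\log T$. Together these give $\lvert\zeta(1+2v)\,q^{-2v}\prod_{\ell\mid q}(1-\ell^{2v})\rvert\ll(qT)^\epsilon$, so the first sum in \eqref{E4} is bounded by
\[
\frac1p\sum_{\substack{q\mid d\\q\neq1}}\frac1q\cdot\frac{q}{2\pi\sqrt d}\cdot(p-1)(qT)^\epsilon\ll\frac{T^\epsilon}{\sqrt d}\sum_{q\mid d}q^\epsilon\ll\frac{(dT)^\epsilon}{\sqrt d}.
\]

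It remains to estimate the $\frac{p-1}{p}$-term of \eqref{E4}. For $v=it$ its two summands are complex conjugates of each other, so the bracket equals $2\,\Re\bigl(\zeta(1+2v)(2\pi\sqrt d)^{-1-2v}\Gamma(k+v)/\Gamma(k-v)\bigr)$, of modulus $\ll\lvert\zeta(1+2it)\rvert/\sqrt d$, which is $\ll(\log T)/\sqrt d$ for $\lvert t\rvert\ge1$. For $\lvert t\rvert<1$ I would write $\zeta(1+2v)=\tfrac1{2v}+\eta(v)$ with $\eta$ holomorphic and bounded near $v=0$: the $\eta$-part contributes $\ll1/\sqrt d$, while the two $\tfrac1{2v}$-parts combine into $\Im\beta/t$ with $\beta=(2\pi\sqrt d)^{-1-2it}\,\Gamma(k+it)/\Gamma(k-it)$, where $\lvert\beta\rvert=1/(2\pi\sqrt d)$ and $\arg\beta\ll\lvert t\rvert\log(dk)$ (the $\log k$ coming from $\psi(k)$), hence this piece is $\ll(\log dk)/\sqrt d$. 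In every case the $\frac{p-1}{p}$-term is $\ll(dTk)^\epsilon/\sqrt d$, and adding it to the bound for the sum over $q$ gives the lemma.
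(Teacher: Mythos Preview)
Your proof is correct and follows essentially the same strategy as the paper's: both evaluate the inner sum $\sum_{(b,qp)=1}\zeta(0,b/q;1+2v)$ in closed form as $\zeta(1+2v)$ times a multiplicative factor that vanishes at $v=0$---you via the identity $\sum_b e(nb/q)=(p-1)c_q(n)$ and the Dirichlet series of the Ramanujan sum, the paper via the equivalent formula $c_q(n)=\phi(q)\mu(q/(n,q))/\phi(q/(n,q))$ followed by a M\"obius inversion---and then bound uniformly in $t$. The treatment of the separate $\tfrac{p-1}{p}$-term is likewise the same in substance: you make the pole cancellation at $v=0$ explicit through the complex-conjugate pairing and a Taylor expansion yielding the $\log(dk)/\sqrt d$ contribution, where the paper simply asserts the cancellation.
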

\begin{proof}
First, we estimate the second summand in \eqref{E4}. If $v \neq 0$, then it is bounded by $\log{T}/\sqrt{d}$.
Otherwise, letting $v \rightarrow 0$, the poles cancel out and the summand is dominated by $\log{kd}/\sqrt{d}$.
In total, this contributes as $\log{kdT}/\sqrt{d}$.
Now we proceed to estimate the first summand of \eqref{E4} containing the sum of Lerch zeta functions
\begin{multline*}
\sum_{\substack{b=1\\(b,qp)=1}}^{qp}\zeta(0,b/q,1+2v)=\sum_{n=1}^{\infty}\frac{1}{n^{1+2v}}
\sum_{\substack{b=1\\(b,qp)=1}}^{qp}\exp{\left( \frac{bpn}{qp}\right)}=\\
=\phi(qp)\sum_{n=1}^{\infty}\frac{\mu(q/(n,q))}{n^{1+2v}\phi(q/(n,q))}=\\=
\phi(qp)\sum_{m|q}\frac{\mu(m)}{\phi(m)}\frac{1}{(q/m)^{1+2v}}\sum_{\substack{n=1\\(n,m)=1}}^{\infty}\frac{1}{n^{1+2v}}.
\end{multline*}
Note that
\begin{equation*}
\sum_{\substack{n=1\\(n,m)=1}}^{\infty}
\frac{1}{n^{z}}=\sum_{n=1}^{\infty}\frac{1}{n^z}\sum_{a|(n,m)}\mu(a)=\zeta(z)\sum_{a|m}\frac{\mu(a)}{a^z}.
\end{equation*}
Thus
\begin{equation*}
\sum_{\substack{b=1\\(b,qp)=1}}^{qp}\zeta(0,b/q,1+2v)=\zeta(1+2v)\phi(qp)\sum_{m|q}\frac{\mu(m)}{\phi(m)}\frac{1}{(q/m)^{1+2v}}
\sum_{a|m}\frac{\mu(a)}{a^{1+2v}}.
\end{equation*}
Since for $q>1$
\begin{equation*}
\sum_{m|q}\frac{\mu(m)}{\phi(m)}\frac{m}{q}\sum_{a|m}\frac{\mu(a)}{a}=\sum_{m|q}\frac{\mu(m)}{q}=0,
\end{equation*}
there is no pole at $v=0$.
If $v=it \neq 0$ we use the bound $|\zeta(1+2v)|<\log{T}$ so that
\begin{equation*}
\sum_{\substack{b=1\\(b,qp)=1}}^{qp}\zeta(0,b/q,1+2v)\ll p(Tq)^{\epsilon}.
\end{equation*}
If $v=0$
\begin{equation*}
\sum_{\substack{b=1\\(b,qp)=1}}^{qp}\zeta(0,b/q,1)=1/2
\phi(qp)\sum_{m|q}\frac{\mu(m)}{\phi(m)}\frac{m}{q}\sum_{a|m}\frac{\mu(a)}{a}2 \log{\frac{m}{aq}},
\end{equation*}
and, therefore,
\begin{equation*}
\sum_{\substack{b=1\\(b,qp)=1}}^{qp}\zeta(0,b/q,1)\ll p(Tq)^{\epsilon}.
\end{equation*}
Combining all the results we have
\begin{equation*}
E_{p}^{4}(0,v,\lambda)\ll \frac{(dTk)^{\epsilon}}{\sqrt{d}}.
\end{equation*}

\end{proof}

\begin{lem}
One has
\begin{equation}
E_{p}^{2}(0,v, \lambda)\ll \frac{1}{\sqrt{dp}}(Tdp)^{\epsilon}.
\end{equation}
\end{lem}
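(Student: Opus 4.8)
The plan is to follow the proof of Lemma~\ref{lem:e4} almost verbatim. First I would substitute $u=0$, $v=it$ with $t\in\R$, and $\lambda=k$ into the definition \eqref{E2}, and dispose of the elementary factors: on the line $\Re v=0$ one has $|\Gamma(k+v)/\Gamma(k-v)|=1$ and $\bigl|(q/(2\pi\sqrt{d/p}))^{1+2v}\bigr|=q\sqrt{p}/(2\pi\sqrt{d})$, so that
\[
E_p^2(0,v,\lambda)\ll\frac{1}{\sqrt{dp}}\sum_{q\mid d}\Biggl|\sum_{\substack{b=1\\(b,qp)=1}}^{qp}\zeta\bigl(0,b/(qp);1+2v\bigr)\Biggr|.
\]

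The main computation is the evaluation of the inner sum over $b$. Using $\zeta(0,b/(qp);1+2v)=\sum_{n\ge1}\exp(2\pi inb/(qp))\,n^{-1-2v}$ and interchanging the order of summation, the sum over $b$ becomes a Ramanujan sum, and summing the resulting Dirichlet series in $n$ I expect to obtain
\[
\sum_{\substack{b=1\\(b,qp)=1}}^{qp}\zeta\bigl(0,b/(qp);1+2v\bigr)=\zeta(1+2v)\sum_{e\mid qp}\frac{\mu(qp/e)}{e^{2v}}.
\]
Since $p$ is prime, $qp\ge2>1$, hence $\sum_{e\mid qp}\mu(qp/e)=0$ and the pole of $\zeta(1+2v)$ at $v=0$ is cancelled. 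For $v=it\ne0$ I would bound $|\zeta(1+2it)|\ll\log T$ together with $\bigl|\sum_{e\mid qp}\mu(qp/e)e^{-2v}\bigr|\le\tau_0(qp)\ll(qp)^\epsilon$; for $v=0$ a short Taylor expansion at $v=0$ shows the limiting value is $-\Lambda(qp)\ll(qp)^\epsilon$, and the same expansion yields the bound uniformly for small $t$. Thus the inner sum is $\ll(Tqp)^\epsilon$ in all cases.

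Finally, summing over the divisors $q\mid d$ and absorbing $\tau_0(d)\ll d^\epsilon$ gives $E_p^2(0,v,\lambda)\ll(Tdp)^\epsilon/\sqrt{dp}$, as claimed. I expect the only genuinely delicate step to be making the $v\to0$ limit explicit with an estimate uniform in $q$, $p$ and $t$ — exactly as in Lemma~\ref{lem:e4} — and this is precisely where the primality of $p$ is used, to ensure $qp>1$ and hence the cancellation of the pole of $\zeta$.
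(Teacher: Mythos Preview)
Your proposal is correct and follows essentially the same route as the paper: bound the elementary factors, then evaluate the inner sum $\sum_{(b,qp)=1}\zeta(0,b/(qp);1+2v)$ and show the pole of $\zeta(1+2v)$ at $v=0$ cancels because $qp>1$. The only difference is cosmetic: the paper quotes the formula from the proof of Lemma~\ref{lem:e4}, namely $\zeta(1+2v)\,\phi(qp)\sum_{m\mid qp}\frac{\mu(m)}{\phi(m)}\bigl(\frac{m}{qp}\bigr)^{1+2v}\sum_{a\mid m}\frac{\mu(a)}{a^{1+2v}}$, whereas your direct Ramanujan-sum computation gives the cleaner equivalent form $\zeta(1+2v)\sum_{e\mid qp}\mu(qp/e)e^{-2v}$, from which the vanishing at $v=0$ and the limit $-\Lambda(qp)$ are immediate.
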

\begin{proof}
Consider
\begin{equation*}
E_{p}^{2}(0,v, \lambda)\ll\frac{1}{p}\sum_{q|d}\frac{1}{2\pi \sqrt{d/p}}\Biggl| \sum_{\substack{b=1\\(b,qp)=1}}^{qp}\zeta(0, b/qp,1+2v)\Biggr|.
\end{equation*}
As in the proof of lemma \ref{lem:e4}
\begin{equation*}
 \sum_{\substack{b=1\\(b,qp)=1}}^{qp}\zeta(0, b/qp,1+2v)=
\zeta(1+2v)\phi(qp)\sum_{m|qp}\frac{\mu(m)}{\phi(m)}\left( \frac{m}{qp}\right)^{1+2v}\sum_{a|m}\frac{\mu(a)}{a^{1+2v}}.
\end{equation*}
The pole at $v=0$ cancels out for $qp>1$ and
\begin{equation*}
\sum_{\substack{b=1\\(b,qp)=1}}^{qp}\zeta(0, b/qp,1+2v)\ll (Tqp)^{\epsilon}.
\end{equation*}
The statement follows.
\end{proof}

\begin{cor}
The contribution of $E_{p}^{j}(0,v, \lambda),$  $j=2,3,4$ to \eqref{M2(p)} is bounded by
\begin{multline*}
\frac{2\pi i^{2k}}{p-p^{-1}}\sum_{d|l}\left( \frac{d}{l}\right)^{1/2-v}\times \\ \times \left( -\frac{E_{p}^2(0,v,\lambda)}{p^{1/2-v}}-
\frac{E^{3}_{p}(0,v,\lambda)}{p^{1/2+v}}+\frac{E_{p}^{4}(0,v,l)}{p}\right)\ll
\frac{(lpTk)^{\epsilon}}{p^2\sqrt{l}}.
\end{multline*}
\end{cor}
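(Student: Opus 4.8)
The plan is to feed the three individual bounds on $E_p^2$, $E_p^3$ and $E_p^4$ established in the preceding lemmas straight into the displayed linear combination and then sum trivially over the divisors $d\mid l$. First I would note that since $\Re v=0$ one has $|(d/l)^{1/2-v}|=(d/l)^{1/2}=\sqrt d/\sqrt l$ and $|p^{\pm v}|=1$, so that $|p^{-1/2+v}|=|p^{-1/2-v}|=p^{-1/2}$, and also $1/(p-p^{-1})\ll 1/p$. These are the only facts about the exponents that are needed.

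Next I would treat the three terms one at a time. Using the bound $E_p^2(0,v,\lambda)\ll (Tdp)^{\epsilon}/\sqrt{dp}$, the $d$-th summand of the $E_p^2$-part is
\[
\frac{1}{p-p^{-1}}\Bigl(\frac{d}{l}\Bigr)^{1/2}\frac{1}{p^{1/2}}\,|E_p^2(0,v,\lambda)|\ll \frac{1}{p}\cdot\frac{\sqrt d}{\sqrt l}\cdot\frac{1}{p^{1/2}}\cdot\frac{(Tdp)^{\epsilon}}{\sqrt{dp}}=\frac{(Tdp)^{\epsilon}}{p^{2}\sqrt l};
\]
here the factor $\sqrt d$ coming from $(d/l)^{1/2}$ cancels the $1/\sqrt d$ in the bound for $E_p^2$, and this cancellation is the only point that requires a moment's attention. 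In exactly the same way the $E_p^3$-part contributes $\ll \log(pT)/(p^{2}\sqrt l)$ per divisor (using $E_p^3(0,v,\lambda)\ll \log(pT)/\sqrt{dp}$), and the $E_p^4$-part contributes $\ll (dTk)^{\epsilon}/(p^{2}\sqrt l)$ per divisor (using Lemma \ref{lem:e4}, i.e. $E_p^4(0,v,\lambda)\ll (dTk)^{\epsilon}/\sqrt d$, together with the extra factor $1/p$ that multiplies $E_p^4$ in the display).

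Finally I would sum over $d\mid l$. Since every occurring $d$ satisfies $d\le l$, each summand is $\ll (lpTk)^{\epsilon}/(p^{2}\sqrt l)$, and $\tau(l)\ll_{\epsilon}l^{\epsilon}$, so the total is still $\ll (lpTk)^{\epsilon}/(p^{2}\sqrt l)$ after absorbing one more power of $l^{\epsilon}$ into the implied constant. I do not expect any genuine obstacle: all the analytic content sits in the three lemmas already proved, and what remains is a routine bookkeeping of the powers of $p$ and $d$ and of the divisor bound.
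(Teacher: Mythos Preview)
Your argument is correct and is exactly the routine combination of the three preceding lemmas that the paper intends; the corollary is stated there without a separate proof, and your bookkeeping of the powers of $p$ and $d$ together with the divisor bound $\tau(l)\ll l^{\epsilon}$ supplies precisely the missing details.
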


\section*{Funding}
The work of Dmitry Frolenkov   was supported by the Russian Science Foundation under grant [14-50-00005] and performed in Steklov Mathematical Institute of Russian Academy of Sciences.



\begin{thebibliography}{}

\bibitem{BalFrol}
\newblock
O.~Balkanova, D.~Frolenkov, \emph{Non-vanishing of automorphic L-functions of prime power level},  preprint, $2015$
\bibitem{BE}
\newblock
H.~Bateman, A.~Erd\'{e}lyi, \emph{Higher Transcendental Functions},  Vol. $1$, McGraw-Hill, New York, $1953$
\bibitem{BE2}
\newblock
H.~Bateman, A.~Erd\'{e}lyi, \emph{Tables of integral transforms},  Vol. $1$, McGraw-Hill, New York, $1954$
\bibitem{Bui}
\newblock
H.~Bui,  \emph{ A note on the second moment of automorphic L-functions},  Mathematika $56$, $2010$,  $35-44$
\bibitem{Byk}
\newblock
V. A. Bykovskii,  \emph{A trace formula for the scalar product of {H}ecke series and its applications },
Zap. Nauchn. Sem. S.-Peterburg. Otdel. Mat. Inst. Steklov.(POMI) $226$ ($1996$), $14-36$
\bibitem{BF}
\newblock
V. A. Bykovskii, D.A. Frolenkov, \emph{Asymptotic formulas for the second moments of $L$-series associated to holomorphic cusp forms on the critical line},  preprint ($2015$), $1-28$
\bibitem{BF2}
\newblock
V. A. Bykovskii, D.A. Frolenkov, \emph{On the Second Moment of $L$-Series
of Holomorphic Cusp Forms on the Critical Line},  Doklady Mathematics $92$ ($2015$), No. $1$, $1-4$

\bibitem{BF3}
\newblock
D.A. Frolenkov, \emph{On the uniform bounds on hypergeometric function},  Dal'nevost. Mat. Zh. $15:2$, $2015$, $289-298$
\bibitem{F}
\newblock
G.~M.~Fichtenholz, \emph{Differential and integral calculus}, vol.~$2$
\bibitem{GR}
\newblock
I.~S.~Gradshteyn,  I.~M.~Ryzhik, \emph{Table of Integrals, Series, and Products},
$6$th ed., Academic Press, Boston, $2000$
\bibitem{IS}
\newblock
H. Iwaniec,  P. Sarnak, \emph{The non-vanishing of central values of automorphic $L$-functions and Landau-Siegel zeros}, Israel J. Math. $120$, $2000$, $155-177$
\bibitem{HMF}
\newblock
F.W.J.~Olver , D.W.~Lozier, R.F.~Boisvert and C.W.~Clarke, \emph{{NIST} {H}andbook of {M}athematical {F}unctions}, Cambridge University Press, Cambridge, $2010$
\bibitem{R}
\newblock D. Rouymi, \emph{Formules de trace et non annulation de fonctions $L$ automorphes au niveau $p^v$ }, Acta Arith. $147$ ($2011$), $1-32$
\bibitem{R2}
\newblock D. Rouymi, \emph{Mollification et non annulation de fonctions L automorphes en niveau primaire},  J. Number Theory $132$ ($2012$), No. $1$ , $79-93$
\bibitem{Royer}
\newblock
E. Royer, \emph{Sur les fonctions L de formes modulaires}, PhD thesis, Universit\'{e} de Paris-Sud, $2001$

\end{thebibliography}
\end{document}